
\documentclass[a4paper, 12pt]{amsart} %(needed for arXiv)

\usepackage[pdftex]{graphicx} %for arXiv

\usepackage{amssymb, amsmath, amsthm}
\usepackage{amscd}

\usepackage{comment}
\usepackage[all]{xy}
\usepackage{color}

\usepackage[utf8]{inputenc}
%%%%%for tikz%%%%%
\usepackage{tikz}
\usetikzlibrary{positioning}
\usetikzlibrary{intersections}
\usetikzlibrary{calc, quotes, angles}

\begin{document}

\title
[Collapsing Alexandrov three-spaces with boundary]
{Collapsing three-dimensional Alexandrov spaces with boundary} %%%and with a lower curvature bound}
 
\author[A. Mitsuishi]{Ayato Mitsuishi}
\author[T.Yamaguchi]{Takao Yamaguchi}

\address{Ayato Mitsuishi, Department of Applied Mathematics, Fukuoka University, Jyonan-ku, Fukuoka, Fukuoka 814--0180, JAPAN}
\email{mitsuishi@fukuoka-u.ac.jp}

\address{Takao Yamaguchi, Department of mathematics, Kyoto University, Kitashirakawa, Kyoto 606--8502, JAPAN}
\curraddr{Institute of Mathematics, University of Tsukuba, Tsukuba 305-8571, Japan}
 \email{takao@math.tsukuba.ac.jp}

\subjclass[2010]{Primary 53C20, 53C23}
\keywords{Alexandrov spaces with boundary, collapsing, extremal subset}

\thanks{This work was supported by JSPS KAKENHI Grant Numbers 17H01091,
18H01118, 15H05739, 26287010}

\date{\today}

\theoremstyle{plain}
\newtheorem{thm}{Theorem}[section]
\newtheorem{lem}[thm]{Lemma}
\newtheorem{cor}[thm]{Corollary}
\newtheorem{prop}[thm]{Proposition}
\newtheorem{defn}[thm]{Definition}
\newtheorem{rem}[thm]{Remark}
\newtheorem{ex}[thm]{Example}
\newtheorem{fact}[thm]{Fact}
\newtheorem{claim}[thm]{Claim}

\newcommand{\diam}[0]{\mathrm{diam}}
\newcommand{\gh}[0]{\xrightarrow{\mathrm{GH}}}
\newcommand{\ba}[0]{\partial^{\bf A}} %%%boundary of Alex sp
\newcommand{\pa}[0]{\partial} %%%topological boundary
\newcommand{\bm}[0]{\partial^{\mathrm{met}}} %%%metric boundary
\newcommand{\reg}[0]{\mathrm{reg}}
\newcommand{\Mo}[0]{\mathrm{M\ddot{o}}}
\newcommand{\e}[0]{\epsilon}

\newcommand{\R}[0]{\mathbb R}
\newcommand{\C}[0]{\mathbb C}
\newcommand{\Q}[0]{\mathbb Q}
\newcommand{\Z}[0]{\mathbb Z}
\newcommand{\N}[0]{\mathbb N}

\newcommand{\ca}[0]{\mathcal}
\newcommand{\pmed}[0]{\par\medskip}
\newcommand{\pbig}[0]{\par\bigskip}
\newcommand{\psmall}[0]{\par\smallskip}
\newcommand{\n}[0]{\noindent}

\newcommand{\cbl}{\color{blue}}
\newcommand{\cred}{\color{red}}

\newcommand{\beq}[0]{\begin{equation}}
\newcommand{\eeq}[0]{\end{equation}}

\newcommand{\beqq}[0]{\begin{equation*}}
\newcommand{\eeqq}[0]{\end{equation*}}

\newcommand{\bali}[0]{\begin{align}}
\newcommand{\eali}[0]{\end{align}}

\newcommand{\benum}[0]{\begin{enumerate}}
\newcommand{\eenum}[0]{\end{enumerate}}

\makeatletter
\renewcommand{\theequation}{%%%
 \thesection.\arabic{equation}}
 \@addtoreset{equation}{section}
\makeatother

%\begin{document}

\begin{abstract}
As a continuation of \cite{MY}, 
we determine the topologies of collapsing three-dimensional compact Alexandrov spaces with nonempty boundary.
\end{abstract}
\maketitle

\setcounter{tocdepth}{1}

\tableofcontents

\section{Introduction} \label{sec:intro}
This paper is a continuation of the paper \cite{MY}.
In \cite{MY}, we classified the topologies of collapsing three-dimensional Alexandrov spaces with no boundary and with 
bounded diameters.
See also \cite{GGZ} for related results.
%the classification of 
%collapsing three-dimensional Alexandrov spaces with no boundary and with unbounded diameters.
In the present paper, we determine the topologies of collapsing three-dimensional compact Alexandrov spaces with non-empty boundary.

We consider the precompact family $\ca M(3, D)$ consisting of all three-dimensional compact Alexandrov spaces $M$ with boundary satisfying
\[
\text{curvature $\ge -1$, \quad $\diam(M)\le D$.}
\]
Let $M_i$ be a sequence in $\ca M(3, D)$ converging to a compact metric space $X$
with respect to the Gromov-Hausdorff distance, 
where $X$ is an Alexandrov space with curvature $\ge -1$ and dimension $\le 3$.
Our main problem is to find topological relations between $M_i$ and $X$ 
for sufficiently large $i$.
In view of the Perelman stability theorem (
\cite{PerAlex2}, \cite{Per:elem} cf. \cite{Kap:stab}), we may assume $0\le \dim X\le 2$.
We always assume that $i$ is large enough.
Our result can be summarized as 

\begin{thm} \label{thm:dim012}
If $0\le \dim X\le 2$, then
there is a singular fibration $f_i : M_i \to X$.
\end{thm}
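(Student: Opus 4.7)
The natural approach is to reduce the problem to the no-boundary classification of \cite{MY} via the doubling construction. Let $\widetilde M_i$ denote the metric double of $M_i$ along $\partial M_i$; this is a three-dimensional Alexandrov space with no boundary, with the same curvature lower bound $-1$ and diameter bound $D$, carrying an isometric $\mathbb Z/2$-action whose fixed-point set is $\partial M_i$. After passing to a subsequence, $\widetilde M_i$ converges Gromov--Hausdorff to an Alexandrov space $\widetilde X$ with a limit $\mathbb Z/2$-action, and $X$ appears as the quotient $\widetilde X / (\mathbb Z/2)$ (or as one of its strata, depending on how $\partial M_i$ collapses). Applying \cite{MY} to $\widetilde M_i \to \widetilde X$ yields a singular fibration $\widetilde f_i : \widetilde M_i \to \widetilde X$, and the plan is to make it equivariant so that it descends to the desired $f_i : M_i \to X$.

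The construction then splits according to $\dim X$. When $\dim X = 0$ the theorem is trivial. When $\dim X = 2$, a generic interior point of $X$ has a neighborhood whose preimage looks like a tube with one-dimensional fiber (a circle or an arc), while near points of $\partial X$ or topologically singular points, the local model must be extracted from the extremal-subset structure of $X$ and from the behaviour of $\partial M_i$. When $\dim X = 1$, $X$ is an arc or a circle and the generic fiber is a collapsing two-dimensional Alexandrov space with boundary, so one combines the two-dimensional classification with the local structure around endpoints. In each case the local models near distinct strata are to be patched by a partition-of-unity/gradient-flow argument applied to distance functions to $\partial X$ and to the extremal subsets of $X$, following the strategy of \cite{MY}.

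The main obstacle is controlling the position of $\partial M_i$ in the Gromov--Hausdorff limit: it may converge onto $\partial X$, collapse to a proper extremal subset of positive codimension, or even accumulate in the interior of $X$ if the collapsing direction is orthogonal to $\partial M_i$. Each case produces a different fibrewise picture, and the singular fibration $\widetilde f_i$ from \cite{MY} must be chosen so that its stratification respects $\partial M_i$ simultaneously with all other extremal subsets of $\widetilde M_i$; only then can $\widetilde f_i$ be made equivariant under the doubling involution and pushed down to a map on $M_i$. The delicate step I expect to be hardest is the fibrewise analysis over $\partial X$ and over those points of $X$ whose preimage in $\widetilde X$ is singular for the $\mathbb Z/2$-action, since the topology of the fiber of $f_i$ there may differ from the corresponding fiber of $\widetilde f_i$. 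The remaining regions follow from the no-boundary theory of \cite{MY} once this compatibility is established.
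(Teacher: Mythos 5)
Your overall frame --- double $M_i$, let $D(M_i)$ converge with its reflection $\phi_i$, invoke \cite{MY} on the double, and descend --- is indeed the paper's starting point, and over the regular part it works exactly as you suggest via the equivariant fibration theorem (Theorem \ref{thm:fibration}), which produces a $\phi_i$-invariant $S^1$-bundle inducing an $I$-bundle on the quotient. The genuine gap is the step you describe as ``make $\widetilde f_i$ equivariant so that it descends'': the generalized Seifert bundle supplied by \cite{MY} is \emph{not} $\phi_i$-invariant near the singular fibers and near the set where $\ba M_i$ accumulates, and there is no soft mechanism (choosing the stratification to respect $\ba M_i$, averaging, isotoping) that makes it so --- the paper explicitly flags this and does not attempt it. Instead, over each small ball around a singular point of $X$ or around $\ba X$ the map from \cite{MY} is discarded and a fibration is rebuilt by hand: one rescales (Theorem \ref{thm:rescaling}) and uses the classification of noncompact nonnegatively curved three-spaces with boundary (Theorem \ref{soul theorem}, Corollary \ref{cor:soul thm ball}) to pin down the topology of the preimage pieces ($D^2\times I$, $B_{\rm pt}^+$, $\Mo\times I$, $K_1(P^2)$, $S^1\times D^2$, \dots), and then constructs local projections compatible with the bundle already fixed on the overlap via cone structures, collar neighborhoods and mapping class group arguments ($MCG(\Mo)\simeq\Z_2$, $MCG(S^1\times I,\pa)$ generated by the Dehn twist), as in Lemmas \ref{lem:no-exceptional}, \ref{lem:local-sing-I}, \ref{lem:projection}, \ref{lem:projection2}; moreover, cutting $M_i$ into these pieces $\phi_i$-invariantly requires a new equivariant gradient-flow/smoothing theorem (Theorem \ref{thm:smooth approximation}). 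Your proposal names this as ``the hardest step'' but supplies no method for it, and the method you do indicate (equivariant choice of $\widetilde f_i$) is precisely what fails.

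A second, related omission is the case division that determines what ``singular fibration'' even means. For $\dim X=2$ with $\ba X\ne\emptyset$ one must split into inradius collapse (then $\phi$ is trivial on the limit of the doubles, $X^D=X$, and the output is a generalized $I$-bundle with no singular fibers over $\ba X$) and non-inradius collapse (then $\ba M_i$ converges to an extremal subset of $\ba X$, each boundary component of $M_i$ is $S^2$, $P^2$, $T^2$ or $K^2$, and the output is a generalized Seifert fibration with the boundary models \eqref{eq:K(P2)}, \eqref{eq:MoxId}, \eqref{eq:L2xId}); this dichotomy, Lemma \ref{lem:X1ZX2}, and the fiber analysis over $\ba X$ (Lemmas \ref{lem:topBiCaseA}, \ref{lem:7.10}) are the content of Theorem \ref{thm:dim2wbdy}, and none of it can be read off from the no-boundary theory of \cite{MY}. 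Similarly for $\dim X\le 1$ the conclusion is not just ``some singular fibration'' but an explicit list obtained from the soul-theorem tables. Until these steps are carried out, the statement remains unproved.
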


From the construction of $f_i$ in Theorem \ref{thm:dim012}, it will become clear that $f_i : M_i \to X$ also provides a 
$\theta_i$-approximation with 
$\lim_{i\to\infty} \theta_i=0$, and has the property of
almost Lipschitz submersion over a regular part of 
$X$.
Therefore for simplicity we omit those statements
in our main results.

From now, we restate our result more explicitly in each case
depending on $\dim X$ and the presence of the boundary of $X$.
First we consider the most basic case that $X$ is two-dimensional and having 
no boundary. 
We denote by $\ba M$ the boundary of an 
Alexandrov space $M$.

\begin{thm} \label{thm:dim2nobdry}
If $X$ is two-dimensional and having no boundary, then
there is a generalized $I$-bundle $f_i : M_i \to X$.

In particular, we have
\begin{enumerate}
 \item $f_i$ may have singular $I$-fibers over some essential singular point set $\ca S=\{ x_\alpha\}$ of $X\,;$
 \item each singular fiber $f_i^{-1}(x_\alpha)$ is an arc joining a topological singular point in $M_i\setminus \ba M_i$ 
and a point of $\ba M_i\,;$
 \item any regular $I$-fiber of $f_i$ is an arc joining two points of $\ba M_i\;$
\item the restriction $f_i|_{\ba M_i}:\ba M_i\to X$ is 
a branched $\Z_2$-covering branched at $\ca S$.
\end{enumerate}
\end{thm}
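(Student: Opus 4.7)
The strategy is to reduce to the closed case treated in \cite{MY} via doubling. Let $DM_i$ denote the metric double of $M_i$ along $\ba M_i$, equipped with the canonical isometric involution $\sigma_i$ whose fixed set is $\ba M_i$; each $DM_i$ is a closed $3$-dimensional Alexandrov space of curvature $\ge -1$ and diameter $\le 2D$. A first step, which follows from general collapsing theory for Alexandrov spaces with boundary and the hypothesis that $X$ is two-dimensional with no boundary, is to show that $DM_i \to X$ in the Gromov--Hausdorff sense and that $(DM_i,\sigma_i) \to (X,\mathrm{id})$ equivariantly. Thus $DM_i \to X$ is a collapse of closed $3$-dimensional Alexandrov spaces onto the $2$-dimensional boundaryless $X$, precisely the setting of \cite{MY}.

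Apply \cite{MY} to obtain a generalized Seifert fibration $g_i : DM_i \to X$ whose generic fibers are circles and whose singular fibers sit over the essential singular set $\ca S$. The key additional step is to arrange $g_i$ to be $\sigma_i$-invariant, so that it descends through the quotient $\pi_i : DM_i \to M_i$ to the desired $f_i : M_i \to X$. This requires redoing the construction of \cite{MY} equivariantly, via $\sigma_i$-invariant regular systems of distance functions and equivariant versions of the stability and fibration theorems. I expect this equivariance step to be the main technical obstacle: one must force the Seifert-singular fibers over each $x_\alpha \in \ca S$ to be $\sigma_i$-invariant with the correct prescribed local action, tracking the involution through the gluing of local fibration models.

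Granting an equivariant $g_i$, properties (1)--(4) follow from a fiber-by-fiber analysis of the quotient. Over a regular $x \in X \setminus \ca S$, the circle fiber $g_i^{-1}(x)$ carries a $\sigma_i$-action; since $\ba M_i = \mathrm{Fix}(\sigma_i)$ is nonempty and $f_i^{-1}(x) = g_i^{-1}(x)/\sigma_i$ must be an Alexandrov $1$-space with boundary, $\sigma_i$ acts as a reflection with two fixed points, producing an arc with both endpoints in $\ba M_i$, which is (3). Over $x_\alpha \in \ca S$, the Seifert-singular fiber of $g_i$ together with the induced $\sigma_i$-action descends to an arc joining a topological singularity of $M_i \setminus \ba M_i$ (the image of a $\sigma_i$-conjugate pair of topological singularities of $DM_i$ on the fiber) to a point of $\ba M_i$ (a $\sigma_i$-fixed point on the fiber), giving (1) and (2). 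Property (4) is then immediate: $\ba M_i = \mathrm{Fix}(\sigma_i)$ meets each regular $g_i$-fiber in two points and each singular $g_i$-fiber in exactly one point, so $f_i|_{\ba M_i} : \ba M_i \to X$ is a branched $\Z_2$-covering branched precisely at $\ca S$.
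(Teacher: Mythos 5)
Your overall strategy (double along $\ba M_i$, show $D(M_i)\to X$ with the reflections converging to the identity, invoke the closed-case result of \cite{MY}, then pass to the quotient) is the same as the paper's, but the step you yourself flag as ``the main technical obstacle'' --- arranging the generalized Seifert fibration $g_i:D(M_i)\to X$ to be $\sigma_i$-invariant by ``redoing the construction of \cite{MY} equivariantly'' --- is exactly where the proof is, and you have not supplied it. The paper explicitly does \emph{not} take this route: away from the (finitely many) singular points of $X$ the equivariant fibration theorem already gives a $\phi_i$-invariant $S^1$-bundle, but over the singular points the fibration produced by \cite{MY} need not be $\phi_i$-invariant, and instead of forcing equivariance the paper discards it there and determines the topology of the preimage $B_i^\alpha\subset M_i$ of a small ball $B$ directly (rescaling argument, Soul Theorem \ref{soul theorem}, Corollary \ref{cor:soul thm ball}), showing $B_i^\alpha\approx D^2\times I$ or $B_i^\alpha\approx B_{\mathrm{pt}}^+$ with $\pa B_i^\alpha\approx \Mo$, and then builds the local $I$-bundle (resp.\ the model $\pi_{B_{\mathrm{pt}}^+}$) by hand, compatibly with the bundle structure on $\pa B$, via cone-structure extension (Lemmas \ref{lem:no-exceptional}, \ref{lem:local-sing-I}). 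Your fiber-by-fiber description of the quotient, even granting equivariance, would still not yield the local model isomorphisms required by Definition \ref{def:gen I-bdl}.

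There is also a concrete point your sketch misses entirely: over an essential singular point of $X$ the generalized Seifert bundle on $D(M_i)$ may a priori have an \emph{exceptional circle fiber} of type $(\mu,\nu)$ with $\mu\ge 2$ (this genuinely happens for collapsing closed three-spaces, cf.\ Remark \ref{rem:diff-closed1}). A reflection-invariant such fiber would descend to an arc with both endpoints on $\ba M_i$ and no topological singular point, and the local structure would not be that of \eqref{eq:Mpt+}; so statements (1)--(2) and the generalized $I$-bundle structure would fail unless $\mu=1$. Excluding $\mu\ge 2$ when $M_i$ has boundary is a substantive extra fact about doubles, proved in the paper by passing to universal covers of $B_i^\alpha\subset B_i^D$, taking equivariant rescaled limits, applying the splitting theorem, and deriving a contradiction with $Z^\alpha/G^\alpha=T_xX$ because $Z^\alpha$ has nonempty boundary (Lemma \ref{lem:no-exceptional}). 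Nothing in your proposal performs or replaces this exclusion, so as written the argument has a genuine gap at precisely the points where the theorem differs from the closed case.
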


For the definition of a generalized $I$-bundle, 
see Definition \ref{def:gen I-bdl}.

\begin{rem} \label{rem:diff-closed1} \upshape
In the case when a three-dimensional closed Alexandrov space $M_i$ collapses to a two-dimensional space $X$ {\it without boundary},
we have a generalized Seifert bundle $M_i \to X$, where
a singular $I$-fiber occurs as an arc joining two topological singular 
points of $M_i$ (\cite[Theorem 1.3]{MY}). In addition, a singular $S^1$-fiber may occurs 
over an essential singular point of $X$. In the situation of Theorem \ref
{thm:dim2nobdry}, if $M_i$ has no topological singular point, then 
it is an $I$-bundle over $X$.
\end{rem}

For an Alexandrov space $M$ with boundary, let ${\rm inrad}(M)$ denote the 
{\it inradius} of $M$, which is defined as 
$$
 \displaystyle{{\rm inrad}(M) :=\sup_{x\in M}\, |x,\ba M|}.
$$
Theorem \ref{thm:dim2nobdry} shows that if $X$ is two-dimensional and having no boundary, then $\lim_{i\to\infty} {\rm inrad}(M_i) =0$. That is,
$M_i$ {\it inradius collapses} to $X$.
% in the sense of 
%\cite{YZ}.
See \cite{YZ} for the general discussion on inradius collapsed manifolds whose second fundamental forms of 
boundaries are uniformly
bounded.

Next we consider the case that the limit space $X$ is two-dimensional and having 
nonempty boundary.
%%%%%%
Passing to a subsequence, we have one of the following two cases. 
\begin{itemize}
\item[(1)] $\mathrm{inrad}(M_i) \to 0$. That is, 
$M_i$ inradius collapses to $X\,;$
\item[(2)] $\mathrm{inrad}(M_i)$ is uniformly bounded 
away from $0$. That is, 
$M_i$ non-inradius collapses to $X$.
\end{itemize}

\begin{thm} \label{thm:dim2wbdy}
If $X$ is two-dimensional and having 
nonempty boundary, then the topology of $M_i$ is determined as follows.
\begin{enumerate}
\item 
If $M_i$ inradius collapses to $X$, then 
there is a generalized $I$-bundle 
$f_i : M_i \to X\,;$
\item If $M_i$ non-inradius collapses to $X$, then 
there is a generalized 
Seifert fibration $f_i : M_i \to X$. In particular, we have 
 \begin{enumerate}
 \item $f_i$ sends each component $\ba_\alpha M_i$ of $\ba M_i$ to 
 an extremal subset $E_\alpha$ of $X$ contained in $\ba X\,;$
 \item $\ba_\alpha M_i$ is homeomorphic to one of $S^2$, $P^2$ $T^2$ and $K^2$. 
\end{enumerate}
\end{enumerate}
\end{thm}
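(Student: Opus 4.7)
The plan is to split the argument into the two cases (1) and (2) and treat each via a different strategy. Case (1) will be handled by direct construction along the lines of Theorem \ref{thm:dim2nobdry}, adapted to a limit space $X$ that itself has boundary. Case (2) will be reduced to the closed-case collapsing results of \cite{MY} by means of the doubling construction.

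For case (1), assume $\mathrm{inrad}(M_i)\to 0$. Then every point of $M_i$ lies within $o(1)$ of $\ba M_i$, so $M_i$ looks locally like a very thin $I$-bundle over $\ba M_i$. Mirroring the construction in Theorem \ref{thm:dim2nobdry}, for each regular point $x\in X\setminus \ba X$ the fiber $f_i^{-1}(x)$ should be an arc joining two points of $\ba M_i$, and over essential singular points interior to $X$ the fiber should be an arc joining a topological singularity of $M_i$ to $\ba M_i$. The new feature is the behavior over $\ba X$: here the two nearby sheets of $\ba M_i$ are forced to meet, and the $I$-fiber degenerates to a single point of $\ba M_i$. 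Carrying this out consists in gluing the local $I$-bundle pictures obtained on regular domains through the singular structure of the pair $(X,\ba X)$, using the Perelman stability theorem and the theory of extremal subsets developed in \cite{MY}.

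For case (2), where $\mathrm{inrad}(M_i)\ge c_0>0$, the main device is doubling. Let $DM_i$ denote the double of $M_i$ along $\ba M_i$ and $DX$ the double of $X$ along $\ba X$; both are Alexandrov spaces (of dimensions $3$ and $2$ respectively) with curvature $\ge -1$ and no boundary. Since doubling is compatible with Gromov--Hausdorff convergence, $DM_i \gh DX$, and the lower inradius bound guarantees $\dim DX = 2$ rather than $1$. Applying the closed case (\cite[Theorem 1.3]{MY} together with Remark \ref{rem:diff-closed1}), we obtain a generalized Seifert fibration $\tilde f_i:DM_i\to DX$. The tautological $\Z_2$-involution $\sigma$ swapping the two copies acts isometrically on both spaces, with fixed sets $\ba M_i$ and $\ba X$; performing the construction equivariantly, which requires revisiting the arguments of \cite{MY} to ensure compatibility with $\sigma$, we may arrange $\tilde f_i$ to be $\sigma$-equivariant, and the quotient yields the required generalized Seifert fibration $f_i:M_i\to X$.

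Properties (a) and (b) then follow from the equivariance. Each boundary component $\ba_\alpha M_i$ is a union of $\sigma$-fixed Seifert fibers, so $\tilde f_i(\ba_\alpha M_i)\subset \ba X$, and this image is an extremal subset $E_\alpha$ of $X$ because the fixed locus of an isometric involution of an Alexandrov space is extremal and $\ba X$ itself is extremal in $DX$. For (b), $\ba_\alpha M_i$ is a closed $2$-surface carrying the $S^1$-structure restricted from $f_i$, hence must be one of the closed surfaces admitting an $S^1$-fibration, namely $S^2$, $P^2$, $T^2$, or $K^2$. The main obstacle I expect is the equivariance step in case (2): ensuring that the generalized Seifert fibration on $DM_i$ can be made strictly $\sigma$-invariant rather than only invariant up to small error. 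This requires that the local $S^1$-actions, approximations, and gluings used in \cite{MY} can all be chosen symmetric with respect to $\sigma$, in particular near points where $\ba X$ meets the essential singular set, where the boundary and Seifert structure interact most intricately.
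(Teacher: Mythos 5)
Your reduction of Case (2) rests on a false claim: the double $D(M_i)$ does not, in general, converge to the double of $X$ along $\ba X$. As Lemma \ref{lem:X1ZX2} records, the limit of $D(M_i)$ is $X^D=X^1\cup_Z X^2$, glued along $Z=\lim_i \ba M_i$, an extremal subset of $\ba X$ which may be a \emph{proper} subset, because parts of $\ba X$ can arise from collapsed circle fibers rather than from $\ba M_i$. For example, let $M_i=\Mo_i\times[0,1]$ with $\Mo_i=S^1_{\e_i}\times[-1,1]/(x,t)\sim(-x,-t)$: this non-inradius collapses to the square $X=[0,1]^2$, but only three of the four boundary edges are limits of $\ba M_i$; the fourth edge is the image of the core circles, over which the fibers are the singular circles $S^1/2$ as in \eqref{eq:MoxId}. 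Hence $X^D\neq D(X)$, $X^D$ can itself have nonempty boundary, the fixed locus of the limit involution is $Z$ rather than $\ba X$, and the closed-case result of \cite{MY} cannot be invoked globally on the double. The delicate content of the theorem --- the local models over $\pa X\setminus\mathring{E}$ with fibers $I$ or $S^1/2$, the $K(P^2)$ model at the corner set $\Omega$, and the $S^2$, $P^2$, $K^2$ collars over $E$ in Definition \ref{def:gen S1-bdl} --- is exactly what this incorrect reduction erases. The equivariance problem you flag is real, but in the paper it is not solved by revisiting \cite{MY}; instead the fibration is built by hand near $\ba X$: a neighborhood of $\ba X$ is cut into pieces $B_i$ and $D_i$, their topology is pinned down by rescaling and the soul theorem (Lemmas \ref{lem:topBiCaseA}, \ref{lem:7.10}), boundary components are matched with components of $Z$ (Lemmas \ref{lem:Z-(4)}, \ref{lem:bdy-dist}), and the projections are constructed via cone structures, collar neighborhoods and mapping class group/isotopy arguments (Lemmas \ref{lem:projection}--\ref{lem:projection2D}); none of this is supplied or replaced by your outline. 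Your justification of (b) is also off: $S^2$ and $P^2$ admit no genuine $S^1$-fibration; the boundary components carry the singular fibrations $\pi_{S^2},\pi_{P^2},\pi_{K^2}$ of Example \ref{ex:toy02}, which is why the list is $S^2,P^2,T^2,K^2$.

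Case (1) also misidentifies the structure over $\ba X$: the $I$-fibers do not degenerate to points there. By Definition \ref{def:gen I-bdl} a generalized $I$-bundle is an honest $I$-bundle over a neighborhood of $\pa X$ (cf.\ Remark \ref{rem:hosoku-Thm2}), and already in the model $M_i=X\times[0,\e_i]$ the fiber over a boundary point of $X$ is an arc lying in the ``side wall'' of $\ba M_i$; the two sheets of $\ba M_i$ are joined through this wall rather than meeting in a point fiber. So the map you propose would not be the object the theorem asserts. Moreover, the real work in this case is precisely at the boundary: the paper shows that inradius collapse is equivalent to $D(M_i)\to X$ with trivial limit involution (Lemma \ref{lem:D(M)toX}), obtains a $\phi_i$-equivariant $S^1$-bundle away from $\ba X$ and the singular points, and then proves that the boundary pieces $D_i$ and $B_i$ are $3$-cells with controlled boundary data (Lemmas \ref{lem:D_iFi=D3D2}, \ref{lem:top B^M}) before extending the $I$-bundle over them compatibly; an appeal to ``gluing local pictures using stability and extremal subsets'' does not substitute for these steps.
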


For the definition of a generalized 
Seifert fibration, see Definition \ref{def:gen S1-bdl}.

\begin{rem} \label{rem:hosoku-Thm2} \upshape
It should be noted that in Theorem \ref{thm:dim2wbdy}(1), $f_i$ has no singular $I$-fibers over $\ba X$.
\end{rem} 

Next we assume $\dim X=1$, that is, $X$ is isometric to a circle, a closed interval.

\begin{thm} \label{thm:main-circle}
If $X$ is a circle, then $M_i$ is homeomorphic to the total space of a fiber bundle over the circle whose fiber is one of $D^2$, $S^1 \times I$ and 
a M\"obius band $\Mo$.
\end{thm}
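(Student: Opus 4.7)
The plan is to deduce the fiber bundle structure directly from Theorem~\ref{thm:dim012} and then identify the possible fiber types via doubling combined with the closed case treated in \cite{MY}.

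By Theorem~\ref{thm:dim012} there is a singular fibration $f_i\colon M_i\to X=S^1$. Now $S^1$, regarded as a $1$-dimensional Alexandrov space, has no boundary, no essential singular points, and no topologically singular points, so every point of $X$ is regular. The remark following Theorem~\ref{thm:dim012} guarantees that $f_i$ is an almost Lipschitz submersion over the regular part of $X$, which here is all of $X$. Consequently $f_i$ has no singular fibers and is, up to homeomorphism, a locally trivial fiber bundle with fiber a compact $2$-manifold $F$. Local triviality forces all fibers to be homeomorphic; since $M_i$ has nonempty boundary while $X$ does not, $F$ must have nonempty boundary.

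To identify $F$, I use a doubling argument. Let $DM_i:=M_i\cup_{\partial M_i} M_i$ be the closed double, which is a closed $3$-dimensional Alexandrov space with curvature $\ge -1$ and bounded diameter. Extend $f_i$ to $Df_i\colon DM_i\to X$ by applying $f_i$ on each copy. Since $X$ has empty boundary, $DM_i$ still collapses to $X=S^1$, with fibers the doubles $DF:=F\cup_{\partial F} F$. The closed case from \cite{MY} forces $DF$ to be one of $S^2$, $P^2$, $T^2$, $K^2$. The identity $\chi(DF)=2\chi(F)$ excludes $P^2$ (whose Euler characteristic $1$ is odd), leaving $DF\in\{S^2,T^2,K^2\}$. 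Using the fact that the double of a compact surface with boundary is orientable if and only if the surface itself is orientable, together with the classification of compact $2$-manifolds with boundary by Euler characteristic, one obtains $F\cong D^2$ when $DF=S^2$ (the only surface with boundary and $\chi=1$), $F\cong S^1\times I$ when $DF=T^2$ (the unique orientable surface with boundary and $\chi=0$), and $F\cong\Mo$ when $DF=K^2$ (the unique non-orientable surface with boundary and $\chi=0$). Hence $M_i$ is a fiber bundle over $S^1$ with fiber $D^2$, $S^1\times I$, or $\Mo$.

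The main anticipated obstacle is technical rather than conceptual: one must verify that the doubling operation interacts cleanly with the singular fibration structure of Theorem~\ref{thm:dim012}, so that $Df_i\colon DM_i\to X$ is genuinely the fibration to which one applies the closed-case classification from \cite{MY}. In particular, one should rule out that doubling introduces singular fibers along the seam $\partial M_i\subset DM_i$, and confirm that $DM_i$ collapses to $X$ itself rather than to some finer space such as a double cover of $X$; once this is done, the rest of the argument is bookkeeping with surface classifications.
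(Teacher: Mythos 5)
Your argument has a genuine gap at its very first step: you invoke Theorem \ref{thm:dim012} to obtain a singular fibration $f_i:M_i\to S^1$, but Theorem \ref{thm:dim012} is only a summary of the case-by-case results of the paper, and its $\dim X=1$ (circle) instance \emph{is} Theorem \ref{thm:main-circle}. So the existence of the fibration is exactly what has to be proven here, and using it is circular. Moreover, even granting some fibration over the regular part, your inference ``no singular points in $S^1$, hence no singular fibers, hence a locally trivial bundle'' is unjustified: the Fibration Theorem \ref{thm:fibration} requires $M=\mathcal R_{\delta_n}(M)$, which fails for spaces with nonempty Alexandrov boundary, so no fibration theorem applies directly to $M_i$ near $\ba M_i$. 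Handling the fiber structure near $\ba M_i$ is precisely the difficulty; it cannot be dismissed by regularity of the base. The same issue undercuts the ``technical obstacle'' you defer at the end: the bundle on $D(M_i)$ produced by \cite{MY} need not be $\phi_i$-invariant, so it does not descend to a bundle on $M_i$, and conversely knowing that $D(M_i)$ fibers over $S^1$ with some closed fiber does not by itself identify the fiber of a hypothetical bundle on $M_i$ (a $3$-manifold can fiber over $S^1$ in inequivalent ways).

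The paper's actual route is different and is where the real work lies: since the circle has no boundary, Proposition \ref{prop:criterion=inradius} gives that $D(M_i)$ also collapses to $X$, and \cite{MY} shows $D(M_i)$ (hence $M_i$) is a manifold; then $M_i$ is decomposed into two metric balls $B(p_i,r)$, $B(q_i,r)$ around nearly antipodal points with $\pi/2<r<2\pi/3$. Each ball can be rescaled (its topological boundary is disconnected), and the rescaling limit $Y=Y_0\times\R$ is analyzed case by case ($\dim Y_0=2$ via the Stability Theorem, $\dim Y_0=1$ via Theorems \ref{thm:dim2nobdry}, \ref{thm:dim2wbdy}, Lemma \ref{lem:S1-bundle/Z} and \cite[Section 5]{MY}), showing each piece and their overlaps are homeomorphic to $F\times I$ with $F\in\{D^2,S^1\times I,\Mo\}$; in particular the case $\ba B(p_i,r)=\emptyset$ must be explicitly ruled out. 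Gluing the pieces yields the bundle. Your Euler-characteristic/orientability bookkeeping for recovering $F$ from its double is fine as arithmetic, but it only becomes relevant after the bundle structure on $M_i$ is established, which your proposal does not do.
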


In the case that $X$ is either a closed interval or a point, we have 
explicit topological relations between $M_i$ and $X$ in detail 
(see Theorems \ref{thm:interval} and \ref{thm:dim0}).
This can be summarized as

\begin{thm} \label{thm:dim01}
If $\dim X\le 1$, then $M_i$ is homeomorphic to a three-dimensional 
Alexandrov space of nonnegative curvature with boundary.
\end{thm}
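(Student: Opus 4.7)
The plan is to reduce Theorem \ref{thm:dim01} to the explicit topological classifications of $M_i$ stated in Theorems \ref{thm:main-circle}, \ref{thm:interval}, and \ref{thm:dim0}, and then to exhibit, for each homeomorphism type appearing in those classifications, a concrete compact three-dimensional Alexandrov space of nonnegative curvature with nonempty boundary realizing that type. The candidate models come in two families: flat models, namely compact subsets of quotients of $\R^3$ by isometric group actions and, more flexibly, mapping tori of isometries of flat surfaces with boundary; and spherical or Euclidean cone models built from two-dimensional nonnegatively curved spaces with boundary.

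When $X$ is isometric to $S^1$, Theorem \ref{thm:main-circle} identifies $M_i$ as a fiber bundle over $S^1$ with fiber $F \in \{D^2,\ S^1\times I,\ \Mo\}$. Such a bundle is the mapping torus of a self-homeomorphism $\phi$ of $F$, and its homeomorphism type depends only on the isotopy class of $\phi$. In each case the (finite) mapping class group of $F$ is realized by isometries of a natural flat metric on $F$, so one may replace $\phi$ by an isometric representative; the resulting mapping torus inherits a flat metric and is the desired nonnegatively curved Alexandrov space. This handles the solid torus and the solid Klein bottle, the products $T^2\times I$ and $\Mo\times S^1$, and their twisted analogues.

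For $X$ a closed interval or a point, one invokes Theorems \ref{thm:interval} and \ref{thm:dim0}, which describe $M_i$ more explicitly as built out of pieces of the form $F\times I$ for a surface $F$, metric suspensions or joins of spherical space forms, and quotients of these by finite isometric actions, glued along totally geodesic boundary components. Each such constituent carries a nonnegatively curved Alexandrov metric (flat, spherical, or a cone over such a model), and the allowed gluings and $\Z_2$-quotients preserve the lower curvature bound, so the resulting space is again a nonnegatively curved Alexandrov 3-space with boundary of the required topology.

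The main obstacle is the case-by-case realization: particularly for the twisted bundles over $S^1$, and for the singular models in the interval and point cases, one must verify that the required self-homeomorphisms and gluing maps can be chosen as isometries of an explicit nonnegatively curved model. Once this verification is carried out for every entry in the lists produced by Theorems \ref{thm:main-circle}, \ref{thm:interval}, and \ref{thm:dim0}, Theorem \ref{thm:dim01} follows by inspection of those lists.
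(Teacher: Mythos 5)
Your proposal takes essentially the paper's own route: Theorem \ref{thm:dim01} is obtained there precisely by combining Theorem \ref{thm:main-circle} (the three fiber types $D^2$, $S^1\times I$, $\Mo$ over $S^1$ all being realized by flat mapping tori of isometries) with Theorems \ref{thm:interval} and \ref{thm:dim0}, whose proofs realize every listed homeomorphism type by an explicit nonnegatively curved model ($D^3$, $K_1(P^2)$, $B_{\rm pt}$, $F\times I$, $B(S^2;2)$, etc.), so your case-by-case realization plan matches the paper. The one caveat is that your phrase ``the allowed gluings \dots preserve the lower curvature bound'' is not automatic: in the paper this is exactly the step (Claim \ref{claim:gluing=nonnegative}) where the pieces are given metrics making the gluing locus an isometric extremal subset of each boundary and Mitsuishi's gluing theorem \cite{Mit:gluing} is invoked.
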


\begin{rem} \upshape
\begin{enumerate}
\item When $X$ is a circle, Theorem \ref{thm:dim01} follows from 
Theorem \ref{thm:main-circle} $;$
\item Conversely, if each $M_i$ has nonnegative curvature, then
it can collapse to a point by rescaling of metrics.
\end{enumerate}
\end{rem}

As a result of Theorem \ref{thm:dim01}, we immediately have the following
gap phenomenon.

\begin{cor} \label{cor:gap}
There is a uniform positive constant $\e$ such that 
if a compact three-dimensional Alexandrov space $M$ with boundary satisfies
\[
\text{{\rm curvature} $\ge -\e$, \quad $\diam(M)\le 1$,}
\]
then it is homeomorphic to an Alexandrov space with nonnegative curvature.
\end{cor}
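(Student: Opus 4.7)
The plan is a contradiction-and-rescaling argument that reduces the gap statement to the zero-dimensional limit case of Theorem \ref{thm:dim01}.

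Suppose no such $\e$ exists. Then there is a sequence $M_i$ of compact three-dimensional Alexandrov spaces with boundary satisfying $\mathrm{curv}(M_i)\geq -1/i$ and $\diam(M_i)\leq 1$, none of which is homeomorphic to a nonnegatively curved Alexandrov space. The key move is to rescale each $M_i$ by $\lambda_i := 1/\sqrt{i}$. The standard scaling behaviour of sectional curvature and diameter yields $\mathrm{curv}(\lambda_i M_i)\geq -1$ and $\diam(\lambda_i M_i)\leq 1/\sqrt{i}\to 0$, so $\lambda_i M_i \in \ca M(3,1)$ for all large $i$ and the rescaled sequence Gromov-Hausdorff converges to a single point; in particular the limit $X$ is zero-dimensional, placing the sequence squarely within the scope of Theorem \ref{thm:dim01}.

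Applying Theorem \ref{thm:dim01} to the rescaled sequence, each $\lambda_i M_i$ is homeomorphic to a three-dimensional Alexandrov space of nonnegative curvature with boundary. Since $\lambda_i M_i$ and $M_i$ share the same underlying topological space, the same conclusion holds for $M_i$ itself, contradicting the choice of the sequence.

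The main obstacle is conceptual rather than technical: one must recognise that the desired $\e$ is not produced by any direct curvature or diameter estimate, but rather by a compactness argument, whose crucial ingredient is the shrinking by $\sqrt{1/i}$ that forces the limit to be a point independently of the topology of the individual $M_i$. Once this rescaling trick is identified, no case analysis on the dimension of a general limit is needed and the full strength of Theorem \ref{thm:dim01} is invoked only at the easiest end of its range; without the trick, the $\dim X = 2$ and $\dim X = 3$ limit cases would otherwise have to be treated separately via Theorems \ref{thm:dim2nobdry}, \ref{thm:dim2wbdy}, and Perelman's stability theorem.
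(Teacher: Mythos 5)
Your proof is correct and is essentially the paper's intended argument: the corollary is stated as an immediate consequence of Theorem~\ref{thm:dim01}, and the natural way to get it from that theorem alone is exactly your contradiction-plus-rescaling by $\sqrt{\e_i}$, which forces the rescaled sequence into $\ca M(3,1)$ with diameter tending to $0$, so the limit is a point and Theorem~\ref{thm:dim01} (via Theorem~\ref{thm:dim0}) applies. Since rescaling is a homeomorphism, the contradiction follows as you state, and no case analysis on $\dim X$ is needed.
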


\begin{rem} \label{rem:diff-closed2} \upshape
Because of three-dimensional nilmanifolds, Corollary \ref{cor:gap} does not hold in the case when 
$M$ is closed. 
\end{rem}

By the present paper together with 
\cite{MY}, we have completed 
the detailed topological classification of three-dimensional Alexandrov spaces with curvature uniformly bounded below.

\psmall
\n
{\bf Organization of the paper}

In Sections \ref{sec:prelim}--\ref{sec:sing_fib}, we prepare some preliminary materials. 
 In Sections \ref{sec:prelim}, we prepare some basics of Alexandrov spaces.
In Section \ref{sec:prelim2}, we recall some basic results which are effectively used in the study of convergence or collapsing Alexandrov spaces.
In particular, in dimension three, we recall the classification results of 
three-dimensional Alexandrov spaces with nonnegative curvature.
In Section \ref{sec:sing_fib}, we provide the notions of generalized $I$-bundles and the generalized Seifert fibrations.
These are used in the later sections to describe the collapsing Alexandrov three-spaces with boundary.

In Sections \ref{section: 2-dim no boundary}--\ref{sec:point}, we prove the collapsing theorems 
stated above.
In Section \ref{section: 2-dim no boundary}, we prove Theorem \ref{thm:dim2nobdry}
by making use of the collapsing of the doubles.

In Section \ref{sec:2-dim with bdry}, we prove Theorem \ref{thm:dim2wbdy}
by considering the two cases, one is the inradius collapse and the other is the non-inradius collapse. 

Theorems \ref{thm:main-circle}, \ref{thm:dim01} are proved in Sections 
\ref{sec:circle}, \ref{sec:interval} and \ref{sec:point}.

In Appendix \ref{sec:flow}, we provide an 
equivariant flow theorem, which is needed when we decompose collapsed Alexandrov spaces into several closed domains.
\pmed\n 
{\bf Acknowledgements}. The authors would like to thank Makoto Sakuma and Ken'ichi Ohshika for the discussion on singular fibrations
from three-dimensional orbifolds to surfaces.

\section{Notations and conventions}

We fix some notations and terminologies used in the paper.

\begin{itemize}
\item $\mathbb R^n_+$ denotes the $n$-dimensional Euclidean half space,
 \[
 \R^n_+=\{ (x_1,\ldots, x_n)\in\R^n\,|\, x_n\ge 0\} \,;
\] 
\item $I$ denotes a closed interval;
\item $D^n=\{ x\in \R^n\,|\,||x||\le 1\}$ and $D^n_+=D^n\cap \R^n_+$\,;
\item $S^n$, $P^n$ and $D^n$ denote $n$-dimensional sphere, real projective space,
and closed disk respectively;
\item $\Mo$ and $K^2$ denote a M\"obius band and a Klein bottle;
\item $I_\ell$ denotes a closed interval of length $\ell$, 
and $S^1_{\ell}$ denotes the circle of length $\ell$ in $\C$ around the origin;
\item For a subset $K$ of a metric space $X$, let $U(K,r)$ (resp. $B(K,r)$) denote
the open (resp. the closed) metric ball around $K$ of radius $r$.
We also use $S(K,r)$ to denote the metric $r$-sphere around $A$;
$S(A,r)=\{ x\in X| |x, A|=r\}$. Sometimes, we write as $B^X(A,r)$ to 
emphasize that it is a ball in $X$;
\item
Let $A(K,r,R)$ denote the metric annulus around $K$ defined as $A(K,r,R)=B(K,R)\setminus U(K,r)$;
\item For a topological space $\Sigma$, let $K(\Sigma)$ denotes an open cone over $\Sigma$, that is, $K(\Sigma) = \Sigma \times [0,\infty) /\Sigma \times \{0\}$.
$K_1(\Sigma)$ denotes the unit cone:
$K_1(\Sigma) = \Sigma \times [0,1] /\Sigma \times \{0\}\,;$ 
\item For a subset $A$ of a topological space, $\mathring A$ denotes the interior of $A\,;$
\item For topological spaces $A, B$, $A \approx B$ means that $A$ and $B$ are homeomorphic. Furthermore, for $p \in A' \subset A$ and $q \in B' \subset B$, $(A,A',p) \approx (B,B',q)$ means that there exists a homeomorphism $f : A \to B$ such that $f(A') = B'$ and $f(p) = q$. 
\end{itemize}

Throughout the paper, we always consider finite-dimensional Alexandrov spaces with a lower curvature bound.

\section{Preliminaries I --- Basics of Alexandrov spaces} %%%
\label{sec:prelim}
Let us recall the definition of Alexandrov spaces and their properties. 
We refer \cite{BGP} and \cite{BBI} for the details. 

%A {\it geodesic} is an isometric embedding from an interval to a metric space. 
%A {\it ray} (resp. a {\it line}) is a geodesic with domain $[0, \infty)$ (resp. $\mathbb R$). 
%A metric space is called a {\it geodesic space} if every two points can be connected by a geodesic. 

For any $\kappa \in \mathbb R$, the simply connected complete surface of constant curvature $\kappa$ is called the {\it $\kappa$-plane}.
For three points $x,y,z$ in a metric space with $y,z$ distinct from $x$, the {\it comparison angle} $\tilde \angle yxz$ at $x$ (with respect to $(-1)$-plane) is defined by 
\[
\cosh |y,z| = \cosh |x,y| \cosh |x,z| - \sinh |x,y| \sinh |x,z| \cos \tilde \angle yxz.
\]
A complete metric space $M$ is called an {\it Alexandrov space of curvature $\ge -1$} if 
it is a geodesic space and 

\begin{equation} \label{eq:qudruple}
\tilde \angle x_1xx_2 + \tilde \angle x_2xx_3 + \tilde \angle x_3xx_1 \le 2\pi
\end{equation}
holds for every $x,x_1,x_2,x_3 \in M$ with
$x\neq x_i$\, $(1\le i\le 3)$. 
For any $\kappa \in \mathbb R$, the notion of Alexandrov spaces of curvature $\ge \kappa$ is also defined, by using the comparison angle with
$\kappa$-plane. Here, when $\kappa > 0$, we assume 
$|x,x_i|+|x_i,x_j|+|x_j,x|<\pi/\sqrt{\kappa}$ for all $1\le i\neq j\le 3$.
The diameter of an Alexandrov space of curvature $\ge \kappa$ dose not exceed $\pi / \sqrt {\kappa}$.

From now on, we simply write as an Alexandrov space
to mean an Alexandrov space of curvature bounded from below by a fixed constant. 
In this paper, we consider only finite dimensional Alexandrov spaces. 

Let $M$ be an Alexandrov space. 
If $\dim M \le 2$, then $M$ is a topological manifold.
For $p \in M$, 
let $\Sigma_p = \Sigma_p M$ be the space of directions at $p$
equipped with the angle distance denoted by $\angle$ (or $|\,\cdot\,,\,\cdot\,|$). 
Then, 
the diameter of $\Sigma_p$ does not exceed $\pi$. 
If $\dim M \ge2$, then $\Sigma_p$ is a compact Alexandrov space of 
curvature $\ge 1$. 
We denote by $T_p M$ the tangent cone of $M$ at $p$.

Let us use the following convention. 
For $x \ne p \in M$, we denote by $\Uparrow_p^x$ the set of all directions of geodesics from $p$ to $x$. 
Furthermore, an element of $\Uparrow_p^x$ is denoted by $\uparrow_p^x$.

We denote the {\it boundary} of $M$ by $\ba M$, which is inductively defined as follows:
If $\dim M = 1$, then 
$\ba M$ is the usual boundary as a manifold; when $\dim M \ge 2$, $\ba M$ is the subset of $M$ consisting of all $x \in M$ such that $\ba \Sigma_x$ is not empty.

The {\it inradius} of an Alexandrov space $M$ with boundary is defined by 
\[
\mathrm{inrad}(M) := \sup_{x \in M} |\ba M,x|.
\]

For a subset $M' \subset M$, we set 
\begin{equation} \label{eq:bdry-notaion}
\ba_M M' = \ba M \cap M'.
\end{equation}
If there is no misunderstanding, we simply write $\ba M' = \ba_M M'$. 
By $\mathrm{int}^{\mathbf A} M$, we denote the interior of $M$ which is the complement of $\ba M$ in $M$.

For a topological space $Y$ and its subset $Y'$, we use the notation $\partial Y' = \pa_Y Y'$ meaning the topological boundary, $\partial_Y Y' = \mathrm{cl}_Y Y' \setminus \mathrm{int}_Y Y'$, where $\mathrm{cl}_Y (\cdot)$ and $\mathrm{int}_Y (\cdot)$ denote the closure and interior in $Y$, respectively.

For instance, let us consider 
the upper half plane $\mathbb R_+^2 = \{(x,y) \in \mathbb R^2 \mid y \ge 0\}$ with a standard Euclidean metric and the domain 
$D_+^2 = \{ v \in \mathbb R_+^2 \mid |v| \le 1\}$, which itself is 
an Alexandrov space. 
Then, $\ba_{D_+^2} D_+^2$ is a circle, and $\ba_{\mathbb R_+^2} D_+^2$ and $\pa_{\mathbb R_+^2} D_+^2$ are arcs.

The following result plays a crucial role 
in the study of Alexandrov spaces with 
nonnegative curvature.

\begin{thm}[\cite{PerAlex2}] \label{thm:dist-to-bdry}
If $X$ is an Alexandrov space of nonnegative curvature, then the distance function $d_{\ba X}$ from the boundary is concave on $X$.
That is, for every $x, y \in X$ and $z \in X$ with $|x,z| + |z,y| = |x,y|$, we have 
\[
d_{\ba X} (z) |x,y| \ge d_{\ba X}(x) |z,y| + d_{\ba X}(y) |z,x|.
\]
\end{thm}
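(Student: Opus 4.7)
The plan is to deduce concavity from Perelman's doubling theorem. Let $\widetilde X := X \cup_{\ba X} X$ denote the metric double, which is itself an Alexandrov space of nonnegative curvature carrying a canonical isometric involution $\sigma$ whose fixed-point set is $\ba X$, and into which $X$ isometrically embeds as a fundamental domain. For $p \in X$ one has
\[
d_{\ba X}(p) = \tfrac{1}{2}\, d_{\widetilde X}(p, \sigma(p)),
\]
since any shortest $X$-path from $p$ to $\ba X$, followed by its $\sigma$-image, is a shortest $\widetilde X$-path from $p$ to $\sigma(p)$ meeting $\ba X$ perpendicularly at its midpoint; conversely, any curve in $\widetilde X$ from $p$ to $\sigma(p)$ must cross $\ba X$ and therefore has length at least $2\, d_{\ba X}(p)$. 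Consequently, concavity of $d_{\ba X}$ along a geodesic $\gamma \colon [0, L] \to X$ is equivalent to concavity of $t \mapsto d_{\widetilde X}(\gamma(t), \sigma \circ \gamma(t))$.

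By standard continuity, it suffices to check the midpoint inequality: for any $x, y \in X$ with midpoint $z$,
\[
d_{\widetilde X}(z, \sigma z) \ge \tfrac{1}{2}\bigl(d_{\widetilde X}(x, \sigma x) + d_{\widetilde X}(y, \sigma y)\bigr).
\]
The four points $x, y, \sigma x, \sigma y$ form a $\sigma$-symmetric quadrilateral in $\widetilde X$: $|x, \sigma y| = |y, \sigma x|$ and $|x, y| = |\sigma x, \sigma y|$, with $z$ and $\sigma z$ corresponding midpoints. The inequality is then a quadrilateral comparison in $\widetilde X$, which I would establish by triangulating through $z$ (or through a $\ba X$-crossing of $[x, \sigma x]$) and applying Toponogov's hinge comparison to each resulting triangle, using the orthogonality of every chord $[p, \sigma p]$ to $\ba X = \mathrm{Fix}(\sigma)$ as the source of the required rigidity.

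The main obstacle is this quadrilateral step: one cannot simply appeal to a generic ``concavity of distance between moving geodesics,'' because in nonnegative curvature the distance between two arbitrary geodesic families is typically \emph{convex}, as already seen in $\mathbb R^n$. The extra input making the $\sigma$-symmetric quadrilateral concave is the perpendicular crossing of $\ba X$ by each $[p, \sigma p]$. Concretely, I would introduce $\theta(t) := \angle(\gamma'(t), \Uparrow_{\gamma(t)}^{\sigma \gamma(t)})$, observe via the first variation formula that $\tfrac{d}{dt} d_{\widetilde X}(\gamma(t), \sigma\gamma(t)) = -2\cos\theta(t)$, and apply Toponogov comparison in $\widetilde X$ to triangles of the form $\gamma(t), \gamma(t+h), \sigma\gamma(t+h)$ (whose ``long'' side $|\gamma(t+h), \sigma\gamma(t+h)|$ is constrained by the orthogonality of $[\gamma(t+h), \sigma\gamma(t+h)]$ to $\ba X$) to conclude that $\theta(t)$ is nondecreasing. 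This monotonicity is exactly the claimed concavity.
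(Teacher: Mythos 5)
The paper offers no proof of this statement at all --- it is quoted verbatim from Perelman \cite{PerAlex2} --- so your argument has to stand on its own, and it does not yet. The reduction to the double $D(X)$ and the identity $d_{\ba X}(p)=\tfrac12 d_{D(X)}(p,\sigma p)$ are correct and are the natural starting point. But the entire content of the theorem sits in your ``quadrilateral step,'' i.e.\ the concavity of the displacement function $p\mapsto d(p,\sigma p)$ on $X$, and that step is only asserted, with a sketch that does not close. First, a sign problem: with your $\theta(t)$ and $\tfrac{d}{dt}\,d(\gamma(t),\sigma\gamma(t))=-2\cos\theta(t)$, concavity is equivalent to $\theta$ being \emph{nonincreasing}; proving $\theta$ nondecreasing would give convexity. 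The rotation-by-$\pi$ involution of $\R^2$ (displacement $2|p|$, which is convex, with $\theta$ nondecreasing along lines) shows moreover that no argument using only ``isometric involution $+$ curvature $\ge 0$ $+$ hinge comparison'' can yield concavity; the reflection/boundary structure must enter quantitatively, not just as a remark about perpendicular crossings.

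Second, and more substantively, the comparisons you invoke point the wrong way. Toponogov hinge comparison gives \emph{lower} bounds on distances such as $|\gamma(t+h),q|$, whereas your monotonicity (equivalently the supporting-line inequality $f(t+h)\le f(t)-2h\cos\theta(t)$) needs an \emph{upper} bound on the displacement at nearby times that beats the triangle inequality. Chaining the only available upper bound, $d(\gamma(t+h),\sigma\gamma(t+h))\le 2\,d(\gamma(t+h),q)$ through the foot point $q$ of $\gamma(t)$ (or any single $\ba X$-crossing), with the hinge estimate at $\gamma(t)$ produces inequalities facing opposite directions; what survives is only the curvature-free bound $d_{\ba X}(z)\ge \tfrac12\bigl(d_{\ba X}(x)+d_{\ba X}(y)-|x,y|\bigr)$, which loses exactly the second-order tangential term ($\sim \sin^2\theta$) that concavity must control. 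The ``orthogonality of $[p,\sigma p]$ to $\ba X$'' is automatic (the concatenation through a foot point is already a shortest path) and supplies no extra quantitative input by itself. So the heart of the argument --- why curvature $\ge 0$ of the double forces the displacement of the reflection to be concave rather than convex --- is missing; this is precisely Perelman's theorem, and its known proofs require considerably more than the doubling theorem plus a two-triangle Toponogov argument, which is why the paper simply cites \cite{PerAlex2}.
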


For an Alexandrov space $M$ with nonempty boundary $\ba M$, let $M_1$ and $M_2$ denote two copies of $M$.
Then the {\it double} $D(M)$ of $M$ is defined as gluing 
of $M_1$ and $M_2$ along their boundary via the canonical identification 
map $\ba M_1 \to \ba M_2$.
We always consider the length metric on $D(M)$ induced by those of $M_1$ 
and $M_2$.
Then $M_1$ and $M_2$ are considered as subsets of $D(M)$ and $D(M) = M_1 \cup M_2$ and $M_1 \cap M_2 = \ba_{M_i} M_i = \partial_{D(M)} M_i$ for $i=1,2$.
The double $D(M)$ is known to be an Alexandrov space with the same lower curvature bound as that of $M$ (\cite{PerAlex2}).
Furthermore, $M_i$ is convex in $D(M)$ in the sense that any two points in $M_i$ are connected by a minimal geodesic contained in $M_i$ for $i=1,2$. 
We always identify $M$ with the subset $M_1$ of $D(M)$.
We denote by $\phi = \phi_M$ the reflection of $D(M)$ with respect to $\ba M$ which is defined in an obvious way. 
It is an isometric involution of $D(M)$ and $D(M) / \phi = M$.

For a closed subset $N$ of $M=M^1$, the double of $N$ is 
defined as 
\begin{align}\label{eq:D(N)}
  D(N)=N\cup\phi(N).
\end{align}

The notion of extremal subsets of an Alexandrov space was introduced by Perelman and Petrunin (\cite{PP ext}). 
A closed subset $E$ of an Alexandrov space $M$ is called an {\it extremal subset} of $M$ if it satisfies the following condition: 
for every $p \in M \setminus E$, if $q \in E$ is a local minimizer of $|p, \,\cdot\,|$ on $E$, then we have 
\[
\limsup_{x \to q} \frac{|p,x|-|p,q|}{|x,q|} \le 0. 
\]

\begin{ex} \upshape 
The whole space $M$ and the boundary $\ba M$ are examples of extremal subsets. 
Note that for $p \in M$, the singleton $\{p\}$ is extremal if and only if $\mathrm{diam}\, \Sigma_p \le \pi/2$. 
For a rectangle $[0,a] \times [0,b]$,
any union of boundary edges is an extremal subset. 
Moreover, in general, for an Alexandrov ``surface'' $X$ with boundary, if $E \subset \ba X$ is an arc, then $E$ is extremal if and only if for each end point $p \in E$, we have $\diam\, \Sigma_p X \le \pi/2$. 
\end{ex}

Let $M$ be an $n$-dimensional Alexandrov space of curvature $\ge -1$. 
A point $p \in M$ is said to be $\delta$-{\it strained} if 
there exists a collection of pairs of points $\{(a_i, b_i)\}_{i=1,2,\dots,n}$ such that 
\[
\begin{aligned}
&\tilde \angle a_ipb_i > \pi- \delta; &&\tilde \angle a_i p a_j > \pi/2-\delta; \\
&\tilde \angle b_i p b_j > \pi/2-\delta; && \tilde \angle a_i p b_j > \pi/2-\delta
\end{aligned}
\]
for $1 \le i < j \le n$. 
We say that such a $\{(a_i,b_i)\}_{i=1,2,\dots,n}$ is a $(n,\delta)$-{\it strainer} at $p$, and that $\min_{1\le i\le n} \min \{|p,a_i|, |p,b_i|\}$ is the {\it length} of the strainer.
The set of all $\delta$-strained points is denoted by $\mathcal R_{\delta}(M)$, which is open from the definition. 
Moreover, it is known: 
\begin{thm}[\cite{BGP}, cf.\cite{OS}] 
\label{thm:regular-measure}
The Hausdorff dimension of $M \setminus (\mathcal R_{\delta}(M) \cup \ba M)$ is less than or equal to $n-2$. 
\end{thm}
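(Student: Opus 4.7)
The plan is to prove the dimension bound by induction on $n = \dim M$, using a slicing argument based on partial strainers to reduce to lower-dimensional Alexandrov spaces.

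\emph{Setup and base case.} The case $n = 1$ is trivial: a $1$-dimensional Alexandrov space is a $1$-manifold, every interior point admits antipodal geodesic directions, and hence $M \setminus (\ca R_\delta(M) \cup \ba M) = \emptyset$. Assume the result for all dimensions less than $n$, and set $S := M \setminus (\ca R_\delta(M) \cup \ba M)$. For each $p \in S$, let $k(p) \ge 0$ denote the maximal size of a partial strainer at $p$ (with some fixed length lower bound); by hypothesis $k(p) < n$.

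\emph{Slicing step.} Fix $p \in S$ with $k = k(p) \ge 1$ and a maximal strainer $\{(a_j, b_j)\}_{j=1}^{k}$. On a small neighborhood $U$ of $p$, the map
\[
F = (|a_1, \cdot|, \dots, |a_k, \cdot|) \colon U \to \R^k
\]
is bi-Lipschitz on the transverse direction, and its level sets $N_c := F^{-1}(c) \cap U$ carry an induced structure of $(n-k)$-dimensional Alexandrov spaces with a uniform lower curvature bound depending only on the original bound and $\delta$. The key observation is that a point $q \in U$ is $\delta$-strained in $M$ if and only if $q$ is $\delta'$-strained in the fiber $N_{F(q)}$ for some $\delta' = \delta'(\delta)$: the $k$ original pairs supply "horizontal" strainers via perturbation, while any further strainer pairs in $M$ at $q$ must lie essentially tangent to $N_{F(q)}$. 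Analogously, $q \in \ba M$ if and only if $q \in \ba N_{F(q)}$. Applying the inductive hypothesis to each fiber gives $\dim_H(S \cap N_c) \le (n-k) - 2$, and combining with the Lipschitz $k$-dimensional base $F$ yields
\[
\dim_H(S \cap U) \le (n-k-2) + k = n-2.
\]

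\emph{Peculiar points and main obstacle.} For the remaining $p \in S$ with $k(p) = 0$—that is, no pair $(a,b)$ satisfies $\tilde\angle a p b > \pi - \delta$—one has $\diam \Sigma_p M \le \pi - \delta$, and a Bishop--Gromov type volume comparison together with a direct packing count shows such points form a set of Hausdorff dimension at most $n-2$. Countably many neighborhoods of the above type cover $S$, yielding the global bound. The main technical obstacle lies in the slicing step: one must verify rigorously that the fibers $N_c$ inherit a genuine Alexandrov structure of dimension $n-k$ with quantitative curvature bound, and that the strainer condition transfers cleanly between $M$ and its slices with controlled parameter loss. This requires the almost-isometric splitting $U \approx \R^k \times N_c$ produced by a long strainer via the quadruple angle comparison, together with the fact (via gradient exponential maps \cite{BGP}) that tangential strainers in $N_c$ lift to strainers in $M$. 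The controlled descent of the $\delta$-strained set, rather than any single step, is where the real work resides.
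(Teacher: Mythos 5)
The paper itself offers no proof of Theorem \ref{thm:regular-measure}; it is quoted from [BGP] (cf.\ [OS]), so your proposal has to stand against the argument in those references. As written it does not: the slicing step, which you yourself flag as ``the main technical obstacle,'' is a genuine gap and not a deferrable technicality. Your induction needs the level sets $N_c=F^{-1}(c)\cap U$ of the strainer map to be $(n-k)$-dimensional Alexandrov spaces with a uniform lower curvature bound in their intrinsic metrics, with $\delta$-strainedness and membership in the Alexandrov boundary transferring between $M$ and the fiber. No known result gives this: fibers of distance/strainer maps are not known to inherit lower curvature bounds (even in the fibration theorems of Perelman and Yamaguchi the fibers are controlled only topologically), and the ``almost-isometric splitting'' produced by a long strainer is a Gromov--Hausdorff/bi-Lipschitz approximation, which does not transfer curvature bounds. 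Without an Alexandrov structure on $N_c$ the inductive hypothesis cannot be invoked, and the statements ``$q$ is strained in $M$ iff strained in $N_{F(q)}$'' and ``$q\in\ba M$ iff $q\in\ba N_{F(q)}$'' are not even well posed. A second, independent problem is the Fubini-type step $\dim_H(S\cap U)\le k+(n-k-2)$: for a merely Lipschitz map to $\R^k$ this inequality is false in general (the projection of the graph of a continuous function to the axis is $1$-Lipschitz with singleton fibers, while the graph can have Hausdorff dimension greater than $1$), so you would additionally need a genuine bi-Lipschitz product structure on $U$ compatible with $F$, which the strainer estimates do not supply.

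The proof in the cited literature avoids fibers altogether. One stratifies the singular set by the maximal number of strainer pairs and shows, by induction on $m$, that the set of non-$(m,\delta)$-strained points has Hausdorff dimension at most $m-1$: near a point admitting an $(m-1,\delta)$-strainer, the distance coordinate map $(d_{a_1},\dots,d_{a_{m-1}})$ restricted to the non-strained set itself is controlled (a packing/multiplicity estimate in $\R^{m-1}$, exploiting that a nearby non-strained point displaced essentially transversally to the coordinates would allow the strainer to be extended), and a set admitting such a parametrization into $\R^{m-1}$ has dimension at most $m-1$. The refinement from $n-1$ to $n-2$ away from $\ba M$ is then a separate analysis of the top stratum, e.g.\ by passing to the double to remove the boundary; at no point does one need the fibers to carry an Alexandrov structure. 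Your treatment of the $k(p)=0$ points is in the right spirit (it is essentially the $m=1$ case of that packing argument), but the intermediate strata are where the real content lies, and the slicing route does not deliver them.
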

Let $\mathcal S_\delta(M) := M \setminus \mathcal R_\delta(M)$. 

\begin{rem} \label{rem:sing} \upshape
If $M$ is two-dimensional, then $\mathcal S_\delta(M)\setminus \ba M$ is discrete. 
\end{rem}

%\subsection{Topological singular points}
Let $M$ be an $n$-dimensional Alexandrov space possibly with boundary. 
We say that $p \in M$ is a {\it topological singular point} if $p$ has no neighborhood which is homeomorphic to $\mathbb R^n$. 
If $p$ is not a topological singular point, it is called a {\it topological regular point} or a {\it manifold point}. 
 A point $p\in M$ is called an {\it essential singular point} if the radius 
$$
  {\rm rad}(\Sigma_p):=\inf_{\xi \in\Sigma_p} \sup_{\eta \in\Sigma_p} |\xi, \eta|
$$ 
of $\Sigma_p$ is less than or 
equal to $\pi/2$. If $p$ is not an essential singular point, then it is topologically regular 
(\cite{GP}, \cite{PP ext}).

\subsection{Gromov-Hausdorff convergence}

Let us recall the notion of
Gromov-Hausdorff convergence.

For metric spaces $X$ and $Y$,
a not necessarily continuous map $f:X\to Y$ is called
$\e$-approximation if 
\begin{itemize}
\item $||f(x), f(x')|-|x,x'|| < \epsilon$ for all $x, x' \in X\,;$ 
\item $f(X)$ is $\e$-dense in $Y$.
\end{itemize} 
The Gromov-Hausdorff distance $d_{GH}(X,Y)<\e$
iff there are $\e$-approximations $\varphi:X\to Y$ and 
$\psi:Y\to X$.
 
For compact subsets $A_1,\ldots,A_k\subset X$
and $B_1,\ldots,B_k\subset Y$,
the Gromov-Hausdorff distance 
$d_{GH}((X,A_1,\ldots,A_k),(Y,B_1,\ldots,B_k))<\e$ 
iff there are $\e$-approximations $\varphi:X\to Y$ and 
$\psi:Y\to X$ such that the restrictions
$\varphi|_{A_i}$ and $\psi|_{B_i}$ give
$\e$-approximations between $A_i$ and $B_i$ for any
$1\le i\le k$.

For pointed metric spaces $(X,a)$ and $(Y,b)$, 
the pointed Gromov-Hausdorff distance 
$d_{pGH}((X,a), (Y,b))<\e$ iff
there are $\e$-approximations 
$\varphi:B(a,1/\e)\to B(b,1/\e)$ and 
$\psi:B(b,1/\e)\to B(a,1/\e)$ such that
 $|\varphi(a),b|<\epsilon$ and $|\psi(b),a|<\epsilon$.

Let us recall the notion of pointed equivariant 
Gromov-Hausdorff distance
introduced by Fukaya (\cite{F}).

Let $G$ and $H$ be groups of isometries of $X$ and $Y$ respectively. 
For $r > 0$, we set 
\[
G(a,r) := \{ g \in G \mid |ga, a| < \epsilon\}.
\]
We say that a triple $(f,\xi,\eta)$ an {\it $\epsilon$-approximation from $(X,a,G)$ to $(Y,b,H)$} if 
\begin{enumerate}
\item $f$ is an $\epsilon$-approximation from $(X,a)$ to $(Y,b)$; 
\item $\xi$ is a map $\xi : G \to H$ satisfying that 
\begin{itemize}
\item $\xi(g) \in H(b,\epsilon^{-1}+\epsilon)$ for every $g \in G(a,\epsilon^{-1})\,;$
\item for $g \in G(a,\epsilon^{-1})$ and $x \in B(a,\epsilon^{-1}-\epsilon)$, we have
\[
|f(g x), \xi(g)(f(x))| < \epsilon; 
\]
\end{itemize}
\item $\eta$ is a map $\eta : H \to G$ satisfying that 
\begin{itemize}
\item $\eta(h) \in G(a,\epsilon^{-1}+\epsilon)$ for every $h \in H(b,\epsilon^{-1})\,;$
\item for $h \in H(b,\epsilon^{-1})$ and $x \in B(a,\epsilon^{-1}-\epsilon)$, 
\[
|f(\eta(h) x), h(f(x))| < \epsilon.
\]
\end{itemize}
\end{enumerate}
%Then 
The {\it equivariant pointed Gromov-Hausdorff distance}
between $(X,a,G)$ and $(Y,b,H)$, denoted by
$d_{epGH}((X,a,G),(Y,b,H))$, is defined as the infimum 
of those $\e$ for which there are $\e$-approximations 
from $(X,a,G)$ to $(Y,b,H)$ and from $(Y,b,H)$ to $(X,a,G)$.
We say that a sequence $(X_i,a_i,G_i)$ 
converges to $(X,a,G)$ if $d_{epGH}((X_i,a_i,G_i),(X,a,G))\to 0$ as
$i\to\infty$.

\section{Preliminaries II --- Basics of collapsing Alexandrov spaces}
\label{sec:prelim2}
In this section, we first recall some fundamental results in the collapsing theory, Stability Theorem, Fibration Theorem and the rescaling argument, which are used to determine the local topology of collapsing spaces. 
We also recall the classification of nonnegatively curved Alexandrov three-spaces with boundary, which appear
as the rescaling limit spaces.
Main reference here are \cite{PerAlex2}, \cite{Y conv}, \cite{Y 4-dim} and \cite{Y ess}.

\subsection{Regularities of distance maps}
Let $M$ be an $n$-dimensional Alexandrov space of curvature $\ge -1$. 
For a compact subset $A$ of $M$, $x \in M \setminus A$ and $\xi \in \Sigma_x$, the {\it directional derivative} of $d_A = |A, \,\cdot\,|$ in the direction $\xi$ is defined as 
\[
d_A'(\xi) := \lim_{t \to 0} \frac{d_A(\gamma(t))-d_A(x)}{t}, 
\]
where $\gamma$ is a rectifiable curve in $M$ such that $\gamma(0) = x$ and $\gamma'(0)=\xi$. 
Furthermore, the first variation formula (see \cite{BGP})
of the distance functions says
\[
d_A'(\xi) = -\cos \angle (\Uparrow_x^A, \xi).
\]
Here, $\Uparrow_x^A$ is the union of $\Uparrow_x^a$ for all $a\in A$ 
with $|a,x| = |A,x|$. 
We say that $d_A$ is $c$-{\it regular} at $x$ for $c > 0$, if $d_A'(\xi) > c$ for some $\xi \in \Sigma_x$. 
$d_A$ is {\it critical} at $x$ if $d_A$ is not $c$-regular at $x$ for every $c > 0$.

Let $A_1, \dots, A_k$ be compact subsets of $M$. 
\begin{defn}[\cite{PerAlex2}] \upshape \label{def:regular}
For $c, \epsilon > 0$, the $k$-tuple $(d_{A_1}, d_{A_2}, \dots, d_{A_k})$ is called {\it $(c,\epsilon)$-regular} at $p \in M \setminus \bigcup_{i=1}^k A_i$ if there exists $x \in M$ such that 
\begin{align}
&\angle (\Uparrow_p^{A_i}, \Uparrow_p^{A_j}) > \pi/2 - \epsilon; \\
&\angle (\uparrow_p^x, \Uparrow_p^{A_i}) > \pi/2+c
\end{align}
for $1 \le i \ne j \le k$. 
\end{defn}
If $\epsilon$ is small compared with $c$, and if $(d_{A_i})_{i=1}^k$ is $(c,\epsilon)$-regular at some point in an Alexandrov space $M$, then we have $k \le \dim M$
(see \cite{PerAlex2}).

\subsection{Stability and Fibration Theorems}
Perelman proved the following significant result. 

\begin{thm}[Stability Theorem {\cite{PerAlex2}}] \label{thm:stability}
For a compact Alexandrov space $X$, there exists $\e=\e_X>0$
satisfying the following. Le $Y$ be a compact Alexandrov space of curvature $\ge -1$ with the same dimension as $X$.
If $d_{GH}(X,Y)<\e$, then $X$ and $Y$ are homeomorphic to each other.
\end{thm}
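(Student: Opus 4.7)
The plan is to implement Perelman's strategy of constructing an explicit homeomorphism $h : Y \to X$ by gluing local homeomorphisms built from admissible distance functions, with an induction on dimension.

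First I would set up the local structure on $X$. By compactness and Theorem \ref{thm:regular-measure}, together with an analysis of $\Sigma_p X$ at each point, I would cover $X$ by finitely many open sets $U_\alpha$ on each of which there is an admissible map $f_\alpha : U_\alpha \to \mathbb R^{k_\alpha}$ of the form $f_\alpha = (d_{A_1}, \ldots, d_{A_{k_\alpha}})$ that is $(c, \epsilon_0)$-regular at every point of $U_\alpha$, for uniform constants $c > 0$ and $\epsilon_0 \ll c$. The key consequence is that a $(c, \epsilon_0)$-regular admissible map is a topological submersion (Perelman's local fibration theorem), so the neighborhoods can be arranged to have the form of truncated cones $K_1(\Sigma_p X) \times \mathbb R^{k_p}$ around each $p \in U_\alpha$.

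Next I would transplant the structure to $Y$. For each set $A_i$ defining an admissible map on $X$, I would pick a corresponding set in $Y$ via a Gromov--Hausdorff approximation $\psi : X \to Y$. When $\epsilon = d_{GH}(X, Y)$ is small relative to the uniform strainer parameters, the corresponding admissible maps $\tilde f_\alpha$ on $Y$ remain $(c/2, 2\epsilon_0)$-regular on slightly shrunken neighborhoods $\tilde U_\alpha \subset Y$, with the same target dimension $k_\alpha$; this is where the hypothesis $\dim Y = \dim X$ is essential. At each $p \in X$, I would construct a local homeomorphism $h_p : \tilde U_p \to U_p$ between the canonical neighborhoods. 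The cone part is handled by the stability theorem applied to the space of directions (inductive hypothesis in dimension, using that $\Sigma_p X$ is a compact Alexandrov space of curvature $\ge 1$ one dimension lower), while the $\mathbb R^{k_p}$-fiber direction is handled by matching the values of $f_\alpha$ and $\tilde f_\alpha$.

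The main obstacle will be gluing these local homeomorphisms $h_p$ into a single global homeomorphism: unlike smooth maps, homeomorphisms admit no partition-of-unity gluing. I would follow Perelman's approach of a Siebenmann--Quinn style induction on a stratification of $X$ by topological regularity, using the local fibration structure to produce isotopies that straighten one local chart to match the next along overlaps. The gluing along overlaps is accomplished by pushing along the flow of an approximate gradient vector field of $f_\alpha$ built from the $(c, \epsilon_0)$-regular strainer, so that the gluing preserves $f_\alpha$-values and descends to the base. The delicate bookkeeping is twofold: first, propagating the stability modulus $\epsilon_X$ through the inductive reduction on dimension without losing the uniform constants, and second, verifying that the straightening isotopies can be chosen compatibly over all strata so that the final homeomorphism is well-defined across the singular set $\mathcal S_\delta(X)$.
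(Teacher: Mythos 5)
The paper does not prove Theorem \ref{thm:stability}: it is quoted from Perelman \cite{PerAlex2} (cf.\ Kapovitch \cite{Kap:stab}), so the only meaningful comparison is with that cited argument. Your outline does follow its broad lines: admissible maps assembled from distance functions, $(c,\epsilon)$-regularity, the local fibration/structure theorem, induction on dimension, and a gluing step carried out by controlled-topology (Siebenmann-type) methods rather than any partition-of-unity device. You also correctly identify the gluing of local homeomorphisms as the principal obstacle.

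There is, however, a genuine gap at the heart of your induction. You propose to handle the conical factor of the local model by ``the stability theorem applied to the space of directions,'' which presupposes that for $q\in Y$ corresponding to $p\in X$ the space $\Sigma_q Y$ is Gromov--Hausdorff close to $\Sigma_p X$. This is false in general: spaces of directions are only semicontinuous under convergence, even without collapse; already for $Y=X$ and regular points $q\to p$ with $p$ singular, $\Sigma_q X$ stays far from $\Sigma_p X$. Perelman's proof circumvents this by establishing, by induction on dimension, a strictly stronger relative statement --- stability \emph{respecting} admissible maps, essentially the version recorded as Theorem \ref{thm:stability respectful} --- in which the local comparison is made through the framed conical neighborhood structure inside each space separately and through convergence of fibers of lifted admissible maps, not through a direct comparison of spaces of directions of nearby points. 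Without formulating and proving this parametrized statement the gluing step cannot even be set up, since the local homeomorphisms must be built to match the $f_\alpha$-values on overlaps; and the gluing itself rests on Siebenmann's deformation-of-homeomorphisms theory for MCS spaces (local isotopy extension plus the gluing theorem), which in turn requires the local fibration theorem to guarantee that all spaces and fibers involved are MCS. So the missing ingredients are the strengthened inductive hypothesis and a replacement for the step ``compare $\Sigma_p X$ with $\Sigma_q Y$''; as written, that step would fail.
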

There is a respectful version of this theorem.

\begin{thm}[\cite{PerAlex2}] \label{thm:stability respectful}
For a compact Alexandrov space $X$
and compact subsets $A_1,\ldots,A_k\subset X$, suppose that 
$(d_{A_1}, \dots, d_{A_k})$ is $(c,\delta)$-regular on an open set $U \subset X$, where $\delta$ is small enough compared with 
$c$ and $U$.
Then there exists $\e=\e_{U,A_1,\ldots,A_k}>0$ satisfying the following:
For a compact Alexandrov space $Y$ of curvature $\ge -1$ 
with the same dimension as $X$, and for 
$B_1,\ldots,B_k\subset Y$ ($1\le i\le k$), suppose that the Gromov-Hausdorff distance between $(X,A_1,\dots, A_k)$ and $(Y,B_1, \dots, B_k)$ is less than $\e$. Then 
there exists a homeomorphism $f : X \to Y$ such that 
$d_{B_i} \circ f = d_{A_i}$ on $K$ for all $1 \le i \le k$, where $K = \bigcap_{1 \le i \le k} d_{A_i}^{-1}[a_i,b_i]$ and $[a_i,b_i] \subset d_{A_i}(U)$. 
\end{thm}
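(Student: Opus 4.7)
The plan is to follow Perelman's original strategy for the Stability Theorem, refining the construction so that the resulting homeomorphism respects the given distance functions on the controlled region $K$. First, I would observe that by the first variation formula and the GH-stability of comparison angles (applied to pointed GH-convergence of small balls in the spaces together with the sets $A_i,B_i$), if $\epsilon$ is small enough then the tuple $(d_{B_1},\dots,d_{B_k})$ is $(c',\delta')$-regular, with $c'\approx c$ and $\delta'\approx\delta$, on some open set $V\subset Y$ that GH-corresponds to $U$. In particular, a neighborhood of the analogue $K'\subset V$ of $K$ in $Y$ carries the same kind of regular structure as $K$.

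Next, I would apply Perelman's Fibration Theorem over the regular region to both $\Phi:=(d_{A_1},\dots,d_{A_k})$ on $U$ and $\Psi:=(d_{B_1},\dots,d_{B_k})$ on $V$, concluding that both maps are locally trivial topological fiber bundles over their respective images. Over any sufficiently small box $[a_1',b_1']\times\cdots\times[a_k',b_k']$ contained in $\Phi(U)$ (and its image under the GH-approximation on the $Y$-side), both $\Phi^{-1}$ and $\Psi^{-1}$ are product bundles whose fibers are Alexandrov subspaces of codimension $k$, and these fibers are mutually GH-close. A parametrized version of the basic Stability Theorem (Theorem \ref{thm:stability}) then provides a fibrewise homeomorphism over the box, giving a local map $f$ with $d_{B_i}\circ f=d_{A_i}$ near each point of $K$.

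Then I would glue these local homeomorphisms into a global homeomorphism defined near $K$ by Perelman's controlled gluing technique, using isotopies that are tangent to the fibers of $\Phi$ (equivalently, preserve the values of $d_{A_1},\dots,d_{A_k}$). Away from $K$, the extension to all of $X$ is obtained from the basic Stability Theorem, merged with the constructed map on a collar of $\partial K$ by an isotopy extension. The uniformity of $\epsilon$, depending only on $U$ and $A_1,\dots,A_k$, comes from a compactness argument covering $K$ by finitely many such boxes together with uniform estimates on the sizes of trivializations coming from the $(c,\delta)$-regularity.

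The main obstacle is precisely the gluing step: the local homeomorphisms built over overlapping boxes need to be adjusted to agree on intersections while continuing to satisfy $d_{B_i}\circ f=d_{A_i}$ exactly on $K$. This forces the patching isotopies to lie inside level sets of $\Phi$, which is only possible because the Fibration Theorem guarantees that these level sets are themselves Alexandrov spaces admitting the relevant isotopy extension property. Verifying that the estimates survive this fibrewise restriction, uniformly in the approximating $Y$, is the technical heart of the proof, and is exactly what Perelman's admissible map machinery in \cite{PerAlex2} is designed to handle.
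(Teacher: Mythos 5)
First, note that the paper does not prove this statement at all: it is quoted verbatim from Perelman \cite{PerAlex2} (see also \cite{Kap:stab}), so your sketch can only be measured against Perelman's actual argument, and there it has a genuine gap. The central step of your plan --- viewing $\Phi=(d_{A_1},\dots,d_{A_k})$ and $\Psi=(d_{B_1},\dots,d_{B_k})$ as fiber bundles and then applying a ``parametrized version'' of Theorem \ref{thm:stability} fibrewise --- is not available. The fibers of a regular admissible map are \emph{not} Alexandrov spaces: they are only MCS-spaces (manifolds with controlled singularities), they inherit no lower curvature bound, and their intrinsic metrics are not controlled by the ambient Gromov--Hausdorff approximation, so the assertion that corresponding fibers of $\Phi$ and $\Psi$ are ``mutually GH-close'' Alexandrov spaces to which stability applies has no justification; the same problem undermines your gluing step, where you explicitly invoke that the level sets ``are themselves Alexandrov spaces admitting the relevant isotopy extension property.'' In addition, the route is structurally circular: in Perelman's scheme the local fibration theorem (regular admissible maps are locally trivial bundles) is a \emph{corollary} of the relative (``respectful'') stability theorem, proved by a simultaneous induction on dimension, not an input to it.

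Perelman's actual proof proceeds quite differently: one works with the class of admissible maps, proves a local version of the respectful stability statement by induction on dimension (using the structure of spaces of directions and deformations of homeomorphisms in the sense of Siebenmann, encapsulated in Perelman's Gluing Theorem), and then globalizes by gluing the locally constructed homeomorphisms that already respect the framing functions; at no point are fibers compared as metric spaces. Your first paragraph (transfer of $(c,\delta)$-regularity to $(d_{B_i})$ on $Y$ for small $\epsilon$) is fine and is indeed part of the argument, but to repair the rest you would have to replace the fibrewise-stability and level-set-isotopy steps by the admissible-map induction and the gluing/deformation machinery of \cite{PerAlex2} (cf.\ \cite{Per:elem}, \cite{Kap:stab}), rather than trying to apply Theorem \ref{thm:stability} to the fibers.
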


In Theorem \ref{thm:stability respectful}, distance functions from compact sets can be replaced by more general functions (see \cite{PerAlex2}).

As a direct consequence of Theorem \ref{thm:stability respectful}, we have: 
\begin{cor} \label{cor:cone}
For each point $p$ in every Alexandrov space, there exists $r > 0$ such that $(U(p,r), p)$ is homeomorphic to $(K(\Sigma_p), o)$. 
\end{cor}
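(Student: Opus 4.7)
The plan is to rescale $M$ around $p$ and appeal to the respectful Stability Theorem \ref{thm:stability respectful}, exploiting the identification of the tangent cone $T_pM$ with $K(\Sigma_p)$. By the definition of $T_pM$, the pointed rescalings $(\lambda_i M, p)$ with $\lambda_i\to\infty$ converge in the pointed Gromov--Hausdorff sense to $(K(\Sigma_p), o)$. In the model cone $K(\Sigma_p)$, the distance function $d_o$ from the vertex has outward radial derivative equal to $1$ at every non-vertex point, so $(d_o)$ is $(c, \delta)$-regular on any closed annulus $A(o,a,b)$ with $0<a<b$, for $c<1$ and $\delta$ sufficiently small.

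Next, I would apply Theorem \ref{thm:stability respectful} to the single function $d_o$ (matched with $d_p$ on the rescaled $M$) on an open neighborhood of a closed annulus $A(o,a,b)$. For all sufficiently large $i$, this produces a homeomorphism
\[
f_i : \overline{B^{K(\Sigma_p)}(o, b)} \longrightarrow \overline{B^{\lambda_i M}(p, b)}
\]
with the respectful property $d_p\circ f_i = d_o$ on $d_o^{-1}[a,b]$. Since $K(\Sigma_p)$ is itself literally a metric cone, the annulus $d_o^{-1}[a,b]$ is homeomorphic to $\Sigma_p\times[a,b]$ via the radial product structure, with the second factor recording $d_o$. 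Transporting this through $f_i$ yields a homeomorphism of the shell $d_p^{-1}[a/\lambda_i, b/\lambda_i]$ in $M$ with $\Sigma_p\times[a/\lambda_i, b/\lambda_i]$ that records $d_p$ on the second factor.

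Using a geometric sequence of scales $\lambda_i = 2^i\lambda_0$ and the ratio $[a,b]=[1/2,1]$, I would assemble these shell homeomorphisms into a single homeomorphism
\[
F : \Sigma_p\times(0, r_0] \longrightarrow \overline{B(p, r_0)}\setminus\{p\},\qquad d_p\circ F(\xi,t)=t,
\]
for some small $r_0>0$. Because $F$ is distance-from-$p$ preserving in the radial coordinate, it extends continuously by sending $\Sigma_p\times\{0\}$ to $p$, giving $(\overline{B(p, r_0)}, p)\approx (K_1(\Sigma_p), o)$ and hence $(U(p, r_0), p)\approx (K(\Sigma_p), o)$.

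The main obstacle is the coherent gluing across scales in the assembly of $F$: the respectful Stability Theorem produces independent homeomorphisms $f_i$ at different rescalings, and a priori there is no reason the induced shell identifications should agree on overlapping annuli. I would handle this inductively, fixing $f_0$ once and for all and then at each stage composing the newly produced $f_{i+1}$ on the inner half of its annulus with a self-homeomorphism of $\Sigma_p\times[1/2,1]$ (preserving the second coordinate) so that it matches $f_i$ on the overlap $d_p^{-1}[r_0 2^{-i-1}, r_0 2^{-i-1}]$. Such a matching self-homeomorphism exists because both maps are homeomorphisms of $\Sigma_p$ onto the same level sphere $S(p,r_0 2^{-i-1})$, which can be extended radially inward. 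The resulting glued map is a continuous homeomorphism on the punctured ball, which then extends across the vertex as described.
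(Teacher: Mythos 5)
Your overall route is exactly the one the paper intends: the paper offers no argument beyond declaring Corollary \ref{cor:cone} a direct consequence of Theorem \ref{thm:stability respectful}, and your reconstruction --- rescale so that $(\lambda_i M,p)\to(K(\Sigma_p),o)$, apply the respectful stability theorem to $d_p$ versus $d_o$ on annuli (with one admissible $\epsilon$ working at all scales because the model ball and framing in the cone are scale-invariant), glue the level-preserving shell homeomorphisms by precomposing with fiberwise self-homeomorphisms of $\Sigma_p$, and cone off at the vertex --- is the standard proof of Perelman's conical neighborhood theorem. The one-point extension is also handled correctly: continuity of the inverse at $p$ follows since the radial coordinate of $F^{-1}(y)$ equals $d_p(y)$.

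There are, however, two places where you use more than Theorem \ref{thm:stability respectful} literally provides. First, you apply it with $Y=\overline{B^{\lambda_i M}(p,b)}$, but a closed metric ball in an Alexandrov space need not itself be an Alexandrov space; one must invoke the local (framed) form of Perelman's stability theorem --- which the paper only alludes to in the remark that distance functions may be replaced by more general functions --- rather than the global compact statement. Second, and more substantively, your gluing needs each shell map to carry $d_o^{-1}(t)$ \emph{onto} the metric sphere $S(p,t/\lambda_i)$, both so that consecutive shell maps land on the same sphere and so that the assembled map exhausts the punctured ball; the respect property $d_p\circ f_i=d_o$ on $d_o^{-1}[a,b]$ only yields the inclusion $f_i(d_o^{-1}(t))\subset S(p,t/\lambda_i)$, since a priori points of the model with $d_o<a$ could also be sent to that sphere. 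This is repaired by the stronger standard form of Perelman's theorem, in which the homeomorphism can be chosen $\theta(\epsilon)$-close to the given Gromov--Hausdorff approximation: then $|d_p(f_i(x))-d_o(x)|<\theta(\epsilon)$ for all $x$, and for levels $t$ in the interior of the respected interval surjectivity onto $S(p,t/\lambda_i)$ follows. With these two standard strengthenings (neither of which is stated explicitly in the version of the stability theorem quoted in the paper), your argument is complete.
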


By Corollary \ref{cor:cone} and a classification of the topologies of space of directions, which are positively curved Alexandrov spaces, in low dimension, when $n \le 3$, we immediately have 

\begin{cor} \label{cor:low_dim} \upshape
Any three-dimensional Alexandrov space is topologically an orbifold possibly with boundary.
\end{cor}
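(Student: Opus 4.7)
The plan is to combine Corollary \ref{cor:cone} with the classification of two-dimensional positively curved Alexandrov spaces to produce an explicit orbifold chart near every point of $M$.

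First I would fix $p\in M$ and analyze the space of directions $\Sigma_p$, which is a two-dimensional compact Alexandrov space of curvature $\ge 1$ and therefore a topological surface, possibly with boundary. The key step is to show that $\Sigma_p$ is homeomorphic to one of $S^2$, $P^2$, or $D^2$. When $\ba\Sigma_p=\emptyset$, this follows from the Gauss--Bonnet type estimate for Alexandrov surfaces, which forces $\chi(\Sigma_p)>0$ and hence $\Sigma_p\approx S^2$ or $P^2$. When $\ba\Sigma_p\ne\emptyset$, I would pass to the double: since $\ba\Sigma_p$ is a disjoint union of circles,
\[
\chi(D(\Sigma_p))=2\chi(\Sigma_p)-\chi(\ba\Sigma_p)=2\chi(\Sigma_p)
\]
is even, and $D(\Sigma_p)$ is closed of curvature $\ge 1$, so the previous case leaves only $D(\Sigma_p)\approx S^2$, whence $\Sigma_p\approx D^2$.

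Next I would read off the local model from Corollary \ref{cor:cone}. If $\Sigma_p\approx S^2$, then $K(\Sigma_p)\approx \R^3$, so $p$ is an interior manifold point. If $\Sigma_p\approx P^2$, then $K(\Sigma_p)\approx \R^3/\{\pm 1\}$, a standard orbifold chart with isotropy $\Z_2$ acting antipodally, and necessarily $p\in\mathrm{int}^{\mathbf A}M$ since $\ba\Sigma_p=\emptyset$. If $\Sigma_p\approx D^2$, then $p\in\ba M$ and $K(\Sigma_p)\approx \R^3_+$, a manifold boundary chart. In every case the chart is a quotient of an open piece of $\R^3$ (or $\R^3_+$) by a finite group action, which is precisely the defining condition for an orbifold chart, with the $\Z_2$-singular case confined to the interior and the boundary case yielding only genuine manifold boundary points.

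To globalize I would observe that the essential singular set (points with $\Sigma_p\approx P^2$) is discrete in $\mathrm{int}^{\mathbf A}M$: by Theorem \ref{thm:stability} applied to small metric spheres, all points sufficiently close to such a $p$ have space of directions homeomorphic to $S^2$ and are manifold points, so the orbifold-singular charts are isolated and patch trivially with the surrounding manifold structure to yield a global orbifold atlas. The hard part will be the two-dimensional classification step, i.e.\ ruling out every other compact surface as $\Sigma_p$; once that is in hand, Corollary \ref{cor:cone} delivers the orbifold-with-boundary structure immediately.
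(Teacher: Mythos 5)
Your argument is correct and is essentially the paper's own (one-line) proof: combine the conical neighborhood theorem (Corollary \ref{cor:cone}) with the classification of compact positively curved Alexandrov surfaces, so that $\Sigma_p\in\{S^2,P^2,D^2\}$ and the local models are $\R^3$, $K(P^2)\approx\R^3/\{\pm1\}$, and $\R^3_+$. You merely spell out the classification step (Gauss--Bonnet plus Perelman's doubling) that the paper takes as known, so there is nothing to add.
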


Here, for an $n$-dimensional orbifold $O$ possibly with boundary, its boundary is defined as the set of $x \in O$ such that $H^n(O, O \setminus \{x\}; \mathbb Z_2) = 0$.
This concept coincides with the boundary of an Alexandrov space.

To state Fibration Theorem, we prepare some definition. 

\begin{defn}[\cite{Y conv}] \upshape \label{def:Lipschitz submersion}
Let $X$ and $Y$ be domains in Alexandrov space, and $\epsilon > 0$. 
A continuous surjective map $f : Y \to X$ is called an {\it $\epsilon$-almost Lipschitz submersion} if it is an $\epsilon$-Gromov-Hausdorff approximation and for any $y, y' \in Y$, we have 
\[
\left| \sin \theta_{y,y'} - \frac{|f(y),f(y')|}{|y,y'|}\right| < \epsilon, 
\]
where $\theta_{y,y'}$ is the infimum of the angles $\angle y' y y''$ when $y''$ runs over the fiber $f^{-1}(f(y))$.
\end{defn}

Let $X$ be a $k$-dimensional Alexandrov space of curvature $\ge -1$ with nonempty boundary.
Let $D(X)$ be the double of $X$ which is $X_1 \cup X_2$ as a set, where $X_1 = X$ and $X_2$ is an isometric copy of $X$. 
A $(k,\delta)$-strainer $\{(a_i, b_i)\}_{1 \le i \le k}$ in $D(X)$ at $p \in X$ is {\it admissible} if $a_i \in X$, $b_j \in X$ for $1 \le i \le k$ and $1 \le j \le k-1$.
Let $\mathcal R_\delta^D(X)$ denote the set of all points of $X$ which have an admissible $(k,\delta)$-strainer.

For a compact domain $Y$ of $\mathcal R_\delta^D(X)$, the $\delta_D$-{\it strained radius of} $Y$, denoted by $\delta_D\text{-str.rad}(Y)$, is defined as the infimum of $\ell > 0$ such that every point $p \in Y$ has an admissible $(k,\delta)$-strainer of length $\ge \ell$. 

For small $\nu > 0$, we set 
\[
\begin{aligned}
Y_\nu &:= \{x \in Y \mid |\ba X, x| \le \nu \}, \\
\partial_0 Y_\nu &:= \{x \in Y \mid |\ba X, x| = \nu \},\\
\mathrm{int}_0 Y_\nu &:= Y_\nu \setminus \partial_0 Y_\nu.
\end{aligned}
\]

\begin{thm}[Fibration Theorem \cite{Y conv}, \cite{Y 4-dim}] \label{thm:fibration}
Let $X$ be a $k$-dimensional Alexandrov space and $G$ a finite group acting on $X$ by isometries. 
For $\mu > 0$, there exists positive numbers $\e_{X,G}(\mu)$ and $\nu = \nu_{X,G}(\mu)$ satisfying the following:
Let $Y \subset \mathcal R_{\delta}^D(X))$ be a $G$-invariant compact domain with ${\delta}_D\text{-}str.rad(Y) > \mu$, where $\delta < \delta_k$. 
Let $M$ be an $n$-dimensional Alexandrov space of curvature $\ge -1$.
Suppose $M= \mathcal R_{\delta_n}(M)$ and $d_{\mathrm{eGH}}((M,G), (X,G)) < \epsilon \le \e_{X,G}(\mu)$. 
Then there exists a $G$-invariant compact domain $N \subset M$ and a $G$-invariant decomposition
\[
N = N_\mathrm{int} \cup N_\mathrm{cap}
\]
of $N$ into two closed domains glued along their boundaries and a $G$-equivariant Lipschitz map $f : N \to Y_\nu$ such that 
\begin{enumerate}
\item $N_\mathrm{int}$ is the closure of $f^{-1}(\mathrm{int}_0 Y_\nu)$ and $N_\mathrm{cap} = f^{-1}(\partial_0 Y_\nu)$; 
\item the restriction $f|_{N_\mathrm{int}} : N_\mathrm{int} \to Y$ and $f |_{N_\mathrm{cap}} : N_\mathrm{cap} \to \partial_0 Y_\nu$ are 
\begin{enumerate}
\item locally trivial fiber bundles; 
\item $\tau(\delta, \nu, \epsilon/\nu)$-Lipschitz submersion.
\end{enumerate}
\end{enumerate}
\end{thm}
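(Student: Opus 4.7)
The plan is to follow the standard scheme of Yamaguchi's fibration theorem, with the double construction used to accommodate $\ba X$ through the admissible strainer condition. At each $p\in Y$ fix an admissible $(k,\delta)$-strainer $\{(a_i,b_i)\}_{i=1}^k$ of length $\ge\mu$ in $D(X)$, so that $a_1,\ldots,a_k$ and $b_1,\ldots,b_{k-1}$ lie in $X_1=X$ while only $b_k$ may lie in the reflected copy $X_2$. Using the equivariant Gromov--Hausdorff approximation $(M,G)\to(X,G)$, extended to the doubles, I lift each $a_i$ and $b_i$ to nearby points $\tilde a_i,\tilde b_i\in D(M)$, and define a local expression of $f$ on a small preimage neighborhood of $p$ by $y\mapsto(d_{\tilde a_1}(y),\ldots,d_{\tilde a_k}(y))$, transported back to a neighborhood of $p$ in $X$ by the analogous distance map there. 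By the first variation formula and the strainer inequalities this map is close to an isometry on the relevant scale.

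Next, glue the local maps into a global $G$-equivariant Lipschitz map $f:N\to Y_\nu$ via an equivariant partition of unity and a center-of-mass construction inside $D(M)$; the averaging is well defined because the regions in question are almost Euclidean at scale $\mu$, and $G$-equivariance is enforced by averaging the local choices over $G$-orbits, using that $G$ is finite. Fix first $\nu=\nu_{X,G}(\mu)$ small enough that at each point of $Y_\nu$ an admissible strainer can be chosen with $b_k$ transverse to $\ba X$ on scale $\nu$, then pick $\e_{X,G}(\mu)\ll\nu$ so that the lifts stay inside $\ca R_{\delta_n}(M)$ and all strainer inequalities survive the approximation. Take $N$ to be the preimage of $Y_\nu$.

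The almost-Lipschitz-submersion estimate with error $\tau(\delta,\nu,\epsilon/\nu)$ is a direct consequence of the first variation formula applied to $(d_{\tilde a_1},\ldots,d_{\tilde a_k})$ together with the strainer inequalities. To upgrade $f$ to a locally trivial fiber bundle, I would run Perelman-type gradient-like flows of the $d_{\tilde a_i}$; applying the equivariant flow theorem of Appendix~\ref{sec:flow} simultaneously along the $k$ coordinate directions yields a $G$-equivariant trivialization of $f$ over coordinate boxes in $Y_\nu$. The decomposition $N=N_\mathrm{int}\cup N_\mathrm{cap}$ is then cut by the level set corresponding to $\{\,|\,\cdot\,,\ba X|=\nu\,\}$: on $N_\mathrm{int}$ the strainer pair $(a_k,b_k)$ is genuinely interior to $X$, while on $N_\mathrm{cap}$ the point $b_k$ has crossed into $X_2$, so there $f$ records proximity to $\ba M$.

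The main obstacle is to match the two pieces into a single continuous fiber bundle along their common boundary, where the role of $b_k$ switches from interior to reflected. One has to verify that the flows used to trivialize $N_\mathrm{int}$ and $N_\mathrm{cap}$ can be chosen to agree up to controlled error on the interface, respecting both the $G$-action and the involution $\phi$ on $D(M)$. This is precisely the job of the equivariant flow theorem of Appendix~\ref{sec:flow}: it produces a single flow simultaneously trivializing the $a_1,\ldots,a_{k-1}$ coordinates and smoothly interpolating between the $a_k$-direction on $N_\mathrm{int}$ and the $\ba X$-distance coordinate on $N_\mathrm{cap}$, so that the local products glue into a globally defined $G$-equivariant fiber bundle structure.
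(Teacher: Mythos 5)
This statement is not proved in the paper at all: Theorem \ref{thm:fibration} is quoted as a preliminary from \cite{Y conv} and \cite{Y 4-dim}, so there is no internal proof to compare with. Judged on its own, your sketch follows the expected strategy (admissible strainers in $D(X)$, lifted distance coordinates, averaging/gluing, almost Lipschitz submersion estimate via first variation), but it has genuine gaps at exactly the points where the cited proofs do real work. First, local triviality: producing a locally trivial fiber bundle from an almost Lipschitz submersion is not a matter of ``running Perelman-type gradient-like flows along the $k$ coordinate directions.'' In the Alexandrov setting this step rests on Perelman's stability/fibration machinery (properness plus $(c,\e)$-regularity of the admissible map, Theorem \ref{thm:stability respectful} and its relatives, or Siebenmann-type deformation theory), and in the equivariant setting one must additionally check that the trivializations can be chosen $G$-compatibly; your sketch asserts this rather than argues it. Second, your appeal to Appendix \ref{sec:flow} is misplaced: Theorem \ref{thm:smooth approximation} supplies a $\Z_2$-equivariant gradient-like flow for a \emph{single} distance function $d_{D(S)}$, equivariant with respect to the reflection $\phi$ of $D(M)$; it is not a device for simultaneously trivializing $k$ strainer coordinates for a general finite group $G$, nor for matching the interior and cap trivializations along their interface, which is the step you yourself flag as the main obstacle.

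Third, the cap region is misdescribed. In Theorem \ref{thm:fibration} one assumes $M=\ca R_{\delta_n}(M)$ (in the applications $M$ is a double, with empty boundary), so $f|_{N_\mathrm{cap}}$ does not ``record proximity to $\ba M$''; the cap phenomenon comes from $\ba X$. Over the $\nu$-collar of $\ba X$ the admissible strainer direction paired with $b_k$ degenerates, the strainer-coordinate construction is no longer regular, and the correct statement is that $N_\mathrm{cap}$ fibers over the level set $\partial_0 Y_\nu$ with ``cap'' fibers (preimages of normal segments to $\ba X$). Establishing that these preimages are topological caps and assemble into a $G$-equivariant bundle over $\partial_0 Y_\nu$, compatible on the interface with the bundle over the interior part, is a separate argument in \cite{Y 4-dim} (using stability/rescaling over the collar), not a consequence of the coordinate map or of the appendix flow theorem. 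As it stands, your proposal is an outline of the known strategy rather than a proof; the three items above are the places it would need substantial additional input.
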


\subsection{Rescaling argument} \label{ssub:rescaling}

To study the topology of neighborhoods of singular points, the following result is useful. 

\begin{thm}[\cite{Y ess}] \label{thm:rescaling}
Let $M_i$ be a sequence of Alexandrov spaces of curvature $\ge -1$ of the same dimension having the collapsing limit $X$. 
Let $p_i \in M$ and $R > 0$. 
Suppose that one of the following holds. 
\begin{enumerate}
\item For any $p_i' \in M_i$ with $|p_i,p_i'| \to 0$, $d_{p_i'}$ has a critical point in $B(p_i',R) \setminus \{p_i'\}$ for large $i$.
\item For any $p_i' \in M_i$ with $|p_i,p_i'| \to 0$, $\pa B(p_i',R)$ is not homeomorphic to any space of curvature $\ge 1$ for large $i$.
\end{enumerate}
Then, there exist points $\hat p_i \in M_i$ with $|p_i,\hat p_i| \to 0$ and positive numbers $\delta_i$ with $\delta_i \to 0$ such that any limit $(Y,y_0)$ of a rescaled sequence $\{((1/\delta_i)M_i, \hat p_i)\}$ is a noncompact Alexandrov space of nonnegative curvature with 
\begin{itemize}
\item $\dim Y \ge \dim X + 1$, 
\item $\dim S \le \dim Y - \dim X$, 
\item $\dim Y(\infty) \ge \dim X-1$.
\item Moreover, there exists an expanding map $\Sigma_pY \to Y(\infty)$, where $p\in Y$ is the limit of $\hat p_i$. 
\end{itemize}
Here, $S$ is a soul of $Y$ and $Y(\infty)$ denotes the ideal boundary of $Y$. 

Furthermore, if every $M_i$ is a double of some Alexandrov space $N_i$ and $p_i \in \ba N_i \subset D(N_i) = M_i$, then $\hat p_i$ can be taken as $\hat p_i \in \ba N_i$.
\end{thm}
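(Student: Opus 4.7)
The plan is a rescaling argument: locate the smallest scale at which the hypothesized obstruction occurs near $p_i$, blow up at that scale, and extract a nonnegatively curved limit from the rescaled sequence. For $p \in M_i$, define the \emph{obstruction scale} $\rho(p)$ as the infimum of $r > 0$ such that either (1) $d_p$ has a critical point in $B(p, r) \setminus \{p\}$, or (2) $\pa B(p, r)$ is not homeomorphic to any Alexandrov space of curvature $\ge 1$. The hypothesis gives $\rho(p_i') \le R$ for every sequence $p_i' \to p_i$. Choose $\hat p_i$ with $|p_i, \hat p_i| \to 0$ so that $\rho(\hat p_i)$ is nearly minimal among points in a shrinking neighborhood of $p_i$ (an Ekeland-type selection), and set $\delta_i := \rho(\hat p_i)$. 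The collapse $M_i \gh X$ forces $\delta_i \to 0$: otherwise, at a fixed positive scale, Theorem \ref{thm:stability} would transport the obstruction down to $X$, where near-minimality of $\hat p_i$ prevents its occurrence in a neighborhood.

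Consider the rescaled pointed sequence $(\tilde M_i, \hat p_i) := ((1/\delta_i) M_i, \hat p_i)$. Its curvature bound rescales to $-\delta_i^2 \to 0$, so any pointed Gromov--Hausdorff sublimit $(Y, y_0)$ is an Alexandrov space of nonnegative curvature. Noncompactness is immediate: since $M_i$ has bounded diameter $\le D$, the rescaled diameter $D/\delta_i$ tends to infinity, and by connectedness $\tilde M_i$ contains points at every prescribed distance from $\hat p_i$, so $Y$ is unbounded.

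For the dimension estimates, the collapsing fiber of $M_i \to X$ opens up at scale $\delta_i$ and contributes at least one new direction to $Y$ on top of the $\dim X$ directions visible in both $\tilde M_i$ and its macroscopic limit, yielding $\dim Y \ge \dim X + 1$. The asymptotic structure is analyzed by a second rescaling: for $\sigma \to \infty$, $(1/\sigma) Y$ converges to the tangent cone of $X$ at $\lim_i f_i(\hat p_i)$, where $f_i : M_i \to X$ realize the collapse; thus $Y(\infty)$ contains the link of this tangent cone and $\dim Y(\infty) \ge \dim X - 1$. The expanding map $\Sigma_p Y \to Y(\infty)$ sends a direction $\xi$ to the asymptotic class of the ray from $p$ in direction $\xi$ (which exists because $Y$ is nonnegatively curved and noncompact); its distance-nondecreasing property is a standard consequence of Toponogov comparison. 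The soul bound $\dim S \le \dim Y - \dim X$ follows from Sharafutdinov deformation retraction of $Y$ onto $S$ together with $\dim Y(\infty) \ge \dim X - 1$, since the Busemann directions in the normal bundle of $S$ account for at least $\dim X$ of the dimensions of $Y$.

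The boundary-preserving variant proceeds equivariantly. The reflection $\phi_{N_i}$ is an isometric involution of $M_i = D(N_i)$ fixing $\ba N_i$ pointwise and preserving $\rho$, so restricting the Ekeland selection to $\ba N_i$ keeps $\hat p_i \in \ba N_i$; the $\mathbb Z_2$-action passes to the limit with $y_0$ fixed. The main obstacle I expect is the simultaneous small- and large-scale control of $Y$: ensuring that the obstruction survives at scale $1$ in $Y$ while preserving enough of the macroscopic $X$-structure at $Y(\infty)$ requires delicate two-scale comparisons between balls of radius $O(1)$ and $O(1/\delta_i)$ in $M_i$. The soul estimate is the subtlest point, since it demands an exact count of collapse directions that become noncompact after rescaling.
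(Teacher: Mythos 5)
You are reviewing a statement that this paper does not prove at all: Theorem \ref{thm:rescaling} is imported verbatim from \cite{Y ess}, so the only fair comparison is with Yamaguchi's rescaling argument there (and with how the present paper actually uses it, e.g.\ the remark after \eqref{eq:Y=D(Y)} that $\hat p_i$ is obtained as the unique maximum point of an averaged distance function built from a $\phi_i$-invariant configuration). Measured against that, your sketch has genuine gaps at exactly the points where the theorem is hard. First, the choice $\delta_i:=\rho(\hat p_i)$ (your ``obstruction scale'') together with the claim that $\delta_i\to 0$ ``by the Stability Theorem'' does not work: stability requires non-collapsing limits of the same dimension, so it cannot transport a critical point of $d_{p_i'}$ down to the collapsed limit $X$; and even granting $\delta_i\to0$, blowing up at the obstruction scale does not by itself yield $\dim Y\ge \dim X+1$ --- the rescaling scale has to be tied to the local collapsing/fibration scale, which is precisely what the actual selection of $(\hat p_i,\delta_i)$ in \cite{Y ess} is engineered to do. The same issue infects your boundary addendum: restricting an Ekeland-type selection to $\ba N_i$ destroys the near-minimality comparison with \emph{all} nearby points; in the real argument $\hat p_i\in\ba N_i$ comes for free because it is the \emph{unique} maximum of a $\phi_i$-invariant function, hence a fixed point of $\phi_i$.

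Second, the conclusion you treat as routine --- the expanding map $\Sigma_pY\to Y(\infty)$ --- is in fact the crux, and your construction proves the opposite inequality. In nonnegative curvature, comparison angles are monotone non-increasing along rays, so the natural map sending a ray direction $\xi$ to its class at infinity is distance \emph{non-increasing} for the Tits metric, not expanding; moreover not every $\xi\in\Sigma_pY$ is a ray direction. Indeed a generic nonnegatively curved $Y$ admits no expanding map $\Sigma_pY\to Y(\infty)$ at all (a paraboloid has $\Sigma_pY\approx S^1$ of length $2\pi$ and $Y(\infty)$ a point). The existence of such a map is a special property of the limit taken at the carefully selected $\hat p_i$ with the carefully selected scale $\delta_i$, and it is what forces $\dim Y(\infty)\ge\dim X-1$ and, together with it, the soul bound $\dim S\le \dim Y-\dim X$; your appeals to ``the fiber opening up,'' Toponogov, and the Sharafutdinov retraction do not substitute for that mechanism. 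So while your overall blow-up strategy (point selection, rescaling, nonnegatively curved noncompact limit, asymptotic analysis) is the right genre, the specific selection principle and the expanding-map argument --- the actual content of the theorem in \cite{Y ess} --- are missing or reversed.
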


\begin{defn} \label{def:rescaling assumption} \upshape
Let $\{M_i\}$ be as in Theorem \ref{thm:rescaling}.
We say that a sequence of domains $D_i \subset M_i$ {\it can be rescaled} if $D_i$ contains $B(p_i,R)$ for fixed $R > 0$ such that $\{p_i\}$ and $R$ satisfies one of the assumption $(1)$ and $(2)$ in Theorem \ref{thm:rescaling}.
\end{defn}

Remark that the condition $(2)$ in Theorem \ref{thm:rescaling} implies $(1)$, due to Perelman's Morse theory (\cite{Per:elem}, \cite{PerAlex2}).

\subsection{Classification of nonnegatively curved three-spaces with boundaries} \label{ssec:classif=nonnegative}
We recall the classification result of three-dimensional Alexandrov spaces of nonnegative curvature with boundaries, due to the second author (\cite{Y 4-dim}).

First, we recall the construction of a soul of a nonnegatively curved Alexandrov space possibly with boundary (\cite{CG}). 
Let $C$ be a compact Alexandrov space of nonnegative curvature having nonempty boundary $\ba C$. 
Due to Theorem \ref{thm:dist-to-bdry}, the distance function 
\[
d_{\ba C} : C \to \mathbb R
\]
is concave on $C$. 
It follows that $d_{\ba C}$ is regular on $d_{\ba C}^{-1}((0, \max d_{\ba C}))$.
Then, the set $C^1 := d_{\ba C}^{-1} (\max d_{\ba C})$ of all maximum points of $d_{\ba C}$ is totally convex in $C$ and $\dim C^1 < \dim C$.
Hence, $C^1$ itself is a nonnegatively curved Alexandrov space possibly with boundary. 
If $C^1$ has boundary, 
repeating the same procedure, 
we obtain a totally convex subset $C^2$ of $C^1$ with $\dim C^2 < \dim C^1$. 
% with $\dim C^2 < \dim C^1$. 
Since $\dim C < \infty$, 
we finally obtain a sequence 
\[
C = C^0 \supset C^1 \supset C^2 \supset \cdots \supset C^k 
\]
of totally convex subsets $C^i$ of $C$ with $\dim C^i < \dim C^{i-1}$ and $\ba C^k = \emptyset$.
Then, we call $C^k$ a {\it soul} of $C$.

Let $X$ be a noncompact Alexandrov space of nonnegative curvature possibly with boundary. 
We fix an arbitrary point $p \in X$ and consider a ray 
\[
\gamma : [0, \infty) \to X
\]
starting from $p = \gamma(0)$. 
Note that $X$ has at least one ray starting from $p$, because $X$ is noncompact. 
Let us consider the {\it Busemann function} $b_\gamma$ associated with $\gamma$ defined by 
\[
b_\gamma (x) = \lim_{t \to \infty} d(\gamma(t),x) - t.
\]
Note that, by the triangle inequality, the above limit always exists and $b_\gamma$ is $1$-Lipschitz.
By the assumption that $X$ has nonnegative curvature, $b_\gamma$ is concave.
Let us consider the Busemann function $b_p$ with respect to $p$ by 
\[
b_p (x) = \inf_{\gamma} b_\gamma (x),
\]
where $\gamma$ runs over all rays starting from $p$, which is also a concave $1$-Lipschitz function on $X$.
Note that 
$b_p$ has a maximum value. 
For any $t \ge 0$, the subset 
\[
X^t := b_p^{-1} ([\max b_p - t, \max b_p])
\]
is compact and totally convex in $X$.
Fixing an arbitrary positive number $t$, we set 
\[
C := X^t
\]
which is a compact Alexandrov space of nonnegative curvature with boundary 
\[
\ba C = \pa X^t \cup \ba_X X^t = b_p^{-1} (\max b_p - t) \cup \ba_X X^t.
\]
By above argument, we obtain a soul of $C$, which is also called a soul of $X$.
Note that if $\ba X = \emptyset$, then $\ba C = \pa X^t$ and $C^1 = X^0$, 
and hence, this construction is the same as the usual construction of soul in an open Alexandrov space of nonnegative curvature.

A relation between a soul and the whole space is known as follows.
\begin{thm}[\cite{PerAlex2}] \label{thm:Sharaftodinov}
Let $X$ be an Alexandrov space of nonnegative curvature with a soul $S$.
We assume that either $X$ is compact and has boundary, or is noncompact.
Then, there exists a continuous map 
\[
\varphi : X \times [0,1] \to X 
\] 
such that 
\begin{itemize}
\item $\varphi_0 = \mathrm{id}_X$; 
\item $\varphi_1 (X)=S$; 
\item $\varphi_t$ is $1$-Lipschitz and $\varphi_t|_S = \mathrm{id}_S$ for every $t \in [0,1]$.
\end{itemize}
Here, $\varphi_t := \varphi(\,\cdot\,, t)$. 
In particular, $X$ has the same homotopy type as $S$.
\end{thm}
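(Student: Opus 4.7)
The plan is to construct $\varphi$ by concatenating gradient flows of the concave functions that appear in the soul construction, exploiting the fact that gradient flows of concave functions on Alexandrov spaces are $1$-Lipschitz.

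First I would recall the following general fact for Alexandrov spaces of nonnegative curvature: if $f$ is a concave function on such a space $Z$, then the Perelman--Petrunin gradient flow $\Phi^f_t$ of $f$ is defined for all $t\ge 0$, is continuous in $(x,t)$, is $1$-Lipschitz in $x$ for each fixed $t$ (this is the key point, and it follows from the concavity: the flow maps act as ``$1$-Lipschitz expansions away from level sets''), and satisfies $\Phi^f_t|_{\mathrm{Max}\,f} = \mathrm{id}$. Moreover, if $\mathrm{Max}\,f$ is compact and totally convex, then $\Phi^f_t$ converges uniformly on compact sets to a $1$-Lipschitz retraction onto $\mathrm{Max}\,f$ as $t\to\infty$.

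With this tool in hand, I would build $\varphi$ in stages. Stage $0$ handles the noncompact case: apply the gradient flow of the concave Busemann function $b_p$ (in its ascending direction) to retract $X$ onto $C := X^t = b_p^{-1}([\max b_p - t,\max b_p])$ in unit time. Stages $1,\dots,k$ are the soul stages: on $C^{i-1}$, apply the gradient flow of the concave function $d_{\ba C^{i-1}}$ provided by Theorem \ref{thm:dist-to-bdry} to retract $C^{i-1}$ onto $\mathrm{Max}\,d_{\ba C^{i-1}} = C^i$. After finitely many stages we reach $C^k = S$, because $\dim C^{i}<\dim C^{i-1}$. Concatenating these flows and reparametrizing time so that the $i$-th retraction takes place on the subinterval $[(i)/(k+1),(i+1)/(k+1)]$ of $[0,1]$ produces the desired map $\varphi:X\times[0,1]\to X$ with $\varphi_0=\mathrm{id}_X$ and $\varphi_1(X)=S$.

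It remains to verify that each $\varphi_t$ is $1$-Lipschitz on all of $X$ and fixes $S$ pointwise. The pointwise fixing on $S$ is clear: since $S\subset C^i$ for every $i$ and each $C^i$ lies in the maximum set of the corresponding concave function, every gradient-flow stage fixes $S$. The $1$-Lipschitz property is inherited stage by stage from the $1$-Lipschitz property of each individual flow, once we observe that the composition of $1$-Lipschitz maps is $1$-Lipschitz and that concatenating flows at a fixed time instant preserves the $1$-Lipschitz estimate. The homotopy equivalence of $X$ and $S$ then follows formally: $\varphi_1$ gives a deformation retraction onto $S$.

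The main technical obstacle is justifying that the gradient flow of each concave function in question is actually defined on the whole domain, continuous up to and including the boundaries between stages, and that the flow of $b_p$ really does push every point of $X$ into the sublevel set $C = X^t$ in finite time in a $1$-Lipschitz manner (Busemann functions on nonnegatively curved spaces are concave and proper on sublevel sets from above, which is precisely what makes this work). Once these analytic properties of the Perelman--Petrunin gradient flow are invoked, the construction of $\varphi$ reduces to bookkeeping.
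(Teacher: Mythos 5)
The paper itself does not prove Theorem \ref{thm:Sharaftodinov}; it quotes it from Perelman \cite{PerAlex2}, so your argument must stand on its own. Its skeleton --- retract along gradient flows of the concave functions occurring in the soul construction of Subsection \ref{ssec:classif=nonnegative}, using that gradient flows of concave functions on nonnegatively curved spaces are distance nonincreasing (\cite{PP QG}, \cite{Pet Semi}) --- is the natural and essentially standard route, but as written it has two genuine gaps. The first is Stage $0$: the gradient flow of $b_p$ fixes only $\mathrm{Max}\,b_p=X^0$, not $C=X^t$, so it is not a retraction onto $C$ and in general it moves the soul, destroying the required property $\varphi_t|_S=\mathrm{id}_S$. (In the paper's construction the soul is built from $C=X^t$ and need not lie in $X^0$: for $X=\R^2_+$, $p=(0,1)$ and $t>1$ one finds $\mathrm{Max}\,b_p=\{0\}\times[0,1]$, while the soul is the single point $(0,(t+1)/2)$.) The standard repair is to flow the truncated function $\min(b_p,\max b_p-t)$, which is again concave and has maximum set exactly $X^t$. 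Moreover ``in unit time'' cannot be right: $b_p$ increases along its gradient flow at rate $|\nabla b_p|^2\le 1$, so a point with $b_p(x)=\max b_p-R$ needs time at least $R-t$ to enter $X^t$; each stage reaches its target set in general only as the flow time tends to infinity, and stopping each point individually when it hits $X^t$ does not obviously preserve the $1$-Lipschitz bound.

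The second and deeper gap is the ``general fact'' you invoke: that $\Phi^f_t$ converges, uniformly on compacta, to a $1$-Lipschitz retraction onto $\mathrm{Max}\,f$ as $t\to\infty$. What is standard is the $e^{\lambda t}$-Lipschitz estimate (hence $1$-Lipschitz for concave $f$), completeness of the flow, and that maximum points are fixed. It is also not hard to show, within a compact superlevel set, that $f(\Phi_t(x))\nearrow\max f$ and that $d(\Phi_t(x),\mathrm{Max}\,f)$ decreases to $0$ (use $|\nabla f|(y)\ge(\max f-f(y))/d(y,\mathrm{Max}\,f)$ and compactness), and Arzel\`a--Ascoli then yields sublimits of $\Phi_t$ that are $1$-Lipschitz retractions onto $\mathrm{Max}\,f$. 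But your homotopy needs the actual limit --- continuity of $\varphi$ at the right endpoint of each stage after reparametrizing $[0,\infty)$ onto a subinterval --- i.e.\ an argument that the gradient curves do not oscillate near $\mathrm{Max}\,f$, or an alternative endgame such as composing finite-time flows between nested superlevel sets with a diagonal argument. This convergence is exactly where the content of the theorem sits, and the proposal asserts it rather than proving it; until it is supplied (with additional care in the noncompact Stage $0$, where the superlevel sets are only exhausted by compacta), the proof is incomplete. The remaining bookkeeping is fine: stages $1,\dots,k$ fix $S$ since $S\subset C^i=\mathrm{Max}\,d_{\ba C^{i-1}}$, compositions and limits of $1$-Lipschitz maps are $1$-Lipschitz, and total convexity of the sets $C^i$ in $X$ makes the intrinsic and ambient Lipschitz bounds agree.
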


When $X$ is three-dimensional, it is known that the geometry of a soul determines the topology and geometry of $X$ (\cite{SY}, \cite{MY}, \cite{Y 4-dim}).
We recall how the topology and geometry of noncompact 3-dimensional Alexandrov spaces with boundary are determined by 
their souls, where are needed in later sections.

\par\medskip

We consider the following three-dimensional Alexandrov
spaces with nonnegative curvature, which play
important roles.
\par\n (1)\,
Let $M_{\mathrm{pt}}$ denote the quotient space of $\mathbb R^2 \times S^1$ by the relation $(v,z) \mapsto (-v,\bar z)$.
Here, $S^1$ is regarded as a unit circle in $\mathbb C$ centered at $0$ and $\bar z$ is the complex conjugation of $z$. 
\par\n (2)\,
Let $M_{\mathrm{pt}}^+$ denote the quotient space of $\mathbb R^2 \times [-1,1]$ by the relation $(v,t) \sim (-v,-t)$.

Note that $M_{\mathrm{pt}}$ is homeomorphic to the 
double of $M_{\mathrm{pt}}^+$.

\begin{thm}[Soul Theorem {\cite[Theorem 7.13 Case B]{Y 4-dim}}] \label{soul theorem}
Let $X^3$ be a three-dimensional noncompact Alexandrov space of nonnegative curvature with boundary.
Suppose that $\ba X^3$ has only one component.
Let $S$ be a soul of $X^3$.
\begin{itemize}
\item[(1)] If $\dim S = 2$, then $X^3$ is isometric to $S \times [0, \infty)$.
\item[(2)] If $\dim S = 1$, then $X^3$ is isometric to $(\mathbb R \times N^2) / \Lambda$, where $N^2$ is either homeomorphic to $\mathbb R^2_+$ or isometric to $\mathbb R \times I$,
and $\Lambda \simeq \mathbb Z$. 
In particular, $X^3$ is homeomorphic to an $N^2$-bundle over $S^1$.
\item[(3)] Suppose $\dim S = 0$. 
If $X^3$ has two ends, then it is isometric to a product $\mathbb R \times X_0$ with $X_0 \approx D^2$. 

Suppose that $X^3$ has exactly one end, and let $C$ be the maximum set (possibly empty) of $d_{\ba X^3}$.
Then $C$ is either empty or of dimension $\ge 1$.
\begin{itemize}
\item[(a)] If $C$ is empty, then $X^3$ is homeomorphic to $\mathbb R^3_+$; 
\item[(b)] If $\dim C = 1$, then $C$ is a geodesic ray, and $X^3$ is homeomorphic to ether $\mathbb R^3_+$
or the identification space $D^2 \times \mathbb R / (x, y) \sim (-x, -y)$; 
\item[(c)] If $\dim C = 2$, then $C$ is homeomorphic to 
 $\mathbb R^2$ or $\mathbb R^2_+$.
\begin{itemize}
\item[(i)]  
  If $C \approx \mathbb R^2$, then $X$ is either homeomorphic to $\mathbb R^3_+$ or isometric to one of the quotient spaces 
\begin{align*}
 \hspace{1cm} & \hspace{1cm}\hat C \times [-t,t] / (x,y) \sim (\sigma(x), -y), \\
& \hspace{1cm} S_{\ell}^1 \times \mathbb R \times [-t,t] /(z,x,y) \sim (\bar z,-x,-y),
\end{align*}
for some $t, \ell> 0$,
where $\hat C$ is a branched double covering of $C$ with a single branching point that is a unique topological singular point in $C$, and $\sigma$ is the deck transformation of the branched covering $\hat C \to C$. 
\item[(ii)] 
  If $C \approx \mathbb R^2_+$, then $X^3$ is either homeomorphic to $\mathbb R^3_+$ or isometric to the quotient 
$$
  D\times\R/(x,y)\sim (\sigma(x),-y),
$$
where $D$ is a compact Alexandrov surface of nonnegative curvature with boundary such that 
\begin{itemize}
\item the farthest points from $\pa D$ forms a segment or a point, say $L\,;$
\item $D$ is symmetric with respect to the midpoint $O$ of $L$,
and $\sigma$ is the symmetry of $D$ about $O$.
\end{itemize}
\end{itemize}
\end{itemize}
\end{itemize}
\end{thm}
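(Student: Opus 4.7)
The plan is to prove this classification by induction on the dimension of the soul $S$, combining the soul construction reviewed in Section~\ref{ssec:classif=nonnegative} (iterated maxima of $d_{\partial X^3}$ and the Busemann function $b_p$) with splitting arguments for nonnegatively curved Alexandrov spaces and with the Sharafutdinov-type retraction of Theorem~\ref{thm:Sharaftodinov}. The starting point is the concavity of $d_{\partial X^3}$ (Theorem~\ref{thm:dist-to-bdry}) and of $b_p$; these two concave functions together produce a soul $S$ and a $1$-Lipschitz retraction $\varphi_1 : X^3 \to S$. The connectedness hypothesis on $\partial X^3$ will be used throughout to rule out certain product pieces.

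Case $\dim S = 2$: the soul is a closed nonnegatively curved surface without boundary, and its normal direction in $X^3$ is uniquely determined since the boundary is connected. Applying the Cheeger--Gromoll type splitting for Alexandrov spaces (a line in the normal direction is produced by the Busemann function of a ray orthogonal to $S$) yields the isometry $X^3 \cong S \times [0,\infty)$.

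Case $\dim S = 1$: $S$ is either a line or a circle, but a line soul would force two ends and hence two boundary components by the splitting theorem, which contradicts connectedness of $\partial X^3$; so $S \approx S^1$. Passing to the universal cover $\tilde X^3$, the lifted soul is a line, so the splitting theorem gives $\tilde X^3 = \mathbb R \times N^2$ for a two-dimensional nonnegatively curved $N^2$ with one-component boundary. The two-dimensional classification of such $N^2$ forces $N^2 \approx \mathbb R^2_+$ or $N^2 \cong \mathbb R \times I$, and the deck $\Z$-action descends to produce the claimed bundle structure.

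Case $\dim S = 0$: Here everything is governed by $C := (d_{\partial X^3})^{-1}(\max)$. If $X^3$ has two ends, a double Busemann argument splits off an $\mathbb R$-factor, leaving a compact nonnegatively curved surface with point-soul, which must be $D^2$. For the one-ended case we split by $\dim C$: if $C=\emptyset$ then $d_{\partial X^3}$ is globally regular and proper and Stability (Theorem~\ref{thm:stability}) at infinity identifies $\partial X^3 \approx \mathbb R^2$, so $X^3 \approx \mathbb R^3_+$; if $\dim C = 1$ then $C$ is a geodesic ray and the normal bundle is either trivial or carries a $\mathbb Z_2$-holonomy, yielding the two models; if $\dim C = 2$ then $C$ is a nonnegatively curved surface, so $C \approx \mathbb R^2$ or $C \approx \mathbb R^2_+$, and one shows that $X^3$ is an $I$-bundle over $C$ with involution symmetry, producing the listed quotients $\hat C \times [-t,t]/\sim$, $S^1_\ell \times \mathbb R \times [-t,t]/\sim$, or $D \times \mathbb R/\sim$ after analyzing the possible branched double covers $\hat C \to C$ at topological singular points.

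The main technical obstacle is the subcase $\dim S = 0$ with $\dim C = 2$. One must prove that the normal exponential map from $C$ is a surjective local isometry onto $X^3$ modulo a $\mathbb Z_2$-identification, classify the admissible shapes of $C$ and of the involutions compatible with $\partial X^3$, and recognize when branching is forced by topological singular points of $C$. The rigidity needed is that concavity of $d_{\partial X^3}$ together with $d_{\partial X^3} \equiv \max$ on $C$ forces $d_{\partial X^3}$ to be affine along geodesics transverse to $C$; this yields a metric product structure on the normal $I$-bundle before taking the $\mathbb Z_2$-quotient, and the symmetry of $D$ about the midpoint $O$ of the farthest-point locus $L$ is extracted from the involution. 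All other subcases are then either direct consequences of the two-dimensional classification or follow by the same splitting/retraction techniques applied to lower-dimensional totally convex subsets produced during the soul iteration.
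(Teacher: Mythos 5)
The paper does not prove Theorem \ref{soul theorem}: it is quoted from \cite[Theorem 7.13, Case B]{Y 4-dim}, so there is no in-paper argument to compare with, and your proposal has to be judged as a self-contained proof attempt. As such it is an outline rather than a proof, and it has gaps at exactly the points where the cited theorem has content. First, in the case $\dim S=2$ the argument as stated fails: the model $S\times[0,\infty)$ contains no line meeting $S$ transversally, so the Busemann function of a ray orthogonal to $S$ cannot ``produce a line'' and no Cheeger--Gromoll splitting in the normal direction is available; the actual mechanism must be rigidity of the concave function $d_{\ba X^3}$ (or a doubling/reflection argument across $\ba X^3$), which you do not carry out. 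Similarly, for $\dim S=1$, the exclusion of a line soul via ``two ends hence two boundary components'' is not a correct implication (a split $\R\times Y$ has one end when $Y$ is noncompact; the real reason $S$ is a circle is that souls are compact and boundaryless by construction), and restricting $N^2$ to $\R^2_+$ or $\R\times I$ requires using $\dim S=1$ to rule out, e.g., $N^2=S^1\times[0,\infty)$, which you do not address.

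More seriously, the whole of case (3) --- that $C$ is a geodesic ray when $\dim C=1$ together with the two resulting models, and in (c) the affineness of $d_{\ba X^3}$ transverse to $C$, the metric $I$-bundle structure over $C$, the classification of the admissible involutions, the branched double cover $\hat C\to C$ with a single branch point at the topological singular point, and the symmetric surface $D$ in (c)(ii) --- is asserted with ``one shows'' / ``is extracted from'' rather than proved. These rigidity statements are precisely the hard content of \cite[Theorem 7.13]{Y 4-dim}, so a proof that presupposes them is circular in effect. The overall strategy (concavity of $d_{\ba X^3}$, the soul iteration, splitting arguments, and the case analysis on $\dim C$) is consistent with how such soul theorems are established, but as written the proposal does not constitute a proof of the stated classification.
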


As a corollary to Theorem \ref{soul theorem}, we obtain the geometry and topology of a large ball around the soul $S$ of 
$X$, which is used to classify the local topologies of collapsing Alexandrov three-spaces with boundaries.

We set
\begin{align*}
B_\mathrm{pt} &:= D^2 \times S^1 / (x,z) \sim (-x,\bar z)
       \subset M_\mathrm{pt} \\
B_\mathrm{pt}^+ &:= D^2 \times [-1,1] / (x,t) \sim (-x,-t)
       \subset M_\mathrm{pt}^+.
\end{align*}

\begin{cor} \label{cor:soul thm ball}
Let $X^3$ and $S$ be as in Theorem \ref{soul theorem}. 
Let $B$ be a closed ball around $S$ with a large radius.
Then, the following holds.
\begin{itemize}
\item[(1)] If $\dim S = 2$, $B$ is isometric to $S \times [0, r]$ for some $r > 0$ and $\ba B = S \times \{0\}$ and $\pa B = S \times \{r\}$.
\item[(2)] If $\dim S = 1$, $B$ is isometric to $(\mathbb R \times B_0) / \mathbb Z$, that is $B_0$-bundle over $S^1$,
where $B_0$ is homeomorphic to $D^2$. 
Furthermore, $\pa B$ is homeomorphic to an $I$-bundle or an $(I \sqcup I)$-bundle over a circle.

\item[(3)] Suppose $\dim S = 0$. 
If $X^3$ has two ends, then $B$ is isometric to a product $[-r,r] \times X_0$ with $X_0 \approx D^2$ for some $r > 0$ and $\partial B = \{\pm r\} \times X_0$.

Suppose that $X^3$ has exactly one end, and let $C$ be the maximum set of $d_{\ba X^3}$.
\begin{itemize}
\item[(a)] If $C = \emptyset$, then $B \approx D^3$ and $\partial B \approx D^2$; 
\item[(b)] If $\dim C = 1$, then $B$ is homeomorphic to $D^3$ with $\partial B \approx D^2$ or $K_1(P^2)$ 
with $\partial B \approx D^2$;
\item[(c)] If $\dim C = 2$, then $C \approx \mathbb R^2$ or $\mathbb R^2_+$.
\begin{itemize}
\item[(i)] If $C \approx \mathbb R^2$, then one of the following holds:
\begin{itemize}
\item $B \approx D^3$ with $\partial B \approx D^2$; 
\item $B \approx B_{\mathrm{pt}}^+$ with $\partial B \approx \Mo$; 
\item $B \approx S^1 \times [-1,1] \times [-1,1] / (x,s,t) \sim (-x,-s,-t)\approx B_{\mathrm{pt}}$ with $\pa B \approx (S^1 \times \{\pm 1\} \times [-1,1] /\!\!\sim) \approx S^1 \times I$.
\end{itemize}
\item[(ii)] If $C \approx \mathbb R^2_+$, $B \approx D^3$ with $\partial B \approx D^2$ or $B \approx K_1(P^2)$ with $\partial B \approx D^2$.
\end{itemize}
\end{itemize}
\end{itemize}
\end{cor}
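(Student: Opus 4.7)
The plan is to apply the Soul Theorem (Theorem \ref{soul theorem}) to $X^3$ and, in each case it provides, read off the homeomorphism type of a sufficiently large closed ball $B = B(S,r)$ around the soul $S$ directly from the explicit isometric or topological description of $X^3$. In every listed case $X^3$ is either a Riemannian product over a compact slice, a free $\Z$-quotient of such a product, or a $\Z_2$-quotient of (compact)\,$\times\,\R$; consequently, taking $r$ larger than the diameter of any compact factor, the ball $B$ is simply the corresponding quotient of a finite ``cylinder'' over the compact slice.

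Parts (1), (2), and the two-ended subcase of (3) are essentially immediate: $B = S \times [0,r]$ in (1); $B = (\R \times B_0)/\Z$ for a large topological disk $B_0 \subset N^2$ in (2), where the two possibilities $N^2 \approx \R^2_+$ versus $N^2 \approx \R \times I$ yield $\partial B$ an $I$-bundle or an $(I \sqcup I)$-bundle over $S^1$; and in the two-ended case $B = [-r,r] \times X_0$ with $X_0 \approx D^2$.

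For the one-ended case with $\dim S = 0$ I proceed by $\dim C$. If $C = \emptyset$, then $X^3 \approx \R^3_+$ so $B \approx D^3$. If $\dim C = 1$ and $X^3 \not\approx \R^3_+$, then $X^3 = D^2 \times \R/(x,y)\sim(-x,-y)$, the soul is the unique fixed point of the involution, and $B$ is the $\Z_2$-quotient of $D^2 \times [-r,r]$; the involution acts freely on $\partial(D^2 \times [-r,r]) \approx S^2$ with quotient $P^2$, giving $B \approx K_1(P^2)$. If $\dim C = 2$, the three quotient models of Theorem \ref{soul theorem}(3)(c) yield $B \approx B_\mathrm{pt}^+$ in the $\hat C \times [-t,t]/\!\sim$ model, $B \approx B_\mathrm{pt}$ in the $S_\ell^1 \times \R \times [-t,t]/\!\sim$ model, and $B \approx D^3$ or $K_1(P^2)$ in the $\R^2_+$-soul model, by matching the defining $\Z_2$-identifications against those that define $B_\mathrm{pt}^+$ and $B_\mathrm{pt}$.

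The main obstacle is the computation of $\partial B$ inside the $\Z_2$-quotient models, since one must separate the topological boundary $\partial B = S(S,r)$ from the Alexandrov boundary $\ba B$ before taking the quotient. In the $B_\mathrm{pt}^+$ case, $\partial B$ descends from the lateral cylinder $\pa \hat D \times [-t,t]$ modulo $(x,y) \sim (\sigma(x),-y)$; because $\sigma$ acts on $\pa \hat D \approx S^1$ as an orientation-reversing involution without fixed points, the quotient is a M\"obius band, giving $\partial B \approx \Mo$. The corresponding analysis in the $B_\mathrm{pt}$ model produces a cylinder $S^1 \times I$. Tracking which face of the product contributes to $\partial B$ versus to $\ba B$ in each model, and correctly identifying the resulting $\Z_2$-orbit space as $\Mo$, $S^1 \times I$, $D^2$, or $P^2$ as required, is the substantive technical content of the proof.
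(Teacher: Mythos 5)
Your proposal is correct and follows essentially the same route as the paper, which states this corollary without a written proof precisely because it is obtained by reading off a large ball around the soul in each explicit model of Theorem \ref{soul theorem} and then separating the topological boundary $\pa B$ from the Alexandrov boundary $\ba B$ in the $\Z_2$-quotient models, exactly as you describe. One small correction to your wording in case (c)(i): the deck transformation $\sigma$ restricted to $\pa \hat D \approx S^1$ is the fixed-point-free antipodal map, which is orientation-\emph{preserving} (an orientation-reversing involution of $S^1$ necessarily has fixed points); the M\"obius band appears because the combined involution $(x,y)\mapsto(\sigma(x),-y)$ on the lateral annulus $\pa \hat D\times[-t,t]$ is free and orientation-reversing, so your conclusion $\pa B\approx \Mo$ stands.
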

In what follows, for convenience, 
we summarize the above result as follows.
Let $S$ and $B$ be as in the Corollary \ref{cor:soul thm ball}. 

\renewcommand{\arraystretch}{1.2}
\begin{table}[htb] 
	\begin{tabular}{|c|c|c|c|} \hline
		$\dim S$ & $B \equiv$ & $\partial B$ & supplement \\ \hline 
		2 & $S \times [0,r]$ & $S \times \{r\}$ & $r>0$ \\ \hline 
		$\begin{array}{c}
			\\[-0.7em] 1 \\[-0.7em]
		\end{array}$ & $
		\begin{array}{c}
			\\[-0.7em]
			(\mathbb R \times B_0) /\mathbb Z
			\\[-0.7em]
		\end{array}$ & 
		$I\text{-bundle over }S^1$ & $
		\begin{array}{c}
			\\[-0.5em] 
			B_0 \approx D^2 \\[-0.5em]
		\end{array}
		$ \\ \cline{3-3} 
		&&$(I \sqcup I)\text{-bundle over }S^1$ & \\ \hline 
	\end{tabular}
	\pmed
	\caption{Topology of $(B,\pa B)$}
	\label{table:B-paB}
\end{table}
\renewcommand{\arraystretch}{1}

When $\dim S=0$ and $X$ has exactly one end, 
we have the following. Let $C$ be the set of maximum points of $d_{\ba X}$ as before.

\renewcommand{\arraystretch}{1.2}
\begin{table}[htb] 
	\begin{tabular}{|c|c|c|c|} \hline
		$\dim C$ & $C$ & $B \approx$ & $\partial B \approx$ \\ \hline 
		$-1$ & $=\emptyset$ & $D^3$ & $D^2$ \\ \hline 
		1 & $
		\begin{array}{c}
			\\[-0.7em]
			=\text{a ray }
			\\[-0.7em]
		\end{array}
		$ & $D^3$ & $
		\begin{array}{c}
			\\[-0.7em]
			D^2 \\[-0.7em]
		\end{array}
		$ \\ \cline{3-3}
		&& $K_1(P^2)$ & \\ \hline 
		& & $D^3$ & $D^2$ \\ \cline{3-4}
		& $\approx \mathbb R^2$ & $B_\mathrm{pt}^+$ & $\Mo$ \\ \cline{3-4}
		2 & & $B_{\mathrm{pt}}$ & $S^1\times I$ \\ \cline{2-4} 
		& $\begin{array}{c}
			\\[-0.7em]
			\approx \mathbb R^2_+
			\\[-0.7em]
		\end{array}$ & $D^3$ & $
		\begin{array}{c}
			\\[-0.7em]
			D^2
			\\[-0.7em]
		\end{array}$ \\ \cline{3-3} 
		& & $K_1(P^2)$ & \\ \hline 
	\end{tabular}
	\pmed
	\caption{Topology of $(B,\pa B)$}
	\label{table:BpaB2}
\end{table}
\renewcommand{\arraystretch}{1}

Next we discuss the compact case.
First, let $X$ be a noncompact Alexandrov three-space without boundary and with codimension one soul $S$.
Then $X$ is isometric to a singular line bundle, say $L(S)$, over $S$ in the sense of Definition \ref{def:gen I-bdl}
(\cite{SY}, \cite{Y 4-dim}).
When the number of singular fibers in 
 $L(S)$ is $k$ and $S$ is homeomorphic to 
 a closed surface $S_0$, we also write as 
 $L(S)=L(S_0;k)$. Since the singular fibers in $L(S)$ occur
 at essential singular points of $S$, we have 
$k\le 4$. 

If $L(S)$ is not a topological manifold, then it is 
isometric to one of the following classes
 $$
 L(S^2;2),\quad L(S^2;4), \quad L(P^2;2),
 $$
(see \cite[Section 17]{Y 4-dim}, \cite[Section 2]{MY}).
By Lemma \cite[Corollary 2.56]{MY}, we have
\[
L(S)=\hat S \times \R / (x,t) \sim (\sigma (x), -t)
\]
where $\hat S$ is a branched covering of $S$
branched at the topological singular points of $L(S)$,
and $\sigma$ is an isometric involution of $\hat S$. 
Here
\beqq
\hat S=
\begin{cases}
&S^2  \quad \text{for $L(S^2;2)$} \\
&T^2  \quad \text{for $L(S^2;4)$} \\
&K^2  \quad \text{for $L(P^2;2)$}
\end{cases}
\eeqq

Let $B(S)$ denote a large metric ball
around $S$ in $L(S)$.

\begin{thm}[{\cite[Theorem 17.13. Case A]{Y 4-dim}}] \label{soul theorem compact}
Let $X$ be a compact Alexandrov three-space of nonnegative curvature with boundary. 
Let $S$ denote a soul of $X$.
Then, the topology of $X$ is determined as follows.
If $\ba X$ is disconnected, then $X$ is isometric to $X_0 \times I$, where $X_0$ is a component of $\ba X$.
Suppose that $\ba X$ is connected.
\begin{itemize}
\item[(1)]
If $\dim S = 0$, then $X$ is homeomorphic to $D^3$, $K_1(P^2)$ or $B_\mathrm{pt};$
\item[(2)]
If $\dim S = 1$, then $X$ is homeomorphic to a $D^2$-bundle over $S^1\,;$
\item[(3)]
If $\dim S = 2$, then $X$ is isometric to one of 
a flat $I$-bundle over $S$, 
 $B(S^2;2)$, $B(S^2;4)$ and $B(P^2;2)$.
\end{itemize}
\end{thm}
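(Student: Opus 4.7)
My plan is to argue by case analysis based on the connectivity of $\ba X$ and on $\dim S$, using the concavity of $d_{\ba X}$ (Theorem \ref{thm:dist-to-bdry}), the Sharafutdinov retraction (Theorem \ref{thm:Sharaftodinov}), and the local models provided by Corollary \ref{cor:soul thm ball}.

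First I would dispose of the disconnected boundary case. Let $B_1, B_2$ be distinct components of $\ba X$; since each $B_i$ is extremal, each $d_{B_i}$ is a $1$-Lipschitz concave function on $X$. Along any minimizing segment from $B_1$ to $B_2$, concavity forces $d_{B_1}$ to be affine of slope $1$, so $\max d_{B_1}$ is attained on $B_2$ and every maximizing segment realizes $d_{B_1}$. Applying the gradient flow of $d_{B_1}$ (well defined for concave functions on an Alexandrov space), I would show that the level sets $d_{B_1}^{-1}(t)$ are mutually isometric, giving $X$ isometric to $B_1 \times [0, \max d_{B_1}]$ with $B_2$ the opposite face. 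In particular $\ba X$ has at most two components, and $X \cong X_0 \times I$ as claimed.

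Now assume $\ba X$ is connected, and analyze according to $\dim S$. If $\dim S = 2$, then $S$ is a closed surface totally convex in $X$, and the Sharafutdinov retraction $\varphi_1 : X \to S$ is a deformation retract. Studying the normal exponential map of $S$ into $X$ and using the local cone structure, I would identify $\varphi_1$ as a (possibly singular) $I$-bundle, with singular fibers only over essential singular points of $S$. Flatness of the bundle away from singular fibers follows from the rigidity of concave affine functions in nonnegative curvature. The essential singular points of a closed nonnegatively curved $2$-surface $S$ are classified by Euler-characteristic constraints into the three configurations giving $B(S^2;2)$, $B(S^2;4)$, $B(P^2;2)$; otherwise $S$ has no essential singular points and one obtains a flat $I$-bundle. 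If $\dim S = 1$, then the totally convex, boundaryless Alexandrov $1$-space $S$ is a circle, and each fiber $\varphi_1^{-1}(s)$ is a compact totally convex nonnegatively curved surface with connected boundary, hence homeomorphic to $D^2$; thus $X$ is a $D^2$-bundle over $S^1$. Finally, if $\dim S = 0$, then $S = \{p\}$, the whole $X$ is a ``large ball around the soul'', and I would invoke the parallel classification summarized in Corollary \ref{cor:soul thm ball}(3) (applied to the compact situation by either doubling or by direct inspection of $\Sigma_p X$) to conclude $X \approx D^3$, $K_1(P^2)$, or $B_{\mathrm{pt}}$, according to whether the maximum set of $d_{\ba X}$ is a point, a segment, or a $2$-disk with the appropriate involutive symmetry.

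The main obstacle is the $\dim S = 2$ case: upgrading the homeomorphism statement to an isometric identification with one of the four explicit flat models requires careful control over geodesics meeting $S$ orthogonally and over the $\mathbb{Z}_2$-monodromy arising near each essential singular point. I expect to pass to the double $D(X)$, which is a closed nonnegatively curved Alexandrov $3$-space carrying an isometric involution fixing $\ba X$, and to combine the soul splitting in $D(X)$ with the equivariant totally-geodesic rigidity of $S \subset X$ to pin down the isometry type. The remaining cases ($\dim S \le 1$ and $\dim S = 0$) then reduce to topological verifications against the model list, which are routine given Corollary \ref{cor:soul thm ball}.
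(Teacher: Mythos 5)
The paper does not actually prove Theorem \ref{soul theorem compact}: it is quoted as a known classification, namely \cite[Theorem 17.13, Case A]{Y 4-dim}, so there is no internal argument to compare you with, and your sketch has to stand on its own. As written it does not, for three concrete reasons. First, your disconnected-boundary case rests on the assertion that $d_{B_i}$ is concave ``since each $B_i$ is extremal.'' Extremality of a subset does not imply concavity of its distance function; the concavity statement available in the paper, Theorem \ref{thm:dist-to-bdry}, is only for the distance to the \emph{whole} boundary $\ba X$. (A flat annulus shows that distance to a single ``boundary-like'' hypersurface is in general convex rather than concave; in nonnegative curvature the concavity of $d_{B_1}$ is essentially equivalent to the product splitting you are trying to establish, so it cannot be taken as the starting point. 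One genuinely needs an argument, e.g.\ partial doubling along $B_2$ via a gluing theorem, or a rigidity/splitting argument in $D(X)$.)

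Second, in the cases $\dim S=1$ and $\dim S=2$ you use the Sharafutdinov retraction $\varphi_1$ of Theorem \ref{thm:Sharaftodinov} as if it were a fibration: you claim each fiber $\varphi_1^{-1}(s)$ is a compact totally convex nonnegatively curved surface with connected boundary, hence a disk. Theorem \ref{thm:Sharaftodinov} only provides a $1$-Lipschitz deformation retraction; its point preimages need not be totally convex, need not be surfaces, and carry no a priori structure, so the conclusion ``$X$ is a $D^2$-bundle over $S^1$'' does not follow this way. The correct mechanism is the one underlying the noncompact Soul Theorem \ref{soul theorem} quoted in the paper: for $\dim S=1$ one splits (the universal cover of) $X$ as $\R\times N^2$ and writes $X=(\R\times N^2)/\Lambda$, and for $\dim S=2$ one uses the rigidity of the concave function $d_{\ba X}$ when its maximum set has codimension one, which yields the description $\hat S\times[-a,a]/(x,t)\sim(\sigma(x),-t)$ analogous to the formula for $L(S)$ recalled in Section 4 (cf.\ \cite{MY}), from which the list consisting of flat $I$-bundles and $B(S^2;2)$, $B(S^2;4)$, $B(P^2;2)$ follows by the Gauss--Bonnet count of essential singular points. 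Third, case (3) is an \emph{isometric} classification, and you explicitly defer exactly this step (``the main obstacle''), offering only the intention to pass to $D(X)$ and use equivariant rigidity; likewise the $\dim S=0$ case is not literally covered by Corollary \ref{cor:soul thm ball}, which concerns noncompact spaces with one end, so the reduction ``by doubling or direct inspection of $\Sigma_pX$'' still has to be carried out. These missing steps are precisely the content of the cited result in \cite{Y 4-dim}, so the proposal, while pointing in a reasonable direction, does not yet constitute a proof.
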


\section{Generalized $I$-bundles and generalized Seifert fibrations}%%%M...
\label{sec:sing_fib}
In this section, we define the notions of singular 
fiber bundles used in the present paper.

We say that continuous maps $p : \tilde A \to A$ and 
$p' :\tilde A' \to A'$ between topological spaces are {\it isomorphic} 
if there are homeomorphism $\tilde q :\tilde A \to\tilde A'$ and $q : A \to A'$ such that $q \circ p = p' \circ\tilde q$.
Such a pair $(\tilde q,q)$ is called an isomorphism from $p$ to $p'$. 
Moreover, for subsets $E \subset A$, $E' \subset A'$ and homeomorphism $e : E \to E'$, $p$ and $p'$ are isomorphic {\it respecting} $e$ if there exists an isomorphism $(\tilde q, q)$ from $p$ to $p'$ satisfying 
$q = e$ on $E$.

\subsection{Generalized $I$-bundles}

The next example plays an important role in this paper.

\begin{ex} \upshape
Let $M_{\mathrm{pt}}^+$ denote the quotient space of $\mathbb R^2 \times [-1,1]$ by the relation $(v,t) \sim (-v,-t)$.
The natural projection defined as
\begin{equation} \label{eq:Mpt+}
\pi_{M_\mathrm{pt}^+} : M_{\mathrm{pt}}^+ \ni [v,t] \mapsto [v] \in \mathbb R^2/(v \sim -v)=K(S^1_\pi)
\end{equation}
is an $I$-bundle except over $[0]$. 
The singular fiber $\pi_{M_\mathrm{pt}^+}^{-1}([0])$ is also an arc. 
The preimage of the unit cone $K_1(S^1_\pi)$ by $\pi_{M_{\mathrm{pt}}^+}$ is denoted by $B_{\mathrm{pt}}^+$, which is homeomorphic to $K_1(P^2)$.
We denote by $\pi_{B_\mathrm{pt}^+} : B_{\mathrm{pt}}^+\to K_1(S^1_\pi)$ the restriction of $\pi_{M_\mathrm{pt}^+}$ to 
$B_{\mathrm{pt}}^+$.
 \end{ex}

The following notion of generalized $I$-bundles
is needed to describe the collapsing 
to Alexandrov surfaces in Sections \ref{sec:2-dim no bdry} and \ref{sec:2-dim with bdry}.

\begin{defn} \label{def:gen I-bdl} \upshape
Let $M$ be a three-dimensional topological orbifold with boundary and $X$ be a topological surface possibly with boundary. 
Let $f : M \to X$ be a continuous surjection.
We say that $f$ is a {\it generalized $I$-bundle} if it satisfies the following conditions: 
\begin{enumerate}
\item there exists a (possibly empty) discrete set $\mathcal S \subset X \setminus \partial X$ such that $f$ is a usual $I$-bundle over $X \setminus \mathcal S$; 
\item for each $x \in \mathcal S$, there exists an open neighborhood $U$ of $x$ in $X \setminus \partial X$ such that the restriction $f|_U : f^{-1}(U) \to U$ is isomorphic to \eqref{eq:Mpt+}, respecting $\{x\} \to \{[0]\}$.
\end{enumerate}

The set $\mathcal S$ is called the {\it singular locus} of $f$ and the preimage of a point of $\mathcal S$ is a {\it singular fiber}. 
\end{defn}

\begin{rem}\label{rem:branched-cover}\upshape
For a generalized $I$-bundle $f:M\to X$, 
let 
$N_0:=f^{-1}(\pa X)$, and $M_0$ denote the closure 
of $\pa M\setminus N_0$.
Then 
$\pa M$ is the gluing of $M_0$ and
$N_0$, and $f|_{M_0}:M_0\to X$ is a branched $\Z_2$-covering
branched at $\ca S$.
\end{rem}

The following is the basic example of local 
model of collapsing, which appears in 
Section \ref{sec:2-dim no bdry}.

\begin{ex}[Collapsing of $M_{\mathrm{pt}}^+$] \upshape \label{example: 2-dim no bdry}
For $\epsilon > 0$, let $X(\epsilon)$ be the Cartesian product $\mathbb R^2 \times [-\epsilon, \epsilon]$
equipped with the involution $\sigma$ defined by $\sigma (v, t) = (-v, -t)$ for $v \in \mathbb R^2$ and $t \in [-\e,\e]$. 
The quotient space $M_{\rm pt}^+(\epsilon)$ of $X(\epsilon)$ by 
$\sigma$ is an Alexandrov space of nonnegative curvature with boundary $\ba M_{\rm pt}^+(\epsilon) = \ba X(\epsilon) / \sigma$ whose intrinsic metric is isometric to the flat Euclidean metric on $\mathbb R^2$.
Clearly $M_{\rm pt}^+(\epsilon)$ has a unique topological singular point, which is the origin of $X(\e)$, and it converges to $K(S^1_\pi)$ as $\e\to 0$.
\end{ex}

\subsection{Generalized Seifert fibrations}

In this subsection, we define the notion of 
generalized Seifert fibrations extending 
the one given in \cite[Definition 2.48]{MY}
(see Definition \ref{def:gen S1-bdl}).

We first recall the notion of Seifert fibered solid torus.
Let $\mu, \nu$ be coprime positive integers with 
$\mu \ge \nu\ge 1$.
The quotient space of $\mathbb C \times S^1$ by the identification $(v,z) \sim (e^{2 \pi i/ \mu}v, e^{2\pi i \nu / \mu} z)$, is denoted by $V_{\mu, \nu}$. Let
\begin{equation} \label{eq:V-mu-nu}
\pi_{V_{\mu,\nu}} : V_{\mu,\nu} \ni [v,z] \mapsto [v] \in \mathbb C / v \sim e^{2\pi i/\mu}v
\end{equation}
be the projection.
The preimage of $D^2 / v \sim e^{2\pi i/ \mu} v$ by $\pi_{V_{\mu, \nu}}$ is called the Seifert fibered solid torus of type $(\mu,\nu)$.

The next example was already discussed in \cite[Example 1.2]{MY}, and played an important role there.

\begin{ex}[\cite{MY}]\upshape
Let $M_{\mathrm{pt}}$ denote the quotient space of $\mathbb R^2 \times S^1$ by the relation $(v,z) \mapsto (-v,\bar z)$, where $S^1$ is regarded as a unit circle in $\mathbb C$ centered at $0$ and $\bar z$ is the complex conjugation of $z$. 
We have the natural projection 
\begin{equation} \label{eq:Mpt}
\pi_{M_{\mathrm{pt}}} : M_{\mathrm{pt}} \ni [v,z] \mapsto [v] \in \mathbb R^2 / v\sim -v=K(S^1_\ell),
\end{equation}
which is a circle-bundle over $K(S^1_\ell) \setminus \{[0]\}$, while $\pi_{M_{\mathrm{pt}}}^{-1}([0])$ is an arc.
We denote by $B_{\mathrm{pt}}$ the preimage of 
$K_1(S^1_\pi)$ by $\pi_{M_{\mathrm{pt}}}$.
\end{ex}

Note that $M_{\rm pt}^+$ is a ``half of $M_\mathrm{pt}$''. That is,
the double of $M_\mathrm{pt}^+$ is
homeomorphic to $M_\mathrm{pt}$.

\begin{ex} \label{ex:boundary}\upshape
Let 
\[
 L^2:=D(\{ y\ge 0, -1\le z\le 1\}),
\]
and consider the projection 
\begin{align} \label{eq:boundary}
\pi:\R\times L^2\to \R^2_+, \quad 
\pi(x, [y,z])=(x,y).
\end{align}
This is an $S^1$-bundle over $\mathring{\R}^2_+$,
while $\pi^{-1}(x,0)$ is an arc for any $x\in\R$.
\eqref{eq:boundary} is nothing but
\eqref{eq:L2xId} below.
\end{ex}

We recall the notion of generalized Seifert
bundle defined in \cite[Definition 2.48]{MY}
to describe the collapsing of three-dimensional
{\it closed} Alexandrov space to Alexandrov surfaces. 

\begin{defn}[cf.~\cite{MY}] \upshape \label{def:gen Seif}
Let $M$ be a three-dimensional topological orbifold without boundary, and let $X$ be a topological surface possibly with boundary. 
A surjective continuous map $f : M \to X$ is called a {\it generalized Seifert bundle}if
there exists a (possibly empty) discrete set $\mathcal S$ of $X\setminus \pa X$ such that 
\begin{enumerate}
\item $f$ is an $S^1$-bundle over 
$X \setminus(\mathcal S\cup \pa X)$; 
\item for each $x \in \mathcal S$, there exists an open neighborhood $U$ of $x$ in $X$ such that the restriction of $f$ to $f^{-1}(U)$ is isomorphic to \eqref{eq:Mpt} or \eqref{eq:V-mu-nu} for some $(\mu,\nu)$, respecting the map $\{x\} \to \{[0]\}$ between singletons$\,;$
\item for each $x\in\pa X$, there exists an open neighborhood $U$ of $x$ in $X$ such that the restriction of $f$ to $f^{-1}(U)$ is isomorphic \eqref{eq:boundary}.
\end{enumerate}
The set $\mathcal S$ and the preimage of a point of $\mathcal S$ 
are called the singular locus and a singular fiber as before.
If a singular fiber is an arc (resp. a circle), then it is called a singular $I$-fiber (resp. a singular $S^1$-fiber) of $f$.
Singular fibers are often called exceptional fibers. 
\end{defn}

\begin{rem} \label{rem:bundle-ifference}\upshape
Definition \ref{def:gen Seif} (3) is slightly different from 
the corresponding one given in \cite[Definition 2.48]{MY}. In the present case, the fiber over a point of $\pa X$ is $I$ while it is a point in \cite[Definition 2.48]{MY}. Although both are essentially the same,
we employ Definition \ref{def:gen Seif} (3)
 because of establishing the equivalence
of the notions of generalized $I$ bundles and reflection invariant generalized Seifert bundles,
as stated in Lemma \ref{lem:MtoD(M)}.

When $X$ has no boundary, Definition \ref{def:gen Seif} is 
the same as \cite[Definition 2.48]{MY}.
\end{rem}

In Definition \ref{def:gen S1-bdl},
we shall extend the notion of generalized Seifert bundle substantially to the case that both $M$ and $X$ have 
nonempty boundaries. 

As the following lemma shows, the notion of 
a generalized $I$-bundle $M \to X$ is equivalent 
to that of a reflection-invariant generalized Seifert bundle
$D(M) \to X$.

\begin{lem}\label{lem:MtoD(M)}
Let $M$ and $X$ be as in Definition \ref{def:gen I-bdl}. 
If $f : M \to X$ is a generalized $I$-bundle in the sense of Definition \ref{def:gen I-bdl}, then the double $D(f) : D(M) \to X$ of $f$ that is
defined as $D(f)(\phi(x)) = D(f)(x) = f(x)$ for $x \in M$
is a generalized Seifert bundle in the sense of 
Definition \ref{def:gen Seif}, with no singular circle fibers. 

Conversely, if $F:D(M)\to X$ is a $\phi$-invariant
generalized Seifert bundle in the sense of 
Definition \ref{def:gen Seif}, then it induces 
a generalized $I$-bundle $f:M\to X$ in the sense of Definition \ref{def:gen I-bdl}.
\end{lem}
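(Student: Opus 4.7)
The strategy is a local-model analysis in both directions: we examine each of the three types of points in $X$ appearing in Definitions \ref{def:gen I-bdl} and \ref{def:gen Seif}, namely an interior point not in $\ca S$, a point of $\ca S$, and a boundary point of $\pa X$. The lemma will follow once we check, for each type, that the local model on the $I$-bundle side is precisely the $\phi$-quotient of the local model on the Seifert side.

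For the forward direction, suppose $f:M\to X$ is a generalized $I$-bundle. The map $D(f):D(M)\to X$ is well defined since $\phi$ fixes $\pa M$, and it is continuous and surjective. Over $x \in X \setminus (\ca S \cup \pa X)$, any trivialization $f^{-1}(U) \approx U \times I$ has $\pa M \cap f^{-1}(U) = U \times \pa I$, so doubling produces the trivial $S^1$-bundle $U \times S^1$. Over $x \in \ca S$ with local model $M_\mathrm{pt}^+ = \R^2 \times [-1,1]/\sigma$, the boundary is a single copy of $\R^2$, and a fundamental-domain computation, using that $\sigma$ already identifies $\R^2 \times \{-1\}$ with $\R^2 \times \{1\}$, shows that gluing two copies of $M_\mathrm{pt}^+$ along their boundary recovers $M_\mathrm{pt}$ as in \eqref{eq:Mpt}. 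Over $x \in \pa X$ with $U \approx \R^2_+$, the part of $\pa M$ inside $f^{-1}(U) \approx U \times I$ is the $\sqcup$-shape $(U \cap \pa X) \times I \cup U \times \pa I$, and doubling along it gives $\R \times L^2$ with projection \eqref{eq:boundary}. No exceptional $S^1$-fiber arises, and $D(f)$ is $\phi$-invariant by construction.

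For the converse, suppose $F : D(M) \to X$ is a $\phi$-invariant generalized Seifert bundle, and set $f := F|_M$. By $\phi$-invariance, $\phi$ preserves each fiber of $F$ and acts on it as an isometric involution. Over a generic $x \in X \setminus (\ca S \cup \pa X)$, the fiber $F^{-1}(x) \approx S^1$: if $\phi$ acted trivially on this fiber, then by continuity and rigidity of isometric $S^1$-actions, $\phi$ would be the identity on a $3$-dimensional neighborhood, contradicting that the fixed set $\pa M$ is $2$-dimensional. Hence $\phi$ acts as a reflection with two fixed points, and the two resulting arcs are respectively the fibers of $f$ and of $f \circ \phi$, giving $f$ a local $I$-bundle structure. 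Over $x \in \pa X$, the local model \eqref{eq:boundary} is $\R \times L^2$ with $L^2 = D(\text{half-strip})$, and the $\phi$-quotient restores $U \times I$. Over $x \in \ca S$ with local model \eqref{eq:Mpt}, the involution $\phi$, which is the unique (up to isomorphism) isometric involution of $M_\mathrm{pt}$ with a $2$-dimensional fixed set compatible with the Seifert structure, has quotient $M_\mathrm{pt}^+$ matching \eqref{eq:Mpt+}.

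The main obstacle is to handle the remaining local model \eqref{eq:V-mu-nu}, i.e., the Seifert solid tori $V_{\mu,\nu}$ with $\mu \ge 2$, in the converse direction. Lifting a candidate $\phi$ to the cover $\C \times S^1$ of $V_{\mu,\nu}$, the lift $\tilde\phi$ must normalize the $\mathbb Z_\mu$-action generated by $\xi(v,z) = (e^{2\pi i/\mu}v, e^{2\pi i\nu/\mu}z)$ and must have a $2$-dimensional fixed set. A short case analysis of reflections $\tilde\phi$ on $\C \times S^1$ forces $\mu \le 2$: for $\mu \geq 3$, no such $\tilde\phi$ exists, so this case cannot arise; for $(\mu,\nu)=(2,1)$ a direct fundamental-domain computation yields $V_{2,1}/\phi \approx M_\mathrm{pt}^+$, again giving the allowed local model \eqref{eq:Mpt+}. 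Once this case analysis is in place the converse is complete.
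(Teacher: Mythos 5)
Your forward direction is fine (it is the half the paper dismisses as clear), but the converse has genuine problems. First, your exclusion of a pointwise-fixed regular fiber rests on a false principle: there is no isometric $S^1$-action in play, and fixing a circle pointwise does not force $\phi$ to be the identity on a $3$-dimensional neighborhood --- the canonical reflection fixes the whole surface $\ba M$ (which contains many circles) pointwise without being the identity near it. What has to be ruled out is that a fiber lies inside $\mathrm{Fix}(\phi)=\ba M$, and you also skip the remaining alternative that $\phi$ acts freely on a regular fiber (excluded because such a fiber would lie on one side of $\ba M$ while $\phi$ interchanges the two sides; note the same side-swapping argument is what forces every fiber to meet $\ba M$ in the first place). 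The paper's proof does not classify involutions of the local models at all: it argues fiberwise that each $\phi$-invariant regular $S^1$-fiber meets $\ba M$ in exactly two points with $\phi$ acting as the reflection about them, that the fibers over $\pa X$ are arcs fixed by $\phi$ (a limit argument), and that a singular $I$-fiber meets $\ba M$ in a single point, whence the local isomorphism with \eqref{eq:Mpt} descends to \eqref{eq:Mpt+}.

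Second, and more seriously, your handling of the exceptional case \eqref{eq:V-mu-nu} with $(\mu,\nu)=(2,1)$ is wrong in exactly the way that reveals the missing constraint. There is indeed a fiber-preserving involution of $V_{2,1}$ with quotient $M_{\mathrm{pt}}^+$ (the one induced by $(v,z)\mapsto(v,\bar z)$), but its fixed set is a plane together with an \emph{isolated} point, the point lying over the cone point of $M_{\mathrm{pt}}^+$. This can never be the restriction of the reflection $\phi$ of a double: $\mathrm{Fix}(\phi)=\ba M$ is a closed surface with no isolated points. Equivalently, if $F^{-1}(B)$ were a solid torus $V_{2,1}$ and $f^{-1}(B)$ were isomorphic to the model \eqref{eq:Mpt+}, then $f^{-1}(B)$ would contain a point whose punctured neighborhood is homotopy equivalent to $P^2$, whereas every point of $M$ lying under a manifold region of $D(M)$ has a neighborhood homeomorphic to $\R^3$ or $\R^3_+$; also the double of the \eqref{eq:Mpt+}-model along its Alexandrov boundary is $M_{\mathrm{pt}}$, not $V_{2,1}$. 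So the correct outcome of the case analysis is that no $V_{\mu,\nu}$-fiber with $\mu\ge 2$ (including $\mu=2$) can occur in a $\phi$-invariant bundle, rather than that the $(2,1)$ case ``arises and gives the allowed model.'' Your final conclusion happens to agree with the lemma, but the step as written asserts something incompatible with the hypothesis that $\phi$ is the double's reflection, and the uniqueness claims you invoke for the involutions of the models \eqref{eq:Mpt} and \eqref{eq:boundary} are likewise unproven; the paper's fiberwise route avoids all of this.
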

\begin{proof}
The first half is clear, and hence omitted.

For a $\phi$-invariant
generalized Seifert bundle $F:D(M)\to X$,
define $f:M\to X$ by 
$f(p):=F(p)$ for $p\in M=M_1\subset D(M)$. Let $\ca S$ be the singular locus
of $F$.
For any $x\in X\setminus(\ca S\cup \pa X)$,
since the regular $S^1$-fiber $F^{-1}(x)$ is $\phi$-invariant, it is easy to see that $F^{-1}(x)$ contains exactly two points
of $\ba M$, say $p_1,p_2$, and that 
$\phi$ restricted to $F^{-1}(x)$ is the reflection around the fixed points
$p_1,p_2$. Thus there are exactly two points
of $\ba M$ on $F^{-1}(x)$. 
It follows from a limit argument that for any $x\in \pa X$,
$F^{-1}(x)$ is an arc fixed by $\phi$.

These imply that $f$ is an $I$-bundle
over $X\setminus\ca S$.
For any $x\in\ca S$,
since the singular $I$-fiber $F^{-1}(x)$ is $\phi$-invariant, it is easy to see that $F^{-1}(x)$ contains a unique point
of $\ba M$, say $q$, and that 
$\phi$ restricted to $F^{-1}(x)$ is the reflection around the fixed point $q$. It follows that 
if $F^{-1}(B)\to B$ is isomorphic to
\eqref{eq:Mpt}, then 
$f:f^{-1}(B)\to B$ is isomorphic to
\eqref{eq:Mpt+}. 
\end{proof} 

\begin{ex} \upshape \label{example: 2-dimDM}
Let $M_{\rm pt}^+(\epsilon) \xrightarrow[\epsilon \to 0]{G H} K(S^1_\pi)$
be as in Example \ref{example: 2-dim no bdry}, and let $F : M_{\rm pt}^+(\epsilon) \to K(S^1_\pi)$ be the natural projection defined by $F([(v,t)])=[v]$.
Let $M_{\rm pt}(\epsilon)$denote the double of $M_{\rm pt}^+(\epsilon)$, which is isometric to $\mathbb R^2 \times S^1_{2 \epsilon} / (v, z) \sim (-v, \bar z)$.
Clearly $M_{\rm pt}(\epsilon)$ also converges to $K(S^1_\pi)$ as $\epsilon \to 0$.
Letting $D(F)$ be the double of $F$, we have the following commutative diagram:
\[
\begin{CD}
 & M_{\rm pt}(\epsilon) & @ > D(F) >> \, & K(S^1_\pi) & \\
&@V / \phi V V& & @ V V \mathrm{id} V & \\ 
 & M_{\rm pt}^+(\epsilon) & \,\,\, @> F >> & K(S^1_\pi), &
\end{CD}
\]
where $\phi$ is the canonical reflection on the double $D(M_{\rm pt}^+(\epsilon))=M_{\rm pt}(\epsilon)$ (see 
Section \ref{sec:prelim}). 
\end{ex}

Toward Definition \ref{def:gen S1-bdl}, we prepare several singular $S^1$-bundles
over surfaces (Examples \ref{ex:toy01},
\ref{eq:K(P2)}, \ref{ex:toy02}), which are needed in the formulation 
in Definition \ref{def:gen S1-bdl}.

\begin{ex}[Toy models] \upshape \label{ex:toy01}
 (1)\,
Let $L^2$ be as in Example \ref{ex:boundary}.
This has a singular $S^1$-bundle defined as
\[
\pi_{L^2}:L^2\ni [y,z] \mapsto y \in \mathbb R_+,
\]
where $\pi_{L^2}^{-1}(0)$ is the unique singular $I$-fiber $[-1,1]$.

(2)\,Let us consider 
the infinite M\"obius strip $\Mo_\infty$ defined as 
\[
\Mo_\infty := S^1 \times \mathbb R / (x,t) \sim (-x,-t). 
\]
This also has a singular $S^1$-bundle defined by
\begin{align*}
\pi_{\Mo_\infty} : \Mo_\infty \ni [x,t] \mapsto |t| \in \mathbb R_+,
\end{align*}
where $\pi_{\Mo_\infty}^{-1}(0)$ is the unique singular 
$S^1$-fiber denoted by $S^1/2$. 
\end{ex}

The following model of a singular fibration on $K(P^2)$ is a variant 
of the one described in \cite{MY}.

\begin{ex}[cf.~\cite{MY}] \upshape \label{ex:KP2}
Let $O$ denote the center of the segment
$J:=\{ 0\}\times [-1,1]\subset L^2$,
and let $\sigma$ be the symmetry of $L^2$ 
about $O$.
%defined by $\sigma=([y,z])=\phi([y,-z])$. 
We realize the cone $K(P^2)$ over the real projective plane $P^2$ as the quotient 
$\R\times L^2/(x, [y,z])\sim (-x,\sigma([y,z]))$.
We consider the singular $S^1$-bundle on $K(P^2)$
defined as 
\begin{align} \label{eq:K(P2)}
\pi_{K(P^2)}:K(P^2) \ni [(x,[y,z])] \mapsto (|x|,y)\in \mathbb R_+ \times \mathbb R_+.
\end{align}
The fiber of $\pi_{K(P^2)}$ over any point of 
$(\mathbb R_{> 0})^2$ is a regular circle.
For any $y> 0$, the fiber at 
$(0,y)$ is a singular $S^1$-fiber $S^1/2$, and 
 there exists an open rectangular neighborhood $B$
 of $(0,y)$ in $\mathbb R_+\times \R_+$ such that 
 the restriction of $\pi_{K(P^2)}$ to $B$ is isomorphic to 
\begin{align}\label{eq:MoxId}
\pi_{\Mo_\infty} \times \mathrm{id}_{\mathbb R} : \Mo_\infty \times \mathbb R \to \mathbb R_+ \times \mathbb R. 
\end{align}
For any $x > 0$,
the fiber over $(x,0)$ is an arc $I$, and there 
exists an open rectangular neighborhood $B$ of
$(x,0)$ in $\mathbb R_+\times \mathbb R_+$ such that the restriction of $\pi_{K(P^2)}$ to $B$ is isomorphic to 
\begin{align} \label{eq:L2xId}
\pi_{L^2} \times \mathrm{id}_{\mathbb R} : L^2 \times \mathbb R \to \mathbb R_+ \times \mathbb R.
\end{align}
\end{ex}

\begin{ex}[Toy models] \label{ex:toy02} \upshape
By restricting the maps $\pi_{L^2}$ and $\pi_{\Mo_\infty}$ defined in Example \ref{ex:toy01}, we obtain the maps
\begin{align*}
&\pi_{D^2} := \pi_{L^2}|_{D^2}: D^2 \to [0,1], \\
&\pi_{\Mo} := \pi_{\Mo_\infty} |_{\Mo} : \Mo \to [0,1],
\end{align*}
where $D^2 :=\{ [y,z] \in L^2 \mid |y| \le 1 \} $.
These are singular $S^1$-bundles. 

Let us consider three closed surfaces $S^2$, $P^2$ and $K^2$ and divide them into two pieces: 
\begin{align*}
S^2 &\approx D^2 \cup_{\pa} D^2, \hspace{1em}
P^2 \approx D^2 \cup_{\pa} \Mo, \hspace{1em}
K^2 \approx \Mo \cup_{\pa} \Mo. 
\end{align*}
Here, $A \cup_\pa B$ denote the space obtained by gluing $\pa A$ and $\pa B$ for some homeomorphism $\varphi : \pa A \to \pa B$, when $A$ and $B$ are surfaces with boundary. 
Gluing the two maps $\pi_{D^2}$ and $\pi_{\Mo}$
in an obvious way, 
we obtain singular $S^1$-bundles: 
\begin{align*}
\pi_{S^2} &: S^2 \to I, \hspace{1em}
\pi_{P^2} : P^2 \to I, \hspace{1em}
\pi_{K^2} : K^2 \to I,
\end{align*}
where $I$ is a closed interval defined as the gluing 
$I = [0,1] \cup_{1=1} [0,1]$.
Notice that the isomorphism classes of the maps 
$\pi_{S^2}$, $\pi_{P^2}$ and $\pi_{K^2}$ 
are independent of the gluing maps
and uniquely determined. 
\end{ex}

The following concept is needed in Section \ref{sec:2-dim with bdry}.

\begin{defn} \upshape \label{def:gen S1-bdl}
Let $M$ be a topological three-dimensional orbifold (possibly) with boundary, and $X$ a compact topological surface with boundary. 
We call a continuous surjection $f : M \to X$ a 
{\it generalized Seifert fibration}
if it satisfies the following: 
\begin{enumerate}
\item The restriction of $f$ to $X \setminus \pa X$ is a generalized Seifert bundle in the sense of Definition \ref{def:gen Seif}\,;
\item $f(\pa M)\subset \pa X$. Moreover, letting $E:=f(\pa M)$, we have the following:
\begin{enumerate}
\item $E$ is a disjoint union of finitely many closed arcs $E_\alpha$\,$(\alpha\in\Gamma)$ and 
circles $S_\beta$\,$(\beta\in\Lambda)$ in $\pa X\,;$
\item For each $E_\alpha$, 
there exists a collar neighborhood $B_\alpha$ of $E_\alpha$ in $X$ such that the restriction 
$f|_{B_\alpha}$ is isomorphic to one of 
the following:
\begin{align*}
\pi_{S^2} \times \mathrm{id}_{[0,1)} &: S^2 \times [0,1) \to I \times [0,1); \\
\pi_{P^2} \times \mathrm{id}_{[0,1)} &: P^2 \times [0,1) \to I \times [0,1);\\
\pi_{K^2} \times \mathrm{id}_{[0,1)} &: K^2 \times [0,1) \to I \times [0,1),
\end{align*}
where the maps $\pi_{S^2}, \pi_{P^2}$ and $\pi_{K^2}$ are defined in Example \ref{ex:toy02}, and
$E_\alpha$ corresponds to $I \times \{0\}$. 
\item  $f$ is an $S^1$-bundle near each $S_\beta$. 
\item There exists a finite set $\Omega\subset \pa X\setminus E$ (possibly empty) satisfying the following: 
For every $x\in \Omega$
there exists an open neighborhood $B$ of $x$ in $X$ such that the restriction of $f$ to $f^{-1}(B)$ is isomorphic to \eqref{eq:K(P2)}.
\item For every $x \in \pa X \setminus (E\cup\Omega)$, there exists an open neighborhood $B$ of $x$ in $X$ such that the restriction of $f$ to $f^{-1}(B)$ is isomorphic to one of \eqref{eq:MoxId} and \eqref{eq:L2xId}.
\end{enumerate}
\end{enumerate}
\end{defn}

\begin{rem} \upshape
For the notion of a generalized Seifert fibration, we give several remarks. 
Let $f : M \to X$ be a generalized Seifert fibrations. 
Let $E=(\bigcup_{\alpha\in\Gamma} E_\alpha)\cup
(\bigcup_{\beta\in\Lambda} S_\beta) \subset \pa X$ be the closed set as in Definition \ref{def:gen S1-bdl}. 
\begin{enumerate}
\item $\pa M$ is the disjoint union of $f^{-1}(E_\alpha)$
\,$(\alpha\in\Gamma)$ and $f^{-1}(S_\beta)$\,$(\beta\in\Lambda)$, where $f^{-1}(E_\alpha)$ (resp. $f^{-1}(S_\beta)$) is homeomorphic to one of
$\{S^2,P^2,K^2\}$ (resp. one of $\{T^2,K^2\}$)\,;
\item If $X$ has no boundary, then $M$ has no boundary and $f$ is a generalized Seifert bundle in the sense of Definition \ref{def:gen Seif}.
\item If $M$ has no boundary, then $E$ is empty and $f$ is a map considered in \cite[Theorem 1.5]{MY}.
\item If there exists a component $X_0$ of $\pa X$ not meeting $E$, then for a small closed tubular neighborhood $B$ of $X_0$, we called $f^{-1}(B)$ a {\it generalized solid torus/generalized solid Klein bottle} in \cite[Definition 1.4]{MY}. 
\end{enumerate}
\end{rem}

Summarizing the fiber data of a generalized Seifert 
fibration $f:M\to X$, we have the following:
\beqq
 f^{-1}(x)= \left\{
 \begin{array} {cll}
 S^1 &\quad &x\in (\mathring{X}\setminus\ca S) 
    \cup\mathring{E} \\
 I &\quad &x\in\ca S\\
 \text{$I$ or $S^1/2$} &\quad &x\in \pa X\setminus 
 \mathring{E}.
 \end{array}
 \right.
\eeqq

\section{The case that $X$ is a closed surface}
\label{section: 2-dim no boundary} \label{sec:2-dim no bdry}
In the rest of the paper, we assume that a sequence $M_i$ in $\ca M(3,D)$ 
collapses to an Alexandrov space $X$. 
In this section, we first consider the most basic case that $X$ is two-dimensional and having no boundary.
We denote by $\theta_i$ a positive number
satisfying $\lim_{i\to\infty}\theta_i=0$.

\begin{proof}[Proof of Theorem \ref{thm:dim2nobdry}]
We denote by $D(M_i)=M_i^1\cup M_i^2$ the double of $M_i$ as in 
Section \ref{sec:prelim}, where $M_i^1$ and $M_i^2$ are two copies of $M_i$.
For simplicity, we make an identification 
$$
 M_i=M_i^1,\quad \ba M_i = \ba M_i^1=\ba M_i^2.
$$
Let $\phi_i:D(M_i)\to D(M_i)$ denote the reflection along $\ba M_i$. 

\begin{lem} \label{lem:double-conv}
$D(M_i)$ also converges to $X$.
\end{lem}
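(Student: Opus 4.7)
The plan is to show that any Gromov--Hausdorff subsequential limit $Y$ of $D(M_i)$ must coincide with $X$. First, passing to a subsequence, $D(M_i)\to Y$ for some compact Alexandrov space $Y$ of curvature $\ge -1$. Since $M_i=M_i^1$ is convex in $D(M_i)$ with its inherited metric equal to the intrinsic one, in the limit we obtain an isometric embedding $X\hookrightarrow Y$ that realizes $X$ as a closed convex subspace of $Y$. The volume estimate $\mathrm{vol}_3(D(M_i))=2\,\mathrm{vol}_3(M_i)\to 0$ forces $\dim Y\le 2$; combined with $\dim Y\ge \dim X=2$ this gives $\dim Y=2$.

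The heart of the argument will be a local comparison at an arbitrary point $p\in X\subset Y$. Since $X$ has no Alexandrov boundary, $\Sigma_p X$ is a connected one-dimensional compact Alexandrov space of curvature $\ge 1$ without boundary, hence a topological circle. Convexity of $X$ in $Y$ realizes this circle as a convex subspace of the one-dimensional $\Sigma_p Y$, which is a priori either a circle or a proper arc. A topological circle cannot be a convex isometric subspace of a proper arc, so $\Sigma_p Y$ must itself be a circle and equal to $\Sigma_p X$. Coupling this agreement of tangent structures with the Stability Theorem (Theorem~\ref{thm:stability}) and the convexity of $X\subset Y$ then shows that $X$ contains a $Y$-neighborhood of $p$, so $X$ is open in $Y$.

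To finish, observe that $X$ is also compact, hence closed in $Y$; being non-empty and clopen, it is a union of connected components of $Y$. The doubles $D(M_i)$ are connected because $\ba M_i\ne\emptyset$ glues the two halves, so their Gromov--Hausdorff limit $Y$ is connected as well, and therefore $X=Y$, proving $D(M_i)\to X$.

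The main obstacle is the local step identifying $\Sigma_p X=\Sigma_p Y$. In particular one must rule out that $p\in X$ becomes a boundary point of $Y$ under the collapse (Alexandrov boundary can appear in collapsing limits), and this is where the observation that a circle cannot sit convexly in a proper arc is essential; it relies only on the classification of connected one-dimensional Alexandrov spaces of curvature $\ge 1$ as circles or arcs. Once the spaces of directions agree, the upgrade to local coincidence of neighborhoods in $X$ and $Y$ is a standard consequence of convexity, after which the topological connectedness argument finishes the proof.
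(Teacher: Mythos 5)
Your global skeleton is viable: pass to a subsequential limit $Y$, use convexity of $M_i=M_i^1$ in $D(M_i)$ to realize $X$ as a closed convex subset of $Y$ with the restricted metric, get $\dim Y=2$ (volume collapse plus the embedded copy of $X$), and conclude by a clopen-plus-connectedness argument. The step ruling out $p\in\ba Y$ also works, with one correction: convexity gives an \emph{isometric embedding} $\Sigma_pX\hookrightarrow\Sigma_pY$ (because $X$-geodesics are $Y$-geodesics and angles are computed from the same distances); you neither have, without further proof, nor need the claim that the image is a \emph{convex} subspace. An isometric copy of a circle cannot lie in an arc of length $\le\pi$, and a circle isometrically embedded in a circle must be onto, so indeed $\Sigma_pY=\Sigma_pX$ for every $p\in X$.

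The genuine gap is the sentence asserting that ``this agreement of tangent structures, the Stability Theorem and convexity'' show that $X$ contains a $Y$-neighborhood of $p$. The Stability Theorem produces a homeomorphism between two GH-close spaces of the same dimension; it does not produce the set-theoretic inclusion of a $Y$-ball into $X$, and equality of spaces of directions together with convexity is not by itself a ``standard consequence'' yielding openness --- this is precisely the crux of the lemma and it is left unproved. Two ways to close it: (a) metrically, suppose $X$ is not open, take $y\in Y\setminus X$ near $X$ and a nearest point $x\in X$; the first variation formula gives $\angle(\uparrow_x^y,\uparrow_x^{x'})\ge\pi/2$ for all $x'\in X$, while directions of shortest paths to points of $X$ are dense in $\Sigma_xX=\Sigma_xY\ni\uparrow_x^y$, a contradiction (note this needs the identification at every point of $X$, which your argument does provide); or (b) invoke invariance of domain: once $\dim Y=2$, $Y$ is a topological $2$-manifold (possibly with boundary) and $X$ is a compact $2$-manifold without boundary embedded in it, hence open in $Y$ --- which makes the entire tangent-cone detour unnecessary. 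Route (b) is exactly the paper's proof: there one observes $Y=X^1\cup X^2$ with both halves isometric to the boundaryless surface $X$ (which also yields $\dim Y=2$ without any volume argument) and concludes $X^1=X^2=Y$ by invariance of domain.
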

\begin{proof}
Let $Y$ be any limit of $D(M_i)$ and let $X^1\subset Y$ and $X^2\subset Y$ be limits of $M_i^1$ and $M_i^2$ respectively 
under the convergence $D(M_i)\to Y$.
Note that $Y$, $X^1$ and $X^2$ are topological two-manifolds with $Y = X^1 \cup X^2$, where 
both $X^1$ and $X^2$ are isometric to $X$ and hence having no boundaries.
The invariance of domains then implies that $X^1 = X^2 = Y$.
\end{proof}

\begin{lem} \label{lem:inrad}
\[
 {\rm inrad}(M_i)<\theta_i.
\]
\end{lem}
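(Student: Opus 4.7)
The plan is to argue by contradiction, combining the equivariant Gromov--Hausdorff convergence of the doubles with a $2$-dimensional volume argument to force the limiting involution on $X$ to be trivial. Suppose, after passing to a subsequence, that $\mathrm{inrad}(M_i)\ge c>0$, and choose $p_i\in M_i=M_i^1$ with $|p_i,\ba M_i|\ge c$. Since any curve in $D(M_i)$ from $p_i$ to $\phi_i(p_i)$ must cross the fixed set $\ba M_i$ of $\phi_i$, we have
\[
d_{D(M_i)}(p_i,\phi_i(p_i)) \;=\; 2\,|p_i,\ba M_i| \;\ge\; 2c.
\]
The goal is then to show that in fact $\phi_i(p_i)$ is arbitrarily close to $p_i$ in the GH sense on $D(M_i)$, yielding a contradiction.

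By Lemma \ref{lem:double-conv}, $D(M_i)\to X$, and each $\phi_i$ acts by isometries on $D(M_i)$. By Fukaya's compactness theorem for pointed equivariant GH convergence, after passing to a further subsequence we may assume
\[
(D(M_i),\langle\phi_i\rangle) \;\to\; (X,G)
\]
equivariantly, for some closed $G\le\mathrm{Isom}(X)$ of order at most $2$. Since the formation of quotients is continuous under equivariant GH convergence for finite group actions, the quotients $M_i=D(M_i)/\phi_i$ converge to $X/G$; combined with the original convergence $M_i\to X$, this yields that $X/G$ is isometric to $X$.

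The main obstacle is ruling out the possibility that $G=\langle\phi\rangle$ for a nontrivial isometric involution $\phi$ of $X$. For this I will invoke a Hausdorff measure argument: since $X$ is a compact $2$-dimensional Alexandrov space without boundary, $0<\mathcal{H}^2(X)<\infty$. A nontrivial isometric involution on such a space has fixed set of Hausdorff dimension at most $1$ (by analyzing the induced involution on the tangent cone at each fixed point, which must be a nontrivial isometric involution of a positively curved Alexandrov surface of curvature $\ge 1$), hence of $\mathcal{H}^2$-measure zero. The quotient map $X\to X/\phi$ is therefore $2$-to-$1$ off a null set and a local isometry there, giving $\mathcal{H}^2(X/\phi)=\mathcal{H}^2(X)/2$, which contradicts $X/\phi\cong X$. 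Thus $G$ is trivial. Consequently, by equivariance of the GH convergence, any convergent subsequence $p_i\to p\in X$ in $D(M_i)\to X$ satisfies $\phi_i(p_i)\to p$ as well, so $d_{D(M_i)}(p_i,\phi_i(p_i))\to 0$, contradicting the lower bound $\ge 2c$ established above. This completes the proof.
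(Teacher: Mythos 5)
Your proof reaches the correct conclusion, but by a genuinely different and much heavier route than the paper's. The paper's argument is a short triangle-inequality estimate: since, by Lemma \ref{lem:double-conv}, each copy $M_i^1,M_i^2\subset D(M_i)$ converges to all of $X$, composing the approximation $\varphi_i|_{M_i^1}$ with an approximate inverse $\psi_i^2:X\to M_i^2$ shows that every $x\in M_i^1$ lies within $3\theta_i$ of a point of $M_i^2$, hence within $3\theta_i$ of $\ba M_i$ (any path between the two copies must cross the boundary). This is direct, quantitative, and needs no equivariant limit. You instead argue by contradiction through equivariant convergence $(D(M_i),\langle\phi_i\rangle)\to(X,G)$, continuity of quotients, and a rigidity statement forcing the limit involution to be trivial; this is closer in spirit to Lemma \ref{lem:D(M)toX} and Proposition \ref{prop:criterion=inradius}, which the paper proves later (in all dimensions) by a direct geodesic-crossing argument. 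Your framework is sound: the displacement bound $d_{D(M_i)}(p_i,\phi_i(p_i))\ge 2c$, the identification of $D(M_i)/\phi_i$ with $M_i$ (using the convexity of $M_i$ in $D(M_i)$), and the conclusion that $X/G$ is isometric to $X$ are all fine.

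The one step you assert rather than prove is that a nontrivial isometric involution $\phi$ of the two-dimensional limit $X$ has fixed set of $\mathcal H^2$-measure zero. Your parenthetical reduces this to the claim that the induced map on the tangent cone at each fixed point is again nontrivial, but that is exactly what has to be shown: in an Alexandrov space there is no exponential-map uniqueness to invoke, so one must exclude a fixed point $p$ at which $\phi$ acts trivially on $\Sigma_p$ (this can be done via the angle-zero rigidity of shortest paths -- two shortest paths issuing from $p$ at angle $0$ coincide, so a trivial differential at a single fixed point forces $\phi=\mathrm{id}$ globally -- combined with a density-point blow-up at a regular fixed point; also note $\Sigma_p$ is a circle, not a positively curved surface). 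A cleaner way to close your argument avoids measure theory altogether: since $X/\phi$ is isometric to $X$ and the projection $X\to X/\phi$ is $1$-Lipschitz and surjective, composing with that isometry gives a $1$-Lipschitz surjection of the compact space $X$ onto itself, which is automatically an isometry; hence $\min(|x,y|,|x,\phi(y)|)=|x,y|$ for all $x,y$, and taking $y=\phi(x)$ yields $\phi(x)=x$ for every $x$. With either repair your proof is complete, though the paper's three-line estimate remains considerably more economical.
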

\begin{proof}
Let $\varphi_i:D(M_i)\to X$ be a $\theta_i$-approximation.
By the proof of Lemma \ref{lem:double-conv}, the restrictions 
$\varphi_i^{\alpha}=\varphi_i|_{M_i^\alpha}:M_i^{\alpha} \to X$ \,$(\alpha=1,2)$ also give 
$\theta_i$-approximations. 
Let $\psi_i^{\alpha}:X\to\ M_i^{\alpha}$ be $\theta_i$-approximations such that 
$|\psi_i^{\alpha}\circ \varphi_i^{\alpha}(x), x|<\theta_i$ for all $x\in M_i^{\alpha}$.
Then for all $x\in X$, we have
\[
 |\psi_i^{1}(x), \psi_i^{2}(x)| \le
 |\varphi_i\circ\psi_i^{1}(x), \varphi_i\circ\psi_i^{2}(x)|+\theta_i
 <2\theta_i.
\]
It follows that for all $x\in M_i^{1}$, we have 
\begin{align*}
 |x, \ba &M^{1}| \le |x, \psi_i^{2}\circ \varphi_i^{1}(x)| \\
 &\le |x, \psi_i^{1}\circ \varphi_i^{1}(x)|+ 
  |\psi_i^{1}\circ \varphi_i^{1}(x), \psi_i^{2}\circ \varphi_i^{1}(x)| <3\theta_i,
\end{align*}
from which the conclusion is immediate. 
\end{proof}

We fix a small $\delta>0$.
By Remark \ref{rem:sing}, 
$\mathcal S_\delta(X)$ is finite. Set $\{ x_n \}_{n=1}^N:=\mathcal S_\delta(X)$.
For a small $\e>0$, choose small metric balls 
$B_n$ around $x_n$ such that 
\begin{itemize}
 \item $B_{n}\cap B_{n'}=\emptyset$ for all $1\le n\neq n'\le N\,;$
 \item $d_{x_n}$ is $\e$-regular on $B_{n}\setminus \{ x_n\}$.
\end{itemize}
We consider the closed domain
$X_0$ defined as 
\beq \label{eq:X0X1} 
 X_0:=X\setminus \bigcup_{n=1}^N \mathring{B}_{n}.
\eeq

%%%%%%%%
Let $ \left< \phi_i \right>\simeq\Z_2$ be the group of isometries of $D(M_i)$ generated by $\phi_i$.
Observe that $(D(M_i), \left< \phi_i \right>)$ converges to $(X, \{1_X\})$ in the equivariant Gromov-Hausdorff topology, where $1_X$ is the identity of $X$.
Due to the equivariant fibration theorem 
(\cite[Theorem 18.4]{Y 4-dim}), for a $\phi_i$-invariant closed domain $D(M_i)_0$ of $D(M_i)$, we have a $\phi_i$-invariant $S^1$-bundle 
$$
f^D_{i,0}:D(M_i)_0 \to X_0,
$$ 
which induces an $I$-bundle
\beq \label{eq:fi0}
 f_{i,0}:M_{i,0} \to X_0,
\eeq
where $M_{i,0}=D(M_i)_0/\phi_i$.
Note that $D(M_i)_0\subset R_{2\delta}(D(M_i))$ for large enough $i$.

Applying \cite[Theorem 1.3]{MY} to the convergence
$D(M_i)\to X$, we obtain an extension of 
$f^D_{i,0}$
\begin{equation} \label{gen Seif fib}
f_i^D: D(M_i) \to X
\end{equation} 
that is a generalized Seifert bundle in the sense of 
Definition \ref{def:gen Seif}, which is also 
a $\theta_i$-approximation.
It should be emphasized that 
from the construction in \cite[Theorem 1.3]{MY}, 
 $f_i^D$ might not be $\phi_i$-invariant near the singular points of $X$.

We set 
\[
 f_i:=f_i^D|_{M_i}:M_i\to X.
\]
As remarked above, since $f_i^D$ might not be $\phi_i$-invariant near the singular points of $X$,
$f_i$ might not have a fiber structure there.
In what follows, we deform $f_i$ to one having a fiber structure over $X$.
%%%%%%%%%%%%%%

We fix any $1\le n\le N$, and set $x:=x_{n}$ and $B:=B_{n}$ for simplicity.
Our main purpose in this section is to find a fiber structure of $f_i^D: (f_i^D)^{-1}(B) \to B$ 
compatible with that of $f_{i,0}$.

Let $r$ denote the radius of $B$. Choose $p_i\in\pa M_i$
converging to $x$, and set 
$$
B_i^D:=(f_i^D)^{-1}(B), \quad B_i^\alpha:= B_i^D\cap M_i^\alpha\,\,(\alpha=1,2).
$$
Note that $B_i^D$ is $\phi_i$-invariant.
By Theorem \ref{thm:smooth approximation}, 
$B_i^D\setminus \mathring{B}^{D(M_i)}(p_i, r/2)$
is homeomorphic to 
$\pa B_i^D\times I$ via $\phi_i$-invariant flow curves.
This implies 
$$
B_i^\alpha \approx B(p_i,r/2) \approx B(p_i,r)\, (\alpha=1,2).
$$
Since $\pa B_i^D$ is homeomorphic to either a torus or a Klein bottle, 
it can be rescaled in the sense of Definition \ref{def:rescaling assumption}. Namely, 
we can apply Theorem \ref{thm:rescaling}
to obtain points 
$\hat p_i\in D(M_i)\cap \pa M_i$ and $\delta_i>0$ 
with $\hat p_i\to x$ and $\delta_i\to 0$ such that 
\begin{itemize}
\item $\left(\frac{1}{\delta_i} B_i^D, \hat p_i\right)$ 
converges to a three-dimensional complete noncompact Alexandrov space $(Y^D,y_0)$ of nonnegative curvature;
\item $\mathring{B}_i^D\simeq Y^D$.
\end{itemize}
 Note that the reference point $\hat p_i$ can be taken as 
a unique local maximum point of an averaged distance functions from 
$\phi_i$-invariant set of points (see \cite{Y ess}). 
Therefore $\hat p_i$ is $\phi_i$-invariant, that is, we have 
$$
  \hat p_i\in \pa M_i.
$$
Passing to a subsequence, we may assume 
that $\left(\frac{1}{\delta_i} B_i^\alpha, \hat p_i\right)$ converges to some space $(Y^\alpha, y_0)$\,($\alpha=1,2$) under this convergence. Then we have 
\begin{align}\label{eq:Y=D(Y)}
 Y^D=D(Y^\alpha)=Y^1\cup Y^2.
\end{align}
Let $S$ be a soul of $Y^\alpha$.
Theorem \ref{thm:rescaling} implies that 
\begin{align}\label{eq:dimDY(infty)}
\dim S \le 1, \quad
 \dim Y^D(\infty)=\dim Y^\alpha(\infty)\ge 1.
\end{align}
From \cite{MY}, we already know that 
the soul of $Y^D$ is a circle and $Y^D$ is isometric to $\mathbb R\times L/\Lambda$,
where $\Lambda\simeq \mathbb Z$ and $L$ is complete noncompact surface
of nonnegative curvature homeomorphic to $\mathbb R^2$.

First we consider the following case:
\pmed
Case (A). $B_i^D$ has no topological singular points. 
\pmed

By \cite[Theorem 1.3]{MY}, the Case (A) occurs if and only if 
$f_i^D$ has no singular $I$-fiber over $B$. 
Thus, $f_i^D: B_i^D \to B$ is isomorphic to a Seifert bundle 
of type $(\mu,\nu)$ for some coprime integers
$\mu, \nu$ with $\mu\ge \nu\ge 1$.
In what follows, we show $\mu=1$.

\begin{lem} \label{lem:no-exceptional}
There exists a trivial $I$-bundle 
$\tilde f_i: B_i^\alpha \to B$ 
compatible with the $I$-bundle structure of $f_{i,0}$
over $\pa B$.
\end{lem}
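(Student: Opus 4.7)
My plan is to show that the Seifert type $(\mu, \nu)$ of $f_i^D$ over $B$ must be $(1, 0)$ in Case (A); once established, $B_i^D \cong D^2 \times S^1$ with $\phi_i$ acting as the identity on the $D^2$-factor and as a reflection on the $S^1$-factor (this is forced by the requirement that the fixed set $\ba M_i$ be $2$-dimensional), so $B_i^\alpha \cong D^2 \times I$ and the projection $\tilde f_i : D^2 \times I \to D^2 \cong B$ is the desired trivial $I$-bundle. The compatibility with $f_{i,0}$ on $\pa B$ will follow because both fiber structures restrict over $\pa B \cong S^1$ to the trivial $I$-bundle $S^1 \times I \to S^1$, so a small isotopy in a collar of $\pa B$ (using the equivariant flow of Appendix \ref{sec:flow} if needed) matches them exactly; no obstruction arises because $B \cong D^2$ is contractible.

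To establish $\mu = 1$, I would exploit the rescaling already set up: $\frac{1}{\delta_i} B_i^D \to Y^D$, $\frac{1}{\delta_i} B_i^\alpha \to Y^\alpha$, with $Y^D = D(Y^\alpha) = \R \times L/\Z$, $L \approx \R^2$, and soul of $Y^D$ a circle. The key point is that in Case (A), each $\phi_i$-invariant $S^1$-fiber of $f_i^D$ must meet $\ba M_i$ in exactly two fixed points: a free $\phi_i$-action on such a fiber would force the fiber to lie entirely in one of $M_i^1$ or $M_i^2$, contradicting $\phi_i$-invariance (since $\phi_i$ exchanges the two halves). Consequently $\ba M_i \cap B_i^D \to B$ is an \emph{unbranched} double cover, giving two disjoint disks (and not one branched disk), so in the limit $\ba Y^\alpha \subset Y^D$ has two parallel planar components, each homeomorphic to $\R^2$. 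Analyzing the lifted reflection $\phi$ on $\R \times L$: for its fixed set in the quotient to be two parallel planes, the $\Z$-action must be trivial on the $L$-factor, giving $Y^D \cong S^1 \times \R^2$ with the trivial Seifert structure, hence $\mu = 1$. The concavity of $d_{\ba Y^\alpha}$ (Theorem \ref{thm:dist-to-bdry}) and nonnegative curvature then yield the isometric product splitting $Y^\alpha \cong \R^2 \times [-t,t]$ for some $t > 0$.

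Once $\mu = 1$ is shown, Perelman's stability (Theorem \ref{thm:stability}) applied to the correspondence $\mathring B_i^\alpha \cong Y^\alpha \approx \R^2 \times I$ yields $B_i^\alpha \cong D^2 \times I$, with $\ba M_i \cap B_i^\alpha$ corresponding to $D^2 \times \pa I$ and $f_i^{-1}(\pa B)$ corresponding to $\pa D^2 \times I$; taking $\tilde f_i$ as the first-factor projection gives the trivial $I$-bundle, and matching $f_{i,0}$ on $\pa B$ is then immediate as explained above. The main obstacle is the middle step: using the no-topological-singularity hypothesis of Case (A) together with $\phi_i$-invariance to rule out nontrivial $\Z$-actions on $L$, equivalently to exclude the $B_\mathrm{pt}^+$ and $B_\mathrm{pt}$ models appearing in Corollary \ref{cor:soul thm ball}. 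This is precisely where the careful fiber-by-fiber analysis of $\phi_i$'s fixed-point structure, together with the classification of $3$-dimensional nonnegatively curved Alexandrov spaces with boundary, becomes essential.
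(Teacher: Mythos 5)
Your overall outline---rule out the bad rescaling limit, conclude $B_i^\alpha\approx D^2\times I$ by stability, and then adjust by an isotopy in a collar of $\pa B$ to match $f_{i,0}$---follows the paper's route, and the first and last steps are fine. The middle step, however, has a genuine gap. You argue that every fiber of $f_i^D$ over $B$ is a $\phi_i$-invariant circle, hence meets $\ba M_i$ in exactly two fixed points, so that $\ba M_i\cap B_i^D\to B$ is an unbranched double cover and the fixed surface is a pair of disks. But the fibration $f_i^D$ obtained from \cite[Theorem 1.3]{MY} is only known to agree with the $\phi_i$-invariant bundle $f_{i,0}^D$ over $X_0$; the paper explicitly warns that $f_i^D$ need not be $\phi_i$-invariant near the singular points of $X$, and $B$ is precisely a ball around such a point. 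Fiberwise invariance is thus available only near $\pa B$, where it yields two section circles on the torus $\pa B_i^D$ --- boundary data that is consistent both with the good case ($\ba M_i\cap B_i^\alpha$ two disks, which does force $\mu=1$) and with the bad case ($\ba M_i\cap B_i^\alpha$ an annulus, $B_i^\alpha\approx S^1\times D^2_+$). In effect you have assumed $\phi_i$-invariance of the fibration over all of $B$, which is essentially the content of the lemma: once such an invariant fibration exists, the quotient immediately gives the $I$-bundle. The same circularity enters your later claim that the fixed set of the lifted reflection on $\R\times L$ consists of two parallel planes, and hence that the $\Z$-action is trivial on the $L$-factor.

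The paper closes exactly this gap by a different mechanism: assuming the bad case $Y^\alpha\cong\R\times L_+/\Lambda$ with $L_+\approx\R^2_+$, it rescales a second time by $\mu_i\gg\delta_i$ so that $\frac{1}{\mu_i}B_i^\alpha\to T_xX$, passes to the universal covers $\tilde B_i^\alpha\subset\tilde B_i^D$ with deck groups $\simeq\Z$, takes equivariant pointed limits, and applies the splitting theorem: the limit of $\tilde B_i^\alpha$ splits as $\R\times N^\alpha$ where $N^\alpha$ must have nonempty boundary, forcing $N^\alpha=K(I)$, so the quotient $Z^\alpha/G^\alpha=K(I)/A^\alpha$ has nonempty boundary --- contradicting that it must equal the boundaryless cone $T_xX$. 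Some argument of this type, exploiting that $X$ has no boundary at $x$ rather than an unavailable fiberwise symmetry of $f_i^D$, is needed to make your step ``$\mu=1$'' valid.
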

\begin{proof}

In view of \eqref{eq:Y=D(Y)} and \eqref{eq:dimDY(infty)},
from Theorem \ref{soul theorem} we have the only possibilities that $\ba Y^\alpha$ is disconnected or 
$Y^\alpha$ is isometric to 
$\mathbb R\times L_+/\Lambda$, where 
$L_+\simeq\mathbb R^2_+$.

We show that the latter case does not occur.
Suppose that $Y^\alpha$ is isometric to $\mathbb R\times L_+/\Lambda$.
Choose $\mu_i$ converging to $0$ with $\mu_i \gg \delta_i$ for which we have the convergence $(\frac{1}{\mu_i}B_i^\alpha,\hat p_i)\to (T_x X, o_x)$.
Note that $B_i^D\approx S^1\times D^2$ and 
$(B_i^{\alpha}, \ba _{M_i^{\alpha}} B_i^{\alpha}) \approx (S^1\times D^2_+, S^1\times\ba _{\mathbb R^2_+} D^2_+)$.
Let $\tilde B_i^D$ and $\tilde B_i^\alpha$ be the universal covers of $B_i^D$ and $B_i^\alpha$ respectively.
Let $\pi_i^D:\tilde B_i^D\to B_i^D$ be the 
projection.

Since 
\[
 \Gamma_i^\alpha:=\pi_1(B_i^\alpha)\simeq \Gamma_i^D:=\pi_1(B_i^D)\simeq\mathbb Z,
\]
we may assume that $\tilde B_i^\alpha\subset \tilde B_i^D$ and
$\tilde B_i^D$ is the gluing of $\tilde B_i^1$ and $\tilde B_i^2$ along 
Alexandrov boundaries
$(\pi_i^D)^{-1}(\ba_{M_1}B_i^1)=(\pi_i^D)^{-1}(\ba_{M_2}B_i^2)$.
Let 
$\tilde p_i\in\tilde B_i^\alpha$ be a point over $\hat p_i$.
Passing to a subsequence, we may assume that 
$(\frac{1}{\mu_i}\tilde B_i^D, \tilde p_i, \Gamma_i^D)$
 (resp. $(\frac{1}{\mu_i}\tilde B_i^{\alpha}, \tilde p_i, \Gamma_i)$) converges to 
some $(Z^D, z_0, G^D)$ (resp. $(Z^\alpha, z_0, G^\alpha)$) with respect to the equivariant pointed 
Gromov-Hausdorff topology, where $Z^D$ and $Z^\alpha$ are complete 
noncompact Alexandrov spaces with nonnegative curvature.
Since the action of $\Gamma_i^\alpha$ coincides with the restriction of 
the action of $\Gamma_i^D$ to $\tilde B_i^\alpha$, 
the action of $G^\alpha$ coincides with the restriction of 
the action of $G^D$ to $Z^\alpha$.
From the splitting theorem, we have 
$$
Z^D=\mathbb R\times N,\quad 
G^D=\mathbb R\times A,
$$ 
where $N$ is a Euclidean cone, say $K(S^1_\ell)$, and $A$ is a finite cyclic group
acting on $N$, and $N/A$ is isometric to the 
cone $T_xX$ (see \cite{SY} for the detail). 
Similarly we have 
$$
Z^\alpha/G^\alpha =T_xX,\quad
Z^\alpha=\mathbb R\times N^\alpha.
$$
Therefore $G^\alpha=\mathbb R\times A^\alpha$, where 
$A^\alpha$ is the restriction of $A$ to $N^\alpha$.

However, since $Z^\alpha$ has nonempty boundary, so does $N^\alpha$.
%Note also that $Z^D=D(Z^\alpha)=\mathbb R\times D(N^\alpha)$. 
It turns out that 
$N^\alpha=K(I)$ for a closed interval $I$, and hence
\[
 Z^\alpha/G^\alpha =N^\alpha/A^\alpha=K(I)/A^\alpha \neq T_xX,
\]
which is a contradiction to the fact $Z^\alpha/G^\alpha =T_xX$.

Thus we conclude that $\ba Y^\alpha$ is disconnected,
and therefore $Y$ is isometric to $I\times L$
(see \cite[Theorem 17.3]{Y 4-dim})
with
$L\approx\R^2$
because of \eqref{eq:dimDY(infty)}.

By Stability Theorem \ref{thm:stability}, $B_i^\alpha$ (resp. $B_i^D$) is homeomorphic to 
$B\times I\approx D^2\times I$ (resp. $B\times D(I)\approx D^2\times S^1$). 
Note that the $I$-fiber structure on $\pa B_i^{\alpha}$
defined by $f_{i,0}$ is isotopic to that on $\pa B_i^{\alpha}$ defined by a 
homeomorphism $\pa B_i^{\alpha}\approx \pa D^2\times I$.
Therefore one can define an trivial $I$-bundle $f_i^\alpha: B_i^{\alpha}\to B$
compatible with that on $\pa B_i^{\alpha}$
defined by $f_{i,0}$.
This completes the proof of Lemma \ref{lem:no-exceptional}.
\end{proof}

Next we consider the other case:

\pmed
Case (B). $B_i^D$ has topological singular points.
\pmed

In this case, $(f_i^D)^{-1}(x)$ is a singular $I$-fiber over the point $x=x_{\alpha}\in B$ and $B_i^D\approx B_{\rm pt}$
 (see \cite[Theorem 1.3]{MY}).

\pmed
\begin{lem} \label{lem:local-sing-I}
There exists a map $f_{B,i}': f_i^{-1}(B) \to B$ 
isomorphic to $\pi_{B_\mathrm{pt}^+}$ in \eqref{eq:Mpt+} and 
compatible with the $I$-bundle structure of $f_{i,0}$
over $\pa B$.
\end{lem}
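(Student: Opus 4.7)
My plan is to identify the rescaled limit $Y^\alpha$ as $M_\mathrm{pt}^+$, transport the model projection $\pi_{B_\mathrm{pt}^+}$ back to $B_i^\alpha$ via an equivariant stability homeomorphism, and then adjust the resulting map on a collar of $\pa B_i^\alpha$ so that it agrees with $f_{i,0}$. First I apply Theorem~\ref{thm:rescaling} equivariantly, as in the proof of Lemma~\ref{lem:no-exceptional}. Because $\hat p_i$ is $\phi_i$-invariant, after passing to a subsequence I obtain
\[
\left(\tfrac{1}{\delta_i} B_i^D,\, \hat p_i,\, \phi_i\right) \to (Y^D, y_0, \phi_\infty)
\]
in the equivariant pointed Gromov-Hausdorff topology, with $Y^D = D(Y^\alpha)$ as in \eqref{eq:Y=D(Y)}. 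The new feature of Case~(B) is that the topological singular points of $B_i^D$ persist in $Y^D$ and lie on $\pa Y^\alpha$. Combining this with \eqref{eq:dimDY(infty)}, Theorem~\ref{soul theorem}(3)(c)(i), and $\phi_\infty$-invariance of the limit, the only possibility consistent with $Y^D = D(Y^\alpha)$ is that $Y^D$ is isometric to a rescaling of $M_\mathrm{pt}$, so $Y^\alpha \approx M_\mathrm{pt}^+$.

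Next I invoke the equivariant respectful version of the Stability Theorem to obtain a $\phi_i$-equivariant homeomorphism $h_i : B_i^D \to B_\mathrm{pt}$ sending $\hat p_i$ to the model base point; its restriction gives $h_i : B_i^\alpha \to B_\mathrm{pt}^+$. Fixing any homeomorphism $\kappa : K_1(S^1_\pi) \to B$ that carries the cone point to $x$, I set
\[
f_{B,i}^{\mathrm{cand}} := \kappa \circ \pi_{B_\mathrm{pt}^+} \circ h_i : B_i^\alpha \to B,
\]
which by construction is isomorphic to $\pi_{B_\mathrm{pt}^+}$ and has a singular $I$-fiber over $x$. It remains to match $f_{B,i}^{\mathrm{cand}}$ with $f_{i,0}$ on $\pa B_i^\alpha$. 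Both restrictions are $I$-bundles over $\pa B \approx S^1$ whose total space is a M\"obius band $\pa B_i^\alpha \approx \Mo$; since the twisted $I$-bundle over $S^1$ is unique up to isomorphism, these two structures are isotopic as $I$-bundles. Using the $\phi_i$-invariant collar $\pa B_i^\alpha \times [0,1] \hookrightarrow B_i^\alpha$ provided by Theorem~\ref{thm:smooth approximation}, this isotopy extends inward to yield the desired $f_{B,i}'$.

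The main obstacle I anticipate is excluding the other topological possibilities in Theorem~\ref{soul theorem}(3)(c) for $Y^D$: I must rule out $\dim C = 1$, $C = \emptyset$, and the $\hat C \times [-t,t]/\!\sim$ option, using the doubling structure $Y^D = D(Y^\alpha)$ together with the fact that the topological singular points of $Y^D$ admit neighborhoods of $B_\mathrm{pt}$-type inherited from $B_i^D \approx B_\mathrm{pt}$. Once this identification is established, both the equivariant stability step and the collar interpolation parallel the argument in Lemma~\ref{lem:no-exceptional}.
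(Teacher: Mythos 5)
Your first half (rescale, identify the model) runs parallel to the paper, but the details are off in ways worth noting: the Soul Theorem \ref{soul theorem}(3)(c)(i) and Corollary \ref{cor:soul thm ball} apply to the half $Y^\alpha$, which has nonempty boundary, not to the boundaryless double $Y^D$ (for $Y^D$ one only has the classification $\R\times L/\Lambda$ from \cite{MY}); the conclusion one can and need only draw is that a large ball in $Y^\alpha$ is \emph{homeomorphic} to $B_{\rm pt}^+$, not that $Y^D$ is isometric to a rescaled $M_{\rm pt}$ (the cross-section $\hat C$ in $\hat C\times[-t,t]/\!\sim$ need not be flat); and the topological singular point sits in ${\rm int}^{\bf A}Y^\alpha$, not on $\pa Y^\alpha$. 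The exclusion you defer as the ``main obstacle'' is in fact the short part: $\dim S=1$ would make $Y^\alpha$ a manifold with boundary, contradicting the singular point, and $\pa B_i^\alpha\approx\Mo$ (which you already know, since $B_i^D\approx B_{\rm pt}$ gives $\pa B_i^D\approx K^2$ and $\pa B_i^\alpha$ is its $\phi_i$-half) rules out every case of Corollary \ref{cor:soul thm ball} except (3)(c)(i) with $B\approx B_{\rm pt}^+$.

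The genuine gaps are in the second half. First, you invoke an ``equivariant respectful version of the Stability Theorem''; no such statement is available in the paper (Theorems \ref{thm:stability} and \ref{thm:stability respectful} are not equivariant), and the very reason Lemmas \ref{lem:no-exceptional} and \ref{lem:local-sing-I} exist is that the equivariant tools fail near the singular points, so this step cannot simply be cited. It is also unnecessary: ordinary stability applied to $B_i^\alpha\to Y^\alpha$, as in the paper, already yields $B_i^\alpha\approx B_{\rm pt}^+$ with $\pa B_i^\alpha\approx\Mo$, $\ba B_i^\alpha\approx D^2$. Second, and more seriously, the compatibility with $f_{i,0}$ over $\pa B$ --- the actual content of the lemma --- is dispatched by ``unique up to isomorphism, hence isotopic, hence the isotopy extends inward along a collar.'' Isomorphism of the two $I$-bundle structures on $\Mo$ only produces a bundle homeomorphism; to interpolate you must control its mapping class (this is exactly the $MCG(\Mo)\cong\Z_2$ argument the paper carries out in Lemma \ref{lem:projection}, Case (b)), and you must also know that your candidate map is fiberwise standard on the collar you interpolate over --- the $\phi_i$-invariant collar from Theorem \ref{thm:smooth approximation} has no a priori relation to $f_{B,i}^{\mathrm{cand}}$, so interpolating there does not obviously yield a map isomorphic to $\pi_{B_{\rm pt}^+}$. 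The paper sidesteps both problems by building the homeomorphism from the boundary inward: first $\hat h_{i,0}$ on $\pa B_i$ with $\pi_{B_{\rm pt}^+}\circ\hat h_{i,0}=f_{i,0}$, then an extension over $\pa B_i\cup\ba B_i\approx P^2$, then a cone extension over $B_i\approx K_1(P^2)$, so that compatibility with $f_{i,0}$ holds by construction and no isotopy or mapping-class analysis is needed. If you want to keep your ``pull back the model and patch'' scheme, you must add the mapping-class and fiberwise-collar arguments explicitly; as written, the patching step is a gap.
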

\begin{proof}
%In what follows, we fix $\alpha=1$ and hence omit the
%superscript $\alpha$.
Note that 
$\mathrm{int}^{\mathbf{A}} B_i^\alpha$ contains a unique topological singular point, say $q_i$, where $q_i$ is an endpoint of the segment $(f_i^D)^{-1}(x)$.
We verify that 
\beq \label{eq:Bi-paBi}
B^{\alpha}_i\approx B_\mathrm{pt}^+, \quad 
\pa B^{\alpha}_i \approx \Mo,\quad
\ba_{M^\alpha} B^{\alpha}_i\approx D^2.
\eeq
Let $S$ be a soul of $Y^\alpha$. 
Note that $B^{Y^\alpha}(y_0, 1)$ contains a topological singular point,
which is the limit of $q_i$.
By Stability Theorem \ref{thm:stability}, $B^{\alpha}_i$ is homeomorphic to a large ball around $S$ in $Y^\alpha$.
If $\dim S = 1$, then $Y^\alpha$ is isometric to $\mathbb R\times N/\mathbb Z$, where
$N\approx \mathbb R^2_+$ or $N=\mathbb R\times I$. It turns out that 
$Y^\alpha$ is a manifold with boundary. This is a contradiction.
Thus we have $\dim S = 0$.
By the boundary condition $\partial B^{\alpha}_ i\approx \Mo$,
only the case (C) (i) occurs in Corollary \ref{cor:soul thm ball}.
This implies \eqref{eq:Bi-paBi}.

We identify $(B,x)=(K_1(S^1_\pi),0)$ topologically.
Using the $I$-bundle structure on $\pa B_i^\alpha$ defined by 
$f_{i,0}$ over $\pa B$, construct a map
$\hat h_{i,0}:\pa B_i\to \pa B_{\rm pt}^+$
such that $\pi_{B_{\rm pt}^+}\circ \hat h_{i,0}=f_{i,0}$
on $\pa B_i$. Extend it to a homeomorphism
$\hat h_{i}:\pa B_i\cup \ba B_i\to 
\pa B_{\rm pt}^+\cup\ba B_{\rm pt}^+$.
Finally extend $\hat h_i$ to a homeomorphism
$h_i:B_i\to B_{\rm pt}^+$ using the topological cone structures of 
$B_i$ and $B_{\rm pt}^+$.
Then the required map $f_{B,i}'$ is obtained as
$f_{B,i}'=\pi_{B_{\rm pt}^+}\circ h_i$.
This completes the proof of Lemma \ref{lem:local-sing-I}.
\end{proof}

Combining Lemmas \ref{lem:no-exceptional} and \ref{lem:local-sing-I}, 
we have completed the proof of 
Theorem \ref{thm:dim2nobdry}.
\end{proof}

\section{The case that $X$ is a surface with boundary} \label{sec:2-dim with bdry}

Let $M_i$ be a sequence of three-dimensional Alexandrov spaces with boundary in $\mathcal M(3,D)$ collapsing to $X$. 
In this section, we always assume that 
$X$ is an Alexandrov surface with boundary.
As indicated in Section \ref{sec:intro},
passing to a subsequence, we have one of the following two cases. 
\begin{itemize}
\item[(1)] $M_i$ inradius collapses to $X\,;$
\item[(2)] $M_i$ non-inradius collapses to $X$.
\end{itemize}

As a preliminary for the proof of 
Theorem \ref{thm:dim2wbdy},
following \cite[Section 5]{SY} and 
\cite[Definition 5.1]{MY}, 
we divide $X$ into two closed domains $X'$ and $X''$.

We first construct $X''$.
Let $\e$ be a small positive number, and 
let $C$ be any component of $\ba X$. 
Then we obtain 
an {\it $\e$-regular covering} $\{ B_\alpha, D_\alpha\}_{\alpha=1}^m$ 
of $C$, where $B_\alpha$ and $D_\alpha$ are
closed domains of $X$ satisfying the following.
\begin{itemize}
\item There exists a finite set $\Phi_C=\{ x_\alpha \}_{\alpha=0}^m$ arranged in a cyclic order 
on the circle $C$ with $x_0 := x_m\,;$ 
\item $B_\alpha$ is a small metric ball around $x_\alpha$ of radius,
say $r$, so that $d_{x_\alpha}$ is 
$\e$-regular on $B_{\alpha} \setminus \{ x_\alpha\}\,;$ 
\item Let $\gamma_\alpha$ denote a unique shortest curve joining $x_{\alpha-1}$and $x_{\alpha}$ in $C$, and set
$$
D_\alpha:=B(\gamma_\alpha, \mu) \setminus (\mathring{B}_\alpha \cup \mathring{B}_{\alpha-1}),
$$
with $0< \mu \ll r$.
Then we have 
\begin{enumerate}
\item[(a)]
 $(d_{x_\alpha}, d_{\gamma_\alpha})$ is 
$(\pi/10,\e)$-regular on $D_\alpha \setminus \gamma_\alpha\,;$
\item[(b)]
$\tilde\angle x_{\alpha-1} y x_\alpha >\pi-\e$ holds
for all $y\in D_\alpha$.
\end{enumerate}
\end{itemize} 
Clearly, any corner point in $\ba X$ is contained in 
$\Phi$, where $x \in \ba X$ is called a {\it corner} point if $\diam\, \Sigma_{x} \le \pi /2$.

Let $X_C''$ denote the union of all $B_\alpha$ and 
$D_\alpha$, and set 
\[
  X'':=\bigcup_C X_C'', \quad 
  X':=X\setminus\mathring{X}'',\quad \Phi:=\bigcup_C \Phi_C
\]
where $C$ runs over all the components of $\ba X$.

%Our argument below will simplifies that in \cite[Section 5]{MY}.

Taking smaller $r$ and $\mu$ if necessary, we may assume 
\begin{align*}
B(X'',\mu)\setminus \ba X \subset R_\delta(X), \quad
B(X'',\mu)\setminus \Phi\subset R_\delta^D(X).
\end{align*}
 Take the double $D(M_i)$ of $M_i$ and consider the limit 
\beq\label{eq:def=XD}
  X^D:= \lim_{i\to\infty} D(M_i),
\eeq
which is the union of two isometric copies $X^1$ and $X^2$ of $X$ as a set.
The limit of the reflection $\phi_i$ on $D(M_i)$ along $\ba M_i$ defines an isometric involution $\phi$ on $X^D$ such that $X=X^D/\phi$.
Note that $\phi$ might be trivial. 

Let $\{ y_k\}_{k=1}^K$ denote the finite set $X' \cap \mathcal S_\delta(X^D)$.
For small open balls $U_k$ centered at $y_k$, choose a closed domain $X_0$ of $\mathcal R_\delta(X)$ such that 
\beq\label{eq:choice-X0}
\biggl(X' \setminus \bigcup_{k=1}^K U_k \biggr)
\bigcup (A(\ba X,\mu/2,2\mu)\setminus U(\Phi,r/3)) \subset \mathring{X}_0.
\eeq
Take two copies of $X_0$ as $X_0^{1} \subset X^1$ and $X_0^{2} \subset X^2$, and set 
\beq \label{eq:XD=X1+X2}
X_0^D := X_0^{1} \cup X_0^{2} \subset \mathcal R_\delta(X^D).
\eeq
From Equivariant Fibration Theorem \ref{thm:fibration}, there are a $\phi_i$-invariant closed domain $D(M_i)_0$ of $D(M_i)$ and a $\Z_2$-equivariant $S^1$-bundle 
\begin{equation} \label{eq:equiv fib}
f_{i,0}^D : D(M_i)_0 \to X_0^D,
\end{equation}
which induces a map 
\beq \label{eq:mapfi0}
   f_{i,0}:M_{i,0}\to X_0,
\eeq
where $M_{i,0}=D(M_i)_0/\phi_i$.

\subsection{Case (1) -- Inradius collapse}
\label{ssec:inradius-collapse}

In this subsection, we prove Theorem \ref{thm:dim2wbdy}(1). Let $M_i$ inradius collapse to $X$.

The following lemma holds in the general dimension.

\begin{lem} \label{lem:D(M)toX}
Let $M_i$ be a sequence of $n$-dimensional Alexandrov spaces of curvature $\ge -1$ with boundary and $\diam (M_i)\le D$ 
converging to an Alexandrov space $X$.
Then $M_i$ inradius collapses to $X$ if and only if
$D(M_i)$ converges to $X$, that is,
$X^D=X$ and $\phi$ is the identity. 
\end{lem}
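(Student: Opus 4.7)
The lemma is an equivalence, and I would prove each direction separately, relying on the fact that any path in $D(M_i)$ between the two copies $M_i^1$ and $M_i^2$ must pass through $\ba M_i$.

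For the ``only if'' direction, assume $X^D = X$ and $\phi = \mathrm{id}_X$. Suppose for contradiction that $\mathrm{inrad}(M_i) \not\to 0$. Along a subsequence, choose $p_i \in M_i$ with $d_{M_i}(p_i, \ba M_i) \ge \epsilon > 0$. Viewing $p_i \in M_i^1 \subset D(M_i)$ and $\phi_i(p_i) \in M_i^2$, any curve joining them in $D(M_i)$ must cross $\ba M_i$, so $d_{D(M_i)}(p_i, \phi_i(p_i)) \ge 2\epsilon$. Passing to subsequential limits under $D(M_i) \to X^D = X$, the points $p_i$ and $\phi_i(p_i)$ converge to $a, a' \in X$ with $d(a, a') \ge 2\epsilon$. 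But $\phi = \mathrm{id}$ forces $a' = \phi(a) = a$, a contradiction.

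For the ``if'' direction, suppose $\mathrm{inrad}(M_i) \to 0$ and let $\varphi_i : M_i \to X$ be $\theta_i$-approximations. I would define $\tilde\varphi_i : D(M_i) \to X$ by $\tilde\varphi_i := \varphi_i \circ \iota^\alpha$ on each copy $M_i^\alpha$, where $\iota^\alpha : M_i^\alpha \to M_i$ is the tautological identification. Since each $M_i^\alpha$ is convex in $D(M_i)$, distances within a single copy are preserved by $\iota^\alpha$, so the distance-approximation on each copy is $\theta_i$. For $x_1 \in M_i^1$ and $x_2 \in M_i^2$, the identity
\[
d_{D(M_i)}(x_1, x_2) = \min_{q \in \ba M_i}\bigl[d(x_1, q) + d(q, x_2)\bigr],
\]
together with the choice of $q \in \ba M_i$ within $\mathrm{inrad}(M_i)$ of $\iota^1(x_1)$, yields
\[
\bigl| d_{D(M_i)}(x_1, x_2) - d_{M_i}(\iota^1(x_1), \iota^2(x_2)) \bigr| \le 2\,\mathrm{inrad}(M_i).
\]
Hence $\tilde\varphi_i$ is a $(\theta_i + 2\,\mathrm{inrad}(M_i))$-approximation from $D(M_i)$ to $X$, and $X^D = X$ by uniqueness of Gromov--Hausdorff limits. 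By construction $\tilde\varphi_i \circ \phi_i = \tilde\varphi_i$, so the limiting involution $\phi$ of $X^D = X$ satisfies $\tilde\varphi_\infty \circ \phi = \tilde\varphi_\infty$, forcing $\phi = \mathrm{id}_X$ under this choice of approximations.

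The main delicate step is the distance comparison in the ``if'' direction; once one observes that any path crossing between the two copies realizes its length through $\ba M_i$, the estimate is immediate from the inradius bound, and everything else is a routine manipulation of Gromov--Hausdorff approximations.
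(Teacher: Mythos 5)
Your proof is correct and follows essentially the same route as the paper: both directions rest on the identity $d_{D(M_i)}(p,\phi_i(p))=2|p,\ba M_i|$ together with the convexity of each copy $M_i^\alpha$ in $D(M_i)$ (the paper deduces $X^1=X^2$ from mirror points becoming close, and gets non-coincidence $X\neq X^D$ from a point deep in ${\rm int}^{\bf A}M_i$). Your version merely makes the ``if'' direction more quantitative by writing down an explicit $(\theta_i+2\,\mathrm{inrad}(M_i))$-approximation $D(M_i)\to X$, which is a harmless elaboration of the same idea.
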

\begin{proof}
Suppose ${\rm inrad}(M_i)\to 0$. This implies that 
$\ba M_i\to X$ under the convergence $M_i\to X$.
For any point $p_i^1\in M^1$, set $p_i^2:=\phi_i(p_i^1)\in M_i^2$.
Then we have $|p_i^1,p_i^2|=2|p_i^1,\ba M_i|\to 0$.
This implies $X^1=X^2$ as required.

If ${\rm inrad}(M_i)>c>0$ for $c$ independent of $i$, take $p_i\in M_i$ with $B(p_i,c)\subset
{\rm int}^A M_i$. Let $p_i^1\in M_i^1$ 
%and $p_i^2=\phi_i(p_i^1)\in M_i^2$ 
be the point corresponding to $p_i$, and $x\in X^1$ the limit of $p_i^1$.
Then $B(x,c/2)$ does not meet $X^2$,
and hence $X\neq X^D$.
\end{proof}

By Lemma \ref{lem:D(M)toX}, each 
$S^1$-fiber of $f_{i,0}^D : D(M_i)_0 \to X_0$ 
is $\phi_i$-invariant, and hence has
exactly two $\phi_i$-fixed points.
This implies that 
\beqq
\text{ 
$f_{i,0}:M_{i,0}\to X_0$ in \eqref{eq:mapfi0}
is an $I$-bundle.}
\eeqq

On the other hand, since $M_i$ inradius collapses to $X$ and 
since $X'$ is away from the boundary $\ba X$,
we can apply Theorem \ref{thm:dim2nobdry}
(see also Lemma \ref{lem:MtoD(M)})
to obtain a closed domain $M_i' \subset M_i$ and a generalized $I$-bundle
\beq \label{eq:fiprime}
f_i' : \mathring{M}_i' \to \mathring{X}'
\eeq
such that 
\begin{itemize}
\item the double $D(f_i'):D(M_i')\to X'$ 
of $f_i'$ is a generalized Seifert bundle$\,;$
\item $D(f_i')$ is compatible with the $S^1$-bundle structure of $f_{i,0}^D$ on $X_0$.
\end{itemize}

Applying \cite[Theorem 1.5]{MY} to the convergence $D(M_i) \to X$, 
we obtain a generalized Seifert fibration 
\[
f_i^D : D(M_i) \to X
\]
that extends both $f_{i,0}^D$ and $D(f_i')$. 
However, from the construction in \cite[Theorem 1.5]{MY},
$f_i^D$ might not be 
$\phi_i$-invariant near the singular point set
$\ba X\cup S_\delta({\rm int}^{\rm\bf A} X)$.
In the rest of this subsection, we shall deform $f_i^D$ to make it $\phi_i$-invariant, and obtain a generalized 
$I$-bundle $M_i\to X$.

Let us set 
\[
M_i'' := M_i \setminus \mathring{M}_i'. 
\]
We are going to show that $M_i''$ is an $I$-bundle
over $X''$.
Take $p_{\alpha,i}\in\ba M_i$ converging to $x_\alpha$,
and choose an arc $\gamma_{\alpha,i}$ in $\ba M_i$ joining $p_{\alpha-1,i}$
and $p_{\alpha,i}$ and converging to $\gamma_\alpha$ as $i\to\infty$.
In what follows, since we fix $1\le \alpha\le m$, we often omit the subscript $\alpha$ and set
\beq \label{eq:omit-alpha}
\begin{aligned}
&x := x_\alpha, \quad p_i:=p_{\alpha,i},\quad p_i':=p_{\alpha+1,i},\quad p_i'':=p_{\alpha-1,i}, \\
&B := B_\alpha,\quad B' := B_{\alpha+1},\quad
 B'':= B_{\alpha-1},\quad
D:= D_{\alpha}, \quad D':=D_{\alpha+1}, \\
&\gamma:=\gamma_\alpha, \quad
\gamma':=\gamma_{\alpha+1},\quad
\gamma_i:=\gamma_{\alpha,i}, \quad
\gamma_i':=\gamma_{\alpha+1,i}.
\end{aligned}
\eeq

\begin{claim} \label{claim:flowB}
There exists a closed domain $B_i$ around 
$p_i$ satisfying  
\begin{enumerate}
\item $B_i\approx B(p_i,r)\,;$
\item $B_i$ converges to $B\,;$
\item $\pa B_i \cap \{d_{\gamma_i\cup\gamma_i'}\ge \mu/2\}\subset f_{i,0}^{-1}(\pa B\cap X_0)\,$
\item $\pa B_i \cap \{d_{\gamma_i\cup\gamma_i'}\le \mu/3\}\subset\pa B(p_i,r)\,$
\end{enumerate}
\end{claim}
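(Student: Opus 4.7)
The plan is to construct $B_i$ by hybridizing two candidate boundary surfaces: the metric sphere $\pa B(p_i,r)$, which will supply $\pa B_i$ in the region close to $\gamma_i \cup \gamma_i'$, and the preimage $f_{i,0}^{-1}(\pa B \cap X_0)$, which will supply $\pa B_i$ in the region far from $\gamma_i \cup \gamma_i'$. The middle annular region $\mu/3 \le d_{\gamma_i \cup \gamma_i'} \le \mu/2$ will be bridged by a gradient-like flow.

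First I would verify that $\pa B \cap \{d_{\gamma \cup \gamma'} \ge \mu/3\} \subset \mathring X_0$. Since $\gamma \cup \gamma'$ are the arcs of $\pa X$ adjacent to the corner $x$, and $r$ is small compared to the distances to other components of $\pa X$ and to the $y_k$'s, any such point lies either in $X'$ (away from $\{y_k\}$) or in the annulus $A(\pa X,\mu/3,2\mu)\setminus U(\Phi,r/3)$; both sit inside $\mathring X_0$ by \eqref{eq:choice-X0}. The respectful Stability Theorem \ref{thm:stability respectful} applied to $d_x$ identifies $(B(p_i,r),p_i)$ with $(B,x)$ topologically, and the $S^1$-bundle $f_{i,0}^D$ over $X_0^D$ produced by Fibration Theorem \ref{thm:fibration} makes $f_{i,0}^{-1}(\pa B \cap X_0)$ a surface approximating $\pa B\cap X_0$.

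Next, the pair $(d_{p_i},d_{\gamma_i\cup\gamma_i'})$ is $(c,\e)$-regular on the transition annulus, because the $\epsilon$-regularity built into the $\e$-regular covering ($(d_{x_\alpha},d_{\gamma_\alpha})$ being $(\pi/10,\e)$-regular on $D_\alpha\setminus\gamma_\alpha$) passes to the double $D(M_i)$ for large $i$. I would therefore invoke the equivariant flow theorem of Appendix \ref{sec:flow} to obtain a $\phi_i$-invariant flow on this annulus that moves the piece of $\pa B(p_i,r)$ lying in $\{\mu/3 \le d_{\gamma_i \cup \gamma_i'} \le \mu/2\}$ onto $f_{i,0}^{-1}(\pa B)$ at its outer edge while fixing its inner edge. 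Defining $\pa B_i$ to be $\pa B(p_i,r)$ where $d_{\gamma_i\cup\gamma_i'} \le \mu/3$, the flow image in between, and $f_{i,0}^{-1}(\pa B\cap X_0)$ where $d_{\gamma_i\cup\gamma_i'} \ge \mu/2$, and letting $B_i$ be the closed domain bounded by $\pa B_i$ together with the enclosed part of $\pa M_i$, properties (3) and (4) are immediate from construction, (2) follows because both $B(p_i,r)$ and $f_{i,0}^{-1}(B\cap X_0)$ converge to $B$, and (1) follows because the flow realizes a homeomorphism $B_i \approx B(p_i,r)$.

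The main obstacle is ensuring that the flow produced by Appendix \ref{sec:flow} can be chosen both $\phi_i$-equivariantly (so that $B_i$ descends from a $\phi_i$-invariant domain in $D(M_i)$, which is essential for the later deformation into a generalized $I$-bundle) and compatibly with the fibers of $f_{i,0}^D$ on the outer boundary of the transition annulus. This requires careful verification that the regularity of $(d_{p_i},d_{\gamma_i\cup\gamma_i'})$ holds on the doubled space and that the resulting hybrid $\pa B_i$ is a topological disc glued to the correct portion of $\pa M_i$.
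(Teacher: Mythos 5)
Your proposal is correct and follows essentially the same route as the paper: the paper likewise uses the equivariant flow theorem of Appendix \ref{sec:flow} to produce a $\phi_i$-invariant gradient-like flow for $d_{p_i}$ on the annulus $A^{D(M_i)}(p_i,r/2,2r)$, and then interpolates between the two level surfaces $\pa B(p_i,r)$ and $f_{i,0}^{-1}(\pa B\cap X_0)$ according to the distance from $\gamma_i\cup\gamma_i'$, taking $B_i$ to be the domain bounded by the resulting hybrid surface and containing $p_i$. The only cosmetic difference is your appeal to $(c,\e)$-regularity of the pair $(d_{p_i},d_{\gamma_i\cup\gamma_i'})$, which is not needed (regularity of $d_{p_i}$ alone suffices, as in the paper), and compatibility with the $f_{i,0}^D$-fibers on the outer edge is automatic from defining $\pa B_i$ there to be exactly $f_{i,0}^{-1}(\pa B\cap X_0)$.
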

\begin{proof}
Using Theorem \ref{thm:smooth approximation},
we construct a $\phi_i$-invariant gradient-like flow $\psi_i^D$ for $d_{p_i}$ on $A^{D(M_i)}(p_i, r/2, 2r)$.
This flow induces a flow $\psi_i$ on $A^{M_i}(p_i, r/2, 2r)$. Making use of $\psi_i$, we connect the two level sets
$\pa B(p_i,r)$ and $f_{i,0}^{-1}(\pa B\cap X_0)$ depending on the 
distance from $\gamma_i\cup\gamma_i'$ to construct $\pa B_i$ satisfying (3), (4).
We omit the detail of the construction of $\pa B_i$
since it is standard.
Let $B_i$ be the domain 
of $M_i$ bounded by $\pa B_i$ and
containing $p_i$. Then (1) and (2) are immediate.
\end{proof}

Considering 
the following slightly longer closed domain 
\beq \label{eq:Dalpha0}
D_{i,0}:=B(\gamma_i,2\mu/3)\cup 
f_{i,0}^{-1}(A(\gamma,\mu/2,\mu)\cap X_0)
\setminus (B(p_i,r/2)\cup B(p_i'',r/2)),
\eeq
we define $D_i$ and $F_i$ by 
\begin{align} \label{eq:Di}
D_i := D_{i,0}\setminus
   (\mathring{B}_i\cup\mathring{B}_i''),\quad
F_i := \pa B_i\cap D_i.
\end{align}
We define 
$D_i'$ using $\gamma',\gamma_i'$ in a similar way, and set
$F_i':=\pa B_i\cap D_i'$.

Consider the arcs
 $F := D\cap \pa B$ and $F' := D' \cap \pa B$
corresponding to $F_i$ and $F_i'$.
Since $f_i'$ is an $I$-bundle over a neighborhood of $\pa X'$, we have
\begin{align}\label{eq:B-FcupF'}
\mathrm{cl}_{M_i} (\pa B_i \setminus (F_i \cup F_i')) = f_i^{-1} (\mathrm{cl}_X(\pa B \setminus (F \cup F')))\approx D^2.
\end{align}

\begin{lem} \label{lem:D_iFi=D3D2} 
Both 
$(D_i,F_i)$ and $(D_i',F_i') $ are homeomorphic to $(D^3,D^2)$.
\end{lem}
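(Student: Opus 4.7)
The plan is to establish $(D_i, F_i) \approx (D^3, D^2)$; the case $(D_i', F_i')$ is completely analogous by symmetry. My strategy is to first extend the $I$-bundle $f_{i,0}$ to a neighborhood of $\gamma$ in $X$, and then identify $D_i$ and $F_i$ inside the total space of the extended bundle. Since the interior of $\gamma$ lies in the $\delta$-regular part of $X$ with admissible strainers given by the $\e$-regular covering, the Equivariant Fibration Theorem \ref{thm:fibration} applied to a slight enlargement $\hat D$ of $D$ (obtained by excising small open neighborhoods of the endpoints $x, x''$ from a full $\mu$-collar of $\gamma$) produces a $\phi_i$-equivariant $S^1$-bundle extending $f_{i,0}^D$, which descends to an $I$-bundle $\hat f_i : \hat D_i \to \hat D$ extending $f_{i,0}$. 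Since $\hat D$ is a topological closed disk, this $I$-bundle is trivial, so $\hat D_i \approx \hat D \times I \approx D^3$.

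Next, I would show $D_i \approx D^3$. By \eqref{eq:Dalpha0} and Claim \ref{claim:flowB}, $D_{i,0}$ coincides with $\hat f_i^{-1}(D)$ outside small neighborhoods of $p_i, p_i''$. The open ball $\mathring B_i$ meets $D_{i,0}$ in a $3$-ball abutting one ``corner'' of $\hat f_i^{-1}(D)$, and similarly for $\mathring B_i''$ at the opposite corner; in each case the removed $3$-ball meets $\partial \hat f_i^{-1}(D)$ in a $2$-disk. Removing two disjoint $3$-balls that are attached along disjoint boundary disks of a topological $3$-ball yields a $3$-ball (an Alexander-trick argument), so $D_i \approx D^3$.

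Then I would identify $F_i \approx D^2$. By Claim \ref{claim:flowB}(3)(4), $F_i = \partial B_i \cap D_i$ decomposes into a piece in $f_{i,0}^{-1}(\pa B \cap X_0) \cap D_i$ (an $I$-bundle strip over an arc of $\pa B$, hence a rectangle homeomorphic to $I \times I$) and a piece in $\pa B^{M_i}(p_i, r) \cap D_i$ (a portion of the geodesic sphere, itself a topological disk since this region is a manifold), glued along an arc in the transition region $\mu/3 \le d_{\gamma_i \cup \gamma_i'} \le \mu/2$. A disk glued to a disk along an arc of their boundaries is a disk, so $F_i \approx D^2$.

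The main technical obstacle is ensuring the compatibility of the two constructions used in the argument: the $\phi_i$-invariant gradient-like flow $\psi_i$ used in Claim \ref{claim:flowB} to shape $\partial B_i$, and the extended $I$-bundle $\hat f_i$. One must verify that the transition region, where $\partial B_i$ is interpolated between the level set of $d_{p_i}$ and the level set $f_{i,0}^{-1}(\pa B \cap X_0)$, is compatible with the fiber structure in the sense that its intersection with each $I$-fiber of $\hat f_i$ is a single point (or empty), ensuring $F_i$ is a genuine disk rather than a surface with extra topology. This compatibility follows because, on the strained region in question, both $\hat f_i$ and the flow $\psi_i$ are locally trivial over the base $\hat D$ and can be arranged to align up to isotopy.
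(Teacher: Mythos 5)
Your first step is where the argument fails. You claim that, since the interior of $\gamma$ is regular, the Equivariant Fibration Theorem \ref{thm:fibration} applied to an enlargement $\hat D$ of $D$ containing the boundary arc $\gamma\subset\ba X$ produces a $\phi_i$-invariant $S^1$-bundle, hence an $I$-bundle $\hat f_i:\hat D_i\to \hat D$ extending $f_{i,0}$, and that $\hat D_i\approx D^3$ follows. This structure is not available. Points of $\ba X$ are never $(2,\delta)$-strained in $X$, and even if one uses admissible strainers in the double $D(X)$ (the set $\mathcal R^D_\delta(X)$, which does contain non-corner edge points), the conclusion of Theorem \ref{thm:fibration} near $\ba X$ is only the cap decomposition: the preimage of the $\nu$-collar of $\ba X$ fibers over the one-dimensional level set $\partial_0 Y_\nu$ with two-dimensional fibers, not over the two-dimensional base up to $\ba X$. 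This is exactly why $X_0$ in \eqref{eq:choice-X0} is taken inside the annulus $A(\ba X,\mu/2,2\mu)$, away from $\ba X$, and why Subsection \ref{ssec:inradius-collapse} constructs the fiber structure over the boundary strip by hand. In fact, the existence of a (trivial) $I$-bundle $D_i\to D$ compatible with $f_{i,0}$ is precisely Lemma \ref{lem:projectionf''D0}, which is proved \emph{after}, and using, the present lemma (it needs $D_i\approx D^3$, $\pa D_i\cup\ba D_i\approx S^2$); so your argument is circular. The subsequent steps — removing corner balls by an Alexander-type argument, writing $F_i$ as two disks glued along an arc, and the ``alignment up to isotopy'' of the flow region with the fibers — all rest on this unavailable bundle structure, and the compatibility issue you flag at the end is exactly the kind of assertion that would itself require proof.

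For comparison, the paper pins down $F_i$ without any fibration near $\ba X$: taking $y_i\in\ba M_i$ converging to $y=\ba X\cap F$, a neighborhood of $y_i$ in $M_i$ (resp.\ $D(M_i)$) has the form $F_i\times I$ (resp.\ $D(F_i)\times I$); the generalized Margulis lemma restricts $F_i$ to $\{D^2, S^1\times I, \Mo\}$ and $D(F_i)$ to $\{D^2,\Mo\}$, and since $D(S^1\times I)\approx T^2$ and $D(\Mo)\approx K^2$, only $F_i\approx D^2$ is consistent. Then $D_i\approx D^3$ is obtained by the flow-curve argument of \cite[Assertions 5.5, 5.6]{MY} based on the equivariant smooth approximation (Theorem \ref{thm:smooth approximation}). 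If you want to salvage your outline, you would have to replace your Step 1 by an argument of this type; as written, the proposal has a genuine gap.
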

\begin{proof}
Let $\{ y\}:=\ba X \cap F$.
From the assumption, we can choose $y_i \in\ba M_i$ converging to $y$.
Note that a neighborhood of $y_i$ in $M_i$ 
(resp. in $D(M_i)$) is homeomorphic to
the form $F_i\times I$ (resp. $D(F_i)\times I)$,
and that $\pa D(F_i)$ is a circle.
From the generalized Margulis lemma (\cite{FY}, 
\cite{Y conv}, \cite{KPT}, \cite{KW}), 
$F_i$ is homeomorphic to one of 
$\{ D^2, S^1\times I, \Mo \}$,
and $D(F_i)$ is homeomorphic to one of 
$\{ D^2, \Mo \}$.
However, if $F_i$ is homeomorphic to either 
$S^1\times I$ or $\Mo$, then 
$D(F_i)$ is homeomorphic to neither $D^2$ nor
$\Mo$.
Thus we have $F_i\approx D^2$ as required. 
Similarly we have $F_i'\approx D^2$. 
 
 To show $D_i\approx D^3$, we use the same argument 
 as in \cite[Assertions 5.5, 5.6]{MY}. This is done by the flow curves  (see also Theorem \ref
{thm:smooth approximation}),
of a smooth vector field on a smooth approximation. Since the idea is the same, we omit the detail. This completes the proof. 
\end{proof}

From \eqref{eq:B-FcupF'} and Lemma \ref{lem:D_iFi=D3D2}, we have
\beq \label{cor:pa B is D2}
\pa B_i\approx D^2.
\eeq 

\begin{lem} \label{lem:top B^M}
$B_i \approx D^3$.
\end{lem}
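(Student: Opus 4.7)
The plan is to identify $B_i$ as a topological $3$-ball by first pinning down the space of directions $\Sigma_{p_i} M_i$ via a doubling argument, and then promoting this classification of the link to $B_i$ using the cone structure at $p_i$.

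Since $p_i \in \ba M_i$ and $\dim M_i = 3$, the space $\Sigma_{p_i} M_i$ is a compact two-dimensional Alexandrov space of curvature $\ge 1$ with non-empty boundary, hence homeomorphic to either $D^2$ or the M\"obius band $\Mo$ by the classification in dimension two. To exclude the second case, one uses the canonical identification
\[
 \Sigma_{p_i} D(M_i) \;=\; D(\Sigma_{p_i} M_i),
\]
whose left-hand side is a closed two-dimensional Alexandrov space of curvature $\ge 1$, and so is homeomorphic to $S^2$ or $P^2$. On the other hand $D(\Mo) \approx K^2$, and the Klein bottle admits no Alexandrov metric of curvature $\ge 1$ (by Gauss--Bonnet, since $\chi(K^2) = 0$ while curvature $\ge 1$ would force positive Euler characteristic). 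Hence $\Sigma_{p_i} M_i \approx D^2$.

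To finish, Claim \ref{claim:flowB}(1) gives $B_i \approx B(p_i, r)$. The $\e$-regularity of $d_x$ on $B\setminus\{x\}$, transferred through the Gromov--Hausdorff convergence $M_i \to X$, provides a gradient-like vector field for $d_{p_i}$ without zeros on an annulus $A(p_i, \rho, r)$, for some $\rho > 0$ independent of $i$ and all large $i$. Combined with the cone structure $B(p_i, \rho) \approx K_1(\Sigma_{p_i} M_i)$ from Corollary \ref{cor:cone}, Perelman's Morse theory then yields
\[
 B_i \;\approx\; B(p_i, r) \;\approx\; K_1(\Sigma_{p_i} M_i) \;\approx\; K_1(D^2) \;=\; D^3.
\]
The main technical hurdle will be establishing the uniform annular no-critical-point bound with $\rho$ independent of $i$; this is carried out by transferring the regular gradient-like flow of $d_x$ on $X$ up to $M_i$ via the $\phi_i$-invariant smooth approximations already used in the construction of $\pa B_i$ in Claim \ref{claim:flowB}.
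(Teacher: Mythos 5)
Your first step is correct but is a general fact: at any Alexandrov boundary point of a three-dimensional space one has $\Sigma_{p_i}D(M_i)=D(\Sigma_{p_i}M_i)$, and since $K^2$ carries no metric of curvature $\ge 1$, indeed $\Sigma_{p_i}M_i\approx D^2$. The gap is in the second half, where you pass from the link to $B_i$. Corollary \ref{cor:cone} only provides a conical radius at $p_i$ that depends on the point, hence on $i$; in a collapsing sequence this radius is in general not bounded below, so you cannot invoke the cone structure on $B(p_i,\rho)$ for a $\rho$ independent of $i$. The regularity of $d_x$ on $B\setminus\{x\}$, transferred through the Gromov--Hausdorff approximation, does give absence of critical points of $d_{p_i}$ on $A(p_i,\rho,r)$ for each \emph{fixed} $\rho>0$ and large $i$ (this part, which you call the main hurdle, is standard), but it says nothing about the scales between the collapsing scale and $\rho$ -- exactly the range where critical points can occur and where the topology of $B_i$ is decided. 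That this is not a repairable technicality is shown by the non-inradius case later in the same section (Lemma \ref{lem:7.10}, Case B): there too $p_i\in\ba M_i$, so $\Sigma_{p_i}M_i\approx D^2$, and $d_x$ is $\e$-regular on $B\setminus\{x\}$, yet $B_i$ can be homeomorphic to $S^1\times D^2$ or $\Mo\times I$. Hence no argument using only the link at $p_i$ together with limit regularity of $d_x$ away from $x$ can yield $B_i\approx D^3$.

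The missing input is the boundary information $\pa B_i\approx D^2$ established in \eqref{cor:pa B is D2}, which is what distinguishes the inradius case. The paper's proof doubles $B_i$ to get $\pa B_i^D\approx S^2$, and then argues by contradiction using the rescaling theorem (Theorem \ref{thm:rescaling}): if $B_i$ were not a $3$-disk, one blows up $B_i^D$ to a noncompact nonnegatively curved limit $Y^D$ with $B_i^D$ homeomorphic to a large ball about its soul, and the classification in Corollary \ref{cor:soul thm ball} combined with $\pa B_i^D\approx S^2$ forces that ball to be $D^3$, a contradiction. To repair your proof you would need to control the sub-$\rho$ scales by such a blow-up/soul argument (or an equivalent), not by Corollary \ref{cor:cone}; Claim \ref{claim:flowB}(1), giving $B_i\approx B(p_i,r)$, remains the correct starting point.
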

\begin{proof}
Under the homeomorphism 
$\psi_i:\pa B_i \to D^2$, obviously we have 
$\psi_i^{-1}(\pa D^2)\subset \ba M_i$
and  $\psi_i^{-1}(\mathring{D}^2)\subset 
{\rm int}^{\bf A} M_i$.

Thus we have $\pa B_i\cap\ba M_i\approx S^1$.
Let $B^D_i:=D(B_i)$.
It follows that 
\begin{align} \label{paBD=S2}
\pa B_i^D=\pa B_i^1\cup_{S^1} \pa B_i^2\approx S^2,
\end{align}
where $B_i^1$ and $B_i^2$ denotes two copies of 
$B_i$.

To show that $B_i\approx D^3$, we consider 
the convergence $B_i^D\to B$.
Suppose that $B_i$ is not a three-disk. 
Then neither is $B_i^D$.
Since $B_i^D$
is not homeomorphic to $K_1(P^2)$ 
from \eqref{paBD=S2}, 
we can rescale $B_i^D$
to have the convergence
\begin{equation*} \label{eq:conv01}
(Y^D, Y, y_0) := \lim_{i \to \infty} \biggl(\frac{1}{\delta_i} B_i^D, \frac{1}{\delta_i} B_i, \hat p_i \biggr)
\end{equation*}
together with 
$B_i^D\approx B(Y^D)$ and $\dim Y^D(\infty)\ge 1$,
where $B(Y^D)$ is a large closed ball around the soul
of the three-dimensional Alexandrov space $Y^D$ with
nonnegative curvature.
It follows from \eqref{paBD=S2} and the list
in Corollary \ref{cor:pa B is D2}
that $B(Y^D)\approx D^3$.
Thus we have $B_i\approx B(Y)\approx D^3$,
yielding a contradiction.
\end{proof}

\begin{lem}\label{lem:projectionf''D0}
There is a map 
\begin{equation} %\label{eq:f_721}
f_{D,i}'' : D_i \to D
\end{equation}
which is isomorphic to the trivial $I$-bundle over $D$
and is compatible with $f_{i,0}$ on $\pa D\cap X_0$.
\end{lem}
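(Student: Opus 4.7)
The plan is to exhibit a homeomorphism $\Phi_i : D_i \to D \times I$ such that $\Phi_i$ respects the $I$-fiber structure coming from $f_{i,0}$ on the part of $\pa D_i$ lying over $\pa D \cap X_0$, and then define $f_{D,i}'' := \mathrm{pr}_D \circ \Phi_i$. This will automatically be a trivial $I$-bundle over $D$. Such a $\Phi_i$ should exist because $D \approx I \times I$ implies $D \times I \approx D^3$, while Lemma \ref{lem:D_iFi=D3D2} already gives $D_i \approx D^3$ with $F_i \approx D^2$.

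First I would decompose $\pa D_i$ into four natural pieces corresponding to the four sides of $D \approx I \times I$: the two disk faces $F_i \approx D^2$ and $F_i' \approx D^2$ in $\pa B_i$ and $\pa B_i'$ lying over $F = \pa B \cap D$ and $F' = \pa B' \cap D$ respectively, the \emph{outer collar piece} $\pa D_i \cap f_{i,0}^{-1}(X_0)$ lying over the arc of $\pa D$ at distance $\mu$ from $\gamma$, and the \emph{inner piece} $\pa D_i \cap \ba M_i$ near $\gamma_i$ which will serve as endpoints of the $I$-fibers over $\gamma$. By construction of $D_i$ in \eqref{eq:Di} and \eqref{cor:pa B is D2}, these four pieces cover $\pa D_i \approx S^2$.

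I would then construct $\Phi_i$ in stages. On the outer collar piece, the map $f_{i,0}$ together with a choice of local trivialization already identifies this region with $(\pa D \cap X_0) \times I$, which determines $\Phi_i$ there. On each of the disk faces $F_i$ and $F_i'$, I would parametrize them as the short side of $D$ times $I$ by a homeomorphism whose restriction to the edge adjacent to the outer collar piece matches the already-chosen trivialization; this is possible because $F_i \approx D^2$ and any homeomorphism defined on an arc of $\pa D^2$ extends over $D^2$. On the inner piece along $\ba M_i$, I would send it onto $\gamma \times \{0\} \subset D \times I$, with the matching gluings to $F_i, F_i'$ at the two corners again handled by a standard disk-extension argument. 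Finally I would extend $\Phi_i$ from $\pa D_i$ to all of $D_i$ by Alexander's trick, using $D_i \approx D^3 \approx D \times I$.

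The main obstacle is the bookkeeping of the corner identifications, in particular ensuring that the $I$-fibers defined on $F_i, F_i'$ agree with those on the adjacent outer collar piece and degenerate correctly to points of $\ba M_i \cap \pa D_i$. This is purely a topological matter, handled by isotopy arguments on two-disks and on the three-disk $D_i$; no further Alexandrov-geometric input is needed beyond Lemmas \ref{lem:D_iFi=D3D2} and \ref{lem:top B^M}. Compatibility with $f_{i,0}$ on $\pa D \cap X_0$ is built into the first stage of the construction.
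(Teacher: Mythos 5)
Your overall skeleton (trivialize over $E_{i,0}:=f_{i,0}^{-1}(\pa D\cap X_0)$ compatibly with $f_{i,0}$, extend to a homeomorphism of the boundary $2$-sphere of $D_i$, then extend over $D_i\approx D^3$ by a cone/Alexander extension using Lemma \ref{lem:D_iFi=D3D2}) is exactly the paper's argument, but your treatment of the piece of the boundary sphere lying on $\ba M_i$ contains a genuine error. In the inradius-collapse setting the fiber of the $I$-bundle over an \emph{interior} point of $D$ is an arc with both endpoints on $\ba M_i$, and the fiber over a point of $\gamma=D\cap\ba X$ is an entire arc \emph{contained in} $\ba M_i$ (see Definition \ref{def:gen I-bdl}, Remark \ref{rem:bundle-ifference}, Remark \ref{rem:hosoku-Thm2}, and the limit argument in Lemma \ref{lem:MtoD(M)}); nothing degenerates to a point. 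Consequently $\ba M_i\cap D_i$ is a $2$-disk, and under any trivialization $D_i\approx D\times I$ compatible with the fiber structure it must correspond to the $2$-disk $(D\times\pa I)\cup(\gamma\times I)\subset\pa(D\times I)$, i.e.\ the two horizontal copies of $D$ together with the vertical square over $\gamma$ --- not to the arc $\gamma\times\{0\}$. Your proposal to send ``the inner piece along $\ba M_i$'' onto $\gamma\times\{0\}$, and your remark that the $I$-fibers should ``degenerate correctly to points of $\ba M_i\cap\pa D_i$'', therefore cannot produce a homeomorphism $\Phi_i$: either you are collapsing a $2$-dimensional piece onto an arc (so $\Phi_i$ is not injective and Alexander's trick does not apply), or, if you read $\pa D_i\cap\ba M_i$ as the $1$-dimensional corner set, your four pieces fail to cover the boundary sphere at all. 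In either reading the resulting $\mathrm{pr}_D\circ\Phi_i$ would have point (or undefined) fibers over $\gamma$, contradicting the assertion that $f_{D,i}''$ is isomorphic to the trivial $I$-bundle and breaking the subsequent gluing of $f_{D,i}''$ with $f_{B,i}''$ into the $I$-bundle $f_i'':M_i''\to X''$.

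The fix is local and brings you back to the paper's proof: after fixing $h_{i,0}:E_{i,0}\to(\pa D\cap X_0)\times I$ with $\pi\circ h_{i,0}=f_{i,0}$, simply extend it to a homeomorphism of the boundary sphere $\pa D_i\cup\ba D_i\approx S^2$ onto $\pa(D\times I)$, sending $F_i$ and the other lateral disk to the two lateral faces and sending the $2$-disk $\ba D_i=\ba M_i\cap D_i$ onto $(D\times\pa I)\cup(\gamma\times I)$ (any such extension of a disk homeomorphism inside a sphere exists), and then cone off using $D_i\approx D^3$. No constraint on $\ba D_i$ beyond being part of this sphere homeomorphism is needed for the lemma, since the only compatibility required is with $f_{i,0}$ over $\pa D\cap X_0$.
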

\begin{proof}
Let $\pi:D\times I\to D$ be the projection.
In what follows, we may assume that $\pa D\cap X_0$ is an arc.
Set $E_{i,0}:=f_{i,0}^{-1}(\pa D\cap X_0)$.
From the construction of $D_i$, we have $E_{i,0}\subset\pa D_i$.
Choose a homeomorphism
$$
h_{i,0}:E_{i,0}\to (\pa D\cap X_0)\times I\subset\pa(D\times I)
$$ 
such that $\pi\circ h_{i,0}=f_{i,0}$ on $E_{i,0}$.
Noting $E_{i,0}\approx D^2$ and $\pa D_i\cup\ba D_i\approx S^2$, we extend $h_{i,0}$ to a homeomorphism
$h_i':\pa D_i\cup\ba D_i\to
\pa(D\times I)$. Finally,
extend $h_i'$ to a homeomorphism 
$h_i: D_i\to D\times I$ by making use of topological 
cone structures of $D_i$ and $D\times I$.
The required map is defined as 
$f_{D,i}'':=\pi\circ h_i$. 
\end{proof}

In view of Lemma \ref{lem:top B^M}, we can also prove the following lemma in the same
way as the proof of Lemma \ref{lem:projectionf''D0},
and hence omit the proof. 

\begin{lem}\label{lem:projectionf''B0}
There is a map 
\begin{equation} %\label{eq:f_721}
f_{B,i}'' : B_i \to B
\end{equation}
which is isomorphic to the trivial $I$-bundle over $B$
and compatible with $f_{i,0}$, $f_{D,i}''$ and $f_{D',i}''$ on $\pa B_i$.
\end{lem}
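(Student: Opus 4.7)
The plan is to imitate the proof of Lemma \ref{lem:projectionf''D0} with $B$ in place of $D$, using Lemma \ref{lem:top B^M} as the input that identifies $B_i$ with a $3$-disk, and handling the compatibility with three pre-existing maps rather than one.

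First, I would fix the target bundle $\pi_B : B \times I \to B$. Since $B$ is a topological $2$-disk, the total space is a $3$-disk and $\partial(B \times I) \approx S^2$. On the source, Lemma \ref{lem:top B^M} yields $B_i \approx D^3$, and the discussion in the proof of that lemma gives $\partial B_i \approx D^2$ together with $\partial B_i \cap \ba M_i \approx S^1$, so $\partial B_i \cup \ba B_i \approx S^2$.

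Next, I would combine the three given maps on $\partial B_i$. The portion $\pa B_i \cap \{d_{\gamma_i\cup\gamma_i'}\ge \mu/2\}$ lies in $f_{i,0}^{-1}(\pa B\cap X_0)$ by Claim \ref{claim:flowB}(3), and on this piece $f_{i,0}$ is already a trivial $I$-bundle over $\pa B \cap X_0$. The pieces $F_i$ and $F_i'$ carry the trivial $I$-bundle structures of $f_{D,i}''$ and $f_{D',i}''$, which by construction in Lemma \ref{lem:projectionf''D0} (and its $D'$-analogue) are compatible with $f_{i,0}$ on the overlaps $F_i \cap f_{i,0}^{-1}(\pa B \cap X_0)$ and $F_i' \cap f_{i,0}^{-1}(\pa B\cap X_0)$. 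Choosing trivializations that agree on these overlaps (possible since each overlap is an arc and trivializations of an $I$-bundle over an arc form a contractible space), I obtain a continuous map $h_{i,0} : \partial B_i \to \pa B \times I \subset \partial(B \times I)$ with $\pi_B \circ h_{i,0}$ equal to the prescribed map on each piece.

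Finally, I would extend $h_{i,0}$ across the remaining hemisphere. The complement of $h_{i,0}(\partial B_i)$ in $\partial(B \times I)$ is a $2$-disk whose boundary matches $h_{i,0}(\partial(\partial B_i)) = h_{i,0}(\partial B_i \cap \ba B_i)$, so $h_{i,0}$ extends to a homeomorphism $h_i' : \partial B_i \cup \ba B_i \to \partial(B \times I)$. Since $B_i$ and $B \times I$ are both $3$-disks with matching $S^2$-boundaries, the topological cone structures on each give an extension to a homeomorphism $h_i : B_i \to B \times I$. The required map is $f_{B,i}'' := \pi_B \circ h_i$. The main obstacle is the trivialization-matching in the middle step, since naively combining three independent bundle trivializations need not give a globally consistent map; the key is that the explicit compatibility clauses recorded in Lemmas \ref{lem:projectionf''D0} and its $D'$-analogue force the overlaps to match up, after which the extension steps are routine consequences of being in dimension $\le 3$ with $D^3$ and $S^2$.
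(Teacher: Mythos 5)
Your proposal is correct and is essentially the paper's intended argument: the paper omits the proof, stating only that it goes ``in the same way as the proof of Lemma \ref{lem:projectionf''D0}'' using Lemma \ref{lem:top B^M}, which is exactly what you carry out --- assemble the prescribed $I$-bundle data from $f_{i,0}$, $f_{D,i}''$, $f_{D',i}''$ into a map on $\pa B_i$, extend over $\pa B_i\cup\ba B_i\approx S^2$, and then extend to $B_i\approx D^3$ by the cone structure, setting $f_{B,i}''=\pi\circ h_i$. Your extra care about matching the three trivializations on the overlap arcs is a reasonable filling-in of the compatibility clause that the paper leaves implicit.
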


Combining Lemmas \ref{lem:projectionf''D0} and
\ref{lem:projectionf''B0}, 
we can define an $I$-bundle 
\[
 f_i'' : M_i'' \to X''. 
\]
which is compatible with the generalized $I$-bundle $f_i': \mathring{M}_i' \to \mathring{X}'$ in \eqref{eq:fiprime}.
Combining $f_i'$ and $f_i''$, we have a 
generalized $I$-bundle $f_i:M_i\to X$.
This completes the proof of Theorem \ref{thm:dim2wbdy}(1).

\pmed

\subsection{Case (2) -- Non-inradius collapse}

In this subsection, we prove Theorem \ref{thm:dim2wbdy}(2).
Let $M_i$ non-inradius collapse to $X$, and let $X^D=X^1\cup X^2$ be as in \eqref{eq:XD=X1+X2},
and let $Z\subset X^D$ be the limit of $\ba M_i$
under the convergence $D(M_i)\to X^D$. 
Let $\pi:X^D\to X$ be the projection.

We begin with the following. 

\begin{lem} \label{lem:X1ZX2}
$X^D$ is the gluing of $X_1$ and $X_2$ along with $Z$.
More precisely we have the following.
\begin{enumerate}
\item $Z=\pa X^1=\pa X^2$, where $\pa X^\alpha$ denotes the topological boundary of $X^\alpha$ in $X^D$\,
$(\alpha=1,2)\,;$
\item $\pi(Z)$ is an extremal subset of $X$ contained in 
$\ba X$ whose component
is either an edge or an circle.
\end{enumerate}
\end{lem}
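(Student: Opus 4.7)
The plan is to treat parts (1) and (2) separately: part (1) via direct limit arguments identifying $Z$ with $X^1\cap X^2$, and part (2) by identifying $Z$ with the fixed-point set of the limit involution $\phi$ on $X^D$ and then exploiting stability of extremal subsets under Gromov--Hausdorff convergence.

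For (1), I will first show $Z=X^1\cap X^2$. The inclusion $Z\subset X^1\cap X^2$ is immediate from $\partial M_i=M_i^1\cap M_i^2$ in $D(M_i)$. For the reverse, given $x\in X^1\cap X^2$, write $x=\lim p_i^1=\lim q_i^2$ with $p_i^1\in M_i^1$ and $q_i^2\in M_i^2$; since each $M_i^\alpha$ is convex in $D(M_i)$, any minimal geodesic from $p_i^1$ to $q_i^2$ must meet $M_i^1\cap M_i^2=\partial M_i$, so $|p_i^1,\partial M_i|\le |p_i^1,q_i^2|\to 0$ and $x\in Z$. The non-inradius collapse hypothesis supplies points of $M_i$ at distance $>c>0$ from $\partial M_i$, whose limits lie in $X^\alpha\setminus Z$; this shows $\mathrm{int}(X^\alpha)\neq\emptyset$ in $X^D$ and, combined with $X^1\cap X^2=Z$, yields $\partial X^\alpha=Z$ in $X^D$ as required.

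For (2), the first step is to identify $Z$ with $\mathrm{Fix}(\phi)\subset X^D$. The inclusion $Z\subset \mathrm{Fix}(\phi)$ follows from the equivariant convergence $(D(M_i),\phi_i)\to (X^D,\phi)$ together with $\partial M_i=\mathrm{Fix}(\phi_i)$. For the converse, given $z\in\mathrm{Fix}(\phi)$ and $z_i\in D(M_i)$ with $z_i\to z$, one has $|z_i,\phi_i(z_i)|\to 0$, and the midpoint of a minimal geodesic from $z_i$ to $\phi_i(z_i)$ is $\phi_i$-fixed, hence lies in $\partial M_i$ and converges to $z$. Since each $\partial M_i$ is extremal in $D(M_i)$, stability of extremality under Gromov--Hausdorff convergence gives $Z$ extremal in $X^D$, and hence $\pi(Z)$ extremal in $X$. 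To prove $\pi(Z)\subset \partial X$, for each $z\in Z$ I consider the induced isometric involution $\phi_*$ on the circle $\Sigma_z X^D$: the key claim is that $\phi_*$ acts as a reflection, so that $\Sigma_{\pi(z)}X=\Sigma_z X^D/\phi_*$ is a half-circle and therefore $\pi(z)\in\partial X$. Finally, each component of $\pi(Z)\subset \partial X$ is a closed connected extremal subset of the one-dimensional set $\partial X$; it is thus either a whole circle-component of $\partial X$, or an arc whose endpoints are extremal points of $X$ on $\partial X$, which in an Alexandrov surface with boundary means corner points with $\mathrm{diam}\,\Sigma_x X\le \pi/2$, i.e.\ an edge of $\partial X$.

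The main obstacle is the reflection claim for $\phi_*$ at a general $z\in Z$; this amounts to ruling out that the component of $Z$ containing $z$ reduces to an isolated fixed point of $\phi$ (equivalently, that $\phi_*$ is a fixed-point-free half-turn on $\Sigma_z X^D$). My plan is to apply a rescaling argument at points $p_i\in \partial M_i$ with $p_i\to z$: since each $\partial M_i$ is a two-dimensional Alexandrov surface, any rescaled limit of $(D(M_i),p_i)$ carries a nontrivial rescaled limit of $\partial M_i$ of positive Hausdorff dimension, which together with the non-inradius hypothesis forces the component of $Z$ through $z$ to be positive-dimensional and thus the $\phi_*$-fixed set on $\Sigma_z X^D$ to be nonempty, so that $\phi_*$ is a reflection.
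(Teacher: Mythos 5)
Your proposal contains two genuine gaps, both at the places where the real work of the lemma happens. First, in part (1) the inclusion $\pa X^\alpha\subset Z$ is indeed immediate from $X^1\cap X^2=Z$ and closedness, but the reverse inclusion $Z\subset\pa X^\alpha$ does not follow from ``$\mathrm{int}(X^\alpha)\neq\emptyset$ combined with $X^1\cap X^2=Z$'': knowing that $X^2\setminus X^1$ is nonempty somewhere does not prevent a given $z\in Z$ from being an interior point of $X^1$, i.e.\ it does not prevent $Z=\mathrm{Fix}(\phi)$ from containing an open set elsewhere. What is needed (and what the paper supplies) is that the nontrivial isometric involution $\phi$ has fixed-point set of dimension $<\dim X$, so that $\Sigma_z(Z)\subsetneqq\Sigma_z(X^\alpha)$ for every $z\in Z$, and then a curve argument in a direction of $\Sigma_z(X^1)\setminus\Sigma_z(Z)$ showing $z\in\mathrm{cl}(\mathring{X}^1)\cap\mathrm{cl}(\mathring{X}^2)$. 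Your text simply asserts the conclusion, so the central content of (1) is missing.

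Second, in part (2) the crux is to rule out that a component of $Z$ is a single point (equivalently, that $\phi_*$ on $\Sigma_zX^D$ could be a fixed-point-free half-turn), and your rescaling argument does not do this: the blow-up limit of $\ba M_i$ at $p_i\to z$ being unbounded (hence of positive dimension) is perfectly compatible with the component of $\ba M_i$ through $p_i$ collapsing, at the original scale, to the single point $z$; positive-dimensionality in the rescaled limit says nothing about the component of $Z$ through $z$ in $X^D$. The paper instead rules out point components by a local separation argument: if $\{z\}$ were a component of $Z$, then by part (1) a small punctured neighborhood of $z$ would be split into the disjoint open sets $\mathring{X}^1$ and $\mathring{X}^2$, contradicting the fact that the two-dimensional Alexandrov space $X^D$ is a manifold. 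Relatedly, your dichotomy for $\phi_*$ (reflection versus free half-turn) omits the case $\phi_*=\mathrm{id}$ on the circle $\Sigma_zX^D$, which would also place $\pi(z)$ in the interior of $X$; excluding it again comes down to $\dim Z<\dim X$, which you have not established. The remaining ingredients you use (extremality of $Z$ as a limit of the extremal sets $\ba M_i$, descent of extremality to the quotient, the analysis of $\Sigma_{\pi(z)}X=\Sigma_zX^D/\phi_*$, and the classification of components of a closed subset of the $1$-manifold $\ba X$) are sound and broadly consistent with the paper, but without the two missing steps above the proof does not go through.
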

\begin{proof} (1)
It is easy to see that $Z=X^1\cap X^2$. Therefore, 
to verify (1), it suffices to show $Z\subset \pa X^1\cap\pa X^2$.
Note that $Z$ is the fixed point set in $X^D$ of the isometric
involution $\phi$, which is non-trivial from our assumption.
Thus we have $\dim Z<\dim X$, where $\dim Z$ is the dimension of $\pi(Z)$ as an extremal subset of $X$. 
This implies that 
$\Sigma_z(Z)\subsetneqq\Sigma_z(X^\alpha)$
for any $z\in Z$.
Therefore if we take a curve $\gamma(t)$ in $X^1$ with
$\dot\gamma(0)=\xi$ for any 
$\xi\in\Sigma_z(X^1)\setminus\Sigma_z(Z)$, we see that 
$\gamma(\e)\in\mathring{X}^1$ for small enough $\e>0$, and therefore $z\in\pa X^1$.
Since $z\in\pa X^2$ similarly, we have 
$Z=\pa X^1\cap\pa X^2$.
This yields that 
$$
  \Sigma_z(Z)=\Sigma_z(\pa X^1)=\Sigma_z(\pa X^2),
  \quad
 \Sigma_z(X^D)=\Sigma_z(X^1)\cup_{\Sigma_z(Z)}\Sigma_z(X_2).
$$

(2)\, 
 Suppose that a component 
of $Z$ is a point, say $\{ z\}$.
If $U$ is a small neighborhood of $z$ in $X^D$,
then $U\setminus \{ z\}$ must be disconnected.
This contradicts the fact that $X^D$ is a two-dimensional
Alexandrov space.
\end{proof} 

The argument in the proof of 
Lemma \ref{lem:X1ZX2} can be generalized as follows.

\begin{prop} \label{prop:criterion=inradius}
Let a sequence $M_i$ of $n$-dimensional Alexandrov spaces 
with boundary in $\ca M(n,D)$ converges to 
an Alexandrov space $X$ without boundary.
Then $M_i$ inradius collapses to $X$.
\end{prop}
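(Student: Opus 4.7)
The plan is to argue by contradiction using Lemma~\ref{lem:D(M)toX}, which identifies inradius collapse of $M_i$ with the convergence of the doubles $D(M_i)\to X$. Suppose $M_i$ does not inradius collapse; then, passing to a subsequence, $D(M_i)\to X^D$ where the reflections $\phi_i$ converge to a non-trivial isometric involution $\phi:X^D\to X^D$ with $X^D/\phi=X$. The Hausdorff limit of $\ba M_i$ coincides with $Z:=\mathrm{Fix}(\phi)$, and $Z\neq\emptyset$ because $\ba M_i\neq\emptyset$ for each $i$.

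I would then choose a point $q\in Z$ at which $\phi$ acts non-trivially on every neighborhood; such $q$ exists because otherwise $Z$ would be open (as $\phi=\mathrm{id}$ on some neighborhood of each of its points, and any such neighborhood necessarily lies in $Z$), hence clopen in the connected space $X^D$, giving $Z=X^D$ and $\phi=\mathrm{id}$, contrary to our assumption. The central task is then to show $\bar q:=\pi(q)\in\ba X$, which yields the desired contradiction with $\ba X=\emptyset$. To this end, I would take $q_i\in\ba M_i$ converging to $q$ and analyze the equivariant Gromov--Hausdorff limit of the balls $(B_{D(M_i)}(q_i,\epsilon),\phi_i)$ for small $\epsilon>0$. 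By construction of the double, each such ball is canonically isometric to the double of the half-ball $B_{M_i}(q_i,\epsilon)$ along $B_{M_i}(q_i,\epsilon)\cap\ba M_i$, and passing to the limit one identifies $B_{X^D}(q,\epsilon)$ with the double of $B_X(\bar q,\epsilon)$ along $B_X(\bar q,\epsilon)\cap\ba X$. The non-triviality of $\phi$ near $q$ then forces this doubling to be non-trivial, so $B_X(\bar q,\epsilon)\cap\ba X\neq\emptyset$ for every $\epsilon>0$; since $\ba X$ is closed, $\bar q\in\ba X$.

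The main obstacle is the rigorous justification that the doubling construction commutes with the equivariant Gromov--Hausdorff limit under the possibly collapsing convergence $D(M_i)\to X^D$. An analogous identification was used implicitly in the two-dimensional case in the proof of Lemma~\ref{lem:X1ZX2}(2). In the general-dimensional setting, one would likely carry this out via a tangent cone analysis at $q$, showing that the differential $d\phi_q$ on $T_qX^D$ is non-trivial and, as a limit of codimension-one reflections on the tangent cones of $D(M_i)$ at $q_i$, acts as a reflection-type involution whose quotient $T_qX^D/d\phi_q=T_{\bar q}X$ has non-empty boundary; this is equivalent to $\bar q\in\ba X$ via the standard characterization of the Alexandrov boundary in terms of tangent cones.
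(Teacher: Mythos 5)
Your overall skeleton (pass to the doubles, assume non-inradius collapse, get a non-trivial limit involution $\phi$ with fixed set $Z=\lim \ba M_i$, and derive a contradiction at a point of $Z$) matches the paper's setup, but the central step of your argument has a genuine gap. The identification of $B_{X^D}(q,\e)$ with the double of $B_X(\bar q,\e)$ glued along $B_X(\bar q,\e)\cap \ba X$ presupposes exactly what must be proved: a priori the gluing locus in the limit is (the image of) $Z$, and there is no general principle saying this locus lies in the Alexandrov boundary of $X$ — indeed, if $\ba X=\emptyset$ your identification would make $B_{X^D}(q,\e)$ disconnected, which is absurd, so the statement cannot be used as an intermediate step. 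Your tangent-cone fallback hides the same gap: the assertion that $d\phi_q$ is a ``reflection-type'' involution because it arises as a limit of the codimension-one reflections $\phi_i$ is not a valid inference under collapse. The fixed sets $\ba M_i$ are codimension one in the $n$-dimensional spaces $D(M_i)$, but this gives no control on the codimension of the fixed set of the limit involution in the lower-dimensional space $X^D$ (or of $d\phi_q$ in $T_qX^D$); and a non-trivial isometric involution of a boundaryless Alexandrov space can perfectly well have a fixed set of codimension two and a quotient with empty boundary (e.g.\ the rotation by $\pi$ of $\R^2$, whose quotient is $K(S^1_\pi)$). So the claim that $T_qX^D/d\phi_q=T_{\bar q}X$ has non-empty boundary is precisely the unproved content, not a consequence of the setup.

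The paper closes this gap by a different mechanism that never tries to show $\pi(Z)\subset\ba X$. It first observes that $X^1\subsetneqq X^D$ and $Z=\pa_{X^D}X^1=\pa_{X^D}X^2$, so $Z$ topologically separates and hence has codimension one in $X^D$; then, because $\ba X=\emptyset$, the non-$\delta$-strained set of $X^1$ has Hausdorff codimension two (Theorem \ref{thm:regular-measure}), so one can pick $z\in Z$ that is a $\delta$-strained point of $X^1$. At such a point a neighborhood $U$ of $z$ in $X^1$ is a manifold of full dimension $\dim X^D$, and since $X^1$ is convex in $X^D$ (its metric and topology agree with the relative ones), the reflected geodesic $yw\cup w\phi(y)$ through the nearest point $w\in Z$ (equivalently, invariance of domain) forces points such as $\phi(y)\in X^2\setminus X^1$ to lie in $U\subset X^1$, a contradiction with $z\in\pa_{X^D}X^1$. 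If you want to salvage your route, you would need to first establish the codimension-one property of $Z$ (e.g.\ by the separation argument) and then still convert it into boundary of the quotient at $\bar q$; the paper's argument shows this can be bypassed entirely by working at a regular point of $X$ on $Z$.
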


See also \cite{YZ2} for a related result.

\begin{proof}[Proof of Proposition \ref{prop:criterion=inradius}]
Let $X^D$ and $Z\subset X^D$ be defined as in Lemma \ref{lem:X1ZX2}.
It suffices to show that $\phi$ is the identity on $X^D$.
If this does not hold, we have 
$\dim Z\le \dim X-1$.
It follows from the above argument that
\beq \label{eq:ZX12general}
X^\alpha\subsetneqq X^D, \quad Z=\pa X^1=\pa X^2.
\eeq
Observe that $\dim Z=n-1$, which follows from
the curvature condition of $X^D$ and the fact that any geodesic joining points of 
$x_1\in X^1\setminus \pa X^1$ and
$x_2\in X^2\setminus \pa X^2$ must 
intersects $Z$.
Since $X_1$ is closed, it is possible to choose a point $z\in Z\cap \ca R_\delta(X^1)$
(see Theorem \ref{thm:regular-measure}).
Obviously, a neighborhood $U$ of $z$ in $X^1$ homeomorphic to $\R^n$. 
Let $y\in U\setminus Z$ be a point nearby $z$,
and let $w$ be a nearest point of $Z$ from $y$.
Then $\phi(y)\in X^2\setminus X^1$, and 
the union $yw\cup w\phi(y)$ forms a 
geodesic in $X^D$.
However the topology of $X^1=X$ coincides with 
the relative topology of $X^1\subset X^D$.
Therefore $yw\cup w\phi(y)$ must be contained 
in $U\subset X^1$ if $y$ is sufficiently close to 
$z$. This is a contradiction.
\end{proof}

\begin{rem} \label{rem:Omega} \upshape
\cite[Lemma 5.4]{MY} shows that 
 each point of $\Omega$ is also a corner point of $\ba X$.
\end{rem}

We take a closed domain $X_{1}$ of $X$ as 
\[
 X_{1}:= X_0\bigcup\biggl(\bigcup_{\alpha\in A_Z} 
 D_\alpha\biggr),
\]
where $X_0$ is as in \eqref{eq:choice-X0} and 
$A_Z$ denotes the set of all $\alpha$ such that 
$D_\alpha$ meet $Z$.

Let $X_1^D:=X_1\cup\phi(X_1)$.
Since $X_1^D\subset R_\delta(X^D)$, we obtain 
a closed domain $D(M_i)_1$ of $D(M_i)$ and an $\Z_2$-equivariant $S^1$-bundle 
\[
\text{
$f_{i,1}^D:D(M_i)_1\to X_1^D$ }
\]
in a way similar to \eqref{eq:equiv fib}, which 
induces a map
\beq\label{eq:noninr-fi1}
f_{i,1} : M_{i,1} \to X_1,
\eeq
where $M_{i,1}:=D(M_i)_1/\phi_i$.
By Lemma \ref{lem:X1ZX2}, both $X_1^D$ and $D(M_i)_1$ are connected.

\begin{lem} \label{lem:S1-bundle/Z}
$f_{i,1}$ is an $S^1$-bundle over a neighborhood of $Z\cap X_1$.
\end{lem}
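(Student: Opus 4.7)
The plan is to determine the fiberwise action of $\phi_i$ on the $S^1$-bundle $f_{i,1}^D$ at points of $Z\cap X_1$ and to show that this action is trivial, so that the quotient $f_{i,1}$ inherits an $S^1$-bundle structure near $Z\cap X_1$.

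First I would fix $z \in Z \cap X_1$ together with a small neighborhood $U$ of $z$ in $X_1^D$ on which $f_{i,1}^D$ is trivializable. Because the vertex balls $B_\alpha$ of the $\e$-regular covering are excluded from $X_1$, I may assume $z$ lies in the interior of a $1$-dimensional component of $Z$, so that $U$ can be taken to be a disk with $Z\cap U$ a diameter on which $\phi$ acts as $(x,t)\mapsto(x,-t)$. Since $z$ is $\phi$-fixed, the fiber $F_z := (f_{i,1}^D)^{-1}(z)$ is $\phi_i$-invariant and $\phi_i|_{F_z}$ is an isometric involution of a circle, which up to conjugacy is either the identity, a reflection (fixing two points), or the free rotation by $\pi$.

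The key step is a dimension count on the fixed-point set of $\phi_i$ in $V_i := (f_{i,1}^D)^{-1}(U)$ to rule out the last two possibilities. On the one side, the global fixed set of $\phi_i$ in $D(M_i)$ is precisely $\ba M_i$, which is a closed two-dimensional topological manifold as the boundary of the three-dimensional Alexandrov space $M_i$. Since $\ba M_i$ Gromov--Hausdorff-converges to $Z$ and $U$ meets $Z$, for large $i$ the intersection $\ba M_i \cap V_i$ is a non-empty open subset of $\ba M_i$, hence two-dimensional. On the other side, because $\phi_i$ covers $\phi$ and so swaps $F_{z'}$ with $F_{\phi(z')}$ whenever $z' \notin Z$, the fixed set in $V_i$ is contained in $(f_{i,1}^D)^{-1}(Z\cap U)$ and equals $\bigcup_{z'\in Z\cap U}\mathrm{Fix}(\phi_i|_{F_{z'}})$. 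The conjugacy type of $\phi_i|_{F_{z'}}$ is locally constant in $z'$, and the three cases yield fixed sets of dimension $2$, $1$ or $-1$ respectively; only the identity case matches the two-dimensionality coming from $\ba M_i$. Consequently $\phi_i|_{F_z}$ is the identity and $F_z \subset \ba M_i$.

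Finally I would globalize: once $\phi_i$ acts as the identity on every fiber over $Z\cap U$, the local trivialization $V_i \approx S^1\times U$ can be arranged so that $\phi_i$ takes the form $\mathrm{id}_{S^1}\times\phi$, whose quotient is $S^1\times(U/\phi)$; projecting to $U/\phi\subset X_1$ gives a local $S^1$-bundle chart for $f_{i,1}$ near the image of $z$. The main obstacle I expect is the dimension-comparison step, which requires one to be careful that $\ba M_i\cap V_i$ is genuinely $2$-dimensional (using that $\ba M_i$ is a closed $2$-manifold) and to check this is incompatible with the reflection or free-rotation case. A minor technical verification is that local bundle trivializations can be chosen compatibly with the $\phi_i$-action on fibers over $Z$; this can be arranged by averaging any local section over $U$ with respect to $\phi_i$ once the fiberwise action is known to be trivial.
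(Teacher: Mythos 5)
Your argument is correct, but its key step is genuinely different from the paper's. The paper works downstairs in $M_i$: if $\phi_i$ failed to fix some fiber $(f_{i,1}^D)^{-1}(z)$, $z\in Z\cap X_1^D$, pointwise, the quotient fiber $f_{i,1}^{-1}(z)$ would be an arc admitting the local model $L^2\times[0,1)$ of Example \ref{ex:boundary}; since that model is a topological manifold, a whole neighborhood of the fiber would lie in $\mathrm{int}^{\mathbf A}M_i$, contradicting $Z=\lim_{i\to\infty}\ba M_i$. You instead stay upstairs in $D(M_i)$ and compare dimensions of $\mathrm{Fix}(\phi_i)=\ba M_i$: because $\ba M_i$ is a closed surface converging to $Z$, its trace in $V_i=(f_{i,1}^D)^{-1}(U)$ is a nonempty open, hence two-dimensional, subset, while equivariance and $\mathrm{Fix}(\phi)=Z$ confine it to $\bigcup_{z'\in Z\cap U}\mathrm{Fix}\bigl(\phi_i|_{F_{z'}}\bigr)$, whose dimension is $2$, at most $1$, or $-\infty$ according as the fibre involution is the identity, a reflection, or free. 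Your route buys an explicit treatment of the free-rotation case (which the paper's ``otherwise the fiber is an arc'' passes over) and avoids the local quotient model and the homological characterization of the Alexandrov boundary; the paper's version is in exchange a two-line appeal to models already set up for Definition \ref{def:gen S1-bdl}.

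Two small points. The one assertion you should back up is the local constancy of the conjugacy type of $\phi_i|_{F_{z'}}$ along $Z\cap U$: reflections are separated from the identity by degree, and a fixed-point-free involution of a circle displaces some point by half the circumference, so it cannot be uniformly close to the identity; alternatively you can dispense with local constancy, since the dimension count already forces identity-type fibres to be dense near $z$ in $Z$, and continuity of $\phi_i$ then gives $\phi_i=\mathrm{id}$ on $F_z$ itself. Also, the final equivariant-trivialization step can be shortened: once the fibres over $Z\cap U$ are pointwise fixed, the quotient of $V_i$ by $\phi_i$ is canonically homeomorphic to the restriction of the bundle to one closed half of $U$, which yields local triviality of $f_{i,1}$ over $U/\phi$ directly.
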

\begin{proof}
It suffices to show that $\phi_i$ leaves $f_{i,1}^D$-fibers
over $Z\cap X_1^D$ fixed.
Otherwise, $f_{i,1}^{-1}(z)$ is an arc for a point
$z\in Z\cap X_1^D$, 
 and therefore $f_{i,1}^{-1}(z)$ has a neighborhood of the form $L^2\times [0,1)$,
which implies that
$f_{i,1}^{-1}(z)\subset {\rm int}^{\bf A} M_i$.
This is a contradiction since $Z=\lim_{i\to\infty}\ba M_i$.
\end{proof}

Let $X=X'\cup X''$ be the decomposition of $X$ as in 
Subsection \ref{ssec:inradius-collapse}.
In view of Lemma \ref{lem:X1ZX2}, we can apply 
 \cite[Theorem 1.3]{MY} to obtain 
 a generalized Seifert bundle 
\[
f_i' : \mathring{M}_i' \to \mathring{X}'
\]
for some closed domain $M_i'\subset \mathrm{int}^{\bf A} M_i$
converging to $X'$ such that $f_i'$ is compatible with $f_{i,1}$.
Set 
\[
M_i'' := M_i \setminus \mathring{M}_i',
\]
which converges to $X''$. 

Let $\{ B_\alpha, D_\alpha\}$ be as in 
the beginning of Section \ref{sec:2-dim with bdry}.
The basic strategy for the proof of Theorem \ref{thm:dim2wbdy}(2) is the same as 
that in the argument in Case (1) of the previous subsection.
Namely we shall construct closed domains $B_{\alpha,i}$, $D_{\alpha,i}$ of $M_i$ dividing $M_i''$, and 
(singular) $S^1$ bundles $f_{B_\alpha,i}'':B_{\alpha,i}\to B_\alpha$ and $f_{D_\alpha,i}'':D_{\alpha,i}\to D_\alpha$
compatible with each other and with $f_i'$.

In what follows, we fix an $\alpha$ and hence omit the subscript $\alpha$, and set 
$B:=B_\alpha$, $D:= D_{\alpha}$ and 
$D':=D_{\alpha+1}$, etc., as in \eqref{eq:omit-alpha}.
In some cases, the constructions of $D_i$ and 
$f_{D,i}''$ are almost immediate.
In fact, Lemma \ref{lem:S1-bundle/Z} shows the following.

\begin{cor} \label{cor:S1onZ}
If $D\cap Z$ is nonempty, we set
\beqq \label{cor:def=Di}
D_i:=f_{i,1}^{-1}(D).
\eeqq
Then $f_{D,i}'':=f_{i,1}|_D:D_{i}\to D$ is an $S^1$-bundle.
\end{cor}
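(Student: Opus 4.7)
My plan is to deduce the corollary as a direct localization of Lemma \ref{lem:S1-bundle/Z} to the compact piece $D\subset X$.

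First I would verify that the hypothesis $D\cap Z\neq\emptyset$ forces $D\subset X_1$, so that $D_i := f_{i,1}^{-1}(D)$ is a well-defined subset of the domain $M_{i,1}$ of $f_{i,1}$. This is immediate from the construction
\[
X_1 = X_0 \cup \biggl(\bigcup_{\alpha\in A_Z} D_\alpha\biggr),
\]
where $A_Z$ was defined as exactly the set of indices $\alpha$ for which $D_\alpha$ meets $Z$; our piece $D=D_\alpha$, meeting $Z$ by hypothesis, is one of these.

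Next I would apply Lemma \ref{lem:S1-bundle/Z}, which supplies a neighborhood $V\subset X_1$ of $Z\cap X_1$ over which $f_{i,1}$ is an $S^1$-bundle. Since $D\cap Z$ is a compact subset of $Z\cap X_1$, by taking the parameters $r$ and $\mu$ of the $\e$-regular covering small enough at the outset, I may arrange that $D\subset V$. Then the restriction $f_{D,i}''=f_{i,1}|_{D_i}$ automatically inherits the $S^1$-bundle structure of $f_{i,1}|_V$, which is exactly the claim.

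As a sanity check on the portion of $D$ lying away from $Z$, one may observe separately that the involution $\phi$ acts freely on $X_1^D\setminus Z$ by interchanging the two lifts of each base point, so the equivariant $S^1$-bundle $f_{i,1}^D:D(M_i)_1\to X_1^D$ from \eqref{eq:equiv fib} descends to an ordinary $S^1$-bundle over $X_1\setminus Z$; combining this with the lemma's conclusion on a neighborhood of $Z\cap D$ confirms the bundle structure on all of $D_i$. Since the proof is essentially one step of unpacking, I do not foresee any serious obstacle; the only minor technical point is that $D$ must lie inside the lemma's neighborhood $V$, which is handled once for all by the a priori choice of the small parameters $r$ and $\mu$ of the $\e$-regular covering.
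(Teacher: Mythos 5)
Your proposal is correct and in substance coincides with the paper's treatment: the paper gives no separate proof, presenting the corollary as an immediate consequence of Lemma \ref{lem:S1-bundle/Z} applied to the $\Z_2$-equivariant bundle $f_{i,1}^D$. Note, though, that the load-bearing step is what you call the ``sanity check'' --- over $X_1\setminus\pi(Z)$ the involution acts freely on the base, so the equivariant $S^1$-bundle $f_{i,1}^D$ descends to an $S^1$-bundle, and Lemma \ref{lem:S1-bundle/Z} supplies the bundle structure on a neighborhood of $Z\cap X_1$, which together cover all of $D$ --- whereas your first mechanism (arranging $D\subset V$ by shrinking $r$ and $\mu$) is not actually justified, since the lemma's neighborhood $V$ is unquantified and is produced only after $r$, $\mu$, $X_1$ and $f_{i,1}$ have been fixed, so it should not be relied upon.
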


Next suppose $D\cap Z$ is empty, and let us define a closed domain slightly larger than $D$
as
\[
  D_0:=B(\gamma,2\mu)\setminus (U(x,r/2)\cup U(x'',r/2)).
\]
In this case, a neighborhood of $M_i$ converging to $D_0$
has no Alexandrov boundary points, and therefore we can 
apply the results in \cite[Section 5]{MY}
to obtain a closed domain $W_i$ converging to $D_0$
and homeomorphic to $D^3$ or 
$\Mo\times I$.

\begin{lem} \label{lem:S1onZ}
Suppose that $D\cap Z$ is empty and 
and $W_i\approx \Mo\times I$.
Then there are a closed domain $D_i$ converging to $D$
and a map 
$$
  f_{D,i}'':D_i \to D,
$$
which is 
isomorphic to $\pi_{\Mo}\times {\rm id}_I$
and compatible with $f_{i,1}$ over $D\cap X_1$.
\end{lem}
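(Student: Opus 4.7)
The plan is to cut $W_i \approx \Mo \times I$ along two flow-adapted disk caps to obtain $D_i$, to transfer the product singular fibration $\pi_\Mo \times \mathrm{id}_I$ onto $D_i$, and finally to adjust by an isotopy on a collar so that the resulting map matches $f_{i,1}$ exactly on the overlap $D \cap X_1$.

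First I would mimic the construction of Claim \ref{claim:flowB}. Using Theorem \ref{thm:smooth approximation}, I would build gradient-like flows for $d_{p_i}$ and $d_{p_i''}$ on the annular neighborhoods $A^{M_i}(p_i, r/2, 2r)$ and $A^{M_i}(p_i'', r/2, 2r)$. Joining the two metric level sets $\pa B(p_i,r/2)$ and $\pa B(p_i'',r/2)$ continuously to $f_{i,1}^{-1}(\pa D \cap X_1 \cap A(\gamma, \mu/2, \mu))$, following conditions (3) and (4) of Claim \ref{claim:flowB}, I would carve out a closed subdomain $D_i \subset W_i$ converging to $D$. Since the two flows are transverse to the $I$-factor of $W_i \approx \Mo \times I$ off the singular core, this trimming preserves the homeomorphism type and gives $D_i \approx \Mo \times I$ with the two flat caps corresponding to $\Mo \times \{0,1\}$.

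Next, I would fix an identification of $D$ with the rectangle $[0,1] \times I$ that sends $\gamma \cap D$ to $\{0\} \times I$ and the outer edge (at distance $\mu$ from $\gamma$) to $\{1\} \times I$. Composing the homeomorphism $D_i \approx \Mo \times I$ with $\pi_\Mo \times \mathrm{id}_I$ and this identification provides a preliminary singular fibration $\tilde f_{D,i}'' : D_i \to D$ whose singular $S^1/2$-fibers sit exactly over $\gamma \cap D$ and whose regular $S^1$-fibers cover $D \setminus \gamma$. To upgrade $\tilde f_{D,i}''$ to one agreeing with $f_{i,1}$ over $D \cap X_1$, I would use that $D \cap X_1$ is a topological disk and that both $\tilde f_{D,i}''$ and $f_{i,1}$ restrict there to $S^1$-bundles whose total spaces are solid tori. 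By triviality of $S^1$-bundles over a disk, there is a bundle isomorphism covering $\mathrm{id}_{D \cap X_1}$; I would extend it to an ambient self-homeomorphism of $D_i$ supported in a collar of $\tilde f_{D,i}''^{-1}(D \cap X_1)$ and fixing the pre-image of the singular locus, then precompose to obtain the desired $f_{D,i}''$.

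The main obstacle I expect is ensuring that the two regular $S^1$-bundle structures on $f_{i,1}^{-1}(D \cap X_1)$ (one pulled back from $\pi_\Mo \times \mathrm{id}_I$ via the cut construction, the other prescribed by $f_{i,1}$) have matching framings at the two corners where $D_i$ abuts the yet-to-be-constructed $B_i$ and $B_i''$. This compatibility is a $\pi_0$-level issue: it must be handled by arranging the gradient-like flows of Step 1 to be $\phi_i$-invariant and aligned with the equivariant $S^1$-bundle $f_{i,1}^D$ in \eqref{eq:noninr-fi1}, so that the boundary trace of the $\Mo \times I$ fibration inherited from $W_i$ already lies in the same isotopy class as the $f_{i,1}$-fibration before any adjustment.
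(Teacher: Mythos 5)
Your proposal takes a genuinely different route from the paper, and it has a gap at precisely the step that carries the mathematical content of the lemma. The paper does not transfer the model fibration through an abstract homeomorphism $D_i\approx \Mo\times I$: it passes to the orientation double cover $\tilde W_i$ of $W_i$ with deck transformation $\psi_i$, takes the limit $(\tilde D_0,\psi)$ of $(\tilde W_i,\psi_i)$, rules out $\psi=\mathrm{id}$ by the fact (from \cite[Section 5]{MY}) that $S^1\times(-1,1)\times(0,1)$ cannot collapse to $[0,1)\times(0,1)$ under a lower curvature bound, shows that $\psi$ has fixed points whose projection is the extremal set $\ba D_0$, so that $\tilde D_0=D_0\cup_{\ba D_0}D_0$ with $\psi$ the reflection, and then applies the equivariant fibration theorem to $(\tilde W_i,\psi_i)$ to obtain a $\Z_2$-equivariant $S^1$-bundle $\tilde g_i:\tilde D_i\to\tilde D$ whose quotient is $f_{D,i}''$. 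In this way the $\pi_{\Mo}\times\mathrm{id}_I$ structure, the location of the $S^1/2$-fibers over $\gamma$, and the compatibility with the ambient circle bundle $f_{i,1}$ are produced directly by the collapsing geometry, not imposed afterwards.

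In your scheme the compatibility is to be restored at the end by an ambient self-homeomorphism of $D_i$ matching the $f_{i,1}$-circle fibration of $f_{i,1}^{-1}(D\cap X_1)$ with the outer part of the pushed-forward model fibration; this is exactly what is not justified. It requires (a) that the $f_{i,1}$-fibers, regarded inside $D_i\approx\Mo\times I$, lie in the isotopy class of the regular fibers of $\pi_{\Mo}\times\mathrm{id}_I$ (twice the core class of $\Mo$), (b) that the outer annulus of $\pa D_i$ corresponds to $\pa\Mo\times I$ under a suitable product structure, and (c) that a fiber-preserving identification on this boundary collar extends over the rest of $\Mo\times I$ — an extension/mapping-class problem of the type the paper handles explicitly elsewhere (compare the $MCG$ and Dehn-twist arguments in Lemmas \ref{lem:projection} and \ref{lem:projection2}). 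None of this is automatic from ``triviality of $S^1$-bundles over a disk''. Moreover, the fix you propose, making the gradient-like flows $\phi_i$-invariant and aligned with $f_{i,1}^D$, cannot supply the missing input: since $D\cap Z=\emptyset$, the region of $M_i$ converging to $D_0$ lies at a definite distance from $\ba M_i$ for large $i$, so $\phi_i$-equivariance is vacuous there. The symmetry that actually pins down the fiber class is the deck involution of the orientation double cover of $W_i$, and identifying its limit as the reflection of $D_0\cup_{\ba D_0}D_0$ is precisely the step your argument omits and the paper's proof supplies.
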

\begin{proof}
Let $\tilde W_i$ be the orientable double cover of $W_i$ with the deck transformation $\psi_i$. We may assume $(\tilde W_i,\psi_i)$ converges to
$(\tilde D_0,\psi)$ with $\tilde D_0/\psi=D_0$.
If $\psi$ is the identity, then we would have $\tilde D_0=D_0$.
However, by \cite[Section 5]{MY}, $S^1\times (-1,1)\times (0,1)$ never 
collapses to $[0,1)\times (0,1)$ under a lower Alexandrov curvature bound. This is a contradiction.
Thus $\psi$ is not the identity.

Next remark that $\psi$ has fixed points. In fact, if 
$\psi$ has no fixed points, then $\tilde D_0$ is a nontrivial covering
of $D_0$. This is a contradiction.
Let $Z$ be the fixed point set of $\psi$. Let $\pi:\tilde D_0\to D_0$
be the projection.
Since $\pi(Z)$ is extremal in $D_0$, we see that 
$\pi(Z)=\ba D_0$ and therefore we have 
$\tilde D_0=D_0\cup_Z D_0$, which is the gluing of two copies of $D_0$
along $\ba D_0$, and $\psi$ is the reflection about $\ba D_0$.

 Therefore letting $\tilde D:=D\cup_{\ba D} D$, 
we have a $\Z_2$-equivariant $S^1$-bundle 
$\tilde g_i:\tilde D_i\to \tilde D$ for a closed domain
$\tilde D_i$ of $\tilde W_i$, which induces 
a singular $S^1$-bundle $f_{D,i}'':D_i\to D$ satisfying the required properties. 
\end{proof}

\pmed
\n
{\bf Construction of $B_i$}.\, 
In what follows, we construct a closed domain $B_i$
corresponding to $B$. 
The way of construction depends on several cases.

For $D$, let us take the above $W_i$ converging to $D_0$.
Note that $W_i\approx G_i\times I$ for some
surface $G_i$ homeomorphic to $D^2$ or $\Mo$.
Let $W_i'\approx G_i'\times I$ be defined for $D'$ similarly.
Letting $J$ be the closure of $\pa B\setminus (F\cup F')$,
we consider the gluing
\[
  \pa B= F\cup J\cup F'.
\]
We first define $\pa B_i$ as the union
\beq \label{eq:gluing=paBi}
 \pa B_i=F_i\cup f_{i,1}^{-1}(J)\cup F_i'
\eeq
of three surfaces
$f_{i,1}^{-1}(J)$, $F_i$ and $F_i'$, where $F_i$ and $F_i'$ are
the surfaces corresponding to $F$ and $F'$ respectively,
defined below. We focus on the construction of $F_i$, since 
$F_i'$ is defined in the same way.

\pmed
\n
{\bf Case A)} \, $B\cap Z=\emptyset$.
\pmed\n
In this case, $G_i$ is homeomorphic to $D^2$ or $\Mo$
(\cite[Section 5.2]{MY}).
We have essentially the following three cases:
\[
 (G_i,G_i')\approx (D^2,D^2), \quad (\Mo,\Mo), \quad (D^2,\Mo).
\]
If $G_i\approx D^2$, we can construct $F_i$ satisfying the following
in the same way as Claim \ref{claim:flowB}:
\begin{itemize}
\item $F_i$ converges to $F\,;$
\item $F_i \cap \{ d_{\gamma_i}\ge \mu/2\}\subset f_{i,1}^{-1}(F\cap X_0)\,;$
\item $F_i \cap \{d_{\gamma_i}\le \mu/3\}\subset\pa B(p_i,r)\,;$
\item $d_{p_i}$-flow curves are transversal to $F_i$.
\end{itemize}
\pmed
If $W_i\approx \Mo$, we define $F_i$ as 
\beq \label{eq:def=Fi}
  F_i := (f_{D,i}'')^{-1}(F),
\eeq
where $f_{D,i}''$ is the projection constructed in 
Lemma \ref{lem:S1onZ}.

\pmed
\n
{\bf Case B)} \, $B\cap Z\neq \emptyset$
\pmed\n
In this case, at least one of $D$ and $D'$
meets $Z$. In view of Corollary \ref{cor:S1onZ},
we have 
\beqq
G_i\approx
 \begin{cases} 
 \text{$D^2$ or $\Mo$} & \text{if $D\cap Z=\emptyset$} 
 \\ 
 S^1\times I & \text{if $D\cap Z\neq \emptyset$}.
\end{cases}
\eeqq
In what follows, we may assume $D'\cap Z\neq \emptyset$ without loss of generality.
Then we have 
\[
 (G_i,G_i')\approx (D^2,S^1\times I), \quad (\Mo,S^1\times I), \quad (S^1\times I, S^1\times I).
\]
If $G_i$ is homeomorphic to $D^2$ or $\Mo$, we define $F_i$ 
in the same way as in Case A).
If $G_i\approx S^1\times I$, we define $F_i$ as
\[
  F_i := f_{i,1}^{-1}(F).
\]
Since $F_i'$ is defined in the same way,
$\pa B_i$ is just defined by \eqref{eq:gluing=paBi} in all the cases.
We let $B_i$ be the closed domain bounded by $\pa B_i$ and
containing $p_i$.
Note that $G_i\approx F_i$.

\pmed
\n
{\bf Construction of $D_i$}. \, 
We define a closed domain $D_i$
corresponding to $D$. 
However, the domain $D_i$ is already defined in the case when
$F_i$ is homeomorphic to $S^1\times I$ or 
$\Mo\times I$ (see Corollary \ref{cor:S1onZ} and Lemma \ref{lem:S1onZ}).
In the case when $F_i\approx D^2$, we define $D_i$ as in \eqref{eq:Di}. From construction, we always have
\[
  D_i\approx F_i\times I.
\]

\pmed\n
{\bf Topology of $B_i$}.

\pmed\n
In Case A), we have $\ba B_i = \emptyset$.
Therefore, we can apply \cite[Lemmas 5.3 and 5.4]{MY} to the convergence $B_i\to B$ to 
see that $F_i$ and $F_i'$ are homeomorphic to $D^2$ or $\Mo$.
In the present case, we can apply \cite[Lemma 5.4]{MY} to determine the topology of $B_i$ via those of 
$F_i$ and $F_i'$ as follows.

\begin{lem}$($\cite{MY}$)$\label{lem:topBiCaseA}
In Case A$)$, we have 
\beq
(B_i,\pa B_i)\approx 
 \begin{cases}
(D^2 \times I,S^2) & \text{if $(F_i,F_i')\approx (D^2,D^2)$} \\
(\Mo \times I,K^2) & \text{if $(F_i,F_i')\approx (\Mo,\Mo)$} \\
(K_1(P^2),P^2) &\text{if $(F_i,F_i')\approx (D^2,\Mo)$}. 
 \end{cases}
\eeq
\end{lem}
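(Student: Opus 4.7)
The plan is to reduce Lemma \ref{lem:topBiCaseA} to an application of \cite[Lemma 5.4]{MY}. Since Case A) means $B \cap Z = \emptyset$, the domain $B_i$ is disjoint from a neighborhood of $\ba M_i$, so $\ba B_i = \emptyset$ and $B_i \subset \mathrm{int}^{\mathbf A} M_i$. Consequently, the convergence $B_i \to B$ takes place in the purely interior three-to-two-dimensional local collapse setting studied in \cite[Section 5]{MY}, and the results proved there apply directly to the present situation.

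First I would identify $\pa B_i$ as a closed surface. From the decomposition \eqref{eq:gluing=paBi}, $\pa B_i = F_i \cup f_{i,1}^{-1}(J) \cup F_i'$, where $J := \mathrm{cl}(\pa B \setminus (F \cup F'))$ is an arc disjoint from $Z$. The restriction of $f_{i,1}$ over $J$ is an $S^1$-bundle over a contractible base and is therefore trivial, giving $f_{i,1}^{-1}(J) \approx S^1 \times I$. Its two boundary circles are identified with $\pa F_i$ and $\pa F_i'$, and capping the cylinder $S^1 \times I$ by two surfaces whose boundary is a single circle yields a closed surface whose type is determined by elementary surface topology: $\pa B_i \approx S^2$ when $(F_i, F_i') \approx (D^2, D^2)$, $\pa B_i \approx K^2$ when $(F_i, F_i') \approx (\Mo, \Mo)$, and $\pa B_i \approx P^2$ when $(F_i, F_i') \approx (D^2, \Mo)$.

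Next I would determine the topology of $B_i$ itself by rescaling. Applying Theorem \ref{thm:rescaling} at points $p_i \to x$, one obtains (along a subsequence) a noncompact nonnegatively curved three-dimensional Alexandrov space $Y$ without boundary such that $B_i$ is homeomorphic to a large closed ball $B(S,R)$ around a soul $S$ of $Y$. The no-boundary analogue of Corollary \ref{cor:soul thm ball}, as classified in \cite[Section 5]{MY}, permits only three topological types for $(B(S,R), \pa B(S,R))$ compatible with $\pa B_i$ being $S^2$, $K^2$, or $P^2$: namely $(D^2 \times I, S^2)$, $(\Mo \times I, K^2)$, and $(K_1(P^2), P^2)$ respectively. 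Matching these bijectively via the topological type of $\pa B_i$ yields the three subcases of the lemma.

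The main obstacle will be the rescaling step in the subcase $\pa B_i \approx S^2$: condition (2) of Theorem \ref{thm:rescaling} cannot be invoked directly since $S^2$ admits metrics of curvature $\ge 1$, so one must instead verify condition (1) by producing critical points of $d_{p_i'}$ in $B(p_i', R) \setminus \{p_i'\}$, using the strainer structure on $B$ around $x$ and the fact that $B_i$ collapses to a two-dimensional region of nonconstant diameter in the radial direction. This is carried out in \cite[Section 5]{MY} in the analogous interior setting, and transports essentially verbatim to the present case once one verifies that the homeomorphisms respect the decomposition \eqref{eq:gluing=paBi} and the $S^1$-bundle structure of $f_{i,1}$ on the overlap $\pa B_i \cap f_{i,1}^{-1}(J)$.
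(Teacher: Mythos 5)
Your first paragraph is exactly the paper's argument: in Case A) the domain $B_i$ contains no Alexandrov boundary points, so the convergence $B_i\to B$ is the purely interior local collapse treated in \cite[Section 5]{MY}, and the paper proves the lemma by nothing more than this observation together with a citation of \cite[Lemmas 5.3 and 5.4]{MY}. Your identification of $\pa B_i$ as $S^2$, $K^2$ or $P^2$ by capping the cylinder $f_{i,1}^{-1}(J)$ with $F_i$ and $F_i'$ is also fine and consistent with the statement.

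The re-derivation in your third paragraph, however, has a genuine gap: the homeomorphism type of $B_i$ is \emph{not} determined bijectively by the topological type of $\pa B_i$ within the classification of large balls around souls of noncompact nonnegatively curved three-spaces without boundary. For $\pa B_i\approx S^2$ the candidates also include $P^2\tilde\times I$ and $B(S^2;2)$; for $\pa B_i\approx K^2$ they include $B_{\mathrm{pt}}$, $K^2\hat\times I$, $B(P^2;2)$ and balls in singular line bundles $(\R\times L)/\Z$, all with boundary $K^2$ (note also that $\Mo\times I$ is just the solid Klein bottle $S^1\tilde\times D^2$). The constraints from Theorem \ref{thm:rescaling} ($\dim S\le \dim Y-\dim X=1$, $\dim Y(\infty)\ge 1$) eliminate the candidates with two-dimensional soul, but $B_{\mathrm{pt}}$ (soul a point, one end, one-dimensional ideal boundary, boundary $K^2$) survives all of them, so in the case $(F_i,F_i')\approx(\Mo,\Mo)$ boundary type plus the soul classification alone cannot distinguish $\Mo\times I$ from $B_{\mathrm{pt}}$. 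What actually discriminates is the fibration data — the compatibility with the circle bundle $f_{i,1}$ over $\pa B\cap X_1$ and the presence or absence of topological singular points (equivalently of singular $I$-fibers over $B$) — and this is precisely the content of the proof of \cite[Lemma 5.4]{MY}; compare also the proof of Lemma \ref{lem:7.10} in this paper, where an analogous ambiguity ($K_1(P^2)$ versus $S^1\tilde\times D^2_+$, both with $\pa B_i\approx\Mo$) has to be excluded by a separate doubling argument. Your worry about invoking the rescaling theorem when $\pa B_i\approx S^2$ is, by contrast, not a real obstacle: if $B_i$ cannot be rescaled, then $d_{p_i}$ has no critical points there and $B_i$ is a cone over $\Sigma_{p_i}$, which already yields $B_i\approx D^3$, the desired conclusion in that case. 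In short, either rely on \cite[Lemma 5.4]{MY} as a black box (as the paper does), or supply the fibration/singular-point argument; the boundary-matching step as written would fail.
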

\pmed

%%%%%%%%%%%%

Next suppose Case B).
Then the center $x$ of $B$ is contained in $Z$. Therefore
we may assume that the center $p_i$ of $B_i$ is an Alexandrov-boundary point. 

Recall that $D'\cap Z \neq \emptyset$, which implies 
$F_i'\approx S^1\times I$, and that 
we have the following three possibilities:
\begin{equation*} \label{eq:3 cases}
F_i \approx D^2, S^1 \times I \text{ or } \Mo,
\quad
\pa B_i \approx D^2, S^1 \times I \text{ or } \Mo. 
\end{equation*}
 
\pmed
\begin{lem} \label{lem:7.10} 
In Case B$)$, we have 
\beq \label{eq:B_i}
(B_i, \pa B_i)\approx 
 \begin{cases}
 (D^3, D^2) & \text{if $(F_i,F_i')\approx (D^2,S^1\times I)$} \\
(\Mo \times [0,1], \Mo \times \{0\}) &\text{if $(F_i,F_i')\approx (\Mo,S^1 \times I)$} \\
(S^1 \times D^2, S^1 \times I) & \text{if $(F_i,F_i')\approx (S^1 \times I, S^1 \times I)$}. 
 \end{cases}
\eeq

\end{lem}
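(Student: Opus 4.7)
The plan is to adapt the method of Lemma \ref{lem:top B^M}: first compute the full topological boundary of $B_i$ from the explicit decomposition \eqref{eq:gluing=paBi}, then apply the rescaling argument of Theorem \ref{thm:rescaling} at a boundary point $p_i\in\ba M_i$ together with Perelman's Stability Theorem, and finally match $B_i$ with an entry of the classification in Corollary \ref{cor:soul thm ball}.

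First, since $J\subset X_1\setminus Z$ is a contractible arc on which $f_{i,1}$ is an $S^1$-bundle (Lemma \ref{lem:S1-bundle/Z}), the preimage $f_{i,1}^{-1}(J)$ is a trivial $S^1$-bundle over $I$, hence homeomorphic to $S^1\times I$. Attaching this annulus to $F_i$ and $F_i'$ along their boundary circles changes nothing up to homeomorphism, so $\pa B_i\approx D^2,\Mo,S^1\times I$ in cases (a), (b), (c) respectively. An analogous calculation for the Alexandrov-boundary pieces of $F_i,F_i'$ together with a trivial $I$-bundle over $\gamma_i\subset\ba M_i$ gives $\ba B_i\approx D^2,\Mo,S^1\times I$. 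The full topological boundary $\pa B_i\cup\ba B_i$ of $B_i$ in $M_i$ is therefore $S^2$, $K^2$, $T^2$ in the three cases.

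Next, since $x\in Z\subset\ba X$, the center $p_i$ may be chosen in $\ba M_i$. Following Lemma \ref{lem:top B^M}, I assume for contradiction that $B_i$ is not the asserted homeomorphism type, verify the rescaling hypothesis of Theorem \ref{thm:rescaling} (using criterion (2) together with the fact that $\pa B_i\cup\ba B_i\in\{S^2,K^2,T^2\}$ precludes $B_i$ being a cone over $P^2$), and obtain a noncompact three-dimensional Alexandrov space $(Y,y_0)$ of nonnegative curvature with $\ba Y\ne\emptyset$, satisfying $\dim Y(\infty)\ge 1$ and $\dim S\le 1$ for a soul $S$ of $Y$. Stability then yields $B_i\approx B(Y)$ for a large closed metric ball around $S$.

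Finally, I match $(B_i,\pa B_i)$ with Tables \ref{table:B-paB}--\ref{table:BpaB2}. The entries $B_\mathrm{pt}$ and $M_\mathrm{pt}$ are excluded by $\ba Y\ne\emptyset$, and $K_1(P^2)$ (which has full boundary $P^2$) is excluded by the full-boundary topology computed above. The surviving entries force (a) $B(Y)\approx D^3$ with $\pa B\approx D^2$; (b) $B(Y)\approx \Mo\times I$, the $\dim S=1$ twisted $D_+^2$-bundle over $S^1$, with $\pa B\approx \Mo$; (c) $B(Y)\approx S^1\times D^2$, the $\dim S=1$ trivial $D_+^2$-bundle over $S^1$, with $\pa B\approx S^1\times I$, contradicting the assumption. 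The subtle point---the likely main obstacle---is the bookkeeping in this final step, since several entries of the tables share the same topology of $\pa B$ (for instance, both $D^3$ and $K_1(P^2)$ give $\pa B\approx D^2$) and are distinguished only by the full boundary topology computed in the first step, together with the $\phi_i$-equivariance descending through rescaling.
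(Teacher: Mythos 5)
Your overall skeleton (choose $p_i\in\ba M_i$, rescale, classify the limit $Y$ by its soul, apply stability, match with Corollary \ref{cor:soul thm ball}) is the same as the paper's, but the decisive exclusions are carried by a step that is not justified: the a priori computation of $\ba B_i$ and of the homeomorphism type of the full boundary $\pa B_i\cup\ba B_i$. The computation of $\pa B_i$ from \eqref{eq:gluing=paBi} is fine, but $\ba B_i$ is precisely the unknown local datum. In the case $(F_i,F_i')\approx(\Mo,S^1\times I)$ one has $D\cap Z=\emptyset$, so there is no arc $\gamma_i\subset\ba M_i$ converging to $\gamma$ (your ``trivial $I$-bundle over $\gamma_i\subset\ba M_i$'' belongs to the inradius case of Subsection \ref{ssec:inradius-collapse}); here $\ba B_i$ is a cap of $\ba M_i$ bounded by the single circle fiber over $F'\cap Z$, and whether that cap is $D^2$ (full boundary $P^2$, i.e.\ $B_i\approx K_1(P^2)$, the case that must be ruled out) or $\Mo$ (full boundary $K^2$) is exactly what the lemma asserts; assuming it begs the question. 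The paper settles this by a separate argument: it passes to the double $B_i^D\to B^D=B\cup_E B$, splits into the subcases $E=\ba B$ (excluded by Corollary \ref{cor:S1onZ} since then $\pa B_i\approx S^1\times I$) and $E$ a segment through $\ba D'$ (excluded because \cite[Lemma 5.4]{MY} gives $B_i^D\approx\Mo\times I$ while $B^{D(Y)}$ has topological singular points).

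In the case $(F_i,F_i')\approx(S^1\times I,S^1\times I)$ your step also fails: $\pa B_i$ and $\ba B_i$ are both annuli, but their union along two circles can be $T^2$ or $K^2$ depending on the gluing, so ``full boundary $T^2$'' cannot be read off, and it is exactly the $K^2$ alternative that would correspond to $B_i\approx B_{\rm pt}$. Moreover your stated exclusion of the $B_{\rm pt}$ entry ``by $\ba Y\neq\emptyset$'' misreads Corollary \ref{cor:soul thm ball}/Table \ref{table:BpaB2}: that entry arises from the model $S^1_{\ell}\times\R\times[-t,t]/(z,x,y)\sim(\bar z,-x,-y)$, which does have nonempty Alexandrov boundary. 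The correct reason it cannot occur (implicit in the paper's restriction to the list \eqref{eq:S0Y}) is the constraint $\dim Y(\infty)\ge 1$ from Theorem \ref{thm:rescaling}, since that model has exactly one end and one-point ideal boundary; the same constraint disposes of $\R\times D^2/\Z_2$. So the two key exclusions need the ideal-boundary estimate and the doubling argument, neither of which your boundary bookkeeping replaces.
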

\begin{proof}
If $B_i$ can not be rescaled, then $\pa B_i \approx \Sigma_{p_i}M_i \approx D^2$ and $B_i \approx D^3_+$,
which is the first case of \eqref{eq:B_i}. 
Suppose that $B_i$ can be rescaled, and let $Y$ denote the rescaling limit of $B_i$, which is a three-dimensional noncompact Alexandrov space of nonnegative curvature with boundary satisfying 
\beq \label{eq:Y(infty)-S}
\begin{aligned} \begin{cases}
&\text{$\dim Y(\infty) \ge 1$ and $\dim S\le 1$, where 
$S$ is a soul of $Y\,;$} \\
& \text{$B_i$ is homeomorphic to a large metric ball $B^Y$ around $S$}.
\end{cases}
\end{aligned}
\eeq
If $\dim S = 1$, then $Y$ is an $N$-bundle over $S^1$, 
where $N$ is a noncompact nonnegatively curved Alexandrov surface with boundary satisfying 
$\dim N(\infty) \ge 1$.
Hence $N \approx \mathbb R_+^2$, 
and $Y$ is homeomorphic to $S^1 \times \mathbb R_+^2$ or the twisted product $S^1 \tilde \times \mathbb R_+^2$ defined as $S^1 \tilde \times \mathbb R_+^2 = S^1 \times \mathbb R \times \mathbb R_+ / (z,x,y) \sim (-z,-x,y)$. 
This shows that 
$$
\text{
$(B_i, \pa B_i) \approx (S^1 \times D_+^2, S^1 \times I)$ \,or\, $(S^1 \tilde \times D_+^2, \Mo)$,}
$$
which are the third and the second cases of \eqref{eq:B_i} respectively.

We assume $\dim S = 0$. \eqref{eq:3 cases}
implies that $Y$ has exactly one end.
It follows from Theorem \ref{soul theorem} and 
\eqref{eq:Y(infty)-S} that $Y$ is homeomorphic to one of 
\beq \label{eq:S0Y}
\mathbb R_+^3, \quad \mathbb R \times D^2 / \mathbb Z_2, \quad \mathbb R^2 \times [-1,1] / \mathbb Z_2,
\eeq
where the $\mathbb Z_2$-actions are the restrictions of the involution $v \mapsto -v$ of $\mathbb R^3$.
The case $Y \approx \mathbb R^3_+$
corresponds to the first case in \eqref{eq:B_i}. 

We are going to show that the other cases in 
\eqref{eq:S0Y} never occur.
In the case $Y\approx \mathbb R \times D^2 / \mathbb Z_2$, $C$ is a geodesic ray in Theorem \ref{soul theorem}, and therefore $Y(\infty)$ is a point.
This is a contradiction. 

Suppose that $Y \approx \mathbb R^2 \times [-1,1] / \mathbb Z_2$.
Then, $B^Y \approx D^2 \times [-1,1] / \mathbb Z_2 \approx K_1(P^2)$ and 
\beq \label{eq:paBi=Mo}
\pa B_i\approx\pa B^Y \approx S^1 \times [-1,1] / \mathbb Z_2 \approx \Mo.
\eeq
Consider the double 
$B_i^D=B_i\cup \phi_i(B_i)\subset D(M_i)$ of $B_i$. 
Let $(B^D, E)$ be the limit of $(B_i^D, \ba B_i)$.
As in Lemma \ref{lem:X1ZX2}, we have 
$$
 B^D=B\cup \phi(B)=B \cup_{E} B\subset X^D,
$$
where $E$ is an arc in $\ba X \cap B$ starting from the corner point $x$, or it coincides with $\ba B$.
If $E=\ba B$, then Corollary \ref{cor:S1onZ} implies that 
$\pa B_i\approx S^1\times I$, contradicting \eqref{eq:paBi=Mo}.

Suppose the other case when 
$E$ is the segment starting from $x$ through 
$\ba D'$.
It follows from the discussion in \cite[Lemma 5.4]{MY}
applied to the convergence $B_i^D\to B^D$
that $B_i^D$ is homeomorphic to $\Mo \times I$.
However $B_i^D$ must be homeomorphic to 
a large metric ball $B^{D(Y)}$ in $D(Y)$,
which contains topological singular points.
This is a contradiction.
\end{proof}
%%%%%%%%%%%%%%%%%%

Here we investigate the components of the space $Z$ in 
Lemma \ref{lem:X1ZX2}.

\begin{lem}\label{lem:Z-(4)}
Let $\{ E_\alpha\}$ be the set of components of $Z$. Then there is a one to one correspondence between 
$\{ E_\alpha\}_\alpha$ and 
the set of components of $\ba M_i$ for large $i$.
\end{lem}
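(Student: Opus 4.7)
The plan is to build a well-defined bijective assignment $\Psi$ from the set of components of $\ba M_i$ to $\{E_\alpha\}$ for all sufficiently large $i$. The key inputs are the Hausdorff convergence $\ba M_i \to Z$ inside $D(M_i)$, the local $S^1$-bundle structure of $f_{i,1}$ near $Z \cap X_1$ provided by Lemma \ref{lem:S1-bundle/Z}, and the corner models $(B_i, \pa B_i)$ established in Lemmas \ref{lem:topBiCaseA} and \ref{lem:7.10}. Since $X^D$ is compact, $Z$ has only finitely many components, and each component of $\ba M_i$ is a closed connected surface of diameter at most $D$; a stability-and-pigeonhole argument then keeps the number of components of $\ba M_i$ uniformly bounded in $i$.

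Passing to a subsequence, each component $N$ of $\ba M_i$ has a Hausdorff limit $L(N) \subset Z$, which is connected since $N$ is, and hence lies in a single $E_\alpha$; set $\Psi(N) := E_\alpha$. Surjectivity of $\Psi$ is immediate: any $p \in E_\alpha$ is the limit of points $p_i \in \ba M_i$ because $Z = \lim \ba M_i$, and the component of $\ba M_i$ containing $p_i$ is mapped by $\Psi$ to $E_\alpha$.

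For injectivity, fix $E_\alpha$ and examine the subset of $\ba M_i$ lying over a small neighborhood of $\pi(E_\alpha)$. Over each $z \in E_\alpha \cap X_1$, Lemma \ref{lem:S1-bundle/Z} supplies a single $S^1$-fiber $f_{i,1}^{-1}(z) \subset \ba M_i$; as $z$ varies through a connected component of $E_\alpha \setminus \Phi$, these fibers sweep out an annulus or a M\"obius band inside $\ba M_i$. At each corner point $x_\beta \in \pi(E_\alpha) \cap \Phi$, the piece
\[
\ba_{M_i} B_{\beta,i} \;\subset\; \ba M_i
\]
is described by Lemmas \ref{lem:topBiCaseA} and \ref{lem:7.10}, and in each listed topological type it is a connected surface that glues the annular/M\"obius pieces sitting on its two sides. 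Consequently the union
\[
N_{\alpha,i} \;:=\; f_{i,1}^{-1}(E_\alpha \cap X_1) \;\cup\; \bigcup_{x_\beta \in \pi(E_\alpha) \cap \Phi} \ba_{M_i} B_{\beta,i}
\]
is a connected closed surface inside $\ba M_i$. Any component of $\ba M_i$ mapping under $\Psi$ to $E_\alpha$ must contain $N_{\alpha,i}$, so $\Psi^{-1}(E_\alpha)$ is a singleton. Combining this with surjectivity gives the bijection, and the identity $\ba M_i = \bigsqcup_\alpha N_{\alpha,i}$ follows.

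The main obstacle is the last connectivity step across corner points: one has to carry out a case-by-case check, guided by the possibilities for $(F_i, F_i')$ in Lemmas \ref{lem:topBiCaseA} and \ref{lem:7.10} (namely $D^2$, $S^1 \times I$, $\Mo$, and their combinations), to confirm that $\ba_{M_i} B_{\beta,i}$ really connects the annular/M\"obius fiber pieces on the two adjacent $D$'s into one surface component rather than splitting off an extra component of $\ba M_i$. Once this is verified, the rest of the argument is routine bookkeeping with Hausdorff convergence.
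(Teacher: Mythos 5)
Your route is genuinely different from the paper's, but as it stands it has a real gap, and it sits exactly where you place your ``main obstacle''. The two assertions that carry all the content of the lemma --- that $N_{\alpha,i}$ is a single connected closed surface, and that every component of $\ba M_i$ whose limit lies in $E_\alpha$ ``must contain $N_{\alpha,i}$'' --- are both left unproved. For the second one the logic has to run the other way: you first need to identify the portion of $\ba M_i$ sitting over a small neighborhood of $E_\alpha$ with $N_{\alpha,i}$ itself (over the fibred region this follows from the $\Z_2$-equivariance of $f_{i,1}^D$: a boundary point is $\phi_i$-fixed, so its image is $\phi$-fixed, i.e.\ lies in $Z$ --- but you never say this), so that any component $N$ with limit in $E_\alpha$ satisfies $N\subset N_{\alpha,i}$; only then do connectedness of $N_{\alpha,i}$ and maximality of $N$ give $N=N_{\alpha,i}$. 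And the connectedness of $N_{\alpha,i}$ --- in particular that the corner pieces of Lemmas \ref{lem:topBiCaseA} and \ref{lem:7.10} really join the annular/M\"obius fiber pieces on their two sides, and that a corner at an endpoint of an arc $E_\alpha$ caps the surface off rather than splitting off a new boundary component --- is precisely the case-by-case check you defer, so the injectivity step is not established. The uniform bound on the number of components is likewise only asserted (``stability-and-pigeonhole'').

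For comparison, the paper avoids the gluing analysis altogether by a separation argument: Lemma \ref{lem:bdy-dist} shows that, under a lower inradius bound and a lower area bound on the limit, distinct components of $\ba M_i$ stay a definite distance $d>0$ apart; its proof is by contradiction --- if two components approached each other, their limit extremal sets would meet at a corner point, forcing the third case of \eqref{eq:B_i} there, where $\ba B_i\approx S^1\times I$ is connected yet would have to meet both components. Given that separation, each connected $E_\alpha$ can meet the limit of only one component (the limits are closed, pairwise $d$-separated, and cover $Z$), surjectivity is immediate from $Z=\lim_i\ba M_i$, and finiteness follows since each point of $\pa Z$ is a corner point of $\ba X$. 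If you want to keep your constructive approach, the corner verification you postpone is unavoidable and is essentially of the same difficulty as the contradiction in Lemma \ref{lem:bdy-dist}; alternatively, importing that lemma reduces your argument to the paper's.
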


 For the proof, we have to verify the following.
%%%%%%%%
\begin{lem}\label{lem:bdy-dist}
For positive numbers $a$ and $r$, there exists $d=d(D,a,r)>0$ and $\e=\e(D,a,r)>0$ satisfying the following $:$
Let $M$ in $\ca M(3,D)$ and 
an Alexandrov surface $X$ with boundary be such that 
\begin{enumerate}
\item ${\rm inrad}(M)\ge r,\quad$ ${\rm area}(X)\ge a$,
%\item 
\item $d_{GH}(M,X)<\e$.
\end{enumerate}
Then the distance between any distinct boundary components of $\ba M$ is greater than $d$.
\end{lem}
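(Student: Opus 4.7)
The plan is to argue by contradiction using rescaling at the scale $\delta_i=d(\ba_\alpha M_i,\ba_\beta M_i)\to 0$ together with the classification of noncompact nonnegatively curved Alexandrov $3$-spaces with disconnected boundary. Suppose the lemma fails: there exist sequences $M_i\in\ca M(3,D)$ and Alexandrov surfaces $X_i$ satisfying the hypotheses with $d_{GH}(M_i,X_i)\to 0$, yet with distinct components $\ba_\alpha M_i\ne\ba_\beta M_i$ of $\ba M_i$ having $\delta_i\to 0$. Pick $p_i\in\ba_\alpha M_i$ and $q_i\in\ba_\beta M_i$ realizing $\delta_i$. Passing to subsequences, $M_i\to X_\infty$ with $X_\infty$ a $2$-dimensional Alexandrov surface, and the inradius bound together with Proposition~\ref{prop:criterion=inradius} forces $\ba X_\infty\ne\emptyset$ and puts us in the non-inradius collapse regime.

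Next, rescale $\hat M_i:=(1/\delta_i)M_i$; the curvature becomes $\ge -\delta_i^2\to 0$ and the diameter diverges. Passing to a further subsequence, $(\hat M_i,p_i)\to(Y,p_\infty)$ in pointed GH, and $q_i\to q_\infty$ with $|p_\infty,q_\infty|=1$, both on $\ba Y$ in distinct components at distance $\ge 1$. To secure $\dim Y=3$ I would verify condition~(1) of Theorem~\ref{thm:rescaling} at this scale: the nearest point on $\ba_\beta M_i$ from any $p_i'$ with $|p_i,p_i'|=o(\delta_i)$ provides a critical point of $d_{p_i'}$ at rescaled distance $\le 1+o(1)$. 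Theorem~\ref{thm:rescaling} then yields $\dim Y\ge\dim X_\infty+1=3$ and $\dim Y(\infty)\ge 1$; applying the theorem's last clause to the doubles $D(M_i)$ allows the reference point to remain on $\ba M_i$.

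Thus $Y$ is a noncompact $3$-dimensional nonnegatively curved Alexandrov space with at least two boundary components and $\dim Y(\infty)\ge 1$. By the classification used analogously in the proof of Lemma~\ref{lem:no-exceptional} (compare \cite[Theorem~17.3]{Y 4-dim}), $Y$ is isometric to $L\times I_\ell$ for some $2$-dimensional noncompact nonnegatively curved Alexandrov space $L$ with $\dim L(\infty)\ge 1$ and a closed interval $I_\ell$ of length $\ell\ge 1$; the two boundary components containing $p_\infty,q_\infty$ are $L\times\{0\}$ and $L\times\{\ell\}$.

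The main obstacle and final step is to extract a contradiction. The rescaling of the approximating maps $f_i:M_i\to X_i$ gives a Lipschitz limit $g:Y\to T_{z_0}X_\infty$ where $z_0=\lim f_i(p_i)\in\ba X_\infty$. Since the $I_\ell$-direction in $Y$ records the collapsing distance $\delta_i$ between the two boundary components (which map to nearby points in $X_\infty$), $g$ must factor through the projection $Y\to L$. Combined with $g(\ba Y)\subset\ba T_{z_0}X_\infty$ coming from $f_i(\ba M_i)\subset\ba X_i$, this forces the $2$-dimensional image $g(L\times\{0\})$ into the $1$-dimensional boundary $\ba T_{z_0}X_\infty$, a contradiction. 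The hypothesis $\mathrm{area}(X_\infty)\ge a>0$ is used precisely here: it ensures $z_0$ can be taken at a generic boundary point where $T_{z_0}X_\infty$ is a nondegenerate half-plane, so the dimensional clash is robust.
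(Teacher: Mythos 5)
There is a genuine gap, and it sits exactly where you flag ``the main obstacle''. First, the rescaling step: Theorem \ref{thm:rescaling} does not let you prescribe the rescaling constants. Its conclusion produces points $\hat p_i$ and scales $\delta_i$ of its own (coming from the averaging construction in \cite{Y ess}); you cannot force the scale to equal the boundary-separation $d(\ba_\alpha M_i,\ba_\beta M_i)$, and verifying hypothesis (1) at that separation scale does not upgrade the limit of $(1/\delta_i)M_i$ (with \emph{your} $\delta_i$) to dimension $3$. If the collapse happens at a scale much smaller than the separation (say $S^1$-fibers of length $\e_i\ll\delta_i$, which is exactly the regime of non-inradius collapse treated in Section \ref{sec:2-dim with bdry}), then rescaling by the separation still yields a $2$-dimensional limit --- e.g.\ a flat strip $\R\times[0,1]$ with two boundary lines at distance $1$ --- which is perfectly consistent and gives no contradiction; while if you instead take the scale produced by Theorem \ref{thm:rescaling}, the second boundary component may escape to infinity and the limit need not have disconnected boundary. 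So the classification $Y\cong L\times I_\ell$ is not available. Second, even granting that splitting, the concluding step is not justified: after rescaling by the separation, the images of $p_i$ and $q_i$ in $X_i$ are at rescaled distance of order $1$, not $o(1)$, so nothing forces the limit map $g$ to kill the $I_\ell$-direction, i.e.\ to factor through $L$; and without that factorization (plus surjectivity of the factored map) the image of a single boundary component need not be $2$-dimensional, so the dimension clash with $\ba T_{z_0}X_\infty$ does not follow. The hypothesis ${\rm area}(X)\ge a$ also does not let you move $z_0$ to a ``generic'' boundary point: $z_0$ is forced to be the common limit of $p_i,q_i$, which in the relevant scenario is a corner point.

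The paper's proof avoids rescaling altogether. It passes to the limits $E_\alpha,E_\beta\subset\ba X$ of the two boundary components (extremal subsets, as in Lemma \ref{lem:X1ZX2}), observes that under the assumption they would meet at an endpoint $x$, necessarily a corner of $\ba X$, and then invokes the local structure already established over such corners in Case B of Section \ref{sec:2-dim with bdry}: by the third case of \eqref{eq:B_i} in Lemma \ref{lem:7.10}, the closed domain $B_i$ converging to a neighborhood $B$ of $x$ is homeomorphic to $S^1\times D^2_+$ with $\ba B_i\approx S^1\times I$ connected, yet this single annulus would have to meet both $\ba_\alpha M_i$ and $\ba_\beta M_i$, contradicting that they are distinct components. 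If you want a rescaling-style proof you would essentially have to re-derive that corner classification, which is the content your argument is missing.
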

\begin{proof}
If the lemma does not hold, we have sequences
$M_i$ and $X_i$ satisfying the above (1) such that 
$d_{GH}(M_i,X_i)\to 0$ as $i\to\infty$ 
and the distance between some distinct boundary components
$\ba_\alpha M_i$ and $\ba_\beta M_i$ tends to $0$.
Passing to a subsequence, we may assume that 
$M_i$ converges to an Alexandrov surface $X$
with boundary and ${\rm area}(X)\ge a$.
%From the discussion above, 
We may also assume that $\ba_\alpha M_i$ and 
$\ba_\beta M_i$ converge to closed subsets
$E_\alpha$ and $E_\beta$ of $\ba X$, which are
extremal subsets of $X$ ([PP]). From the assumption,
$E_\alpha$ meets $E_\beta$ at the endpoint, say $x$.
It follows that the third case occurs in \eqref{eq:B_i}
at $x$. Namely a closed domain $B_i$ converging 
to a closed neighborhood $B$ of $x$ is homeomorphic
to $S^1\times D^2_+$, where 
$\ba B_i$ is homeomorphic to $S^1\times I$, and this meets both $\ba_\alpha M_i$
and $\ba_\beta M_i$ for large $i$.
This is a contradiction.
\end{proof}

\begin{proof}[Proof of Lemma \ref{lem:Z-(4)}]
As indicated in the above proof, we may assume that 
any component $\ba_\alpha M_i$ of
$\ba M_i$ converges to an extremal subset
$E_\alpha$ contained in $\ba X$.
Lemma \ref{lem:bdy-dist} shows that
$E_\alpha$ does not meet $E_\beta$ for distinct 
$\ba_\alpha M_i$ and $\ba_\beta M_i$.
Note that since each point of $\pa Z$ is a corner point of $\ba X$, the number of $\{ \ba_\alpha M_i\}$ 
is uniformly bounded above by a constant $N_D$.
(see \cite{Fj} for general results in this direction).
\end{proof}

%%%%%%%%%
\pmed\n
{\bf Projection maps.}\,
To finish the proof of Theorem \ref{thm:dim2wbdy}(2),
we have to define projection maps $B_i\to B$
and $D_i\to D$ of singular $S^1$-bundles compatible with each other and with
$f_{i,1}$.
\pmed
\n
{\bf Construction of the projections $B_i\to B$}.

We employ mainly two methods for the construction 
of the projections $B_i\to B$.
The one is to make use of a topological cone
structure of $B_i$ if any, and the other one is to use a collar neighborhood structure of $\ba B$ in
$B$.

\pmed
We begin with Case A).

\begin{lem} \label{lem:projection} In Case A$)$, 
there is a map
\begin{equation*} %\label{eq:f_721}
f_{B,i}'' : B_i \to B
\end{equation*}
which is isomorphic to one of \eqref{eq:K(P2)},
\eqref{eq:MoxId} and \eqref{eq:L2xId} in 
Example \ref{ex:KP2}
and is compatible with $f_{i,1}$ over $\pa B\cap X_1$.
\end{lem}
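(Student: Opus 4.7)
The plan is to proceed by case analysis on the three topological types of $(B_i,\pa B_i)$ given by Lemma \ref{lem:topBiCaseA}, constructing $f_{B,i}''$ in each case in the spirit of Lemmas \ref{lem:local-sing-I}, \ref{lem:projectionf''D0} and \ref{lem:projectionf''B0}: first fix the projection on the skeleton $\pa B_i$ so that it matches $f_{i,1}$ on $f_{i,1}^{-1}(J)$, and then extend to the bulk of $B_i$ using either a product structure or a topological cone structure.

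For each sub-case I fix a compact model $(\mathcal{M},\pi_\mathcal{M}:\mathcal{M}\to B)$ carved out of one of \eqref{eq:K(P2)}, \eqref{eq:MoxId}, \eqref{eq:L2xId}: namely $\mathcal{M}:=\pi_{L^2}^{-1}([0,1])\times[-1,1]\approx D^2\times I$ with $\pi_\mathcal{M}$ the restriction of $\pi_{L^2}\times\mathrm{id}$ in the $(D^2,D^2)$ case; $\mathcal{M}:=\pi_{\Mo_\infty}^{-1}([0,1])\times[-1,1]\approx \Mo\times I$ with $\pi_\mathcal{M}$ the restriction of $\pi_{\Mo_\infty}\times\mathrm{id}$ in the $(\Mo,\Mo)$ case; and when $(F_i,F_i')\approx(D^2,\Mo)$ (which forces $x$ to be a corner of $\pa X$, since $B_i\approx K_1(P^2)$ has a unique topological singular point that must converge to an essential singular point in the base), $\mathcal{M}$ is a closed cone neighborhood of the cone point of $K(P^2)$, homeomorphic to $K_1(P^2)$, with $\pi_\mathcal{M}$ the restriction of $\pi_{K(P^2)}$. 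In each case the base of the model is identified with $B$ topologically, and a direct computation shows that the end-caps $\pi_\mathcal{M}^{-1}(F)$ and $\pi_\mathcal{M}^{-1}(F')$ have the same types as $F_i$ and $F_i'$, while $\pi_\mathcal{M}^{-1}(J)$ is a trivial $S^1$-bundle over $J\approx I$.

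To produce a homeomorphism $h_i:B_i\to\mathcal{M}$ with $\pi_\mathcal{M}\circ h_i=f_{i,1}$ on $f_{i,1}^{-1}(J)$, I build $h_i$ in three stages. First, on the lateral cylinder $f_{i,1}^{-1}(J)\approx S^1\times I$, a choice of trivializations of $f_{i,1}|_J$ and $\pi_\mathcal{M}|_{\pi_\mathcal{M}^{-1}(J)}$ yields a fibered homeomorphism $h_i^J:f_{i,1}^{-1}(J)\to\pi_\mathcal{M}^{-1}(J)$ with $\pi_\mathcal{M}\circ h_i^J=f_{i,1}$. Second, extend $h_i^J$ across the end-caps $F_i,F_i'$: since these are homeomorphic to the corresponding end-caps of $\mathcal{M}$ and any homeomorphism of a boundary circle extends over $D^2$ or $\Mo$, one obtains a boundary homeomorphism $\tilde h_i:\pa B_i\to\pa\mathcal{M}$. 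Third, extend $\tilde h_i$ to a homeomorphism $h_i:B_i\to\mathcal{M}$; in the first two sub-cases this is done via the $I$-bundle structure of $B_i$ (matching the two copies $F_i,F_i'$ of the surface fibre with the ends of $\mathcal{M}$), and in the third via the topological cone structure of $K_1(P^2)$, sending the unique topological singular point of $B_i$ (cf.\ \cite{MY}) to the cone point of $\mathcal{M}$. Setting $f_{B,i}'':=\pi_\mathcal{M}\circ h_i$ finishes the construction.

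The main subtlety appears in the Möbius sub-case, where, when assembling the two Möbius end-caps with the lateral cylinder, one must choose the trivialization of $f_{i,1}|_J$ so that the resulting circle fibration on $\pa B_i\approx K^2$ matches the model's circle fibration on $\pa(\Mo\times I)=K^2$; equivalently, the twist gluing the two end-caps through the cylinder must agree between $B_i$ and $\mathcal{M}$. Since both fibrations on $K^2$ arise from analogous singular $S^1$-bundle structures and are therefore isomorphic, the required trivialization exists, and the other sub-cases are straightforward.
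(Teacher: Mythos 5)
Your overall strategy (choose a compact model fibration over $B$, pin the map down on $\pa B_i$, then extend to the interior by a cone or product structure) is the same as the paper's, and your $(D^2,D^2)$ and $(D^2,\Mo)$ sub-cases are fine; the latter is exactly the paper's Case (a), where the Alexander-type cone extension over $D^3$, resp.\ $K_1(P^2)$, is unproblematic. The genuine gap is in the $(\Mo,\Mo)$ sub-case, $(B_i,\pa B_i)\approx(\Mo\times I,K^2)$. There you first fix a homeomorphism $\tilde h_i:\pa B_i\to\pa\mathcal M$ (forced on the lateral cylinder by $f_{i,1}$, then capped off over $F_i,F_i'$) and then assert that it extends to $B_i\to\mathcal M$ ``via the $I$-bundle structure''. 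Unlike the $D^3$ and $K_1(P^2)$ cases there is no Alexander trick here: extending through the product structure amounts to finding an isotopy of $\Mo$ joining the two cap identifications that is compatible with the identification already imposed on the lateral annulus, and whether such an isotopy exists is a mapping-class question. Note that after the lateral cylinder is fixed by $f_{i,1}$ you have essentially no residual freedom on the caps (homeomorphisms of $D^2$ or $\Mo$ fixing the boundary circle are isotopic to the identity rel boundary), so the isotopy class of your $\tilde h_i$ is essentially determined, and it may a priori fail to bound. Your only justification --- that the two singular $S^1$-fibrations on $K^2$ are abstractly isomorphic --- does not address this: an abstract isomorphism of the boundary fibrations says nothing about whether your particular boundary identification extends over $\Mo\times I$.

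This is precisely where the paper's proof of Case (b) has its real content, and it proceeds differently: it does not prescribe a boundary homeomorphism at all, but takes an arbitrary homeomorphism $h_i:B_i\to\Mo\times[0,1]$ carrying $F_i,F_i'$ to the two ends, transports the two cap fibrations to maps $\theta_0,\theta_1:\Mo\to[0,1]$ isomorphic to $\pi_{\Mo}$, chooses markings $k_\alpha$ with $\theta_\alpha\circ k_\alpha=\pi_{\Mo}$, and interpolates the fibration through the product by an isotopy from $k_0$ to $k_1$. The possible obstruction, the difference of $[k_0]$ and $[k_1]$ in the mapping class group of $\Mo$, is killed because that group is $\Z_2$ with the nontrivial class realized by a fibration-preserving, boundary-orientation-reversing homeomorphism, so $k_0$ can be replaced to make it isotopic to $k_1$. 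Some argument of this kind is needed in your construction and is missing. A secondary point: the lemma asks for compatibility with $f_{i,1}$ over all of $\pa B\cap X_1$, which contains not only $J$ but also the outer portions $F\cap X_1$ and $F'\cap X_1$ of the side arcs (this is what later allows $f_{D,i}''$ in Lemma \ref{lem:projection2D} to be matched simultaneously to $f_{B,i}''$ and $f_{i,1}$); you impose compatibility only on $f_{i,1}^{-1}(J)$ and cap off $F_i,F_i'$ by arbitrary extensions, whereas the paper arranges the cap compatibility explicitly in both Case (a) (condition (2) on $\hat h_i$) and Case (b) (the maps $\tilde f_{i,1}^\alpha$ extend $f_{i,1}|_{F\cap X_1}$).
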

\begin{proof}
Let $Q:=[0,1]\times [0,1]\subset \R_+\times \R_+$.
First we consider 

\pmed\n
{\bf Case (a)}\,\,$(B_i,\pa B_i)\approx 
(K_1(P^2),P^2)$, $(F_i,F_i')\approx (D^2,\Mo)$.

\pmed
We make use of the cone structure of $B_i$
in this case. 
The case $B_i\approx D^2\times I$ with
$(F_i,F_i')\approx (D^2,D^2)$ is similarly discussed, and hence omitted.
Let
\[
 \pi_{K_1(P^2)}:K_1(P^2)\to Q
\]
be the restriction of the projection $\pi_{K(P^2)}$, which is defined in \eqref{eq:K(P2)}, to $Q$.
For a small $s>0$, set
\begin{align*}
&P:= \pi_{K_1(P^2)}^{-1}(\{ 1\}\times [0,s])\approx D^2, \quad \\
&P':= \pi_{K_1(P^2)}^{-1}([0,s]\times \{ 1\})\approx\Mo.
\end{align*}
Let $h:(B,\pa B, x)\to (Q, \pa Q, (0,0))$ be a homeomorphism such that 
$h(F)=\pi_{K_1(P^2)}(P)$ and 
$h(F')=\pi_{K_1(P^2)}(P')$.
We can choose a homeomorphism $\hat h_i:\pa B_i \to \pa K_1(P_2)$ satisfying the following:
\begin{enumerate}
\item $\hat h_i(F_i)=P$ and $\hat h_i(F_i')=P'\,;$
\item $\pi_{K_1(P^2)}\circ \hat h_i=h\circ f_{i,1}$ on 
 $(f_{i,1})^{-1}(\pa B\cap X_1)$.
\end{enumerate}
To see this, construct $\hat h_i$ 
on $(f_{i,1})^{-1}(\pa B\cap X_1)$ so as to satisfy
(2). Then it suffices to extend it to homeomorphisms $F_i\approx P$,
$F_i' \approx P'$.
We extend $\hat h_i$ to a homeomorphism
$h_i:B_i \to K_1(P^2)$ using the cone structures
of $B_i$ and $K_1(P^2)$.
Finally we define $f_{B,i}'':B_i\to B$ by
$f_{B,i}'':=h^{-1}\circ\pi_{K_1(P^2)}\circ h_i$,
which satisfies the required properties.
%%%%%
\[
\begin{CD}
 & B_i & @ > h_i >> \, & K_1(P^2) & \\
&@V f_{B,i}'' V V& & @ V V \pi_{K_1(P^2)} V & \\ 
 & B & \,\,\, @> h >> & Q &
\end{CD}
\]
%%%%%
\psmall
Next we consider 
\pmed\n
{\bf Case (b)}\,\, $(B_i,\pa B_i)\approx 
(\Mo\times I, K^2)$, $(F_i,F_i')\approx(\Mo, \Mo)$.

\pmed
In place of the cone structure used in the above
Case (a), we make use of a collar neighborhood
of $B$ as used in \cite[Section 5.5]{MY}.
%In what follows, we identify $\ba B$ with $[0,1]$.
%%%%%
Let $h:B\to Q$ be a collar neighborhood
of $\ba B$ such that 
$h(\ba B)=[0,1]\times \{ 0\}$, 
$h(F)=\{ 0\}\times [0,1]$ and 
$h(F')=\{ 1\}\times [0,1]$.
\pmed
%%%%%%%%%%%%%%%%%%%%
\begin{center}
\begin{tikzpicture}
[scale = 0.4]
\fill (0,0) coordinate (A) circle (0pt) node [below] {$\ba B$};
%%%%
\draw[thick](0,0) -- (-3,3);
\draw[thick](0,0) -- (3,3);
\draw[very thick](-3,3)--(-2.2,3.8);
\draw[very thick](3,3)--(2.2,3.8);
\draw[thin](-2.2,3.8) to [out=45,in=135] (2.2,3.8);
\fill (-3, 3.3) circle (0pt) node [above] {$F$};
\fill (3, 3.3) circle (0pt) node [above] {$F'$};
\draw[thick](0,0) -- (0,4.7);
\draw[thin](-2.3,2.3) to [out=45,in=-70] (-2,4);
\draw[thin](2.3,2.3) to [out=135,in=250] (2,4);
\draw[thin](-1.6,1.6) to [out=55,in=-70] (-1.5,4.3);
\draw[thin](1.6,1.6) to [out=125,in=250] (1.5,4.3);
\draw[thin](-0.8, 0.8) to [out=70,in=280] (-0.75,4.6);
\draw[thin](0.8, 0.8) to [out=110,in=260] (0.75,4.6);
\draw[thick](8,0) -- (14,0);
\fill (8,0) circle (0pt) node [below] {$0$};
\fill (14,0) circle (0pt) node [below] {$1$};
\draw[very thick](8,0) -- (8,4);
\draw[very thick](14,0) -- (14,4);
\draw[thick](8,4) -- (14,4);
\draw[thick](11,0) -- (11,4);
\draw[thick](8.75,0) -- (8.75,4);
\draw[thick](13.25,0) -- (13.25,4);
\draw[thick](9.5,0) -- (9.5,4);
\draw[thick](12.5,0) -- (12.5,4);
\draw[thick](10.25,0) -- (10.25,4);
\draw[thick](11.75,0) -- (11.75,4);
\draw[thick, ->] (4.5,2)--(6.5,2);
\fill (5.5,2) circle (0pt) node [above] {$h$};
\end{tikzpicture}
\end{center}
\pmed
%%%%
Choose a homeomorphism $h_i:B_i\to \Mo\times [0,1]$.
Deforming $h_i$,
we may assume that 
$h_i(F_i)=\Mo\times \{ 0\}$ and 
$h_i(F_i')=\Mo\times \{ 1\}$.
For $\alpha=0,1$, let 
$$
 \theta_\alpha:=
 h\circ\tilde f_{i,1}^\alpha\circ
 h_i^{-1}|_{\Mo\times \{\alpha\}}:
  \Mo\to [0,1],
$$
where $\tilde f_{i,1}^0:F_i\to F$
and $\tilde f_{i,1}^1:F_i'\to F'$
are singular $S^1$-bundles
isomorphic to $\pi_{\Mo}$ in Example \ref{ex:toy02} extending $f_{i,1}|_{F\cap X_{1}}$.
Note that $\theta_\alpha$ are also singular $S^1$-bundle
isomorphic to $\pi_{\Mo}$.
Take homeomorphisms $k_\alpha$ of $\Mo$
such that $\theta_\alpha\circ k_\alpha=\pi_{\Mo}$.
Since the mapping class group of $\Mo$ is $\Z_2$ 
whose nontrivial element is represented by a homeomorphism 
reversing the orientation of $\pa \Mo$, we may assume
that $k_1$ is isotopic to $k_0$ by replacing
$k_0$ if necessary.
Let $k:\Mo\times [0,1]\to\Mo$ be an isotopy from 
$k_0$ to $k_1$ with $k_t:=k(\cdot, t)$.
Letting 
\[
 \theta_t:=\pi_{\Mo}\circ k_t^{-1}:\Mo\to [0,1],
\]
define $\theta:\Mo\times [0,1]\to [0,1]$
by $\theta(\cdot,t)=\theta_t$.
Letting $\pi:\Mo\times [0,1]\to [0,1]$ be the projection, we define $f_{B,i}'':B_i\to B$
by $f_{B,i}'':=h^{-1}\circ (\theta,\pi)\circ h_i$,
which satisfies the required properties.
%%%%%
\[
\begin{CD}
 & B_i & @ > h_i >> \, & \Mo\times [0,1] & \\
&@V f_{B,i}'' V V& & @ V V (\theta,\pi) V & \\ 
 & B & \,\,\, @> h >> & [0,1]\times [0,1] &
\end{CD}
\]
%%%%%

\end{proof}

\pmed

Next we consider Case B).

\begin{lem} \label{lem:projection2}
In Case B$)$, there is a singular $S^1$-bundle 
\begin{equation*} %\label{eq:f_721}
f_{B,i}'' : B_i \to B
\end{equation*}
satisfying 
\begin{enumerate}
\item it is isomorphic to one of 
$\{ \pi_{L^2} \times \mathrm{id}_{[0,1]}, 
\pi_{\Mo}\times \mathrm{id}_{[0,1]},
\pi_{S^1\times [0,1]} \times \mathrm{id}_{[0,1]} \}\,;$
\item it is compatible with $f_{i,1}$ over 
$\pa B\cap X_1$.
\end{enumerate}
\end{lem}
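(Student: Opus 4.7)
I will follow the pattern of Lemma \ref{lem:projection}, treating the three subcases of Lemma \ref{lem:7.10} separately. In each subcase the strategy is to fix a collar identification $h : B \to [0,1]^2$ sending $F, F'$ to the two vertical sides, then to build a homeomorphism $h_i : B_i \to \bar F_i \times [0,1]$, where $\bar F_i \in \{D^2, \Mo, S^1 \times I\}$ is the standard model supplied by Lemma \ref{lem:7.10}, matching the boundary fibrations coming from $f_{i,1}$ and from $f''_{D,i}$, and finally to set $f''_{B,i} := h^{-1} \circ (\pi_{\bar F_i} \times \mathrm{id}_{[0,1]}) \circ h_i$.

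When $(F_i, F_i') \approx (S^1 \times I, S^1 \times I)$, we have $B_i \approx S^1 \times D^2$, and by Lemma \ref{lem:S1-bundle/Z} the map $f_{i,1}$ restricts to an honest $S^1$-bundle on collars of both $F_i$ and $F_i'$. Since any $S^1$-bundle over an interval is trivial and its automorphism group is connected, the two boundary trivialisations glue to a global trivialisation of $B_i$ compatible with $f_{i,1}$; the resulting $f''_{B,i}$ is isomorphic to $\pi_{S^1 \times [0,1]} \times \mathrm{id}_{[0,1]}$. When $(F_i, F_i')$ is $(D^2, S^1 \times I)$ or $(\Mo, S^1 \times I)$, the $F_i'$-side again carries an honest $S^1$-bundle from $f_{i,1}$ while the $F_i$-side carries a singular $S^1$-bundle inherited from $f''_{D,i}$, already isomorphic to $\pi_{L^2}|_{D^2}$ or $\pi_{\Mo}$. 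Arguing as in Case (b) of Lemma \ref{lem:projection}, I will choose $h_i$ so that $h_i(F_i) = \bar F_i \times \{0\}$ and $h_i(F_i') = \pa \bar F_i \times [0,1]$, and then extract $f''_{B,i}$ by interpolating, through an isotopy of singular $S^1$-bundle structures on $\bar F_i$, between the fibrations induced on $\bar F_i \times \{0\}$ and on $\pa \bar F_i \times [0,1]$; pushing $\pi_{\bar F_i} \times \mathrm{id}_{[0,1]}$ forward through $h_i$ then gives a map compatible with $f_{i,1}$ on $\pa B_i$.

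The hard part will be verifying that the interpolating isotopy exists, i.e.\ that the two singular $S^1$-bundle structures on $\bar F_i$ differ by a self-homeomorphism of $\bar F_i$ isotopic to the identity. For $\bar F_i = D^2$ this is essentially automatic from the cone structure. For $\bar F_i = \Mo$ I will invoke the same mapping class group observation already exploited in the proof of Lemma \ref{lem:projection}: $\pi_0\,\mathrm{Homeo}(\Mo) \simeq \Z_2$ with its nontrivial class reversing the orientation of $\pa \Mo$, which suffices, after possibly precomposing $h_i$ with a self-homeomorphism of $\Mo \times [0,1]$, to match the two fibrations on $F_i$. Once the compatibility on $F_i$ and $F_i'$ is achieved, the construction on $B_i \setminus (F_i \cup F_i')$ is then forced by the product structure, completing the definition of $f''_{B,i}$.
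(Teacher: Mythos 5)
Your overall scheme (collar/product identifications of $B_i$, matching the boundary fibrations induced by $f_{i,1}$ and by the maps over the adjacent $D$'s, then pushing forward a model projection) is the same as the paper's, and your use of the cone structure when $F_i\approx D^2$ matches Case (c). But there is a genuine gap exactly at the point where the paper has to work hardest, namely Case (d), $(F_i,F_i')\approx(S^1\times I,S^1\times I)$ with $B_i\approx S^1\times D^2$. Your justification ``any $S^1$-bundle over an interval is trivial and its automorphism group is connected, so the two boundary trivialisations glue'' is not correct: because the resulting map must also agree with $f_{i,1}$ over the middle arc $J\subset\pa B\cap X_1$, the comparison of the two end trivialisations has to be made relative to $\pa A$ (with $A=S^1\times I$), and the relevant group of ambiguities is $MCG(A,\pa A)\cong\Z$, generated by the Dehn twist $\lambda$. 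The two trivialising homeomorphisms $k_0,k_1$ may well differ by a nontrivial power $\lambda^m$, which is \emph{not} isotopic to the identity rel $\pa A$, so neither connectivity of the automorphism group nor your later criterion ``the two structures differ by a self-homeomorphism isotopic to the identity'' is available. What rescues the construction --- and what the paper isolates as Claim \ref{claim:isotopy=theta} --- is the weaker but sufficient fact that $\lambda(z)=e^{2\pi i|z|}z$ preserves the fibration $\pi_A$, so one can interpolate the \emph{fibrations} $\theta_t=\pi_A\circ\tilde k_t^{-1}$ even though the trivialisations themselves are not isotopic. This idea is absent from your proposal.

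The same correction is also what the paper uses in its treatment of the mixed case $(F_i,F_i')\approx(\Mo,S^1\times I)$ (Case (e)): rather than arguing on $\Mo$ itself, it splits $B_i=H_i\cup K_i$ with $K_i=(h\circ f_{i,1})^{-1}(Q_+)$ already trivialised by $f_{i,1}$, and then absorbs the discrepancy between the two pieces on the middle annulus by the same Dehn-twist-preserves-$\pi_A$ argument. Your alternative configuration ($h_i(F_i)=\Mo\times\{0\}$, $h_i(F_i')=\pa\Mo\times[0,1]$, then quoting $\pi_0\,\mathrm{Homeo}(\Mo)\simeq\Z_2$ as in Case (b) of Lemma \ref{lem:projection}) could probably be made to work, but as written it again needs the rel-boundary comparison (so that compatibility with $f_{i,1}$ over $J$ is not destroyed), and there the ambiguity once more contains boundary-parallel Dehn twists; so the missing ingredient is the same fibration-preserving observation, not a mapping class group computation alone. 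You should either import Claim \ref{claim:isotopy=theta} explicitly into both Case (d) and your handling of Case (e), or reorganise Case (e) along the paper's $H_i\cup K_i$ decomposition.
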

\begin{proof}
First we consider

\pmed\n
{\bf Case (c)}\,\,$(B_i,\pa B_i)\approx 
(D^3,D^2)$, $(F_i,F_i')\approx (D^2,S^1\times I)$.
\pmed
This case is similarly discussed as Case (a)
via the cone structure of $D^3$, and hence omitted.

\pmed\n
{\bf Case (d)}\,\,$(B_i,\pa B_i)\approx
(S^1\times D^2,S^1\times I)$, $(F_i,F_i')\approx (S^1\times I, S^1\times I)$.
\pmed

As we see in what follows, 
the argument is similar to Case (b) in the proof of Lemma \ref{lem:projection}.
We make use of a collar neighborhood of 
$B$ again.
 
Let $h:B\to Q$ be a collar neighborhood
of $\ba B$, where we identify $\ba B$ with $[0,1]\times \{0\}$.
%%%%
Set $A:=S^1\times I$ for simplicity, and choose a homeomorphism $h_i:B_i\to A\times [0,1]$.
Deforming $h_i$ by isotopies, we may assume that 
$h_i(F_i)=A\times \{ 0\}$ and $h_i(F_i')=A\times \{ 1\}$.
For $\alpha=0,1$, let 
$$
  \theta_\alpha:=h\circ f_{i,1}\circ h_i^{-1}|_{A\times \{\alpha\}}: A\to [0,1].
$$
Note that $\theta_\alpha$ are trivial $S^1$-bundles.
Take homeomorphisms $k_\alpha$ of $A$
such that $\theta_\alpha\circ k_\alpha=\pi_{A}$.

\begin{claim} \label{claim:isotopy=theta}
There is an isotopy $\tilde k_t$\,$(t\in I)$
of homeomorphisms of $A$ such that 
$\tilde k_0=k_0$ and 
\[
 \theta_t:=\pi_{S^1\times I}\circ \tilde k_t^{-1}:A
    \to [0,1]
\]
defines a one-parameter family of 
$S^1$-bundles 
joining $\theta_0$ and $\theta_1$.
\end{claim}
\begin{proof}
We may assume that $k_\alpha$ preserve the orientation of $\pa A$ and they fix $\pa A$.
Consider the mapping class group
$MCG(A,\pa A)$ of $A$ consisting isotopy classes of $A$ fixing
$\pa A$.
%%%%
Identify 
$A$ with $\{{1\leq |z|\leq 2\}} \subset \mathbb {C}$
and define a homeomorphism $\lambda$
by
$$
\lambda(z)=e^{2i\pi |z|}z,
$$
which is the identity on $\pa A$.

Then $MCG(A,\pa A)$ is generated by the class of 
$\lambda$ (cf.\cite{Iv}).
Therefore $k_0$ is isotopic to $k_1\circ \lambda^m$ 
for some integer $m$.
Let $\tilde k:A\times [0,1]\to A$ be an isotopy from 
$k_0$ to $k_1\circ \lambda^m$ with $\tilde k_t:=\tilde k(\cdot, t)$. Then 
\[
 \theta_t:=\pi_{A}\circ \tilde k_t^{-1}: A\to [0,1]
\]
satisfies the conclusion.
\end{proof}

We now define $\theta: A\to [0,1]$
by $\theta(\cdot,t)=\theta_t$.
Letting $\pi:A\times [0,1]\to [0,1]$ be the projection, we define $f_{B,i}'':B_i\to B$
by $f_{B,i}'':=h^{-1}\circ (\theta,\pi)\circ h_i$,
which satisfies the required properties.
\pmed

Next we consider
\pmed\n
{\bf Case (e)}\,\, $(B_i,\pa B_i)\approx 
(\Mo\times [0,1],\Mo\times \{ 0\})$, 
$(F_i,F_i')\approx (\Mo,S^1\times I)$.
\pmed

%%%%%%%%
For $0<s_0<1/2$, 
take a homeomorphism $h:(B,x)\to (Q,(0,0))$ 
such that 
$h(F)=\{ 1\} \times [0,s_0]$ and 
$h(F')=[0,s_0]\times \{1\}$.
Set $Q_+:=[0,1]\times [1/2, 1]$.
Note that 
$f_{i,1}$ is defined over $B\setminus h^{-1}([0,s]\times [0,s])$ for a fixed $0<s<s_0$.
Subdivide $B_i$ as $B_i=H_i\cup K_i$, where
$$
K_i:=(h\circ f_{i,1})^{-1}(Q_+), \quad 
H_i:=B_i\setminus\mathring{K}_i.
$$ 
Let $\mathrm{\hat M\ddot{o}}:=\pi_{\Mo}^{-1}([0,1/2])
\approx \Mo$.
Let $\pi:S^1\times Q\to Q$ be the projection.
Since $h\circ f_{i,1}:K_i\to Q_+$ is 
a trivial $S^1$-bundle, we have a homeomorphism
$
g_i:K_i\to S^1\times Q_+
$
satisfying 
$$
\pi\circ g_i=h\circ f_{i,1}|_{K_i}.
$$
Take a homeomorphism
$h_i:H_i\to S^1\times Q_-$
%
%\mathrm{\hat M\ddot{o}}\times [0,1]
%$
so as to satisfy
$$ \text{$\pi\circ h_i
=h\circ f_{i,1}$ on $F_i$.}
$$
%%%%%
%%%%%%%%%%%%%%%%%%%%
%\vspace{0.05cm}
\pmed
\begin{center}
\begin{tikzpicture}
[scale = 0.35]
\fill (0,0) coordinate (A) circle (0pt);

\fill (0.15,0.15) circle (2.5pt);
\fill (0.28,0.28) circle (0pt) node [below left] {{\small $p_i$}};
%%%%
\draw[thick](0.5,0) -- (3,0);
\draw[thick](0,0.5) -- (0,3);
\draw[thick](0.5,0) to [out=180,in=270] (0,0.5);
\draw[thin](0,1.5) to [out=0,in=180] (1,1.6);
\draw[thin](1,1.6) to [out=0,in=180] (2,1.4);
\draw[thin](2,1.4) to [out=0,in=180] (2.85,1.5);
\fill (1.3,1.5) circle (0pt) node [above] {{\tiny $K_i$}};
\fill (1.3,1.5) circle (0pt) node [below] {{\tiny $H_i$}};
\draw[very thick](0,3) to [out=0,in=180] (0.7,3);
\draw[very thick](3,0) to [out=90,in=270] (3, 0.7);
\fill (0.35,3) circle (0pt) node [above] {{\tiny $F_i'$}};
\fill (3,0.35) circle (0pt) node [right] {{\tiny $F_i$}};
\draw[thin](0.7,3) to [out=0,in=90] (3, 0.7);

\draw[thick, ->] (5.5,2)--(7.5,2);
\fill (6.5,1.8) circle (0pt) node [above] {{\small $g_i$}};
\draw[thick, ->] (5.5,0.7)--(7.5,0.7);
\fill (6.5,0.6) circle (0pt) node [below] {{\small $h_i$}};
%%%%%%%%%%%%%%%%%%%%%%%

\draw (9.5,0) rectangle(13,2.7);
\draw (9.5,1.35)--(13,1.35); 
\fill (11.25,1.2) circle (0pt) node [above] {{\tiny $S^1\times Q_+$}};
\fill (11.25,1.5) circle (0pt) node [below] {{\tiny $S^1\times Q_-$}};
%%%%%%%%%%%%%%%%%%

\draw[thick, ->] (1.5,-1)--(1.5,-2.3);
\fill (1.7,-1.6) circle (0pt) node [right] {{\tiny $f_{i,1}$}};
%%%%%%%

\fill[shift={(0,-7)}] (0,0) circle (3pt);
\fill [shift={(0,-7)}](0.1,0.1) circle (0pt) node [below left] {{\small $x$}};
%%%%
\draw[shift={(0,-7)},thick](0,0) -- (3,0);
\draw[shift={(0,-7)},thick](0,0) -- (0,3);

\draw[shift={(0,-7)},thin](0,1.5) to [out=0,in=180] (1,1.6);
\draw[shift={(0,-7)},thin](1,1.6) to [out=0,in=180] (2,1.4);
\draw[shift={(0,-7)},thin](2,1.4) to [out=0,in=180] (2.85,1.5);
\fill [shift={(0,-7)}] (1.3,1.5) circle (0pt) node [above] {{\tiny $K$}};
\fill[shift={(0,-7)}] (1.3,1.5) circle (0pt) node [below] {{\tiny $H$}};
\draw[shift={(0,-7)},very thick](0,3) to [out=0,in=180] (0.7,3);
\draw[shift={(0,-7)},very thick](3,0) to [out=90,in=270] (3, 0.7);
\fill [shift={(0,-7)}] (0.35,3) circle (0pt) node [above] {{\tiny $F'$}};
\fill [shift={(0,-7)}] (3,0.35) circle (0pt) node [right] {{\tiny $F$}};
\draw[shift={(0,-7)},thin](0.7,3) to [out=0,in=90] (3, 0.7);
\draw[shift={(0,-7)},thin](0,0.5) to [out=0,in=90] (0.5, 0);

\draw[shift={(0,-7)},thick, ->] (5.5,1.5)--(7.5,1.5);
\fill[shift={(0,-7)}] (6.5,1.5) circle (0pt) node [above] {{\small $h$}};

\draw[shift={(0,-7)}] (9.5,0) rectangle(13,2.7);
\draw[shift={(0,-7)}] (9.5,1.35)--(13,1.35); 
\fill[shift={(0,-7)}] (11.25,1.35) circle (0pt) node [above] {{\tiny $Q_+$}};
\fill[shift={(0,-7)}] (11.25,1.35) circle (0pt) node [below] {{\tiny $Q_-$}};

\draw[shift={(0,-7)},very thick](9.5,2.7) to (10.3, 2.7);
\fill[shift={(0,-7)}] (9.9,2.7) circle (0pt) node [above] {{\tiny $h(F')$}};
\draw[shift={(0,-7)},very thick] (13, 0) to (13,0.8);
\fill[shift={(0,-7)}] (13,0.4) circle (0pt) node [right] {{\tiny $h(F)$}};
\draw[shift={(0,-7)}] (9.5,0) rectangle(10,0.5);
\fill[shift={(0,-7)}] (10.5,0.1) circle (0pt) node [below left] {{\tiny $(0,0)$}};
\draw[thick, ->] (11.25,-1)--(11.25,-2.3);
\fill (11.4,-1.6) circle (0pt) node [right] {{\tiny $\pi$}};
\end{tikzpicture}
\end{center}
\psmall

Set $A:=S^1\times [0,1]$ for simplicity.
To investigate the difference between 
$h_i$ and $g_i$ on the intersection, we consider the homeomorphism $k_{1/2}$ of $S^1\times [0,1]$ defined by
$$
k_{1/2}:= g_i\circ h_i^{-1}|_{A\times \{1/2\}} :A\times \{1/2\} \to A\times \{1/2\}.
$$
Up to isotopies, we may assume that $k_{1/2}$ fixes $\pa A$.
As before, $k_{1/2}$ is isotopic to $\lambda^m$ relative $\pa A$ for some
integer $m$.
Let $\tilde k_t$\,$(t\in[1/2,1])$ be an isotopy 
 such that $\tilde k_{1/2}=k_{1/2}$ and 
$\tilde k_1=\lambda^m$.
Let $\tilde k$ be a homeomorphism of $S^1\times Q_+$ defined by 
$\tilde k(\cdot, t)=(\tilde k_t, t)$.
 Finally define a homeomorphism 
$\tilde g_i:K_i\to S^1\times Q_+$ by
$\tilde g_i:= \tilde k^{-1}\circ g_i$.
Combining $h_i$ and $\tilde g_i$, we have a 
map $\hat h_i: B_i\to S^1\times Q$
such that $\pi\circ \hat h_i$ extends $h\circ f_{i,1}|_{F_i\cup F_i'}$.
The required map $f_{B,i}'':B_i\to B$ is defined as 
$f_{B_i}'':=h^{-1}\circ \pi\circ \hat h_i$.
This completes the proof.
\end{proof}
%%%%%%

\pmed\n
{\bf Construction of the projections $D_i\to D$}.
\pmed
\begin{lem} \label{lem:projection2D}
There is a map 
\begin{equation} \label{eq:f_D2}
f_{D,i}'' : D_i \to D
\end{equation}
satisfying 
\begin{enumerate}
\item it is isomorphic to one of
$\{ \pi_{L^2}\times\mathrm{id}_{[0,1]}, \pi_{\Mo} \times \mathrm{id}_{[0,1]}, \pi_{S^1\times [0,1]}\}\,;$ 
\item it is compatible
with $f_{i,1}$ on $\pa D_i\setminus (F_i\cup F_i')$,
and with $f_{B,i}''$ and $f_{B',i}''$ on $F_i$ and $F_i'$
respectively.
\end{enumerate}
\end{lem}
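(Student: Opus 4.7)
The plan is to treat the three possible homeomorphism types of $F_i$ separately. When $F_i \approx S^1 \times I$ (which forces $D \cap Z \ne \emptyset$), Corollary \ref{cor:S1onZ} already provides $f_{D,i}'' := f_{i,1}|_{D_i}$ as a genuine $S^1$-bundle isomorphic to $\pi_{S^1 \times [0,1]}$; compatibility with $f_{i,1}$ on $\pa D_i \setminus (F_i \cup F_i')$ is tautological, and compatibility with $f_{B,i}''|_{F_i}$ and $f_{B',i}''|_{F_i'}$ follows because in Lemmas \ref{lem:projection} and \ref{lem:projection2} those maps were constructed to extend $f_{i,1}$ over $\pa B \cap X_1$ and $\pa B' \cap X_1$ respectively. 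When $F_i \approx \Mo$, Lemma \ref{lem:S1onZ} supplies the desired singular $S^1$-bundle isomorphic to $\pi_{\Mo} \times \mathrm{id}_{[0,1]}$; compatibility with $f_{i,1}$ is built into the construction, and to match $f_{B,i}''|_{F_i}$ and $f_{B',i}''|_{F_i'}$ on the two M\"obius ends I will apply an initial isotopy, using $\mathrm{MCG}(\Mo, \pa\Mo) \cong \mathbb Z_2$, exactly as in Case (b) of the proof of Lemma \ref{lem:projection}.

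The remaining case $F_i \approx D^2$ requires a direct construction. I will fix a homeomorphism $h_i \colon D_i \to D^2 \times [0,1]$ sending $F_i$ to $D^2 \times \{0\}$ and $F_i'$ to $D^2 \times \{1\}$, and identify $D$ with $[0,1] \times [0,1]$ so that the target model is $\pi_{D^2} \times \mathrm{id}_{[0,1]}$, i.e.\ the restriction of $\pi_{L^2} \times \mathrm{id}_{[0,1]}$ to a compact piece. Over the annular part $\pa D_i \setminus (F_i \cup F_i')$ the map $f_{i,1}$ is an honest $S^1$-bundle, and a fiber-preserving isotopy on this annulus makes $f_{i,1}$ coincide with $\pi_{D^2} \times \mathrm{id}_{[0,1]}$ there. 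On the two ends, $f_{B,i}''|_{F_i}$ and $f_{B',i}''|_{F_i'}$ are themselves isomorphic to $\pi_{D^2}$ by Lemmas \ref{lem:projection} and \ref{lem:projection2}, so a further adjustment of $h_i$ on each end forces these restrictions to equal $\pi_{D^2} \times \mathrm{id}_{\{0\}}$ and $\pi_{D^2} \times \mathrm{id}_{\{1\}}$. Because $\mathrm{Homeo}(D^2, \pa D^2)$ is connected, the ambient adjustments on the two ends and on the annulus interpolate to a global self-homeomorphism of $D^2 \times [0,1]$, and the required map is then $f_{D,i}'' := (\pi_{D^2} \times \mathrm{id}_{[0,1]}) \circ h_i$.

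The hard part will be reconciling the three bundle structures along the codimension-$2$ corners where $F_i$ and $F_i'$ meet the annular piece $\pa D_i \setminus (F_i \cup F_i')$: along each of these circles the pullback fibrations coming from $f_{B,i}''$ or $f_{B',i}''$ must agree with the one coming from $f_{i,1}$. This will be handled by the same mapping class group argument as in Claim \ref{claim:isotopy=theta}, exploiting that the mapping class groups of $D^2$, $\Mo$, and $S^1 \times I$ relative to the boundary are of rank $0$, $1$, and $1$ respectively, so that the required simultaneous compatibility can always be achieved after an ambient isotopy. Once the bundle structures are reconciled on $\pa D_i$, the extension across the interior is immediate from the product structure $D_i \approx F_i \times I$.
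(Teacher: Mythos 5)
Your proof takes essentially the same route as the paper: the $S^1\times I$ and $\Mo$ cases are delegated to Corollary \ref{cor:S1onZ} and Lemma \ref{lem:S1onZ}, and for $F_i\approx D^2$ the prescribed maps on $F_i$, $F_i'$ and the lateral annulus are assembled into boundary data and then extended across $D_i\approx D^3$ (the paper performs this extension via the cone structure of the $3$-disk, i.e.\ the Alexander trick, which is the precise justification of your ``product structure'' step). Your remaining worries are harmless: the corner reconciliation is already guaranteed because $f_{B,i}''$ and $f_{B',i}''$ were constructed to agree with $f_{i,1}$ over $\pa B\cap X_1$, and the mapping class group of $\Mo$ relative to $\pa \Mo$ is in fact trivial (the $\Z_2$ used in Lemma \ref{lem:projection} is the non-relative group), which only makes your matching isotopies easier.
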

\begin{proof}
When $F_i$ is homeomorphic to $S^1\times I$ or $\Mo$,
the construction of $f_{D,i}''$ is already covered in 
Corollary \ref{cor:S1onZ} and Lemma \ref{lem:S1onZ}.

Suppose $F_i\approx D^2$.
As the boundary condition, we can define 
the map $\pa f_i:\pa D_i\to \pa D$ by combining 
$f_{i,1}|_{\pa D\setminus(F\cup F'')}$, $f_{B,i}''|_F$ and $f_{B'',i}''|_{F''}$.
It is easy to construct a homeomorphism
$\pa h_i:\pa D_i\to \pa (D^2\times I)$ satisfying 
$h\circ\pa f_i=(\pi_{D^2}\times {\rm id}_I)\circ \pa h_i$,
where $h:D\to I^2$ is a homeomorphism 
sending $F''$ and $F$ to $\{ 0\}\times I$ and $\{ 1\}\times I$ respectively.
Since $D_i\approx D^3$, we can extend 
$\pa h_i$ to a homeomorphism 
$h_i:D_i\to D^2\times I$.
Then the required map $f_{D,i}'':D_i\to D$ is defined as 
$f_{D,i}''=h^{-1}\circ\pi\circ h_i$.
\end{proof}

Combining $f_{i,1}$, $f_{B,i}''$ and $f_{D,i}''$
for all $B$ and $D$, 
we obtain a generalized Seifert fibration 
$f_i:M_i\to X$.
This completes the proof of Theorem \ref{thm:dim2wbdy}(2).
\pmed

\section{The case that $X$ is a circle} \label{sec:circle}
In this section, we assume that the limit space $X$ of $M_i$ is a circle. 
By Proposition \ref{prop:criterion=inradius},
$D(M_i)$ also collapses to $X$. 
By \cite[Theorem 1.7]{MY}, there is a fiber bundle 
\[
f_i^D : D(M_i) \to X
\]
whose fiber is one of $S^2$, $P^2$, $T^2$ and $K^2$.
In particular, $D(M_i)$ is a topological three-manifold, and hence $M_i$ is a topological three-manifold with boundary.

\begin{proof}[Proof of Theorem \ref{thm:main-circle}]
Let $(M_i, p_i)$ converges to $(X,p)$, where we may assume that $X$ is a circle
of length $2\pi$. Fix any $\pi/2<r<2\pi/3$, and consider the ball $B(p_i, r)$.
Note that the topological boundary $\pa B(p_i, r)$ is disconnected.
This implies that $B(p_i, r)$ can be rescaled. 

\begin{claim}\label{claim:circle}
 $B(p_i,r)$ is homeomorphic to one of 
$D^2\times I$, $\Mo\times I$ and $(S^1\times I)\times I$.
\end{claim}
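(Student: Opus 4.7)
The plan is to exploit the $\phi_i$-equivariant fiber bundle structure on the double $D(M_i)$. Since $D(M_i)$ also collapses to $X$ and $X=S^1$ is free of singular strata, the equivariant Fibration Theorem \ref{thm:fibration} together with \cite[Theorem 1.7]{MY} yields a $\phi_i$-equivariant locally trivial fiber bundle $f_i^D:D(M_i)\to X$ whose fiber is one of $S^2$, $P^2$, $T^2$, $K^2$.

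Because $\ba M_i$ is nonempty and the fixed-point set of $\phi_i$ on a generic fiber $F$ equals $\ba M_i\cap F$, this fixed set is a nonempty codimension-one submanifold of $F$ separating $F$ into two copies of $F_i:=F/\phi_i$. Hence $F=D(F_i)$ is the double of a compact surface $F_i$ with nonempty boundary. Since $\chi(D(F_i))=2\chi(F_i)$ is always even while $\chi(P^2)=1$, the projective plane cannot occur as such a double, so $F\in\{S^2,T^2,K^2\}$ and correspondingly $F_i\in\{D^2,S^1\times I,\Mo\}$. Passing to the quotient, $f_i:=f_i^D/\phi_i:M_i\to X$ is a locally trivial $F_i$-bundle over the circle.

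It remains to identify $B(p_i,r)$ with a preimage $f_i^{-1}(\overline U)$ for a suitable arc $U\subset X$. Set $q:=f_i(p_i)$ and $U:=B^X(q,r)$; since $\pi/2<r<2\pi/3<\pi$ and $X$ has length $2\pi$, $U$ is a proper open arc. Since $f_i$ is a Gromov-Hausdorff $\theta_i$-approximation and an almost Lipschitz submersion, a flow argument using the gradient-like flow of $d_{p_i}$ identifies $B(p_i,r)$ with $f_i^{-1}(\overline U)$, which is homeomorphic to $F_i\times I$ by local triviality. This gives $B(p_i,r)\approx F_i\times I$, completing the proof.

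The main obstacle is the flow identification step. It requires $d_{p_i}$ to be regular on the annulus $A(p_i,\rho,r)$ for small $\rho$, so that its flow pushes $\pa B(p_i,r)$ to a union of two fibers of $f_i$. The disconnectedness of $\pa B(p_i,r)$, noted just before the claim, justifies the rescaling in Theorem \ref{thm:rescaling} and, together with the $F_i$-fiber structure just constructed, supplies the needed regularity. A secondary point is ensuring the $\phi_i$-equivariance of $f_i^D$, which follows from applying the equivariant Fibration Theorem to the equivariant convergence $(D(M_i),\phi_i)\to(X,\mathrm{id}_X)$.
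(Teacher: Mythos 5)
Your argument hinges on producing a \emph{$\phi_i$-equivariant} locally trivial fiber bundle $f_i^D:D(M_i)\to S^1$, and that step is not justified. The citation \cite[Theorem 1.7]{MY} gives a fiber bundle on the double, but nothing in its construction makes it $\phi_i$-equivariant; indeed this paper repeatedly stresses (Sections \ref{sec:2-dim no bdry} and \ref{sec:2-dim with bdry}) that the maps imported from \cite{MY} fail to be $\phi_i$-invariant near metrically singular regions, and removing that defect is exactly the work the rescaling and equivariant-flow machinery is doing. Your fallback, applying the equivariant Fibration Theorem \ref{thm:fibration} to $(D(M_i),\phi_i)\to (X,\mathrm{id}_X)$, does not go through either, because that theorem requires $D(M_i)=\mathcal R_{\delta_3}(D(M_i))$, i.e.\ every point of the collapsing total space must be $(3,\delta_3)$-strained. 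This hypothesis genuinely fails in circle collapse: for instance, if the fiber is the double of a thin triangle (an $S^2$ with three almost-conical points) crossed with $S^1$ and the fiber shrinks, $D(M_i)$ collapses to a circle while the points over the triangle's corners admit no $(3,\delta)$-strainer. So the global equivariant bundle you start from is precisely what cannot be assumed; in effect your proposal presupposes an equivariant strengthening of \cite[Theorem 1.7]{MY} that, if available, would prove all of Theorem \ref{thm:main-circle} outright and make the Claim unnecessary. A secondary weak point: even granting equivariance, the assertion that $\mathrm{Fix}(\phi_i)\cap F=\ba M_i\cap F$ is a nonempty separating $1$-submanifold of each fiber (so that $F=D(F_i)$) is stated without argument; an involution of a surface can a priori have isolated fixed points, and ruling this out requires using that the fixed set is the two-dimensional extremal set $\ba M_i$ together with a limit argument, as in Lemma \ref{lem:MtoD(M)} and Lemma \ref{lem:S1-bundle/Z}.

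For comparison, the paper proves the Claim without any global bundle on $D(M_i)$: since $\pa B(p_i,r)$ is disconnected, $B(p_i,r)$ can be rescaled (Theorem \ref{thm:rescaling}); the rescaled limit splits as $Y=Y_0\times\R$ with $Y_0$ compact of dimension $1$ or $2$; when $\dim Y_0=2$ the Stability Theorem gives $B(p_i,r)\approx Y_0\times I$ with $Y_0\in\{D^2,S^1\times I,\Mo\}$, and when $\dim Y_0=1$ one analyzes where $\ba B(p_i,r)$ converges (one side, both sides, everywhere, or nowhere) using Theorem \ref{thm:dim2nobdry}, Lemma \ref{lem:S1-bundle/Z} and \cite[Section 5]{MY}, the empty case being contradictory. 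If you want to salvage your approach, you would first have to prove the equivariant bundle statement, which essentially amounts to redoing this local analysis anyway.
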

\begin{proof}
Let $(Y,y_0)$ be a rescaling limit of $(\frac{1}{\delta_i}B(p_i,r), \hat p_i)$ with a reference point 
$\hat p_i$ and a rescaling constant $\delta_i$ as in 
Theorem \ref{thm:rescaling}.
Obviously $Y$ splits isometrically as a product $Y=Y_0\times \mathbb R$ with $\dim Y_0\in \{ 1, 2\}$.
Since $\pa B(p_i, r)$ is disconnected, 
$Y_0$ must be compact.
First suppose $\dim Y_0=2$. 
Then
$Y_0$ is homeomorphic to one of $D^2$, $S^1 \times I$ and $\Mo$.
Since
$B(p_i,r)$ is homeomorphic to $Y_0\times I$ by Theorem \ref{thm:stability respectful}, 
we obtain the conclusion. 

Next suppose $\dim Y_0=1$. Then $Y_0$ is a circle or an arc.
If $Y_0$ is a circle, then $Y=Y_0\times\mathbb R$ has no singular points and no boundary. Therefore we can apply Theorem \ref{thm:dim2nobdry} to conclude that 
$B(p_i, r)$ is an $I$-bundle over $S^1\times I$.

Now suppose that $Y_0$ is an arc, and let $L_1$ and $L_2$ be the components of 
$\ba B(y_0, R)$, which are line segments. 

We have the following four possibilities.
\pmed
Case (1)\, $\ba B(p_i, r)$ converges to only one of $L_1$ and $L_2$.

In this case, it follows from \cite[Section 5]{MY} and
Lemma \ref{lem:S1-bundle/Z} that 
the components of $\pa B(p_i, r)$ are homeomorphic to 
$D^2$ or $\Mo$, and therefore $B(p_i, r)$ is homeomorphic to 
$D^2\times I$ or $\Mo\times I$.

\pmed
Case (2)\, $\ba B(p_i, r)$ converges to the union $L_1\cup L_2$.

In this case, we see that the components of $\pa B(p_i, r)$ are homeomorphic to 
$S^1\times I$, and therefore $B(p_i, r)$ is homeomorphic to 
$(S^1\times I)\times I$.

\pmed
Case (3)\, $\ba B(p_i, r)$ converges to $B(y_0, R)$.

In this case, Theorem \ref{thm:dim2nobdry} shows that $B(p_i, r)$ is a trivial $I$-bundle over $B(y_0, R)$,
and therefore it is homeomorphic to 
$D^2\times I$.

\pmed
Case (4). $\ba B(p_i, r)$ is empty.

In this case, from \cite[Section 5]{MY} we see that the components of $\pa B(p_i, r)$ is homeomorphic to 
one of $S^2$, $P^2$ and $K^2$, and therefore $B(p_i, r)$ is homeomorphic to 
one of $S^2\times I$, $P^2\times I$ and $K^2\times I$.
 Repeating the argument finitely many times together with a continuation procedure, we easily have that 
$M_i$ has empty boundary, which is a contradiction.

Thus we obtain that $B(p_i,r)$ is homeomorphic to one of 
$D^2\times I$, $\Mo\times I$ and $(S^1\times I)\times I$ in either case.
\end{proof}

Next take $q_i\in M_i$ with $|p_i, q_i|\to \pi$.
Then in a way similar to \cite[lemma 6.3]{MY}, we conclude 
that $B(p_i,r)\setminus \mathring{B}(q_i,r)$, 
$B(q_i,r)\setminus \mathring{B}(p_i,r)$
and each component of $B(p_i,r)\cap B(q_i,r)$
are all homeomorphic to 
$D^2\times I$, $\Mo\times I$ or $(S^1\times I)\times I$.
Thus $M_i$ is homeomorphic to 
the total space of a fiber bundle over $S^1$ with fiber $\in \{ D^2, \Mo, S^1\times I\}$.
This completes the proof of Theorem \ref{thm:main-circle}.
\end{proof}

\section{The case that $X$ is an interval} \label{sec:interval} 

Let us consider a sequence $M_i$ in $\ca M(3,D)$ 
collapsing to an interval $X$. 
By rescaling, we may assume that $X = [0,1]$. 
Let $E\subset X$ denote the limit of $\ba M_i$ under this convergence. Since $E$ is extremal in $X$, it is one of $\{0,1\}$, $\{0\}$, $\{1\}$ and $X$.

In this section, we show the following.
 
\begin{thm} \label{thm:interval}
Under the above situation, the topology of $M_i$ is determined as follows: 
\begin{enumerate}
\item If $E=\{0,1\}$, then $M_i$ is homeomorphic to $F \times [0,1]$, where $F$ is homeomorphic to one of $S^2, P^2, T^2$ or $K^2\,;$
\item If $E= \{1\}$ $($The case of $E=\{0\}$ is essentially the same as this case$)$, then 
$M_i$ is homeomorphic to a convex set with connected boundary
in a three-dimensional noncompact Alexandrov space of nonnegative curvature $\,;$
\item If $E=X$, then $M_i$ is homeomorphic to 
a three-dimensional compact Alexandrov space of nonnegative curvature.
\end{enumerate}
As a conclusion, in any case, $M_i$ is homeomorphic to a three-dimensional nonnegatively curved Alexandrov space with boundary. 
\end{thm}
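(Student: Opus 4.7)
My plan is to treat each of the three cases in Theorem \ref{thm:interval} separately via the rescaling argument of Theorem \ref{thm:rescaling} combined with the structure theorems of Section \ref{ssec:classif=nonnegative}, and then verify at the end that the topological model produced carries a nonnegatively curved Alexandrov metric with boundary. The extremal subset $E=\lim\ba M_i\subset X=[0,1]$ governs how close interior points of $M_i$ come to $\ba M_i$, and this dictates which rescaling limits appear.

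For Case (1), where $E=\{0,1\}$, I will fix any $x\in(0,1)$ and a sequence $p_i\to x$ such that $|p_i,\ba M_i|$ stays bounded below. For small fixed $r>0$, the topological boundary $\pa B(p_i,r)$ is disconnected, so $B(p_i,r)$ can be rescaled; Theorem \ref{thm:rescaling} and the splitting theorem then produce a rescaling limit $Y=F\times\mathbb R$ with $F$ a compact nonnegatively curved Alexandrov surface. Since $\ba M_i$ stays uniformly away from $p_i$, $F$ has empty boundary, so $F\in\{S^2,P^2,T^2,K^2\}$; stability yields $B(p_i,r)\approx F\times I$ locally, and covering $X$ by overlapping intervals propagates this to $M_i\approx F\times[0,1]$ globally, the bundle being trivial since $[0,1]$ is contractible. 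For Case (2), $E=\{1\}$, I will take $p_i\in M_i$ maximizing $d_{\ba M_i}$ so that $p_i\to 0$, and apply Theorem \ref{thm:rescaling} with radius exceeding $\diam(M_i)$ to extract a rescaling limit $(Y,y_0)$ which is a three-dimensional noncompact nonnegatively curved Alexandrov space with connected boundary. By the respectful Stability Theorem \ref{thm:stability respectful} and Corollary \ref{cor:soul thm ball}, $M_i$ is then homeomorphic to a large metric ball $B^Y(S,r)$ around a soul $S$ of $Y$, and Theorem \ref{thm:Sharaftodinov} identifies this ball as a convex subset of $Y$ whose boundary is connected, matching the connectedness of $\{1\}$.

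Case (3), $E=X$, is the main obstacle and will occupy most of the work. Here $\mathrm{inrad}(M_i)\to 0$ by definition of $E=X$, and Lemma \ref{lem:D(M)toX} gives $D(M_i)\to X$. At each interior point $x\in(0,1)$ the same rescaling argument yields a product $Y_0\times\mathbb R$, but now with $Y_0$ a compact nonnegatively curved surface \emph{with} boundary, forcing $Y_0\in\{D^2,S^1\times I,\Mo\}$ and hence $B(p_i,r)\approx Y_0\times I$ locally. At each endpoint of $X$ the rescaling limit is a three-dimensional noncompact nonnegatively curved Alexandrov space whose soul-ball models the local endpoint cap and is classified by Corollary \ref{cor:soul thm ball}. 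The technical difficulty is globalization: I will glue the interior product $Y_0\times (0,1)$ structure to the two endpoint caps via a careful matching of boundary data, trivializing the mapping class group obstruction of $Y_0$ by an isotopy argument analogous to the proof of Lemma \ref{lem:projection2}. The resulting model is by construction a compact three-dimensional nonnegatively curved Alexandrov space with boundary, so the concluding assertion of the theorem --- that $M_i$ is homeomorphic to such a space in all three cases --- follows immediately.
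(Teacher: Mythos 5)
Your overall strategy (rescaling plus soul/structure theorems, then a gluing step) is in the right spirit, but there are concrete gaps, the most serious in Cases (2) and (3). In Case (2) the rescaling limit $Y$ at the deepest point has \emph{empty} boundary: $\ba M_i$ lies at definite unrescaled distance from $p_i$, so after rescaling it escapes to infinity. Hence quoting Corollary \ref{cor:soul thm ball} (which concerns noncompact limits \emph{with} boundary) and asserting that $Y$ has connected boundary is incorrect; the ``connected boundary'' in statement (2) is that of the convex sublevel set, the ambient noncompact space being boundaryless. Moreover, Theorem \ref{thm:rescaling} together with stability only identifies a small ball $B(\hat p_i,\delta_i R)$ of $M_i$ with a large ball in $Y$; it does not identify all of $M_i$, whose diameter is of order one, and taking ``radius exceeding $\diam(M_i)$'' does not fix this. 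The paper bridges the gap by decomposing $M_i=B_i\cup C_i\cup B_i'$, using the fibration $f_i^D$ of the double over $X^D$ to see that $C_i\cup B_i'\approx F\times[0,1]$ (the collar containing $\ba M_i$), so that $(M_i,\ba M_i)\approx(B_i,\pa B_i)$ with $\ba B_i=\emptyset$, and then quotes the closed-collapse classification \cite[Theorem 1.8]{MY} for the cap $B_i$ (Table \ref{table:FB}). The same omission of the boundary-carrying caps appears in your Case (1): covering $(0,1)$ by interior intervals never treats the two end regions that contain $\ba M_i$, and contractibility of $[0,1]$ is not the issue there; the paper instead uses the global $\Z_2$-equivariant $F$-bundle $D(M_i)\to X^D$ over the circle and passes to the quotient.

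In Case (3) your plan admits only three-dimensional rescaling limits at the endpoints, but Theorem \ref{thm:rescaling} only guarantees $\dim Y\ge 2$: the case $\dim Y=2$ (a second collapse $B_i\to B^Y$), as well as the case that $B_i$ cannot be rescaled at all (giving $B_i\approx D^3$), do occur, and handling them via Theorems \ref{thm:dim2nobdry} and \ref{thm:dim2wbdy} applied to $B_i\to B^Y$ is where most of the work in Section \ref{sec:interval} lies (the lists \eqref{eq:list-5}--\eqref{eq:list-10} and Table \ref{table:gamma-beta}). Finally, the concluding claim that the glued model is ``by construction'' a compact nonnegatively curved Alexandrov space is not automatic: a topological gluing of two nonnegatively curved caps need not carry such a metric a priori. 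The paper proves this separately (Claim \ref{claim:gluing=nonnegative}) by equipping the model pieces with nonnegatively curved metrics for which the interface $F\in\{D^2,S^1\times I,\Mo\}$ is an extremal subset and isometric from both sides, invoking the gluing theorem of \cite{Mit:gluing}, and then identifying $M_i$ with the glued model by a cone-type extension of a boundary homeomorphism. Your mapping-class-group/isotopy remark addresses only the topological matching of the pieces, not the existence of the nonnegatively curved metric on the result.
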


For the details in the above Cases (2),(3), see
Tables \ref{table:FB} and \ref{table:gamma-beta}.

Now, we consider the double $D(M_i)$, which collapses to
$X^D$ (see \eqref{eq:def=XD}).
By Lemma \ref{lem:X1ZX2}, we have 
\[
  X^D=X^1\cup_Z X^2,
\]
where $\pi(Z)=E$.
We have 
the following three possibilities depending on 
the above cases:

\begin{enumerate}
\item $X^D$ is a circle, i.e., $X^D = D([0,1])$. \label{case:S1}
\item $X^D = [0,2]$ that is a partial double of $X$. \label{case:partial}
\item $X^D = [0,1]$ that is $X$ itself. 
This case is equivalent to $\mathrm{inrad}(M_i) \to 0$.
\label{case:interval}
\end{enumerate}
Then, by \cite[Theorem 1.8]{MY}, there is a continuous surjection 
\[
{f_i^D} : D(M_i) \to X^D
\]
such that the restriction of ${f_i^D}$ to $X^D \setminus U(\ba X^D )$ is a $\Z_2$-equivariant fiber bundle with fiber homeomorphic to one of $S^2$, $P^2$, $T^2$ and $K^2$.
Here, $U(\ba X^D )$ denotes a small neighborhood of $\ba X^D $ in $X^D$ which is invariant under the involution $\phi$ on $X^D$defined as the limit
of the involution $\phi_i$ on $D(M_i)$.
Note that 
if $\ba X^D \neq \emptyset$, then 
${f_i^D} |_{X^D \setminus U(\ba X^D)}$ is a trivial bundle.

For later use, we divide $M_i$ into three closed domains $B_i$, $C_i$ and $B_i'$:
\[
 M_i=B_i\cup C_i\cup B_i',
\]
where $B_i$ and $B_i'$ are metric balls in $M_i$ centered at $p_i$ and $p_i'$ of radius $< 1/3$ such that $p_i$ and $p_i'$ converge to the boundary points $0$ and $1$ of $[0,1]$ respectively, 
and $C_i:=M_i \setminus (\mathring{B}_i \cup \mathring{B}_i')$. 
We set 
$$
F_i := B_i \cap C_i = \pa B_i, \quad
F_i' := B_i' \cap C_i = \pa B_i'.
$$
In view of Theorems \ref{thm:stability respectful},
\ref{thm:dim2nobdry} and \ref{thm:dim2wbdy},
in a way similar to \cite[Lemma 6.2]{MY}, we can prove that 
$$
C_i\approx F_i\times I.
$$
The limits of $B_i, C_i, B_i', F_i$ and $F_i'$ are denoted by $B, C, B', F$ and $F'$ respectively.
Note that 
\begin{equation} \label{eq:conn}
\pa B_i \text{ is connected}, 
\end{equation}
because so is $\pa B$.

\subsection{Case (\ref{case:S1})}
We consider the case (\ref{case:S1}). 
Namely, $E=\ba X$ and $X^D$ is a circle.
In this case, ${f_i^D} : D(M_i) \to X^D$ is globally a
$\Z_2$-equivariant fiber bundle with fiber 
$F_i\in \{ S^2,P^2,T^2,K^2\}$.
Note that $\phi_i$ fixes each component of $(f_i^D)^{-1}(\ba X)$
since $M_i$ has nonempty boundary.
Therefore ${f_i^D}$ induces a trivial bundle
$f_i : M_i \to X$.
Thus $M_i$ is homeomorphic to $F_i \times I$.
Note that all of $B_i, C_i$ and $B_i$ are homeomorphic to $F_i\times I$.

\subsection{Case (\ref{case:partial})}
Suppose $E=\{ 1\} \in \ba X$.
Then we can apply Case (\ref{case:S1}) to
$C\cup B'$ to verify that 
$$
(B_i',\pa B_i') \approx (C_i \cup B_i', \pa (C_i \cup B_i')) \approx (F \times [0,1], F \times \{0\})
$$ 
for $F \approx S^2, P^2, T^2$ or $K^2$, which yields
\[
(M_i, \ba M_i) = (B_i \cup C_i \cup B_i', \ba (B_i \cup C_i \cup B_i')) \approx (B_i, \pa B_i)\approx (B_i,F_i).
\]
Since $\ba B_i = \emptyset$, $(B_i,\pa B_i)$ is one of the spaces listed in \cite[Theorem 1.8]{MY}.
Thus we conclude that the topological type of $(M_i,\ba M_i) \approx (B_i,F_i)$ is determined as follows:

\renewcommand{\arraystretch}{1.2}
\begin{table}[htb] 
	\begin{tabular}{|c|c|} \hline 
		$\ba M_i$ & $M_i$ \\ \hline 
		$S^2$ & $D^3$, $P^2\tilde\times I$, $B(S^2;2)$ \\ \hline 
		$P^2$ & $K_1(P^2)$ \\ \hline 
		$T^2$ & $S^1 \times D^2$, $S^1 \times \Mo$, $K^2 \tilde \times I$, $B(S^2;4)$ \\ \hline 
		$K^2$ & $S^1 \tilde \times D^2$, $K^2 \hat \times I$, $B_\mathrm{pt}$, $B(P^2;2)$ \\ \hline 
	\end{tabular}
	\pmed
	\caption{Topology of $(M_i,\ba M_i)$}
	\label{table:FB}
\end{table}
\renewcommand{\arraystretch}{1}

Note that the model spaces listed above are also given by 
compact sublevel sets of Busemann functions 
as in Section \ref{ssec:classif=nonnegative}
in complete noncompact three-dimensional Alexandrov spaces without boundary. 
This completes the proof of Theorem \ref{thm:interval}(2).

\subsection{Case (\ref{case:interval}) -- Inradius collapse}
Let us consider the case (\ref{case:interval}), that is, $X^D=X=[0,1]$. 
We are going to determine the topology of $B_i$. 
The topology of $B_i'$ is also determined by the same way.
In this case, obviously we have 
\begin{equation} \label{eq:baB}
\ba B_i \ne \emptyset, \quad \ba C_i \ne \emptyset. 
\end{equation}
If $B_i$ can not be rescaled, then $\pa B_i \approx D^2$ and $B_i \approx D^3$.
We assume that $B_i$ can be rescaled and consider a rescaling limit $Y$ of $B_i$. 
Then $Y$ is a noncompact Alexandrov space of nonnegative curvature and of dimension $\ge 2$.

First suppose $\dim Y = 3$.
Then $Y$ has boundary and it is classified as in Soul Theorem \ref{soul theorem}.
Let $S$ denote a soul of $Y$ and let $B^Y$ be a large ball around $S$ in $Y$.
By Stability Theorem \ref{thm:stability respectful}, we have 
$B_i \approx B^Y$.
By \eqref{eq:conn}, the possible topological 
types of $Y$ and $(B_i, \pa B_i)$ are described as follows.

If $\ba Y$ is disconnected, then $Y$ is 
isometric $Y_0 \times I$, where $Y_0$ is a
noncompact Alexandrov surface of nonnegative
curvature without boundary, and hence 
homeomorphic to $\mathbb R^2$ or isometric to one of $S^1 \times \mathbb R$ and $S^1 \tilde \times \mathbb R$. 
By \eqref{eq:conn}, 
the case $Y_0 \equiv S^1 \times \mathbb R$ never occurs. 
Hence, we obtain 
\begin{equation} \label{eq:list-1}
(B_i,\pa B_i) \approx (I \times D^2, I \times \pa D^2) \text{ or } (I \times \Mo, I \times \pa \Mo).
\end{equation}

From now on, we assume that $\ba Y$ is connected.
If $S$ is a surface, then $Y \equiv S \times \mathbb R_+$, where $S$ is one of $S^2$, $P^2$, $T^2$ and $K^2$.
Thus we have 
\begin{equation} \label{eq:list-2}
(B_i,\pa B_i) \approx (S \times [0,1], S \times \{1\}),
\end{equation}

If $S$ is a circle, then $Y$ is an $N$-bundle over a circle, where $N$ is a nonnegatively curved noncompact contractible Alexandrov surface with boundary. 
Hence $N \approx \mathbb R_+^2$ or $N \equiv I \times \mathbb R$. 
Let $I = [-1,1] \subset \mathbb R$. For $k-1,2$,
define $S^1 \tilde \times_k I^2$ as
\[
 S^1 \tilde \times_k I^2:=S^1 \times I^2 / (z,x,y) \sim (-z,\tau_k(x,y)),
\] 
where $\tau_k$ denote the involutions on $\mathbb R^2$ defined by 
\[
\tau_1(x,y) = (-x,-y) ,\quad \tau_2(x,y) = (x,-y). 
\]
By \eqref{eq:conn}, 
$(B_i,\pa B_i)$ must be homeomorphic to one of the following:
\begin{equation} \label{eq:list-3}
\begin{aligned}
&(S^1 \times D_+^2, S^1 \times \pa D_+^2), (S^1 \tilde \times D_+^2, S^1 \tilde \times \pa D_+^2), \\
&(S^1 \tilde \times_1 I^2, S^1 \tilde \times_1 (I \times \pa I)), (S^1 \tilde \times_2 I^2, S^1 \tilde \times_2 (I \times \pa I)).
\end{aligned}
\end{equation}
Note that 
\begin{gather*}
S^1 \tilde \times_1 (I \times \partial I)\approx S^1\times I,
\quad S^1 \tilde \times_2 (I \times \partial I)\approx S^1\times I, \\
(S^1 \times D_+^2, S^1 \times \pa D_+^2)\approx
(S^1 \tilde \times_1 I^2, S^1 \tilde \times_1 (I \times \pa I)).
\end{gather*}

Suppose that $S$ is a point.
By \eqref{eq:conn}, $Y$ has exactly one end.
By Corollary \ref{cor:soul thm ball} together with 
Table \ref{table:BpaB2},
 $(B_i,\pa B_i)$ is homeomorphic to one of 
\begin{equation} \label{eq:list-4}
(D_+^3, \pa D_+^3), (K_1(P^2), D^2), (K_1(P^2), \Mo),
 (B_{\rm pt}, S^1\times I).
\end{equation}

Next suppose $\dim Y = 2$.
By \eqref{eq:conn}, $S^1 \times \mathbb R$ and $I \times \mathbb R$ are excluded from the possibilities of $Y$.
Therefore we have the following list for possible $Y$:
\begin{enumerate}
\item[(\ref{subsub:931})] $Y \equiv S^1 \times \mathbb R_+$; 
\item[(\ref{subsub:932})] $Y \equiv S^1 \tilde \times \mathbb R$; 
\item[(\ref{subsub:933})] $Y \equiv I \times \mathbb R_+$;
\item[(\ref{subsub:934})] $Y \approx \mathbb R^2_+$;
\item[(\ref{subsub:935})] $Y \approx \mathbb R^2$.
\end{enumerate}
As before, $B^Y$ denotes a large metric ball around a soul of $Y$. 

\subsubsection{} \label{subsub:931}
Suppose that $Y \equiv S^1 \times \mathbb R_+$. 
Then we obtain 
\[
(B^Y, \ba B^Y, \partial B^Y) \equiv (S^1 \times [0,\ell], S^1 \times \{0\}, S^1 \times \{\ell\}),
\] 
for some $\ell>0$.
From Theorem \ref{thm:dim2wbdy}, $B_i$ is either an $I$-bundle over $B^Y$ or an $S^1$-bundle over $B^Y$.
If $B_i$ is an $S^1$-bundle over $B^Y$, then 
$\pa B_i$ is homeomorphic to $T^2$ or $K^2$.
It turns out that $C_i$ has 
no Alexandrov-boundary points, which contradicts 
 \eqref{eq:baB}. 
Therefore, 
$B_i$ is an $I$-bundle over $S^1\times I$, and we have
\begin{equation} \label{eq:list-5}
(B_i,\pa B_i)\approx (S^1\times D^2, S^1\times I)
\,\,\text{or}\,\, (\Mo\times I, \Mo)
\end{equation}

\psmall
\subsubsection{} \label{subsub:932}
Let us assume that $Y \equiv S^1 \tilde \times \mathbb R$. 
By Theorem \ref{thm:dim2wbdy}, we obtain 
\begin{equation} \label{eq:list-6}
(B_i, \pa B_i) \approx (\Mo \times I, \pa \Mo \times I)\, \text{ or } \,(\Mo \tilde \times I, \pa \Mo \tilde \times I). 
\end{equation}

\subsubsection{} \label{subsub:933}
Let us consider the case that $Y \equiv I \times \mathbb R_+$. 
Then, $B^Y$ is isometric to an rectangle.
Hence, by Theorem \ref{thm:dim2wbdy}, we obtain a surjective continuous map 
\[
f_i : B_i \to B^Y. 
\]
If the general fiber of $f_i$ is $I$, then $f_i$ is a trivial $I$-bundle. 
Namely, we have in this case
\begin{equation} \label{eq:list-7}
(B_i, \pa B_i) \approx (B^Y \times I, \pa B^Y \times I). 
\end{equation}

Assume that the general fiber of $f_i$ is $S^1$. 
Let us denote by $a, b$ and $c$ edges of the rectangle $B^Z$ that is distinct from $\pa B^Z$ such that $c =I\times \{ 0\}$. 
Let $\alpha, \beta$ and $\gamma$ denote the fibers of 
$f_i$ over interior points of edges $a$, $b$ and $c$ respectively. 
Since $\pa B_i$ is a surface with boundary, at least one of $\alpha$ and $\beta$ must be a circle.
Therefore we may assume that $\alpha$ is a circle. 
The topological types of $(B_i, \pa B_i)$ are 
classified in terms of $(\beta, \gamma)$ by the following table
(see Lemmas \ref{lem:topBiCaseA} and \ref{lem:7.10}) :

\renewcommand{\arraystretch}{1.2}
\begin{table}[htb] 
	\begin{tabular}{|c|c||c|c|} \hline
		$\gamma$ & $\beta$ & $B_i$ & $\partial B_i$ \\ \hline \hline 
		& $S^1$ & $S^1 \times I \times [0,1]$ & $S^1 \times I \times \{1\}$ \\ \cline{2-4}
		$S^1$ & $I$ & $D^2 \times [0,1]$ & $D^2 \times \{1\}$\\ \cline{2-4}
		& $S^1/2$ & $\Mo \times [0,1]$ & $\Mo \times \{1\}$ \\ \hline
		& $S^1$ & $I \times D^2$ & $I \times \partial D^2$ \\ \cline{2-4}
		$I$ & $ I $ & $D^2 \times [0,1]$ & $D^2 \times \{1\}$ \\ \cline{2-4}
		& $S^1/2$ & $K_1(P^2)$ & $\Mo$ \\ \hline
		& $S^1$ & $I \times \Mo$ & $I \times \partial \Mo$ \\ \cline{2-4}
		$S^1/2$ & $I$ & $K_1(P^2)$ & $D^2$ \\ \cline{2-4}
		& $S^1/2$ & $\Mo \times [0,1]$ & $\Mo \times \{1\}$ \\ \hline
	\end{tabular}
	\pmed
	\caption{Topology of $(B_i,\pa B_i)$}
	\label{table:gamma-beta}
\end{table}
\renewcommand{\arraystretch}{1}

Let $k$ and $\ell$ be the numbers of essential singular points
of $Y$ contained in $\ba Y$ and $\mathrm{int}^{\bf A} Y$ respectively.
 
\subsubsection{} \label{subsub:934}
Let us assume that $Y\approx \mathbb R^2_+$.
Then we have only the following possibilities
(see \cite[Corollary 14.4]{SY} for instance):
\[
 (k,\ell)=(2,0), (1,0), (0,1), (0,0).
\]
The case $(k,\ell)=(2,0)$ is nothing but the case (\ref{subsub:933}).
The case $(k,\ell)=(1,0)$ is essentially done in the case (\ref{subsub:933}).
Therefore, we consider the case $(k,\ell)=(0,1)$. 
Then $Y$ is isometric to 
$D(\{ x,y\ge 0\})\cap \{y\le h\}$ for some $h>0$.
We may assume that an exceptional fiber of 
$f_i:B_i \to B^Y$ appears.
If $f_i$ is a generalized $I$-bundle, then 
\begin{equation} \label{eq:list-72}
(B_i, \pa B_i) \approx (K_1(P^2), D^2).
\end{equation}

Assume that $f_i$ is a generalized Seifert fibration.
Then the fiber of $f_i$ over any point of $\ba B^Y$ must be 
a regular circle because if it was $S^1/2$ or $I$, then
$\pa B_i$ would be homeomorphic to $S^2$ or $K^2$,
contradicting \eqref{eq:baB}. Thus we have
\begin{equation} \label{eq:list-8}
(B_i, \pa B_i) \approx 
(B_\mathrm{pt}, I \times S^1).
\end{equation}

\pmed
\subsubsection{} \label{subsub:935}
Finally, we suppose that $Y \approx \mathbb R^2$. 
Then we have $\ell\le 2$.
When $\ell =2$, 
$Y$ is isometric to $D(I\times \R_+)$.
By the same reason as in (\ref{subsub:934}), $f_i$ is a generalized $I$-bundle, which implies 

\begin{equation} \label{eq:list-9}
(B_i,\pa B_i)\approx 
(B_\mathrm{pt}^+ \cup_{I \times I} B_\mathrm{pt}^+, \Mo) \approx (B_{\rm pt}, \Mo).
\end{equation}
When $\ell \le 1$, we have 
\begin{equation} \label{eq:list-10}
(B_i, \partial B_i) \approx
 \begin{cases}
  (B_{\mathrm{pt}}^+, \Mo) &\text{ if $\ell=1$}, \\
  (D^2 \times I, \partial D^2 \times I) &\text{ if $\ell=0$}.
 \end{cases}
\end{equation}

\pmed
Let $f_i:F_i\to F_i'$ be a homeomorphism defined by
$C_i\approx F_i\times I$, and consider $M_i$ as the glued space
$$
M_i\approx B_i\cup_{f_i} B_i'.
$$
Summarizing \eqref{eq:list-1} $\sim$ \eqref{eq:list-10}
together with Table \ref{table:gamma-beta},
 we have the 
following gluing data based on the topological types of $F_i\approx F_i'$.

\renewcommand{\arraystretch}{1.2}
\begin{table}[htb] 
\begin{tabular}{|c|c|c|} \hline
$B_i$ & $F_i$ & $B_i'$ \\ \hline \hline
$D^3, K_1(P^2)$ & $D^2$ & $D^3, K_1(P^2)$ \\ \hline
$D^3$, $S^1\times D^2$, $S^1\tilde \times D^2$, $B_{\rm pt}$
& $S^1\times I$
&$D^3$, $S^1\times D^2$, $S^1\tilde \times D^2$, $B_{\rm pt}$
\\ \hline
%\psmall
$S^1\tilde \times D^2$, $K_1(P^2)$, $B_{\rm pt}$ 
& $\Mo$ 
& $S^1\tilde \times D^2$, $K_1(P^2)$, $B_{\rm pt}$ \\ \hline
\end{tabular}
\pmed
\caption{Gluing data}
\label{table:gluing-data}
\end{table}
\renewcommand{\arraystretch}{1}

For the proof of Theorem \ref{thm:interval}(3),
it suffices to show the following.

\begin{claim} \label{claim:gluing=nonnegative}
$M_i\approx B_i\cup_{f_i} B_i'$ is homeomorphic to an Alexandrov
space with nonnegative curvature.
\end{claim}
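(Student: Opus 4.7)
The plan is a case-by-case verification along the rows of Table \ref{table:gluing-data}. Each piece $B_i, B_i'$ appearing in the lists \eqref{eq:list-1}--\eqref{eq:list-10} is, by Corollary \ref{cor:soul thm ball}, homeomorphic to a large metric ball around the soul of some noncompact nonnegatively curved three-dimensional Alexandrov space with connected boundary, and hence sits as a codimension-zero closed domain in a compact nonnegatively curved model. For every admissible triple $(B_i, F_i, B_i')$ in the table and every isotopy class of gluing homeomorphism $f_i : F_i \to F_i'$, I will exhibit a compact three-dimensional Alexandrov space $X$ of nonnegative curvature drawn from the list in Theorem \ref{soul theorem compact} together with an embedded surface $\hat F \subset X$ decomposing $X$ into two closed domains $\hat B, \hat B'$ with $(\hat B, \hat F, \hat B')$ homeomorphic to $(B_i, F_i, B_i')$ via a homeomorphism whose restriction to $\hat F$ is isotopic to $f_i$.

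For $F_i \approx D^2$, one has $\mathrm{MCG}(D^2) = 1$, so the three admissible gluings give unique topological types: $D^3 \cup_{D^2} D^3 \approx D^3$, $D^3 \cup_{D^2} K_1(P^2) \approx K_1(P^2)$, and $K_1(P^2) \cup_{D^2} K_1(P^2)$ is a twisted $I$-bundle over $K^2$; all three appear in Theorem \ref{soul theorem compact}. For $F_i \approx \Mo$ the mapping class group is $\Z_2$, producing finitely many gluings which I will match to the flat $I$-bundles over $K^2$ and the singular bundles $B(S^2;2)$ and $B(P^2;2)$ of Theorem \ref{soul theorem compact}(3). For the annular interface $F_i \approx S^1 \times I$, $\mathrm{MCG}(S^1 \times I)$ contains a $\Z$-factor of Dehn twists along the core circle; each twist modifies the Seifert invariant of the glued space, and the flexibility of the classification of $D^2$-bundles over $S^1$ in Theorem \ref{soul theorem compact}(2), together with the model $B_{\mathrm{pt}}$, accommodates all resulting configurations.

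The main obstacle will be the annular case $F_i \approx S^1 \times I$, where the Dehn-twist ambiguity yields infinitely many topologically distinct glued spaces, each of which must be matched to a concrete nonnegatively curved model. I expect to handle this uniformly by observing that, up to isotopy, $f_i$ is determined by its action on $H_1(F_i) \simeq \Z$, which in turn prescribes the framing of the resulting Seifert structure on $B_i \cup_{f_i} B_i'$; since every such framing is realized by an appropriate $D^2$-bundle over $S^1$, with the twisted cases $S^1 \tilde\times D^2$ and $B_{\mathrm{pt}}$ absorbing the nonorientable configurations, the claim follows. A minor additional point, handled for free, is that $f_i$ from the product structure $C_i \approx F_i \times I$ is only defined up to isotopy, so realizing each isotopy class — rather than each pointwise map — is exactly what is required.
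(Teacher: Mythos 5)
Your overall strategy (enumerate the triples of Table \ref{table:gluing-data} and match each glued space with a member of the classification in Theorem \ref{soul theorem compact}) is a legitimate alternative route, but as written the central annular case contains a genuine error, in both diagnosis and cure. There is no ``Dehn-twist ambiguity yielding infinitely many topologically distinct glued spaces'': the boundary circles of $F_i\approx S^1\times I$ lie in $\ba M_i$ and carry no constraint, so the homeomorphism type of $B_i\cup_{f_i}B_i'$ depends only on the free isotopy class of $f_i$, and a Dehn twist along the core is freely isotopic to the identity (via $(\theta,t)\mapsto(\theta+2\pi st,t)$); the free mapping class group of the annulus is finite. Conversely, your proposed cure could not work if your premise were true: up to homeomorphism there are only two $D^2$-bundles over $S^1$, and the whole list of Theorem \ref{soul theorem compact} is finite, so it cannot ``absorb'' infinitely many Seifert framings --- if infinitely many glued types really occurred, the Claim itself would be false. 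Moreover, the finite matching is not actually carried out, and is wrong where it is stated: $K_1(P^2)\cup_{D^2}K_1(P^2)$ is not a twisted $I$-bundle over $K^2$ (that is a manifold, whereas the glued space has two points with cone-over-$P^2$ neighborhoods); it is $B(P^2;2)$. Likewise, annular gluings such as $B_{\rm pt}\cup_{S^1\times I}B_{\rm pt}\approx B(S^2;4)$ land outside the families ($D^2$-bundles over $S^1$ and $B_{\rm pt}$) you propose to use for that case.

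For comparison, the paper avoids this bookkeeping altogether: it takes topological models $P\approx B_i$, $P'\approx B_i'$, puts nonnegatively curved Alexandrov metrics on them so that the interface surfaces $F\subset\ba P$, $F'\subset\ba P'$ are extremal and isometric, invokes the gluing theorem of \cite{Mit:gluing} to conclude that $Q=P\cup_{\mathrm{id}}P'$ is a nonnegatively curved Alexandrov space, and then proves $Q\approx M_i$ by choosing a homeomorphism $F\to F_i$ and extending it first over the boundary surfaces and then over the pieces via their topological cone structures --- this extension step is also what makes the choice of gluing map irrelevant, so no identification of $M_i$ within the classification is ever needed. If you wish to keep your route, you must (i) replace the Dehn-twist discussion by the observation that only finitely many free isotopy classes of $f_i$ occur (and that classes extending over $B_i$ or $B_i'$ give homeomorphic results), and (ii) actually verify the resulting finite table of identifications, including the non-manifold outcomes $B(S^2;2)$, $B(S^2;4)$, $B(P^2;2)$.
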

\begin{proof}
Choose $F$ from $\{ D^2, S^1\times I, \Mo\}$ with 
$F\approx F_i$, and consider a metric of $F$ with
nonnegative curvature. Let $F'$ be another copy of $F$.
Take the model spaces $P$ and $P'$ from the above list
such that $P\approx B_i$ and $P'\approx B_i'$.
Now it is easy to construct Alexandrov metrics of $P$ and $P'$
with nonnegative curvature such that 
 \begin{itemize}
\item $F\subset \ba P$ and $F'\subset\ba P'$ are
extremal subsets of $P$ and $P'$ respectively\,;
\item the identical map ${\rm id}:F\to F'$ is an isometry with respect to the intrinsic metrics induced from $P$ and $P'$ respectively.
\end{itemize}
Let us define $Q$ by the gluing 
\[
  Q:=P\cup_{\rm id} P'
\]
along $F=F'$.
According to \cite{Mit:gluing}, $Q$ is an Alexandrov 
space with nonnegative curvature.
It suffices to show that $M_i$ is homeomorphic to $Q$.
Choose a homeomorphism $\varphi_i:F=F'\to F_i$.
Using the cone structures, we extend $\varphi_i$ to a homeomorphism 
$$
 \psi_i:\ba P\bigcup_{F=F'}\ba P' \to \ba B_i\bigcup_{f_i}\ba B_i'.
$$
 Finally extend $\psi_i$ to a homeomorphism 
$\Phi_i:Q\to M_i$ using a kind of cone-extension in each case.
\end{proof}

\section{The case that $X$ is a single point set} \label{sec:point} 
In this section, we prove the following theorem as an application of the results so far.

\begin{thm} \label{thm:dim0}
Let $M_i$ be a sequence  of Alexandrov spaces with boundary in $\ca M(3,D)$ 
converging to a point. 
Then $M_i$ is homeomorphic to an Alexandrov space of nonnegative curvature. 
\end{thm}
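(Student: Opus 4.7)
The plan is to rescale $M_i$ to diameter one and then invoke either Perelman's Stability Theorem or one of the already established collapsing theorems. Let $\rho_i := \mathrm{diam}(M_i)$, which tends to $0$ by assumption, and set $\tilde M_i := \rho_i^{-1} M_i$. Then $\tilde M_i \in \ca M(3, 1)$ has diameter $1$ and curvature bounded below by $-\rho_i^2 \to 0^-$, so after passing to a subsequence $\tilde M_i$ converges in the Gromov--Hausdorff topology to a compact Alexandrov space $Y$ of nonnegative curvature with $\diam(Y) = 1$; in particular $\dim Y \in \{1, 2, 3\}$.

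When $\dim Y = 3$, Perelman's Stability Theorem (Theorem \ref{thm:stability}) directly provides a homeomorphism $\tilde M_i \approx Y$ for large $i$. Since this homeomorphism preserves the Alexandrov boundary, $Y$ is a compact three-dimensional Alexandrov space of nonnegative curvature with boundary, and hence so is $M_i$.

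When $\dim Y = 1$, the space $Y$ is isometric to either a circle or a closed interval. In that case I would apply Theorem \ref{thm:main-circle} when $Y$ is a circle and Theorem \ref{thm:interval} when $Y$ is an interval to the convergence $\tilde M_i \to Y$; in either case the conclusion is that $\tilde M_i$, and therefore $M_i$, is homeomorphic to one of the explicit nonnegatively curved models listed in Theorems \ref{soul theorem}--\ref{soul theorem compact}.

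Finally, when $\dim Y = 2$, I would apply Theorem \ref{thm:dim2nobdry} if $\ba Y = \emptyset$ and Theorem \ref{thm:dim2wbdy} if $\ba Y \ne \emptyset$ to the convergence $\tilde M_i \to Y$. These theorems produce a decomposition of $\tilde M_i$ into local pieces governed by a generalized $I$-bundle or generalized Seifert fibration over $Y$, in which each piece is homeomorphic to one of the explicit nonnegatively curved building blocks (such as $D^2 \times I$, $\Mo \times I$, $K_1(P^2)$, $B_{\mathrm{pt}}^+$, $B_{\mathrm{pt}}$, or flat bundles over a nonnegatively curved surface). Proceeding as in Claim \ref{claim:gluing=nonnegative}, I would equip each model piece with an explicit nonnegatively curved Alexandrov metric arranged so that the intrinsic metrics on the pairwise gluing interfaces agree, and then invoke the gluing theorem \cite{Mit:gluing} to obtain a global nonnegatively curved Alexandrov metric realizing the homeomorphism type of $\tilde M_i$, and hence of $M_i$. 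The hard part of the argument is precisely this last case: matching the topological gluing data coming from Theorems \ref{thm:dim2nobdry}--\ref{thm:dim2wbdy} with the classification in Theorem \ref{soul theorem compact}, and choosing the metrics on the pieces so that the compatibility hypothesis of the gluing theorem is satisfied.
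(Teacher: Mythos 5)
Your overall architecture is exactly that of the paper: rescale by the diameter, obtain a nonnegatively curved limit $Y$ with $\diam Y=1$, and dispatch $\dim Y=3$ by the Stability Theorem and $\dim Y=1$ by Theorems \ref{thm:main-circle} and \ref{thm:interval}; those cases are fine as you state them. The difference is in how the case $\dim Y=2$ is closed, and this is also where your proposal stops short of a proof. The paper does not re-metrize pieces and invoke the gluing theorem \cite{Mit:gluing} there; instead it exploits the nonnegative curvature of $Y$ itself: when $\ba Y=\emptyset$, $M_i$ is a generalized $I$-bundle $B(Y;k)$ whose topology is already classified ($B(S^2;2)$, $B(S^2;4)$, $B(P^2;2)$), and when $\ba Y\neq\emptyset$, $Y$ is $D^2$ or a flat annulus or M\"obius band, and the Gauss--Bonnet bound $2k+\ell\le 4$ on the numbers of interior and boundary singular fibers reduces the fiber data of the generalized $I$-bundle or Seifert fibration to a short explicit list, each resulting gluing being recognized directly as one of the nonnegatively curved models ($D^3$, $K_1(P^2)$, $B_{\rm pt}$, $B(S^2;4)$, $F\times I$, etc.). Your plan of ``matching the gluing data with Theorem \ref{soul theorem compact} and applying \cite{Mit:gluing} as in Claim \ref{claim:gluing=nonnegative}'' is a viable alternative route (it is how the interval case is finished), but as written it names the hard step without carrying it out: the finiteness of the possible fiber/gluing configurations, which comes from the curvature of $Y$ and not merely from Theorems \ref{thm:dim2nobdry}--\ref{thm:dim2wbdy}, is the missing ingredient you would need to supply.
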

\begin{proof}
Let $d_i := \diam\, M_i$.
Passing to a subsequence, we may assume that $d_i^{-1} M_i$
converges a compact nonnegatively curved Alexandrov space $Y$ with $1\le\dim Y\le 3$. For $\dim Y \in \{ 1,3\}$, 
Stability Theorem \ref{thm:stability}, 
Theorems \ref{thm:main-circle} and \ref{thm:interval} yield the conclusion.

From now on, we assume $\dim Y =2$. 
If $\ba Y = \emptyset$, then by Theorem \ref{thm:dim2nobdry}, $M_i$ is a generalized $I$-bundle over $Y$. 
Such a space is denoted by $B(Y; k)$, where $k$ is the number of singular $I$-fibers (\cite{SY}). 
Note that the topology of $B(Y; k)$ is completely determined 
in \cite{Y 4-dim} and \cite{MY}, which is one of 
\[
 B(S^2, 2), B(S^2;4), B(P^2,2).
\]
In particular, $B(Y;k)$ is a compact nonnegatively curved Alexandrov space with boundary.

Finally, we assume that $\dim Y = 2$ and $\ba Y \neq \emptyset$. 
Then $Y$ is either homeomorphic to $D^2$ or isometric to 
one of $S^1\times I$ and $\Mo$, which are flat.
Due to Theorem \ref{thm:dim2wbdy}, we obtain a map
 $f_i : M_i \to Y$ 
satisfying one of the following:
\begin{enumerate}
\item it is a generalized $I$ \-bundle\,$;$
\item it is a generalized Seifert fibration.
\end{enumerate}
We first consider the case $(1)$. 
If $Y$ is a M\"obius band or an annulus, then it is flat, and hence $f_i$ has is $I$-bundle.
We assume that $Y$ is a two-disk. 
Let $k$ be the number of singular $I$-fibers of $f_i$.
Since the number of essential singular points in the interior of a nonnegatively curved disk is at most $2$,
we have $k\le 2$.
By Theorem \ref{thm:dim2nobdry},
$M_i$ is homeomorphic to $D^3$, $K_1(P^2)$ or $B_\mathrm{pt}$ if $k = 0$, $1$ or $2$ respectively. 
Thus, the conclusion of Theorem \ref{thm:dim0}
certainly holds so far.

Next let us consider the case $(2)$. 
Let $E\subset \ba Y$ be the limit of $\ba M_i$.
If $Y = \Mo$, then $E=\ba Y$ and $f_i$ is an $S^1$-bundle over $\Mo$. 
If $Y = S^1 \times I$, then $E$ coincides with $\ba Y$ or a component $\ba_0 Y$ of $\ba Y$. 
If $E=\ba Y$, then $M_i$ is an
$S^1$-bundle over $S^1 \times I$, which is homeomorphic to $T^2 \times I$ or $K^2 \times I$. 
If $E=\ba_0 Y$, then $M_i$ is homeomorphic to an $F$-bundle over $S^1$, where $F$ is $D^2$ or $\Mo$. 
All those spaces have nonnegatively curved metrics.

Finally we consider the case $Y \approx D^2$. 
Let $k$ and $l$ denote the numbers of 
singular fibers of $f_i$ over points in $\mathrm{int}^{\bf A} Y$ and $\ba Y$ respectively.
Since these singular fibers appear only over essential singular points of $Y$, from Gauss-Bonnet formula for Alexandrov surfaces (see \cite{SY} for instance), we obtain 
\[
2 k + l \le 4.
\]

If $(k,\ell)=(2,0)$, then $Y$ is an envelop:
$Y=D(\R_+\times I)\cap \{ x\le h\}$ for some $h>0$,
and $E=\ba Y$. We divide $Y$ into two disks
$D_1$ and $D_2$ via a segment each of whose interiors contains 
an essential singular point of $Y$. 
Since $f_i^{-1}(D_\alpha)\approx B_{\rm pt}$ for each
$\alpha=1,2$, we obtain the gluing 
\[
 M_i\approx B_{\rm pt}\cup_{S^1\times I} B_{\rm pt}
  =B(S^2,4).
\]

Suppose $k=1$.
If $E=\ba Y$, then $\ell=0$ and 
$M_i\approx B_{\rm pt}$.
If $E\subsetneqq \ba Y$, then $\ell=2$
since the singular fibers appear at $\pa E$. 
We cut off
a neighborhood of the closure, say $G$, of $\ba Y\setminus E$ in $Y$
by a segment transversally meeting $E$.
 Then we have the gluing
\[
 M_i\approx B_{\rm pt}\cup_{S^1\times I} P,
\]
where $P$ is either $D^2\times I$ or $\Mo\times I$
depending on the fiber of $f_i$ on $G$.

Next suppose $k=0$. If $\ell=4$, then $Y$ is a rectangle.
If $E$ is connected in addition, we divide $Y$ into two rectangles by a segment from the midpoint of $E$
to have the gluing 
\[
M_i\approx K_1(P^2)\cup_{F} K_1(P^2),
\]
where $F$ is either $D^2$ or $\Mo$
depending on the fiber data of $f_i$ on the three edges of $G$.
If $E$ is disconnected, then 
$M_i\approx F\times I$, where 
$F$ is one of $S^2$, $P^2$ or $K^2$
depending on the fiber data of $f_i$ on the three edges of $G$.

If $(k,\ell)=(0,3)$, then $M_i\approx K_1(P^2)$.
If $(k,\ell)=(0,2)$, then $M_i\approx F\times I$, 
where $F$ is $D^2$ or $\Mo$.
If $(k,\ell)=(0,0)$, then $M_i\approx D^3$.

Note that all those spaces appear in 
the previous arguments, and certainly admit 
metrics of nonnegative curvature.
This completes the proof of Theorem \ref{thm:dim0}. 
\end{proof}

\appendix

\section{Equivariant flow argument} \label{sec:flow} 
This section is devoted to generalize our result \cite[Theorem 3.2]{MY} to an equivariant version. 
This is technically crucial in the present paper.

Let $M$ be an Alexandrov space. 
A map $\Phi$ is called a {\it local flow} on $M$ if the domain of $\Phi$ is of the form 
\[
\mathrm{dom}(\Phi) = \bigcup_{x \in M} U_x \times (-\epsilon_x, \epsilon_x), 
\]
where $U_x$ is an open neighborhood of $x$ in $M$ and $\epsilon_x > 0$, such that 
$\Phi$ satisfies 
\[
\Phi(\Phi(x,t), s) = \Phi(x,t+s)
\]
for each $x \in M$ and $s, t \in \mathbb R$ whenever the both sides of the above formula are well-defined.
If $\epsilon_x$ can be taken to be infinity for every $x \in M$, the local flow is called a ({\it global}) {\it flow}.
Let $U$ be an open set of $M$ and $f : U \to \mathbb R$ a Lipschitz function.
We say that a local flow $\Phi$ on $M$ is {\it gradient-like} for $f$ if it is locally Lipschitz and there is a constant $C > 0$ such that 
\[
\liminf_{t \to 0} \frac{f(\Phi(x,t))- f(x)}{t} \ge C
\]
for every $x \in U$. 

Suppose that $M$ has nonempty boundary in addition. Recall that 
$D(K)=K\cup\phi(K)$ for a closed subset $K$ of $M$, where $\phi$ is the reflection on the double $D(M)$ along the boundary $\ba M$. 

The following is an equivariant version of \cite[Theorem 3.2]{MY}. Although the proof is similar to that of \cite[Theorem 3.2]{MY},
we give an outline of the proof for reader's convenience, focusing on the equivariant aspect.

A bijective map $\varphi:X\to Y$ between metric spaces are 
called {\it $\e$-almost isometric} if it is bi-Lipschitz
with bi-Lipschitz constant $\le 1+\e$. 

\begin{thm} \label{thm:smooth approximation}
For $n \in \mathbb N$, there is $\epsilon_n > 0$ satisfying the following. 
Let $M$ be an $n$-dimensional Alexandrov space of curvature $\ge -1$ with boundary. 
Let $K\subset M$ be compact such that 
\beq \label{eq:D(K)-strained}
\text{$D(K)$ is $\epsilon$-strained }
\eeq
for $\epsilon \le \epsilon_n$.
If $\ba M \cap K \neq \emptyset$, then there exist a smooth (incomplete) Riemannian manifold $N$ with an isometric involution $\tau$ and a $DC^1$-homeomorphism $\varphi : U(D(K)) \to N
$ defined on a $\phi$-invariant neighborhood
$U(D(K))$ of $D(K)$ in $D(M)$
such that
\begin{itemize}
\item $\varphi$ is $\theta(\epsilon)$-almost isometric$\,;$
\item $\varphi$ is $\mathbb Z_2$-equivariant in the sense that $\varphi \circ \phi = \tau \circ \varphi $\,;$
$\item if $S$ is a closed set in $M$ such that
\beq\label{eq:dDSregular}
\text{ $d_{D(S)}$ is $(1-\delta)$-regular on $D(K)$}, \eeq
then there is a flow $\Phi : D(M) \times \mathbb R \to D(M)$ which is gradient-like for $d_{D(S)}$ on $U(D(K))$ with 
\[ 
\left. \frac{d}{d t} \right|_{t = 0+} d_{D(S)} \circ \Phi(x,t) > 1 - 5 \sqrt \delta - \theta(\epsilon)
\]
for every $x \in U(D(K))\,;$
\item $\Phi$ is $\mathbb Z_2$-equivariant, i.e., 
$\Phi(\phi x,t) = \tau \Phi(x,t)$
for every $(x,t) \in D(M) \times \mathbb R$.
\end{itemize}
\end{thm}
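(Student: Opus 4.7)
\medskip

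The plan is to derive the equivariant statement by adapting the proof of the non-equivariant version \cite[Theorem 3.2]{MY}, taking advantage of the fact that both $D(K)$ and (when it appears) $D(S)$ are $\phi$-invariant by construction. Recall that the non-equivariant argument proceeds in two stages: first, at each point $p \in D(K)$ one chooses a $(n,\e)$-strainer and uses the tuple of distance functions to build a local $DC^1$-chart, and then these charts are glued through a partition of unity and smoothed out (mollification in the chart sense) to produce a smooth Riemannian manifold $N$ and an almost-isometric $DC^1$-homeomorphism $\varphi$. The desired gradient-like flow is obtained by lifting a smooth regular vector field on $N$ for $d_{D(S)}\circ\varphi^{-1}$ back to $D(M)$.

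First I would produce a $\phi$-invariant covering of $D(K)$ by strainer-charts. Because $D(K)$ is $\phi$-invariant and $\phi$ is an isometry of $D(M)$, for any $p\in D(K)$ the image under $\phi$ of an admissible $(n,\e)$-strainer at $p$ is an admissible strainer at $\phi(p)$; so away from $\mathrm{Fix}(\phi)=\ba M$ we pick strainers at representatives of $\phi$-orbits and push them over by $\phi$, while at points of $\ba M\cap D(K)$ we select strainers that are symmetric under $\phi$ (one uses $(n-1)$ strainer directions tangent to $\ba M$ together with one perpendicular pair $(a,b)$ with $b=\phi(a)$, which is still an admissible strainer since $\phi$ is an isometry). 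The resulting chart cover $\{U_\alpha\}$ can be taken $\phi$-equivariantly, meaning $\phi(U_\alpha)=U_{\alpha'}$ for a matching index $\alpha'$. I then choose a partition of unity $\{\chi_\alpha\}$ subordinate to this cover and symmetrize it by replacing $\chi_\alpha$ with $\tfrac{1}{2}(\chi_\alpha+\chi_{\alpha'}\circ\phi)$, obtaining a $\phi$-equivariant partition; mollifying the distance-function coordinates inside each chart via this symmetrized partition yields a smooth Riemannian manifold $N$ (given by the averaged pullback metric) and a $DC^1$-homeomorphism $\varphi:U(D(K))\to N$ which is $\theta(\e)$-almost isometric and satisfies $\varphi\circ\phi=\tau\circ\varphi$ for the induced smooth involution $\tau$ on $N$.

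For the second bullet, assuming $d_{D(S)}$ is $(1-\delta)$-regular on $D(K)$, the non-equivariant argument of \cite[Theorem 3.2]{MY} provides a smooth vector field $V$ on $N$ whose flow $\tilde\Phi$ satisfies $\tfrac{d}{dt}|_{t=0+}\, d_{D(S)}\circ\varphi^{-1}\circ\tilde\Phi(x,t)>1-5\sqrt\delta-\theta(\e)$. Since $D(S)$ is $\phi$-invariant, $d_{D(S)}$ is $\phi$-invariant, hence $d_{D(S)}\circ\varphi^{-1}$ is $\tau$-invariant on $N$. I would therefore replace $V$ by its symmetrization $V^{\rm sym}:=\tfrac{1}{2}(V+\tau_*V)$, which is a $\tau$-equivariant smooth vector field on $N$; because the directional-derivative condition $\langle\nabla(d_{D(S)}\circ\varphi^{-1}),V\rangle>c$ is affine in $V$ and holds for both $V$ and $\tau_*V$ (the latter by applying $\tau$-invariance of $d_{D(S)}\circ\varphi^{-1}$), it is preserved by the averaging. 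Pulling back its flow to $D(M)$ through $\varphi$ gives the desired $\mathbb Z_2$-equivariant flow $\Phi$.

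The main obstacle I anticipate is the construction at points of $\ba M\cap D(K)$, where $\phi$ has fixed directions and the symmetrization of strainers must be performed carefully so that the resulting $(n,\theta(\e))$-strainer is still admissible in the sense of \eqref{eq:D(K)-strained}; one must verify that the four strainer-angle inequalities survive after replacing a pair $(a_i,b_i)$ by $(a_i,\phi(a_i))$, which uses precisely that $\phi$ is an isometry fixing $\ba M$ pointwise and that the strainer inequalities on $D(M)$ are inherited from those of $M$. A secondary technical point is checking that the smoothing (mollification) commutes with $\phi$ in the appropriate sense; this is handled by using the symmetrized partition of unity and by mollifying via a $\tau$-invariant kernel on each chart, which introduces no new difficulties beyond those already overcome in \cite[Theorem 3.2]{MY}.
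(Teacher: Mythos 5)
Your proposal is correct and follows essentially the same route as the paper: an equivariant adaptation of the Otsu-type construction behind \cite[Theorem 3.2]{MY}, with a $\phi$-invariant net, boundary-symmetric strainers (one pair of the form $(a,\phi(a))$ at points of $\ba M\cap K$, the rest taken along the boundary), an induced involution $\tau$ on the glued smooth manifold $N$, and a $\tau$-invariant (symmetrized) gradient-like vector field whose flow is pulled back to $D(M)$. The only cosmetic difference is that the paper builds the symmetry in from the start via explicit reflections $R_k$ on the boundary charts and an equivariant cocycle of smooth transition maps, rather than by symmetrizing a partition of unity and the mollification kernel, but the underlying idea is the same.
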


Before starting the proof of Theorem \ref{thm:smooth approximation}, we prepare several lemmas. 
Let us fix an $n$-dimensional Alexandrov space $M$ of curvature $\ge -1$ with boundary.

The following lemma can be proved via the strainer map given in \cite{BGP}: 
\begin{lem}
Let $x \in D(M)$ be an $(n,\epsilon)$-strained point with strainer of length $\ell$, where $\epsilon$ is small with respect to $n$.
Then, there exists $t>0$ depending on $\ell$ such that for any $\xi \in \Sigma_x D(M)$, there exists an $(n,2\epsilon)$-strainer $\{(a^\alpha, b^\alpha)\}_{\alpha=1,\dots,n}$ at $x$ of length $t$ such that 
\[
\angle (\uparrow_x^{a^1}, \xi )< \theta_n(\epsilon). 
\]
\end{lem}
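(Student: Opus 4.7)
The plan is to reduce the problem to elementary linear algebra in $\mathbb{R}^n$ via the BGP strainer map. Let $\{(a_i,b_i)\}_{i=1}^{n}$ denote the given $(n,\epsilon)$-strainer of length $\ell$ at $x$, and consider the strainer map $\Phi : U \to \mathbb{R}^n$ defined by $\Phi(y) = (|a_1,y|,\ldots,|a_n,y|)$ on a neighborhood $U$ of $x$ in $D(M)$. By \cite{BGP}, there exists a universal constant $c_n \in (0,1)$ such that the restriction of $\Phi$ to $B(x, c_n\ell)$ is a $\theta_n(\epsilon)$-bi-Lipschitz open map onto its image. Moreover, the first variation formula gives that for any $\eta \in \Sigma_x D(M)$ the vector
\[
\rho(\eta) := \bigl(-\cos\angle(\uparrow_x^{a_i},\eta)\bigr)_{i=1}^{n} \in \mathbb{R}^n
\]
has norm within $\theta(\epsilon)$ of $1$; I will call $\rho(\eta)$ the \emph{coordinate representation} of $\eta$.

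Given $\xi \in \Sigma_x D(M)$, I would let $v := \rho(\xi)$, normalize to $\hat v = v/|v|$, extend to an orthonormal basis $\{v^1=\hat v, v^2,\ldots, v^n\}$ of $\mathbb{R}^n$, and fix $t := c_n\ell/4$. Define the candidate strainer points
\[
a^\alpha := \Phi^{-1}\bigl(\Phi(x) + t v^\alpha\bigr), \qquad b^\alpha := \Phi^{-1}\bigl(\Phi(x) - t v^\alpha\bigr),
\]
which are well defined by the openness of $\Phi$ on $B(x,c_n\ell)$. Since $\Phi$ is $\theta_n(\epsilon)$-almost isometric and the displacements $\{\pm tv^\alpha\}$ form a Euclidean orthogonal set of equal length, each of the comparison angles $\tilde\angle a^\alpha x b^\alpha$, $\tilde\angle a^\alpha x a^\beta$, $\tilde\angle a^\alpha x b^\beta$ equals the corresponding Euclidean value ($\pi$, $\pi/2$, $\pi/2$) up to an additive error $\theta_n(\epsilon)$. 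For $\epsilon \le \epsilon_n$ small enough, this yields the $(n,2\epsilon)$-strainer condition of length $\ge t$, where $t$ depends only on $\ell$ and the dimension.

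It remains to check the angle estimate $\angle(\uparrow_x^{a^1},\xi) < \theta_n(\epsilon)$. Applying the first variation formula along a minimizing geodesic from $x$ to $a^1$, the coordinate representation $\rho(\uparrow_x^{a^1})$ agrees, up to error $\theta(\epsilon)$, with the unit vector in the direction of $\Phi(a^1)-\Phi(x) = tv^1$, namely $\hat v$. On the other hand, $\rho(\xi) = v$ is by definition within $\theta(\epsilon)$ of $\hat v$. Hence $\rho(\uparrow_x^{a^1})$ and $\rho(\xi)$ are $\theta(\epsilon)$-close in $\mathbb{R}^n$, and since $\{\uparrow_x^{a_i}\}_{i=1}^n$ is an almost orthonormal basis of $\Sigma_x D(M)$ (guaranteed by the $(n,\epsilon)$-strainer hypothesis), closeness of coordinate representations propagates to closeness of the directions themselves in $\Sigma_x D(M)$, giving $\angle(\uparrow_x^{a^1},\xi) < \theta_n(\epsilon)$.

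The main technical obstacle is the openness and uniform bi-Lipschitz property of the strainer map on $B(x, c_n\ell)$: one must produce, for each target coordinate vector near $\Phi(x)$, an actual preimage point in $D(M)$, and simultaneously control the distortion of angles. This is the standard BGP construction and is where the hypothesis "$\epsilon$ is small with respect to $n$" is used. Once this input is granted, together with the coordinate-to-angle dictionary on $\Sigma_x$ at a strained point (which rests on the fact that $\Sigma_x$ is $\theta(\epsilon)$-close to $S^{n-1}$), the remainder of the argument is routine.
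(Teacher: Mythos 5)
Your overall route --- push an orthonormal frame of $\mathbb{R}^n$, aligned with the ``coordinate vector'' of $\xi$, back through the BGP distance-coordinate (strainer) map --- is exactly what the paper has in mind; it offers no proof beyond the remark that the lemma ``can be proved via the strainer map given in [BGP]''. However, as written your argument has a genuine gap at the very last quantitative step. Everything your construction delivers is controlled only up to $\theta_n(\epsilon)$: the strainer map is a $\theta$-almost isometry, so the comparison angles of the new points deviate from $\pi$ and $\pi/2$ by additive errors of $\theta$-type (indeed, near $\pi$ a relative distance distortion $\theta$ only gives $\tilde\angle a^1xb^1>\pi-C\sqrt{\theta}$). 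The sentence ``for $\epsilon\le\epsilon_n$ small enough, this yields the $(n,2\epsilon)$-strainer condition'' is a non sequitur: $\theta_n(\epsilon)$ denotes an unspecified function tending to $0$ with $\epsilon$, and in general $\theta_n(\epsilon)\gg 2\epsilon$ for small $\epsilon$, so no smallness assumption on $\epsilon$ converts a $\theta_n(\epsilon)$-bound into a $2\epsilon$-bound. What your argument proves is the existence of an $(n,\theta_n(\epsilon))$-strainer of definite length whose first direction is $\theta_n(\epsilon)$-close to $\xi$; to get literally the stated $(n,2\epsilon)$-precision one would have to control the new comparison angles with additive error $O(\epsilon)$, e.g.\ by playing the quadruple inequality \eqref{eq:qudruple} off against the original strainer directly, which the almost-isometry bound cannot give. (For the paper's use of the lemma --- Lemma \ref{lem:t} and Step 0 of Theorem \ref{thm:smooth approximation}, where all conclusions are anyway of $\theta(\epsilon)$-type --- the weaker $(n,\theta_n(\epsilon))$ version is what is actually needed, but you should either prove the statement as written or note explicitly that you are proving this weaker form.)

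A secondary imprecision of the same kind: you fix $t=c_n\ell/4$ with $c_n$ universal and assert $\theta_n(\epsilon)$-distortion of $\Phi$ on $B(x,c_n\ell)$. Even in Euclidean space, distance coordinates based at points at distance $\ell$ are distorted at scale $t$ by errors of order $t/\ell$, so the correct BGP estimate has the form $\theta_n(\epsilon,t/\ell)$; to make all your errors functions of $\epsilon$ alone (as required for the conclusion $\angle(\uparrow_x^{a^1},\xi)<\theta_n(\epsilon)$) you must let $t$ shrink with $\epsilon$ as well, say $t=\epsilon\ell$, which is still consistent with the lemma's (loosely stated) ``$t$ depending on $\ell$''. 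With these two points repaired --- and with the standard facts you invoke (closeness of $\Sigma_x$ to $S^{n-1}$ and the first-variation comparison at strained points, cf.\ \cite[Lemma 1.9]{Y conv}) cited rather than asserted --- the proof is the intended one.
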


\begin{lem} \label{lem:t}
Let $M, K, S$ be as in Theorem \ref{thm:smooth approximation}, and let $\epsilon, \delta$ be as in 
\eqref{eq:D(K)-strained} and \eqref{eq:dDSregular}.
Then there exist positive numbers $t \ll \ell$ depending on $K$ and $S$ such that 
for each $x\in B^{D(M)}(D(K),10t)$, there exists
a point $q(x) \in D(M)$ such that 
\begin{itemize}
\item $|x, q(x)| = \ell$; 
\item for every $q \in xq(x)$, we have 
\[
d_{D(S)}(q)- d_{D(S)}(x) \ge (1-2\delta) |x, q|; 
\]
\item there exists an $(n,\epsilon)$-strainer $\{(a^\alpha, b^\alpha)\}_{\alpha=1,\dots,n}$ at $x$ in $D(M)$ of length $\ell$ with 
$b^n = q(x)\,;$
\item $q(x)$ is equivariant:
$\phi q(x) = q(\phi x)$.
\end{itemize}
\end{lem}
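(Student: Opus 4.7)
The plan is to extend the non-equivariant construction of $q(x)$ from \cite[proof of Theorem 3.2]{MY}, making every choice respect the isometric involution $\phi$. Fix $\ell$ as the strainer length on $D(K)$ and choose $t\ll\ell$ so small that $d_{D(S)}$ remains $(1-2\delta)$-regular and $D(K)$ stays $2\e$-strained throughout $B^{D(M)}(D(K),10t)$. For each such $x$, the regularity hypothesis \eqref{eq:dDSregular} gives an ascent direction $\eta_x\in\Sigma_x D(M)$ with $d'_{D(S)}(\eta_x)>1-\delta$, equivalently $\angle(\Uparrow_x^{D(S)},\eta_x)>\pi-\arccos(1-\delta)$. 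Let $\gamma_x\colon[0,\ell]\to D(M)$ be a unit-speed geodesic with $\dot\gamma_x(0)=\eta_x$ and set $q(x):=\gamma_x(\ell)$.

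The ascent inequality along $xq(x)$ follows from the first variation formula combined with triangle comparison, with the $\e$-strained structure controlling how the angle $\angle(\Uparrow^{D(S)},\dot\gamma_x)$ can drift from its initial value as $\gamma_x$ is traversed; the resulting loss is $\theta(\e,\delta)$ and is absorbed into the $(1-2\delta)$ constant. The strainer condition on $q(x)$ is arranged by noting that the set of directions at $x$ arising as $\uparrow_x^{b^n}$ of some $(n,\e)$-strainer of length $\ell$ is open and of positive measure in $\Sigma_x$, while $\eta_x$ may be perturbed within a $\theta(\e)$-cone without losing the ascent rate; choose such a perturbation that completes to an $(n,\e)$-strainer.

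Equivariance is handled as follows. Since $D(S)=S\cup\phi(S)$ is $\phi$-invariant, we have $d_{D(S)}\circ\phi=d_{D(S)}$, and $\phi_*$ maps ascent directions at $x$ to those at $\phi x$. Pick $q(x)$ by the above procedure on the fundamental domain $\mathring{M}$, then set $q(\phi x):=\phi q(x)$ for $x\in\mathring{M}$. At a $\phi$-fixed point $x\in\ba M$, equivariance forces $q(x)\in\ba M$, hence $\eta_x$ must be selected from the fixed locus $\Sigma_x\ba M\subset \Sigma_x D(M)=D(\Sigma_x M)$. Since $\Uparrow_x^{D(S)}$ is $\phi_*$-invariant, one extracts from its near-antipodal set an element lying in $\Sigma_x\ba M$ at an additional $\theta(\e)$ cost in the angle bound, and the ensuing geodesic $\gamma_x$ remains in $\ba M$.

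The main obstacle is precisely this last step: extracting a $\phi_*$-invariant ascent direction at boundary points while losing only $\theta(\e,\delta)$ in the ascent rate. This is where the $\e$-strained hypothesis on $D(K)$ enters decisively, since admissibility of the strainer ensures that $\Sigma_x\ba M$ meets any $\phi_*$-invariant open set in $\Sigma_x D(M)$ almost transversally on the relevant angle scale. Once this extraction is carried out, the four conditions on $q(x)$ combine on the $\phi$-orbit decomposition of $B^{D(M)}(D(K),10t)$ to yield the desired equivariant selection.
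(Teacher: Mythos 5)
Your overall strategy matches the paper's: find an ascent direction for $d_{D(S)}$ at each $x$, go out a distance $\ell$ in (approximately) that direction, complete to an $(n,\epsilon)$-strainer, and force equivariance by working on a fundamental domain and separately on the fixed locus. However, two steps contain genuine gaps. First, you define $q(x)$ as the endpoint of a geodesic $\gamma_x$ with $\dot\gamma_x(0)=\eta_x$; in an Alexandrov space geodesics do not exist in arbitrarily prescribed directions, so this construction can fail (and replacing geodesics by quasigeodesics would not automatically give $|x,q(x)|=\ell$ together with the first-variation estimate). The paper avoids shooting a geodesic altogether: it picks any point $q(x)$ with $|x,q(x)|=\ell$ whose direction $\uparrow_x^{q(x)}$ makes angle $<\delta^2$ with the ascent direction, and then runs the comparison estimate along a minimal segment $xq(x)$, shrinking $\ell$ if necessary.

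Second, and more seriously, your treatment of the fixed points $x\in\ba M$ is not a proof. You need an ascent direction lying in the fixed locus $\Sigma_x\ba M\subset\Sigma_x D(M)$, and you justify its existence by asserting that admissibility of the strainer makes $\Sigma_x\ba M$ meet ``any $\phi_*$-invariant open set'' almost transversally; that assertion is false in general (a $\phi_*$-invariant open set can consist of two components interchanged by $\phi_*$ and disjoint from the fixed locus), and nothing in your argument rules this out for the set of ascent directions. The paper's device is to take the canonical direction $v(x)=\nabla d_{D(S)}(x)/|\nabla d_{D(S)}(x)|$: since $d_{D(S)}$ is $\phi$-invariant and the gradient direction is the \emph{unique} maximizer of $d_{D(S)}'$ on $\Sigma_x$, $v(x)$ is automatically $\phi_*$-fixed, hence tangent to $\ba M$ at boundary points; moreover $(d_{D(S)})'(v(x))>1-\delta$ holds on $B(D(K),10t)$ by lower semicontinuity of $|\nabla d_{D(S)}|$, which is exactly how $t$ is chosen. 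With this canonical choice, equivariance of $x\mapsto q(x)$ is immediate off the boundary (make the choice on a fundamental domain and reflect), and at a boundary point one only has to select $q(x)\in\ba M$ close to the direction $v(x)$ --- not, as you claim, to have the whole segment stay in $\ba M$, which is in any case false in general since $\ba M$ need not be totally geodesic in $D(M)$.
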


Before proving Lemma \ref{lem:t}, we recall the notion of the {\it gradient} of distance functions
(see \cite{PP QG}, \cite{Pet Semi}).
Let $X$ be an Alexandrov space and $A$ a closed subset of $X$. 
For $x \in X \setminus A$, the gradient of $d_A = |A,\,\cdot\,|$ at $x$ is defined as the 
 element $\nabla d_A (x)$ of $T_x X$: 
\[
\nabla d_A (x) := \left\{ 
\begin{aligned}
&o_x && \text{if $d_A$ is critical at $x$} \\
&d_A'(\xi_{\max}) \xi_{\max} &&\text{otherwise,}
\end{aligned}
\right.
\]
where $\xi_{\max} \in \Sigma_x$ is the unique element which maximizes the derivative $d_A'$ of $d_A$ on $\Sigma_x$. 

\begin{proof}[Proof of Lemma \ref{lem:t}]
Since the norm of the gradient $|\nabla d_{D(S)}|$ is lower semicontinuous, there exists $t > 0$ such that 
\begin{equation} \label{eq:ab-grad}
|\nabla d_{D(S)}| > 1- \delta \text{ on } B(D(K), 10t). 
\end{equation}
Moreover, we may assume that each point of $B(D(K),10t)$ is $\epsilon$-strained with 
a strainer of length $10\ell > 0$ for a small enough $\ell$.
Set $K':=B(K,10t)$.
For each $x \in D(K')= B(D(K),10t)$, by \eqref{eq:ab-grad}, 
setting 
\[
v(x) := \frac{\nabla d_{D(S)}}{|\nabla d_{D(S)}|} \in \Sigma_x D(M), 
\]
we have 
\[
(d_{D(S)})'(v(x)) > 1- \delta.
\]
Because $d_{D(S)}$ is $\phi$-equivariant, so is $v(x)$. 
Let us take a point $q(x) \in D(M)$ such that 
\[
|x, q(x)| = \ell 
\]
and
\begin{equation*} 
%	\label{eq:almost gradient}
\angle (\uparrow_x^{q(x)}, v(x)) < \delta^2. 
\end{equation*}
Then, we have 
\begin{align*}
(d_{D(S)})'(\uparrow_x^{q(x)}) &= - \cos \angle (\Uparrow_x^{D(S)}, v(x)) \\ 
&\ge - \cos \left(\angle (\Uparrow_x^{D(S)}, \uparrow_x^{q(x)}) - \angle (\uparrow_x^{q(x)}, v(x)) \right) \\
&> (1-\delta)(1- \delta^4) - \delta^2 \\ 
&> 1 - 2 \delta. 
\end{align*}
Note that there exists $s(x) \in S$ such that 
\[
\angle (v(x), \uparrow_x^{s(x)}) - \widetilde \angle q(x) x s(x) < \theta(\epsilon)
\]
Hence, we have 
\[
- \cos \widetilde \angle q(x)x s(x) > 1 - 2\delta - \theta(\epsilon).
\]
Furthermore, since $\ell$ is small, $|x, s(x)| > 10\ell$.

Retaking $\ell$ to be small, 
we may assume that 
\[
d_{D(S)}(q)-d_{D(S)}(x) \ge (1-2\delta) |x,q|
\]
for any $q$ in $x q(x)$.
Let $\{(a^\alpha, b^\alpha)\}_{\alpha = 1,2,\dots,n}$ be an $\epsilon$-strainer at $x$ of length $\ell$ such that $b^n=q(x)$.
It is clear that $q(x)$ can be taken to satisfy $\phi(q(x)) = q(\phi(x))$. 
This completes the proof. 
\end{proof}

\begin{proof}[Outline of a proof of Theorem \ref{thm:smooth approximation}]
We need several steps for proving Theorem \ref{thm:smooth approximation}. 
 
\pmed
\noindent
{\bf Step 0}\,(Preliminaries). 

\psmall
Let $t$ be as in Lemma \ref{lem:t}.
For $0<s\ll t$, take a maximal $(0.2)s$-discrete set $\{x_j\}_{j=1}^N$ of $K$. 
If $|x_j, \ba M \cap K| \le (0.05)s$, then we take $\tilde x_j \in \ba M$ which is a midpoint of a minimal geodesic $x_j \phi(x_j)$.
If $|x_j, \ba M \cap K| > (0.05)s$, we set $\tilde x_j = x_j$. 
Then, $\{\tilde x_j\}_{j=1}^N$ is a $(0.4)s$-net of $K$. 
Note that $\tilde x_j$ may not be a point of $K$. 
Recall that 
$d_{D(S)}$ is $(1-\delta)$-regular on $B_{10 t} (D(K))$. 
We permute the indices of $\tilde x_j$ in such a way that 
\begin{itemize}
\item $\tilde x_j \in \ba M$ if $j \le N'$, for some $N' \le N$; 
\item $\tilde x_j \not \in \ba M$ if $j > N'$. 
\end{itemize}
Note that the 
$\phi$-invariant set 
\[
\{\tilde x_j\}_{j \le N'} \cup \{\tilde x_j, \phi \tilde x_j \}_{j >N'}
\]
is a $(0.4)s$-net of $D(K)$ and is $(0.1)s$-discrete. 

For each $j \in \{1,\dots, N\}$, we choose an $\epsilon$-strainer 
$\{(a_j^\alpha,b_j^\alpha)\}_{\alpha=1}^n\subset D(M)$ at $\tilde x_j$ of length $\ell$ such that 
\begin{itemize}
\item $b_j^1 = q(\tilde x_j)$ for every $j \le N$, where $q(\tilde x_j)$ is as in 
Lemma \ref{lem:t} $\,;$
\item for $j \le N'$, we have 
\begin{align*}
&a_j^\alpha, b_j^\alpha \in \ba M \hspace{1em} (1 \le \alpha \le n-1), \\
&\phi(a_j^n) = b_j^n.
\end{align*}
\end{itemize}
Note that for every $j \le N$, $\{(\phi(a_j^\alpha), \phi(b_j^\alpha))\}_{\alpha=1}^n$ is an $\epsilon$-strainer at $\phi(\tilde x_j)$. 

Since $t \ll \ell$, $\{(a_j^\alpha,b_j^\alpha)\}_\alpha$ is $\theta(\epsilon)$-strainer at $x$
 for any $x \in B^{D(M)}(\tilde x_j, 10 t)$.
Due to \cite[Lemma 1.9]{Y conv} and $s \ll t$, for $c\in \{ a_j^\alpha, b_j^\alpha\}$, we have 
\[
|\tilde \angle c xy - \angle cxy| < \theta(\epsilon)
\]
for all $x \in B^{D(M)}(\tilde x_j, s)$ and $y \in B^{D(M)}(x, s)$. 

\pmed
We are going to sketch the construction of the manifold $N$ desired in the theorem by 
dividing into
several steps.
For the detail of the discussion, we refer to \cite{O}
(cf.\cite{KMS}).

Let $E_j$ denote the standard Euclidean $n$-space for $1\le j \le N$. 
We define $f_j = (f_j^\alpha)_{\alpha=1}^n : B^{D(M)}(\tilde x_j, 10t) \to E_j$ by 
\[
f_j^\alpha(y):= \frac{1}{\mathcal H^n (B^{D(M)}(a_j^\alpha,\epsilon'))} \int_{B^{D(M)}(a_j^\alpha,\epsilon')} |y,z|-|\tilde x_j,z|\, d \mathcal H^n(z)
\]
where $\epsilon' \ll \epsilon$. 
This is a $\theta(\epsilon)$-almost isometric $DC^1$-homeomorphism, 
and the family $\{f_j, f_j \circ \phi\}_{j \le N}$ gives a $DC^1$-coordinate system of the $9t$-neighborhood of $D(K)$.
Here, for the notion of $DC^1$-regularity, we refer to \cite{Per DC} and \cite{KMS}. 
From now on, we denote $f_j'$ and $E_j'$ by
\[
f_j' := f_j \circ \phi : B^{D(M)}(\phi \tilde x_j, 10t) \to E_j' := E_j. 
\]

\vspace{1em}
\noindent{\bf Step 1}\,(Construction of $\{ F_j^k \}_{j,k}$). 
This part corresponds to \cite[Lemma 5]{O} (cf. \cite[Lemma 3.3]{MY}). 
For the detail of the following construction, see the proof therein.
First we construct an isometry 
\[
F_j^k : E_k \to E_j
\]
satisfying 
\begin{align}
|F_j^k \circ f_k (y) - f_j(y)| < \theta(\epsilon) s; \label{eq:F1} \\
|dF_j^k \circ df_k(\xi)- df_j(\xi)| < \theta(\epsilon) \label{eq:F2}
\end{align}
for all $j \ne k$, $y \in B(\tilde x_j,s) \cap B(\tilde x_k,s)$ and $\xi \in \Sigma_y (D(M))$.

For $k \in \{1,\dots, N\}$ and $\alpha \in \{1,\dots,n\}$, we take $y_k^\alpha \in \tilde x_k b_k^\alpha$ and $y_k^{-\alpha} \in \tilde x_k a_k^\alpha$ with 
\[
|\tilde x_k, y_k^{\pm \alpha}| =s.
\]
Then, we have 
\[
\left< f_k(y_k^\alpha), f_k(y_k^\beta) \right> = s^2 \delta_{\alpha \beta} + \theta(\epsilon, s/\ell).
\]
Let $\{e_k^\alpha \}_{\alpha}$ be the orthonormal basis of $E_k$ obtained by Schmidt's orthogonalization of $\{f_k(y_k^\alpha)/ s\}_\alpha$. 
Let 
\[
v_\alpha := \frac{1}{2s} (f_j(y_k^\alpha)- f_j(y_k^{-\alpha})). 
\]
Then, we have 
\[
\left< v_\alpha, v_\beta \right> = \delta_{\alpha \beta} + \theta(\epsilon). 
\]
By Schmidt's orthogonalization of $\{v_\alpha\}_\alpha$, we obtain an orthonormal basis $\{ \tilde e^\alpha_j \}_\alpha$ of $E_j$ such that 
\[
|\tilde e^\alpha_j - v_\alpha| < \theta(\epsilon). 
\]
Then the desired isometry is defined by 
\[
F_j^k (v) := f_j(\tilde x_k) + \sum_\alpha \left< v, e_k^\alpha \right> \tilde e^\alpha_j. 
\]

\vspace{1em}
\noindent{\bf Step 2}\,(Construction of transition maps $\{\tilde F_j^k, \tilde G_j^k, \tilde H_j^k, \tilde A_j^k \}_{j,k}$). 
This step corresponds to \cite[Lemma 6]{O} (cf. \cite[Lemma 3.4]{MY}). 
Let us set 
\[
V_j := U(0, (0.4)s) \subset E_j
\]
and $V_j' := V_j \subset E_j'$. 

For $j,k \le N$ with $|\tilde x_j, \tilde x_k| < (0.9)s$, 
we construct
a $\theta(\epsilon)$-almost isometric $C^\infty$-map 
\[
\tilde F_j^k : E_k \to E_j
\]
satisfying 
\begin{align}
&\tilde F_j^j = \mathrm{id}; \label{eq:F3}\\
&\tilde F_j^i(v) = \tilde F_j^k \circ \tilde F_k^i(v) \label{eq:F4}
\end{align}
for all $v \in V_i \cap \tilde F_i^k(V_k) \cap \tilde F_i^j(V_j)$. 
Moreover, $\{\tilde F_j^k\}_{j,k}$ satisfies the same property as \eqref{eq:F1} and \eqref{eq:F2}.

Let $\lambda : [0, \infty) \to [0, \infty)$ be a $C^\infty$-function such that 
\beq \label{eq:labambda}
\begin{cases}
\begin{aligned}
&\lambda = 1 \hspace{1em}\text{ on } [0,1/2] \\
&\lambda=0 \hspace{1em}\text{ on } [1, \infty) \\
&-4 \le \lambda' \le 0. 
\end{aligned}
\end{cases}
\eeq
Define $\psi_j : E_j \to [0,1]$ by 
\[
\psi_j(v) := \lambda(|v| / (0.8)s). 
\]
First we set 
\[
\tilde F_j^1 = F_j^1, \hspace{1em} \tilde F_1^j = (\tilde F_j^1)^{-1} \hspace{1em} (j \ge 1),
\]
and define $\tilde F_j^2$ with $j \ge 2$ by 
\[
\tilde F_j^2 (v) := (\psi_1 \circ \tilde F_1^2(v)) \cdot \tilde F_j^1 \circ \tilde F_1^2(v) + (1-\psi_1 \circ \tilde F_1^2(v)) \cdot F_j^2(v)
\]
for $v \in E_2$. 
 In view of \eqref{eq:labambda}, \eqref{eq:F4} holds
for $\{ i,k\}=\{ 1,2\}$ and any $j$. Moreover, 
$\{\tilde F_j^2\}$ satisfies the same property as \eqref{eq:F1} and \eqref{eq:F2}.

Similarly, as described in \cite[pp.1263-1264]{O}, we obtain $\tilde F_j^k$ for arbitrary $k,j$
satisfying the desired properties. 

For $1\le k \le N'$, we define the reflection $R_k : E_k \to E_k'$ by 
\[
R_k\biggl( \sum_{\alpha=1}^n u^\alpha e_k^\alpha\biggr) := \sum_{\alpha<n} u^\alpha e_k^\alpha - u^n e_k^n
\]
for $(u^\alpha) \in \mathbb R^n$, where $\{e_k^\alpha\}$ is as in Step 1.
That is, $R_k$ is the reflection along the hyperplane with normal vector $e_k^n$.
For $k > N'$, we define $R_k : E_k \to E_k'$ by $R_k = \mathrm{id}$.

Finally, we set 
\begin{align*}
\tilde G_j^k &= R_j \circ \tilde F_j^k : E_k \to E_j'; \\
\tilde H_j^k &= \tilde F_j^k \circ R_k : E_k' \to E_j; \\
\tilde A_j^k &= R_j \circ \tilde F_j^k \circ R_k: E_k' \to E_j'.
\end{align*}
Then, the family $\{\tilde F_j^k, \tilde G_j^k, \tilde H_j^k, \tilde A_j^k \}_{j,k}$ satisfies the cocycle condition similar to \eqref{eq:F3} and \eqref{eq:F4}. 
Indeed, for instance, for $v \in V_k \cap \tilde F_k^i (V_i) \cap (\tilde G_j^k)^{-1}(V_j')$, we have 
\[
\tilde H_i^j \circ \tilde G_j^k(v) 
= \tilde F_i^j \circ R_j \circ R_j \circ \tilde F_j^k (v) 
= \tilde F_i^j \circ \tilde F_j^k(v) 
= \tilde F_i^k(v).
\]

\vspace{1em}
\noindent{\bf Step 3}\,(Construction of a manifold $N$ and a homeomorphism $f : D(U(K)) \to N$). 
The cocycle condition of $\{\tilde F_j^k, \tilde G_j^k, \tilde H_j^k, \tilde A_j^k \}_{j,k}$ induces a natural equivalent relation on the disjoint set 
\[
\bigsqcup_{j \le N} V_j \sqcup V_j'.
\]
That is, for $v \in V_j$, $w \in V_k$, $v' \in V_j'$ and $w' \in V_k'$, 
\begin{align*}
v \sim w &\iff \tilde F_k^j (v) = w, \\
v \sim w' &\iff \tilde G_k^j (v) = w', \\
v' \sim w &\iff \tilde H_k^j (v') = w, \\
v' \sim w' &\iff \tilde A_k^j (v') = w'.
\end{align*}
Then, the quotient space 
\[
N := \left(\, \bigsqcup_{j \le N} V_j \sqcup V_j' \right)/\!\sim
\]
becomes a smooth manifold.
An involution $\tau$ on $N$ is naturally induced from the maps 
\[
V_j \ni v \mapsto R_j(v) \in V_j'.
\]
The natural projection is denoted by $\pi_N : \bigsqcup_{j} V_j \sqcup V_j' \to N$. 
From the construction, $\pi_N|_{V_j}$ and $\pi_N|_{V_j'}$ are homeomorphisms. 
Let $N_j = \pi_N(V_j)$ and $N_j' = \pi_N(V_j')$. 
We denote by $\varphi_j$ and $\varphi_j'$ the inverses of $\pi_N|_{V_j}$ and $\pi_N|_{V_j'}$, respectively. 
Then, the system $\{(N_j, \varphi_j), (N_j', \varphi_j')\}_{j \le N}$ is a smooth atlas on $N$. 

Now we construct a homeomorphism $f : U(D(K)) \to N$.
Following \cite{O}, we define maps $f^{(j)} : B(\tilde x_j, s) \to E_j$ \,$(1\le j\le N)$ satisfying 
\beq \label{eq:cycleF}
  f^{(j)}=F_j^k\circ f^{(k)} \quad \text{on $\hat V_j\cap \hat V_k$},
\eeq
where $\hat V_j:=(f^{(j)})^{-1}(V_j)$.
This is achieved via : 
\begin{align*}
f^{(1)} &:= f_1 \\ 
f^{(2)}(x) &:= \psi_1 \circ f^{(1)}(x) \cdot \tilde F_2^1 \circ f^{(1)}(x) + (1-\psi_1 \circ f^{(1)}(x)) f_2(x).
\end{align*}
For the construction of $f^{(j)}$ for $j\ge 3$,
see \cite[p.1272]{O}.

Next we set 
\begin{align*}
f^{(j)}{}' &:= R_j \circ f^{(j)} \circ \phi : B(\phi \tilde x_j, s) \to E_j',\\
f^{(j)}{}'' &:= R_j \circ f^{(j)} : B(\tilde x_j, s) \to E_j',\\
f^{(j)}{}''' &:= f^{(j)} \circ \phi : B(\phi \tilde x_j, s) \to E_j,\\
\hat V_j' &:= \phi(\hat V_j).
\end{align*}
Then we have 
\begin{align*} \label{eq:cycleA}
&f^{(j)''}=\tilde G_j^k\circ f^{(k)}, \quad 
 f^{(j)}=\tilde H_j^k\circ f^{(k)} \quad 
 f^{(j)''}=\tilde A_j^k\circ f^{(k)''} \quad \text{on $\hat V_j\cap \hat V_k$}, \\
%%%%%%%%%%%%%%%%%%%%%%%%%%%%%%%%%%%
 & f^{(j)'''}=\tilde F_j^k\circ f^{(k)'''},\quad 
f^{(j)'}=\tilde G_j^k\circ f^{(k)'''} \quad\text{on $\hat V_j'\cap \hat V_k'$}, \\
& f^{(j)'''}=\tilde H_j^k\circ f^{(k)'}, \quad 
  f^{(j)'}=\tilde A_j^k\circ f^{(k)'} \quad \text{on $\hat V_j'\cap \hat V_k'$}. \\
 \end{align*}
It follows that 
the family $\{f^{(j)}, f^{(j)}{}', f^{(j)}{}'', f^{(j)}{}''' \}_{j=1}^N$ induces a natural map 
\[
\varphi : U(D(K)) \to N, 
\]
where $U(D(K))$ is the union of $\{\hat V_j, \hat V_j'\}_{j=1}^N$. 
From the construction, $\varphi$ is a $\theta(\epsilon)$-almost isometric $\mathbb Z_2$-equivariant $DC^1$-homeomorphism.

\vspace{1em}
\noindent
{\bf Step 4}\, (Construction of a flow). 
By an argument similar to the proof of \cite[Theorem 3.2]{MY}, we can construct a desired flow for $d_{D(S)}$ on $D(M)$ which is gradient-like on $U(D(K))$ as follows. 
First, we prepare a family of vector fields $W_j$ on $V_j$ which are almost gradient vectors for $d_{D(S)} \circ \varphi^{-1} \circ \pi_N$.
Note that such a family is taken to be $\tau$-invariant, that is, it is compatible with $\{ R_j \}$.
Next, we define a $\tau$-invariant vector field on $N$ which is close to $W_j$ on each $N_j$ and consider its integral curves. 
Finally, the pull-back of the integral curves gives rise to the desired flow $\Phi$ on $D(M)$.
This completes the proof of Theorem \ref{thm:smooth approximation}. 
\end{proof}

%%%%%%%%%%%%%%

%\section{Discussions}
%
%\begin{prop}
%Let $M_i$ collapse to $X$, where $\dim X = 2$. 
%Suppose $p_i \in M_i$ converges to an interior point of $X$. 
%\end{prop}

\end{document}